\newcommand{\num}{}
\newtheorem{definition}{Definition}[chapter]
\newtheorem{theo}{Theorem}
\newtheorem{prop}[definition]{Proposition}
\newtheorem{coro}[definition]{Corollary}
\newtheorem{lemma}[definition]{Lemma}
\newtheorem{example}[definition]{Example}
\newtheorem{case}{Case}
\newtheorem{rem}[definition]{Remark}
\newcommand{\half}{{\frac{1}{2}}}
\newcommand{\baralpha}{\bar{\alpha}}
\newcommand{\barbeta}{\bar{\beta}}
\newcommand{\bargamma}{\bar{\gamma}}
\newcommand{\NN}{{\mathbb{N}}}
\newcommand{\ZZ}{{\mathbb{Z}}}
\newcommand{\RR}{{\mathbb{R}}}
\newcommand{\dist}{\operatorname{dist}}
\newcommand{\interior}{\operatorname{int}}
\newcommand{\cR}{\mathcal{R}}
\newcommand{\cD}{\mathcal{D}}
\newcommand{\cS}{\mathcal{S}}
\newcommand{\clS}{\bar{\cS}}
\newcommand{\cA}{\mathcal{A}}
\newcommand{\cB}{\mathcal{B}}
\newcommand{\cP}{\mathcal{P}}
\newcommand{\Tw}{\operatorname{Tw}}
\newcommand{\TW}{\operatorname{TW}}
\newcommand{\sgn}{\operatorname{sgn}}
\newcommand{\ccol}{\operatorname{color}} 
\newcommand{\charge}{\operatorname{charge}}
\newcommand{\wind}{\operatorname{wind}}
\newcommand{\Aout}{\cA^{\vu}}
\newcommand{\maxtw}{\Tw_{\max}}
\newcommand{\metricw}[2]{\operatorname{w}_{\operatorname{metric}}(#1,#2)}
\newcommand{\topw}[2]{\operatorname{w}_{\operatorname{top}}(#1,#2)}
\newcommand{\Link}{\operatorname{Link}}
\newcommand{\Wr}{\operatorname{Wr}}
\newcommand{\ST}{T_{\operatorname{sym}}}
\newcommand{\bB}{\mathbf{B}}
\newcommand{\tv}{\vec{v}}
\newcommand{\vu}{\vec{u}}
\newcommand{\vw}{\vec{w}}
\newcommand{\vbeta}{\vec{\beta}}
\newcommand{\ex}{\vec{\mathbf{i}}}
\newcommand{\ey}{\vec{\mathbf{j}}}
\newcommand{\ez}{\vec{\mathbf{k}}}
\newcommand{\neighbor}{\mathcal{N}}
\newcommand{\myScaleVar}{1} 
\newcommand{\floor}[1]{\left\lfloor #1 \right\rfloor}
\newcommand{\ceil}[1]{\left\lceil #1 \right\rceil}
\newcommand{\plshalf}[1]{{#1}^\sharp}
\newcommand{\base}{\vw}
\newcommand{\tbase}{t_{\base}}
\newcommand{\tbasez}{t_{\ez}}
\newcommand{\tbasew}{t_{\vw}}
\newcommand{\subsumtext}[1]{\scalebox{0.7}{\mbox{#1}}}
\newcommand{\puc}{PUC-Rio}
\author{Pedro Henrique Milet Pinheiro Pereira}
\title{Domino tilings of three-dimensional regions}
\keywords
{
\key{Three-dimensional tilings} 
\key{Dominoes}
\key{Dimers}
\key{Flip accessibility}
\key{Connectivity by local moves}
\key{Writhe}
\key{Knot theory}
}
\cD \times [0,N]$. Em particular, n\'os investigamos as componentes conexas do espa\c{c}o de coberturas desse tipo de regi\~ao por flips, o movimento local que consiste em remover dois domin\'os paralelos adjacentes e coloc\'a-los de volta na \'unica outra posi\c{c}\~ao poss\'ivel. Para regi\~oes da forma $\cD \times [0,2]$, n\'os definimos um invariante polinomial $P_t(q)$ que caracteriza coberturas que est\~ao ``quase na mesma componente conexa'', num sentido discutido na tese. Tamb\'em provamos que o espa\c{c}o de coberturas desse tipo de regi\~ao \'e conexo por flips e trits, um movimento local que consiste em remover tr\^es domin\'os adjacentes e ortogonais entre si e coloc\'a-los de volta na \'unica outra posi\c{c}\~ao poss\'ivel. No caso geral, o invariante \'e um inteiro, o twist, para o qual damos uma f\'ormula combinat\'oria simples, bem como uma interpreta\c{c}\~ao via teoria dos n\'os; tamb\'em provamos que o twist tem propriedades aditivas para decomposi\c{c}\~oes adequadas de uma regi\~ao. Por fim, investigamos tamb\'em o conjunto de valores que s\~ao twists de coberturas de uma caixa $L \times M \times N$.
\abstract
{
	In this thesis, we consider domino tilings of three-dimensional regions, especially those of the form $\cD \times [0,N]$. In particular, we investigate the connected components of the space of tilings of such regions by flips, the local move performed by removing two adjacent dominoes and placing them back in the only other possible position. For regions of the form $\cD \times [0,2]$, we define a polynomial invariant $P_t(q)$ that characterizes tilings that are ``almost in the same connected component'', in a sense discussed in the thesis. We also prove that the space of domino tilings of such a region is connected by flips and trits, a local move performed by removing three adjacent dominoes, no two of them parallel, and placing them back in the only other possible position. For the general case, the invariant is an integer, the twist, to which we give a simple combinatorial formula and an interpretation via knot theory; we also prove that the twist has additive properties for suitable decompositions of a region. Finally, we investigate the range of possible values for the twist of tilings of an $L \times M \times N$ box.
}
\begin{document}
\chapter{Introduction}
Towards the end of the twentieth century, a lot has been said about tilings of two-dimensional regions by a number of different pieces: in particular, the so-called \emph{domino} and \emph{lozenge} tilings have received a lot of attention.

Kasteleyn \cite{Kasteleyn19611209} showed that the number of domino tilings of a plane region can be calculated via the determinant of a matrix. Conway \cite{conway1990tiling} discovered a technique using groups, that in a number of interesting cases can be used to decide whether a given region can be tesselated by a set of given tiles. Thurston \cite{thurston1990} introduced height functions, and proposed a linear time algorithm for solving the problem of tileability of simply connected plane regions by dominoes. 
In a more probabilistic direction, Jockusch, Propp and Shor \cite{jockusch1998random,cohn1996local} studied random tilings of the so-called Aztec Diamond (introduced in \cite{elkies1992alternating}), and showed the Arctic Circle Theorem. Richard Kenyon and Andrei Okounkov also studied random tilings and, in particular, their relation to Harnack curves \cite{kenyonokounkov2006dimers,kenyonokounkov2006planar}.
The concept of a flip is important in the context of dominoes as well as in that of lozenges. In both cases, two tilings of a simply connected region can always be joined by a sequence of flips (see \cite{saldanhatomei1998overview} for an overview). Also, see \cite{saldanhatomei1995spaces} for considerations on flip connectivity in more general two-dimensional regions.     

However, in comparison, much less is known about tilings of three-dimensional regions. Hammersley \cite{hammersley1966limit} proved results concerning the asymptotic behavior of the number of brick tilings of a $d$-dimensional box when all dimensions go to infinity. In particular, his results imply that the limit $\ell_3 = \lim_{n \to \infty} \frac{\log f(2n)}{(2n)^3}$, where $f(n)$ is the number of tilings of an $n \times n \times n$ box, exists and is finite; as far as we know, its exact value is not yet known, but several upper and lower bounds have been established for $\ell_3$ (see \cite{ciucu1998improved} and \cite{friedland2005theory} for more information on this topic). 
Randall and Yngve \cite{randall2000random} considered tilings of ``Aztec'' octahedral and tetrahedral regions with triangular prisms, which generalize domino tilings to three dimensions; they were able to carry over to this setting many of the interesting properties from two dimensions, e.g., height functions and connectivity by local moves. Linde, Moore and Nordahl \cite{linde2001rhombus} considered families of tilings that generalize rhombus (or lozenge) tilings to $n$ dimensions, for any $n \geq 3$. 
Bodini \cite{bodini2007tiling} considered tileability problems of pyramidal polycubes. Pak and Yang \cite{pak2013complexity} studied the complexity of the problems of tileability and counting for domino tilings in three and higher dimensions, and proved some hardness results in this respect.

Others have considered difficulties with connectivity by local moves in dimension higher than two (see, e.g., \cite{randall2000random}). We propose a few related algebraic invariants that could help understand the structure of connected components by flips in dimension three.

\begin{figure}[ht]
	
	\centering
    \includegraphics[width=0.4\textwidth]{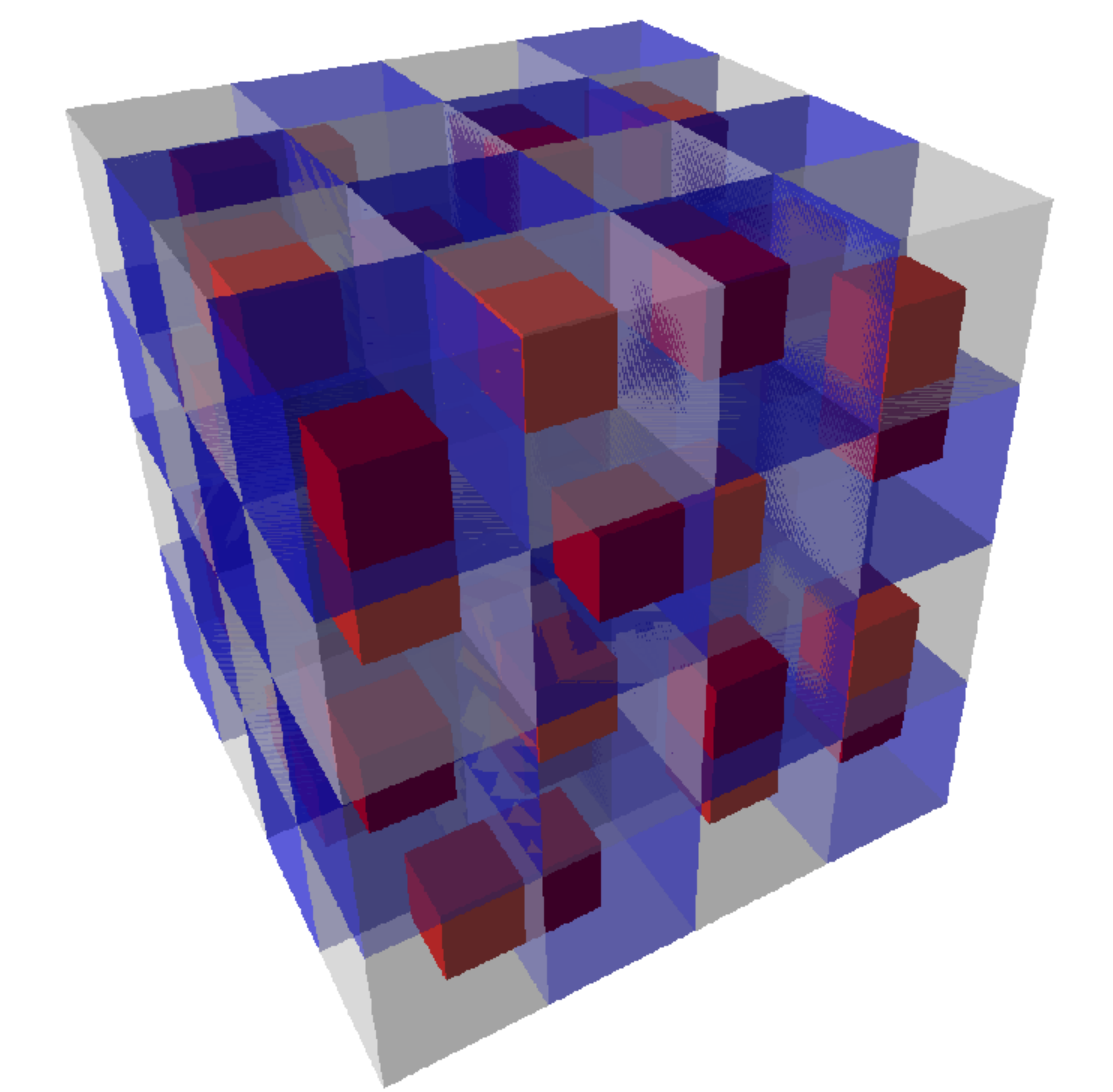}

		\caption{A tiling of a $4\times 4 \times 4$ box.}
			\label{fig:tiling444box}	

\end{figure}

This thesis contains material from the preprints \cite{primeiroartigo} and \cite{segundoartigo}.
In this thesis, we investigate tilings of contractible cubiculated regions (that is, a finite union of unit cubes with vertices in $\RR^2$) by \emph{domino brick} pieces, or \emph{dominoes}, which are $2 \times 1 \times 1$ rectangular cuboids, in one of the three possible rotations. An example of such a tiling is shown in Figure \ref{fig:tiling444box}. While this 3D representation of tilings may be attractive, it is also somewhat difficult to work with. Hence, we prefer to work with a 2D representation of tilings, which is shown in Figure \ref{fig:notation2Dexample}.

A key element in our study is the concept of a \emph{flip}, which is a straightforward generalization of the two-dimensional one. We perform a flip on a tiling by removing two (adjacent and parallel) domino bricks and placing them back in the only possible different position. The removed pieces form a $2 \times 2 \times 1$ slab, in one of three possible directions (see Figure \ref{fig:flipExample}). 

We also study the \emph{trit}, which is a move that happens within a $2 \times 2 \times 2$ cube with two opposite ``holes'', and which has an orientation (positive or negative: see Chapter \ref{chap:notation}). More precisely, we remove three dominoes, no two of them parallel, and place them back in the only other possible configuration (see Figure \ref{fig:negtrit_example}).   

\begin{figure}[hpt]
\centering
\def\svgwidth{0.8\columnwidth}
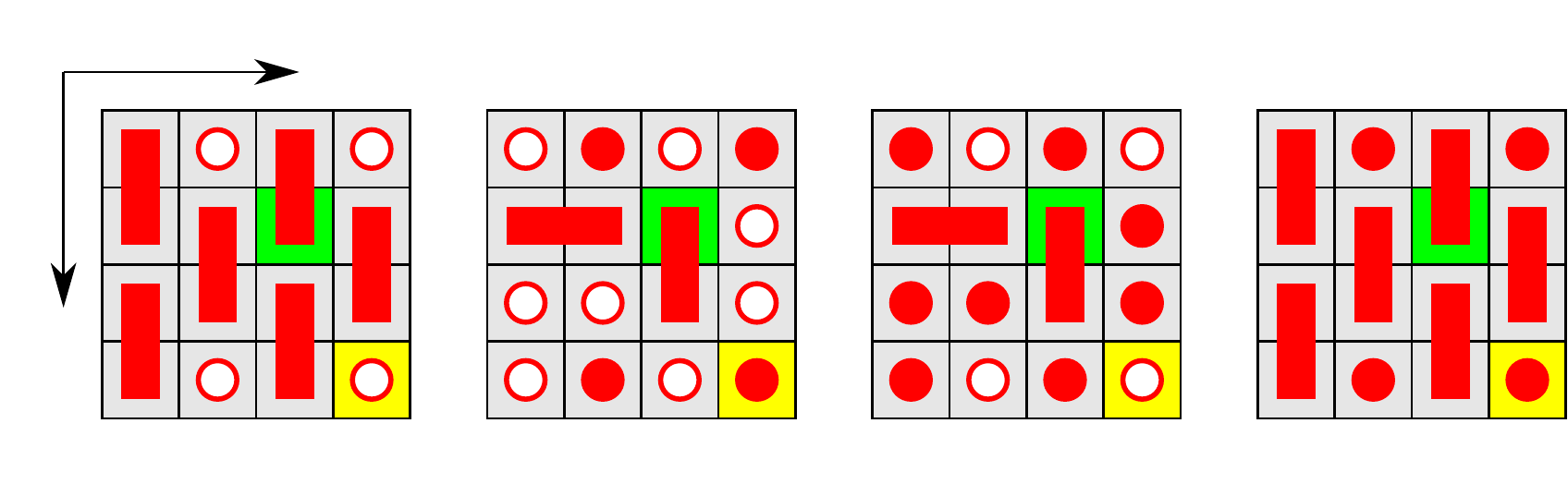
\caption{A tiling of the box $\cB = [0,4] \times [0,4] \times [0,4]$ box in our notation. The $x$ and $y$ axis are drawn, and $z$ points towards the paper, so that floors to the right have higher $z$ coordinates. Dominoes that are parallel to the $x$ or $y$ axis are represented as 2D dominoes, since they are contained in a single floor. Dominoes parallel to the $z$ axis are represented as circles, with the following convention: if the corresponding domino connects a floor with the floor to the left of it, the circle is painted red; otherwise, it is painted white. Thus, for example, in Figure \ref{fig:notation2Dexample}, each of the four white circles on the leftmost floor represents the same domino as the red circles on the floor directly to the right of it.
The squares highlighted in yellow represent cubes whose centers have the same $x$ and $y$ coordinates. Notice the top two yellow cubes are connected by a domino parallel to the $z$ axis, as well as the bottom two. The squares highlighted in green also represent cubes whose center have the same $x$ and $y$ coordinates, but the dominoes involving these cubes are not parallel to the $z$ axis.}
\label{fig:notation2Dexample}
\vspace{8pt}

\includegraphics[width=0.9\columnwidth]{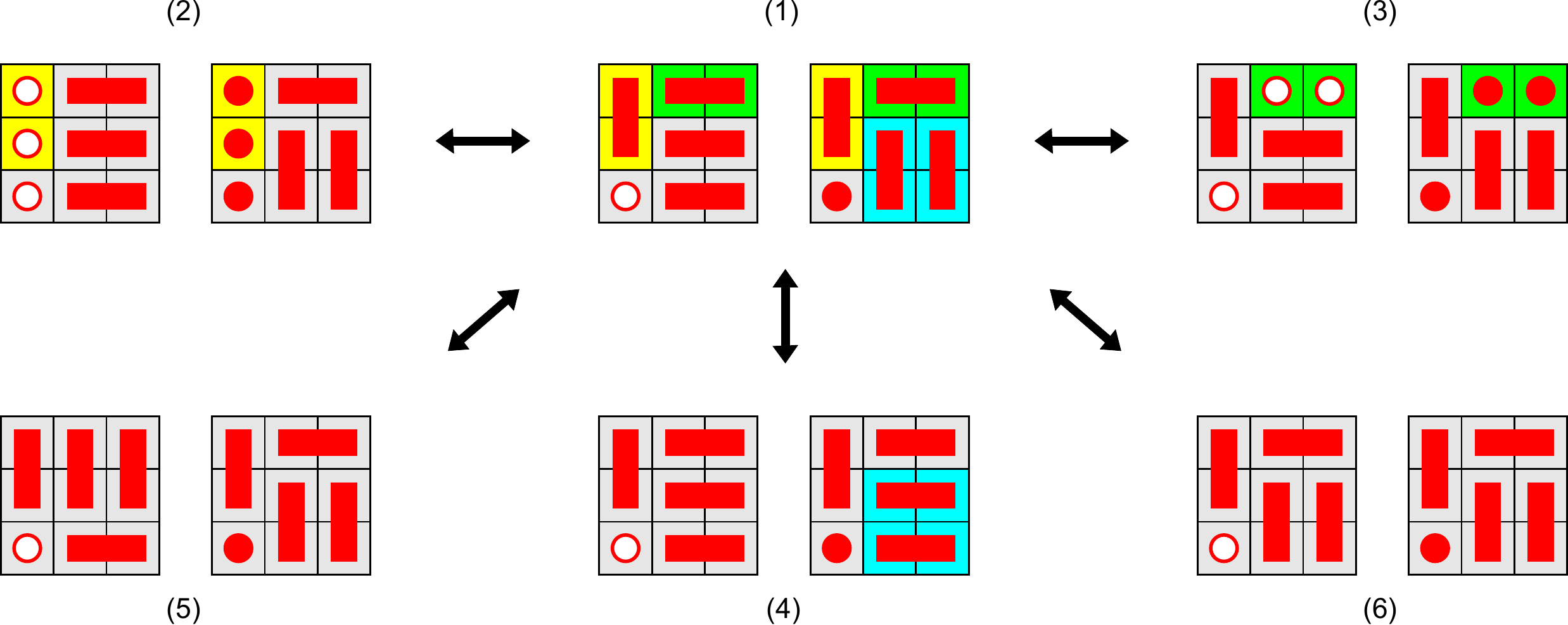}%
\caption{All flips available in tiling (1). The $2 \times 2 \times 1$ slabs involved in the flips taking (1) to (2), (3) and (4) are highlighted: they illustrate the three possible relative positions of dominoes in a flip.}%
\label{fig:flipExample}%

\vspace{10pt}

\includegraphics[width=0.6\columnwidth]{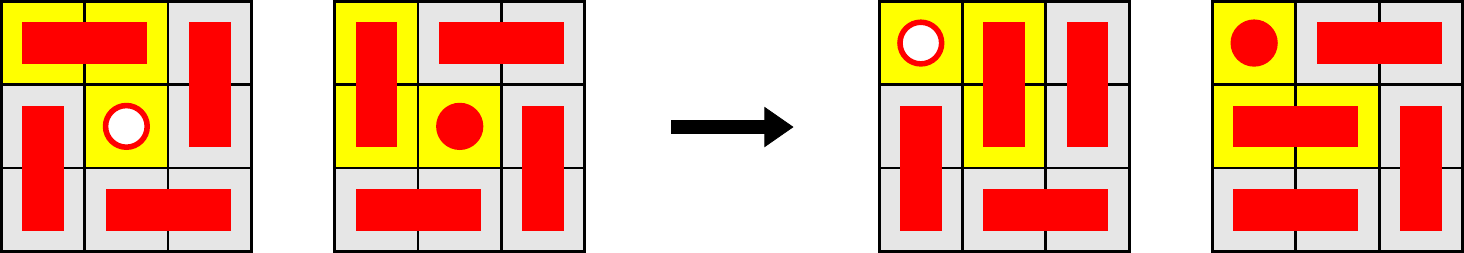}%
\caption{An example of a negative trit. The affected cubes are highlighted in yellow.}%
\label{fig:negtrit_example}%

\end{figure}

A \emph{(cubiculated) cylinder} or \emph{multiplex} is a region of the form $\cD \times [0,N]$ (possibly rotated), where $\cD \subset \RR^2$ is a simply connected planar region with connected interior. 
When $N = 2$, the region is called a \emph{duplex region}. A cubiculated region $\cR$ is a \emph{two-story region} if it is contained in a set of the form $\RR^2 \times [0,2]$, possibly rotated, and such that the sets $\cR \cap (\RR^2 \times [0,1])$ and $\cR \cap (\RR^2 \times [1,2])$ are both connected and simply connected.


In view of the situation with plane tilings, one might expect that the space of domino brick tilings of a simple three-dimensional region, say, an $L \times M \times N$ box, would be connected by flips. This turns out not to be the case, as the spaces of domino brick tilings of even relatively small boxes are already not flip-connected. In Chapter \ref{chap:twofloors}, we introduce an algebraic invariant for tilings of two-story regions. This invariant is a polynomial $P_t \in \ZZ[q,q^{-1}]$ associated with each tiling $t$ such that if two tilings $t_1,t_2$ are in the same flip connected component, then $P_{t_1} = P_{t_2}$. The converse is not necessarily true; however, it is almost true, in a sense that we shall discuss later. 
We shall prove the following theorems in Chapter \ref{chap:twofloors}:

\begin{theo}
\label{theo:invariantProperties}
The polynomial $P_t(q)$, defined in Sections \ref{sec:twoIdenticalFloors} and \ref{sec:generalTwoStory}, has the following properties:
\begin{enumerate}[label=(\roman*),topsep=0pc]
	\item \label{item:flipInvariant} If $t_0$ and $t_1$ are tilings of a two-story region that lie in the same flip connected component, then $P_{t_0} = P_{t_1}$.
	
	\item \label{item:moreSpace} If $\cR$ is a duplex region and $t_0$ and $t_1$ are two tilings of $\cR$, then $P_{t_0} = P_{t_1}$ if and only if there exists a box with two floors $\cB$ such that the embeddings of $t_0$ and $t_1$ in $\cB$ lie in the same flip connected component.
	
	\item \label{item:fourFloursEmbedding} If $\cR, t_0, t_1$ are as above and $P_{t_0}'(1) = P_{t_1}'(1)$, then there exists a box with four floors $\cB$ such that the embeddings of $t_0$ and $t_1$ in $\cB$ lie in the same flip connected component.
	
\end{enumerate}

\end{theo}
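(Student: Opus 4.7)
The plan is to prove the three parts of Theorem~\ref{theo:invariantProperties} in order, since (i) supplies the forward direction of both (ii) and (iii). For part (i), the natural approach is case analysis on flips. Every flip in a two-story region takes place inside a $2 \times 2 \times 1$ slab which is either (a) entirely contained in one of the two floors, so that two parallel horizontal dominoes are reoriented, or (b) a vertical slab straddling both floors, so that two vertical dominoes are swapped with two horizontal ones or vice versa. I would verify directly, from the definition of $P_t(q)$ given in Sections~\ref{sec:twoIdenticalFloors} and~\ref{sec:generalTwoStory}, that in case (a) the set of vertical dominoes and the cycle structure that $P_t$ records are both unchanged, while in case (b) the local rearrangement of cycles preserves the polynomial as well.

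For part (ii), the forward direction is immediate from (i) applied to the ambient two-floor box $\cB$. For the converse, my plan is to take a two-floor box $\cB \supset \cR$ substantially larger than $\cR$, so that $\cB \setminus \cR$ provides a generous buffer of empty cubes, and to introduce a family of canonical tilings of $\cB$ indexed by the admissible values of $P_t(q)$. The strategy is then to show that any tiling of $\cB$ can be reduced, by a sequence of flips, to the canonical tiling with the same polynomial. The reduction should work by first using flips to migrate all nontrivial cycles into the buffer region and then normalising them one at a time into a standard position. Given $P_{t_0} = P_{t_1}$, both embeddings then reach the same canonical tiling and thus lie in the same flip-connected component of $\cB$.

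For part (iii), the idea is that four floors instead of two offer enough vertical room to exchange cycles that contribute different monomials to $P_t(q)$ but the same total amount to $P_t'(1)$. Concretely, I would construct an explicit flip sequence inside a four-floor box that converts a pair of cycles of "winding degrees" $k$ and $-k$ into a pair of trivial cycles, thereby altering $P_t(q)$ while preserving $P_t'(1)$. Combining this with part (ii) applied to the constituent duplex pieces of the four-floor box, one can reduce any two tilings of $\cB$ with equal $P_t'(1)$ to the same canonical form, yielding the required flip connectivity.

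The main obstacle, for both (ii) and (iii), is the explicit construction of these flip sequences: even when the polynomial (or its derivative at $1$) already matches, realising the required cycle migrations and simplifications within the restricted vertical extent of $\cB$ will require careful combinatorial bookkeeping. A secondary difficulty is choosing the family of canonical tilings in a way that equality of $P_t(q)$ truly forces equality of the resulting canonical form --- that is, verifying that the polynomial is a complete invariant for the flip action on the enlarged box, not merely a necessary condition.
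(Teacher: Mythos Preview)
Your case split in part~(i) is correct, but your analysis of case~(a) contains a genuine error. You assert that for a flip contained in one floor ``the set of vertical dominoes and the cycle structure that $P_t$ records are both unchanged.'' The first clause is true (the jewels are untouched), but the second is false: an in-floor flip typically merges two cycles into one or splits one into two. This is exactly the nontrivial case. The paper handles it by distinguishing two subcases: either the two cycles are side by side with the same orientation, or one is nested inside the other with opposite orientation. In both subcases one must check that the winding number of the resulting cycle system around each jewel is unchanged, and this is where the argument lives. Your case~(b), by contrast, is the easier one: two adjacent jewels of opposite colour appear or disappear, and since they are enclosed by the same cycles their contributions cancel.

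For parts~(ii) and~(iii) your high-level strategy (reduce to a canonical form indexed by the invariant) matches the paper's, but the paper supplies the specific machinery you identify as the main obstacle. For~(ii) the paper abstracts tilings to \emph{socks} (systems of cycles in $\ZZ^2$), shows that flip-connectivity in some large two-floor box is equivalent to flip-homotopy of socks in $\ZZ^2$, and then proves that every sock is flip-homotopic to an \emph{untangled} sock consisting solely of ``boxed jewels'' lined up on the $x$-axis. The reduction is by a minimal-area argument: at the bottom-right corner one walks northwest along a diagonal until the pattern breaks, and either finds a single flip move reducing the area or extracts a boxed jewel. Two untangled socks with the same polynomial are then easily seen to be flip-homotopic. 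Your idea of ``migrating cycles into the buffer'' is in the right spirit, but the diagonal/extraction mechanism is the missing ingredient.

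For~(iii) the paper does not directly cancel a degree-$k$ cycle against a degree~$-k$ cycle as you propose. Instead it shows two moves available in a four-floor box: a boxed jewel of degree $n$ can be decomposed into $n$ boxed jewels of degree~$1$ (by transporting the innermost cycle down to the bottom two floors and back up elsewhere), and two boxed jewels of degrees $1$ and $-1$ with the same colour cancel. Combined with~(ii), this reduces any tiling to a canonical form depending only on $P_t'(1)$.
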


\begin{theo}
\label{theo:tritProperties}
The trit has the following properties in two-story regions:
\begin{enumerate}[label=(\roman*)]
	\item \label{item:posTrit} If $t_0$ and $t_1$ are two tilings of a two-story region and $t_1$ can be reached from $t_0$ after a single positive trit, then $P_{t_1}(q) - P_{t_0}(q) = q^k(q-1)$ for some $k \in \ZZ$; as a consequence, $P_{t_1}'(1) - P_{t_0}'(1) = 1$.
	\item \label{item:connectivityFlipsTrits} The space of domino brick tilings of a duplex region is connected by flips and trits. In other words, if $t_0$ is any tiling of such a region and $t_1$ is any other tiling of the same region, then there exists a sequence of flips and trits that take $t_0$ to $t_1$.
\end{enumerate}
\end{theo}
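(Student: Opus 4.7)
\textbf{Plan for part \ref{item:posTrit}.} This is a local computation. A positive trit occupies a $2\times 2\times 2$ sub-cube with two opposite unit cells removed; since a duplex region has height $2$, the sub-cube necessarily spans both floors and sits over a $2\times 2$ horizontal window. Among the three pairwise orthogonal dominoes involved, exactly one is vertical (parallel to $\ez$) and the remaining two are horizontal, one on each floor; I would enumerate the ``before'' and ``after'' configurations consistent with the positivity convention of Chapter \ref{chap:notation}. The definitions in Sections \ref{sec:twoIdenticalFloors} and \ref{sec:generalTwoStory} express $P_t(q)$ as a $q$-weighted sum over the components of the planar structure obtained from $t$ at the interface between the two floors, where the weight exponent records a (locally defined) height/winding contribution. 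Comparing the two configurations cell by cell, all components not meeting the $2\times 2$ window are unaffected, and the single component meeting it has its weight shifted from $q^k$ to $q^{k+1}$. The net change is $P_{t_1}(q) - P_{t_0}(q) = q^{k+1}-q^k = q^k(q-1)$, and differentiating and evaluating at $q=1$ yields $P_{t_1}'(1) - P_{t_0}'(1) = 1$.

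\textbf{Plan for part \ref{item:connectivityFlipsTrits}.} I would prove connectivity in two stages, using a ``plug'' decomposition of tilings. For each tiling $t$ of the duplex region $\cR = \cD \times [0,2]$, let $V_t \subset \cD$ denote the set of unit squares through which a vertical domino of $t$ passes. A tiling is then equivalent to the triple $(V_t, t^{\text{bot}}, t^{\text{top}})$, where $t^{\text{bot}}$ and $t^{\text{top}}$ are ordinary planar domino tilings of $\cD \setminus V_t$.

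The first stage handles the case $V_{t_0} = V_{t_1}$. With the plug set fixed, $t_0$ and $t_1$ correspond to two planar domino tilings of the (simply connected, but possibly holey) region $\cD \setminus V_{t_0}$ on each floor. Invoking the classical two-dimensional flip-connectivity results for planar domino tilings (see \cite{saldanhatomei1995spaces,saldanhatomei1998overview}), each floor may be converted independently by planar flips; each such planar flip lifts verbatim to a flip in $\cR$, so $t_0$ and $t_1$ are already flip-connected.

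The second stage reduces the general case to the first by showing that any two plug sets can be interconverted by flips and trits. A flip either preserves $V_t$ (when it is purely horizontal on one floor) or exchanges two vertical-domino plugs for a pair of horizontal dominoes and vice versa; a trit, by contrast, moves a single plug diagonally across a $2\times 2$ window, while modifying a single horizontal domino on each floor. I would argue by induction on a suitable distance measure between $V_{t_0}$ and $V_{t_1}$ (for instance, a weighted symmetric difference) that a sequence of flips and trits always exists: at each step, one locates a plug of $t_0$ that must move, uses planar flips on the two floors to bring the surrounding $2\times 2\times 2$ neighborhood into a trit-compatible configuration, and then applies a single trit to shift the plug one step toward its target position in $V_{t_1}$.

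The main obstacle will be precisely this preparatory-flip step: given a plug whose surroundings are not yet of the form required by the trit, one must show that enough planar flexibility exists on each floor to rearrange the dominoes around the plug into the needed $2\times 2\times 2$ pattern without disturbing other plugs. This will rely on the simple connectivity of $\cD$ and on a careful local analysis similar in spirit to the one underlying part \ref{item:posTrit}, and it may require first merging the chosen plug with a nearby ``free'' region of $\cD$ via a short sequence of plug-preserving flips before the trit can be executed.
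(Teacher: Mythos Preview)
For part \ref{item:posTrit} your plan lands in the right place but mischaracterizes $P_t$. The polynomial is not a sum over ``components'' of the planar picture; it is a sum over \emph{jewels} $j$ (projections of vertical dominoes), each contributing $\ccol(j)\,q^{k_t(j)}$ where $k_t(j)$ is the total winding number of the cycles around $j$. The paper's argument (Section~\ref{sec:effectOfTritsOnPt}) is then exactly that a trit moves a single jewel across a single cycle, so only that jewel's exponent changes, by $+1$ for a positive trit on a black jewel and by $-1$ for a positive trit on a white jewel; in either case the difference is $q^k(q-1)$. Your ``shift from $q^k$ to $q^{k+1}$'' misses the white-jewel case and attributes the change to the wrong object.

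For part \ref{item:connectivityFlipsTrits} your Stage~1 is actually false, and this breaks the whole plug-induction scheme. Once you delete the plug set, $\cD\setminus V_t$ is typically \emph{not} simply connected, and planar tilings of a region with holes are not flip-connected in general: the $3\times 3\times 2$ box already furnishes a counterexample (Example~\ref{example:332box}). The two tilings with twist $\pm 1$ there have the \emph{same} plug set $V_t$ (the single central vertical domino), yet they are each alone in their flip component; indeed their $P_t$'s differ, so no sequence of flips can join them. So ``same $V_t$ $\Rightarrow$ flip-connected'' fails even in the smallest nontrivial box, and Stage~2 cannot save you because Stage~1 was supposed to be the base case.

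The paper avoids this entirely: it passes to the sock picture, introduces a \emph{trit move} on socks (Figure~\ref{fig:tritstep}), and then shows by a minimal-area argument (Lemma~\ref{lemma:flipAndTritHomotopy}) that every sock is flip-and-trit homotopic to the \emph{empty} sock. One finds an innermost cycle, takes its rightmost-bottommost vertex $v$, looks at $w=v-(1,1)$, and checks that either a flip move or a trit move strictly decreases total area. Reducing everything to the empty sock sidesteps any need to compare two tilings with a fixed plug set.
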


%
However, the construction for $P_t(q)$ no longer makes sense for more general regions.
Chapter \ref{chap:multiplex} discusses an invariant, the twist, defined for cylinders. 
The definitions of twist are somewhat technical and involve a relatively lengthy discussion. We shall give two different but equivalent definitions: the first one, given in Section \ref{sec:combTwistBoxes}, is a sum over pairs of dominoes.  At first sight, this formula gives a number in $\frac{1}{4} \ZZ$ and depends on a choice of axis. However, it turns out that, for cylinders, this number is an integer, and different choices of axis yield the same result. The proof of this claim will be completed in Section \ref{sec:differentDirections}, and it relies on the second definition, which uses the concepts of writhe and linking number from knot theory (see, e.g., \cite{knotbook}). In particular, we prove the following:

\begin{theo}
\label{theo:main}
Let $\cR$ be a cylinder, and $t$ a tiling of $\cR$. The twist $\Tw(t)$ is an integer with the following properties:
\begin{enumerate}[label=\upshape(\roman*)]
	\item \label{item:flipInvariance} If a tiling $t_1$ is reached from $t_0$ after a flip, then $\Tw(t_1) = \Tw(t_0)$.
	\item \label{item:tritDifference} If a tiling $t_1$ is reached from $t_0$ after a single positive trit, then $\Tw(t_1) - \Tw(t_0) = 1$.
	\item \label{item:duplexes} If $\cR$ is a duplex region, then $\Tw(t) = P_t'(1)$ for any tiling $t$ of $\cR$.   
	\item \label{item:multiplexUnion} Suppose a cylinder $\cR = \bigcup_{1 \leq i \leq m} \cR_i$, where each $\cR_i$ is a cylinder (they need not have the same axis) and such that $i \neq j \Rightarrow \interior(\cR_i) \cap \interior(\cR_j) \neq \emptyset$. Then there exists a constant $K \in \ZZ$ such that, for any family $(t_i)_{1 \leq i \leq n}$, $t_i$ a tiling of $\cR_i$,
	$$\Tw\left(\bigsqcup_{1 \leq i \leq m} t_{i}\right) = K + \sum_{1 \leq i \leq m} \Tw(t_{i}).$$
	\end{enumerate}
	\end{theo}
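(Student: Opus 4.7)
The plan is to handle each of the four properties using whichever of the two equivalent definitions of twist is most convenient, while taking the integrality and axis-independence of $\Tw$ as a separate knot-theoretic fact (established via the writhe interpretation in Section \ref{sec:differentDirections}). The guiding idea is that flips and trits are sufficiently local that their effect on $\Tw(t)$ can be computed within the $2\times 2 \times 1$ slab or $2\times 2\times 2$ cube where they occur, so properties \ref{item:flipInvariance} and \ref{item:tritDifference} reduce to finite case analyses.

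For \ref{item:flipInvariance}, I would use the combinatorial formula from Section \ref{sec:combTwistBoxes}. A flip replaces two parallel dominoes by two other parallel dominoes occupying exactly the same $2\times 2\times 1$ slab. The change $\Tw(t_1)-\Tw(t_0)$ is a sum of contributions from pairs $(d,d')$ where at least one of $d,d'$ is affected. Pairs outside the slab contribute the same because the slab as a whole is unchanged (only the internal cut differs), so their effect depends on the slab's outline and not on the internal pair of dominoes; checking this for each of the three possible flip orientations reduces it to a short symbolic computation. For \ref{item:tritDifference}, the same local strategy applies, but now the writhe interpretation is more illuminating: a positive trit corresponds to changing a single crossing in the induced link diagram in a way that increases writhe by exactly $1$, so $\Tw(t_1)-\Tw(t_0)=1$.

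For \ref{item:duplexes}, the plan is to combine \ref{item:flipInvariance}, \ref{item:tritDifference} with the corresponding properties of $P_t'(1)$ already established in Theorem \ref{theo:tritProperties}: both $\Tw$ and $P_t'(1)$ are flip-invariant and increase by $1$ under a positive trit. By Theorem \ref{theo:tritProperties}\ref{item:connectivityFlipsTrits}, the space of tilings of a duplex region is connected by flips and trits. It therefore suffices to verify $\Tw(t_0)=P_{t_0}'(1)$ for a single, canonically chosen tiling $t_0$ per duplex region; I would pick a tiling composed entirely of vertical dominoes on a suitable sub-brick (or build one by a gluing argument) where both quantities vanish by direct inspection.

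For \ref{item:multiplexUnion}, the combinatorial definition exhibits $\Tw$ as a sum over unordered pairs of dominoes. Decomposing the tiling $t=\bigsqcup t_i$ according to $\cR=\bigcup \cR_i$, the sum splits as
\[
\Tw(t) = \sum_{i} \Tw(t_i) + \sum_{i<j} C_{ij}(t_i,t_j),
\]
where $C_{ij}$ collects contributions from pairs with one domino in $\cR_i$ and the other in $\cR_j$. Using the linking-number interpretation, the arcs associated with $t_i$ have endpoints on $\partial \cR_i$ determined only by the cylinder structure of $\cR_i$, not by the particular tiling; hence the contribution of $(t_i,t_j)$ to the mutual linking depends only on how the cylinders $\cR_i,\cR_j$ are placed in space. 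Consequently each $C_{ij}$ is a constant, and setting $K=\sum_{i<j}C_{ij}$ gives the desired identity. The main obstacle in the whole proof will be establishing integrality and axis-independence via the knot-theoretic setup, since the naive combinatorial formula only yields a quarter-integer; once the capping procedure at the cylinder boundary is set up correctly, each of \ref{item:flipInvariance}--\ref{item:multiplexUnion} becomes either a local verification or a consequence of the bilinear structure of the definition.
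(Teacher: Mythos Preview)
Your approach to \ref{item:flipInvariance} and \ref{item:tritDifference} has a genuine gap: the locality heuristic ``pairs outside the slab contribute the same because only the internal cut differs'' is false. In the paper's case analysis (Proposition~\ref{prop:flipsAndTrits}, Case~\ref{case:d1notparz}), when the $2\times 2\times 1$ slab is perpendicular to the projection direction $\vu$, a domino $d$ that straddles the boundary of the $\vu$-shade $\cA^{\vu}$ of the slab picks up a net contribution $E^{-\vu}(d) = \pm\tfrac14$ that \emph{does} depend on whether the slab is cut as $(d_0,\tilde d_0)$ or as $(d_1,\tilde d_1)$. These boundary contributions do not cancel pairwise; they cancel only after summing over all such $d$, and that sum is $\sum_{C\subset\cA^{\vu}}\ccol(C)$, which vanishes precisely because cylinders are \emph{fully balanced} (Lemma~\ref{lemma:fullyBalancedMultiplex}). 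The same mechanism gives the $+1$ in the trit case: there the analogous color sum over $\cA^{\vu}$ equals $1$ rather than $0$, producing exactly the four quarter-contributions that add to $1$. Your ``single crossing change'' picture for the trit is suggestive but does not replace this computation, since the writhe formula in Section~\ref{sec:writheInterpretation} compares $t$ to $\tbase$ rather than comparing $t_0$ to $t_1$ directly.

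For \ref{item:multiplexUnion} your decomposition into diagonal terms plus cross terms $C_{ij}$ is the right shape, but the justification that $C_{ij}$ is constant is wrong: there are no ``arcs with endpoints on $\partial\cR_i$''; the relevant curves (those of $\Gamma^*$) are closed and depend on the tiling. The paper's argument instead shows, via Lemma~\ref{lemma:secondOrderTwistDiff}, that the second difference $T^{\vu}(t_{00})-T^{\vu}(t_{01})-T^{\vu}(t_{10})+T^{\vu}(t_{11})$ equals a sum of linking numbers between curves in $\Gamma^*(t_{\cR_0,0},t_{\cR_0,1})$ and curves in $\Gamma^*(t_{\cR_1,0},t_{\cR_1,1})$; this vanishes because each $\cR_i$ is simply connected, so its curves are null-homotopic in $\RR^3\setminus\cR_j$. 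The rest of the proof then proceeds by embedding in a large box and telescoping (Lemmas~\ref{lemma:trivialTilingBoxes}, \ref{lemma:trivialTilingAllMultiplexes} and Corollary~\ref{coro:embeddableRegions}), which also handles the subtlety that $T^{\vu}$ restricted to $t_i$ must be identified with $\Tw(t_i)$. Your plan for \ref{item:duplexes} is essentially the paper's: connectivity by flips and trits (Theorem~\ref{theo:tritProperties}) reduces to checking one tiling, and the all-vertical tiling $\tbasew$ works.
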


Finally, Chapter \ref{chap:possiblevals} discusses the possible range values for the twist of tilings of a given region. The main result of the chapter is the following theorem:
	
	\begin{theo}
	\label{theo:possible}
	For a box $\cB$, set $\Tw(\cB) = \{\Tw(t) |\allowbreak t \text{ tiling of } \cB\}$, and let $\maxtw(L,M,N) = \max \Tw([0,L] \times [0,M] \times [0,N])$. If $L,M,N \geq 2$, with at least two of the three dimensions strictly larger than $2$ (and at least one of them even), then
$$
C_0 LMN\min(L,M,N) \leq \maxtw(L,M,N) \leq  C_1 LMN\min(L,M,N),
$$
where $C_0 = 1/162$ and $C_1 = 1/16$.
Moreover, 
$$\limsup_{L,M,N \to \infty} \frac{\maxtw(L,M,N)}{LMN\min(L,M,N)} = \frac{1}{16} \text { but } \liminf_{L,M,N \to \infty} \frac{\maxtw(L,M,N)}{LMN\min(L,M,N)} \leq \frac{1}{24}.$$	
 \end{theo}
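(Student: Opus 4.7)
The plan is to treat the upper bound and the lower bound separately, then refine each into the matching asymptotic statements.

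For the upper bound $\maxtw(L,M,N) \leq \tfrac{1}{16}LMN\min(L,M,N)$, I would start from the combinatorial formula for the twist developed in Section \ref{sec:combTwistBoxes}, which expresses $\Tw(t)$ as a sum over unordered pairs of dominoes relative to a chosen axis $\vu \in \{\ex,\ey,\ez\}$. Fix $\vu = \ez$ so that $\min(L,M,N)$ will control the contribution per $\ez$-column. Each nonzero contribution to the pairwise sum comes from pairs whose projections along $\ez$ overlap in a controlled way; in particular only $\ez$-parallel dominoes and their "shadow partners" contribute, and for each such domino there are $O(\min(L,M))$ partners with nonzero contribution. Carefully bookkeeping the $\pm\tfrac{1}{4}$-weights in the sum, together with the freedom to choose $\vu$ as the axis in the direction of smallest side, yields the constant $\tfrac{1}{16}$. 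The main work here is to verify the multiplicative constant and make sure no double counting occurs.

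For the lower bound $\maxtw(L,M,N) \geq \tfrac{1}{162}LMN\min(L,M,N)$, I would exhibit an explicit family of tilings built from a basic "staircase" block. The idea is: construct a two-story tiling of a region $[0,L]\times[0,M]\times[0,2]$ whose polynomial invariant $P_t(q)$ (from Chapter \ref{chap:twofloors}) has derivative at $1$ of order $LM\min(L,M)$; by Theorem \ref{theo:main}\ref{item:duplexes}, this is already a twist of that order. Then stack $\lfloor N/2\rfloor$ such duplex tilings, using Theorem \ref{theo:main}\ref{item:multiplexUnion} to ensure the twists add (up to a bounded constant $K$), producing a box tiling of twist of order $LMN\min(L,M,N)$. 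The constant $\tfrac{1}{162}$ arises from the loss in the staircase block (which only fills a constant fraction of the box efficiently) and from the parity adjustments required so that the construction works whenever $L,M,N \geq 2$ with at least one even.

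For the asymptotic $\limsup = \tfrac{1}{16}$, I would refine the stacking construction so that, along a subsequence of triples $(L,M,N) \to \infty$ with suitable divisibility properties, the staircase blocks almost tile the box and nearly saturate the per-pair upper bound. The $\liminf \leq \tfrac{1}{24}$ claim calls for identifying a subsequence of triples (most naturally cubes $L=M=N$) on which a strictly better upper bound holds. Here the symmetry of the cube allows the twist to be computed with respect to any of the three axes, and averaging the three combinatorial formulas gives a sharper bound; a quantitative version of this symmetry argument yields the constant $\tfrac{1}{24}$.

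The main obstacle I expect is the matching construction for $\limsup = \tfrac{1}{16}$: arranging the tiling so that asymptotically every pair of dominoes in the sum contributes with the correct sign and maximal magnitude is a delicate combinatorial design problem, and likely requires a family of configurations whose analysis is done column by column with explicit cancellation of boundary terms. The upper bounds and the generic lower bound $\tfrac{1}{162}$ should be more routine once the combinatorial and the knot-theoretic definitions of twist are in hand.
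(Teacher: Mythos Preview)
Your lower-bound plan has a genuine gap. You propose to build a duplex tiling of $[0,L]\times[0,M]\times[0,2]$ with $P_t'(1)$ of order $LM\min(L,M)$ and then stack $\lfloor N/2\rfloor$ copies. But $P_t'(1)=\Tw(t)$ for a duplex, and the very upper bound you sketched (applied with $N=2$) already gives $\maxtw(L,M,2)\leq LM/4$; a duplex simply cannot carry twist of order $LM\min(L,M)$. Stacking such slices and invoking additivity therefore produces at most order $LMN$, a factor of $\min(L,M,N)$ short. That missing factor comes from genuinely three-dimensional linking between pieces of the tiling living in different slices, which a stack of independent duplexes cannot produce. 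The paper obtains it via a \emph{solenoid}: two ``cable loops'', each a bundle of parallel wire loops, interlocked so that every wire loop in one cable links every wire loop in the other. With parameters $b,c\sim N/4$, $a\sim L$, $d\sim M$ this gives twist $\lceil abcd/2\rceil$ of the correct order, and $1/162$ falls out of optimising integer parameters (with a separate argument via $3\times 3\times 2$ blocks for small $N$).

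Two smaller points. In your upper-bound sketch the contributing dominoes are those \emph{perpendicular} to $\ez$, not parallel to it; the clean route to $1/16$ is to group by column $(x,y)$, bound the effects there by $\tfrac{1}{4}\,n_{\ex}(x,y)\,n_{\ey}(x,y)$ with $n_{\ex}+n_{\ey}\leq N$, and sum over the $LM$ columns. Your idea for $\liminf\leq 1/24$ is correct in spirit: the paper sharpens the upper bound to $\tfrac{LMN}{4}\max_{\alpha+\beta+\gamma=1}\min\bigl(\tfrac{N\alpha\beta}{\alpha+\beta},\tfrac{M\alpha\gamma}{\alpha+\gamma},\tfrac{L\beta\gamma}{\beta+\gamma}\bigr)$, and at $L=M=N$ this optimisation yields exactly $N^4/24$. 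The $\limsup=1/16$ also needs a construction that is not a stack of duplexes; the paper uses long paths weaving between the top and bottom halves of the box so that effects accumulate over $K\times K$ crossing zones with $K\to\infty$.
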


\chapter{Definitions and Notation}
\label{chap:notation}

This short chapter contains general notations and conventions that are used throughout the thesis, although we might postpone definitions that involve a lengthy discussion or are intrinsic of a given chapter or section.  

If $n$ is an integer, $\plshalf{n}$ will denote $n + \half$ (in music theory, $D\sharp$ is a half tone higher than $D$ in pitch). We also define $\plshalf{\ZZ}$ to be the set $\{\plshalf{n} | n \in \ZZ\}$.

Given $\tv_1,\tv_2,\tv_3 \in \RR^3$, $\det(\tv_1,\tv_2,\tv_3) = \tv_1 \cdot (\tv_2 \times \tv_3)$ denotes the determinant of the $3 \times 3 \times 3$ matrix whose $i$-th line is $\tv_i$, $i=1,2,3$. If $\beta = (\vbeta_1, \vbeta_2, \vbeta_3)$ is a basis, write $\det(\beta) = \det(\vbeta_1, \vbeta_2, \vbeta_3)$.

We denote the three canonical basis vectors as $\ex = (1,0,0)$, $\ey = (0,1,0)$ and $\ez = (0,0,1).$ 
We denote by $\Delta = \{\ex, \ey, \ez\}$ the set of canonical basis vectors, and $\Phi = \{\pm \ex, \pm \ey, \pm \ez\}$. 
Let $\bB = \{\beta = (\vbeta_1,\vbeta_2,\vbeta_3) | \vbeta_i \in \Phi, \det(\beta) = 1\}$ be the set of positively oriented bases with vectors in $\Phi$. 

A \emph{basic cube} is a closed unit cube  in $\RR^3$ whose vertices lie in $\ZZ^3$. For $(x,y,z) \in \ZZ^3$, the notation $C\left(\plshalf{x} , \plshalf{y}, \plshalf{z}\right)$ denotes the basic cube $(x,y,z) + [0,1]^3$, i.e., the closed unit cube whose center is $\left(\plshalf{x},  \plshalf{y}, \plshalf{z}\right)$; it is \emph{white} (resp. \emph{black}) if $x + y + z$ is even (resp. odd). If $C = C\left(\plshalf{x} , \plshalf{y}, \plshalf{z}\right)$, define $\ccol(C) = (-1)^{x+y+z+1}$, or, in other words, $1$ if $C$ is black and $-1$ if $C$ is white. A \emph{region} is a finite union of basic cubes. A \emph{domino brick} or \emph{domino} is the union of two basic cubes that share a face. A \emph{tiling} of a region is a covering of this region by dominoes with pairwise disjoint interiors.


We sometimes need to refer to planar objects. Let $\pi$ denote either $\RR^2$ or a \emph{basic plane} contained in $\RR^3$, i.e., a plane with equation $x = k$, $y = k$ or $z = k$ for some $k \in \ZZ$. 
A \emph{basic square in} $\pi$ is a unit square $Q \subset \pi$ with vertices in $\ZZ^2$ (if $\pi = \RR^2$) or $\ZZ^3$.
 A \emph{planar region} $D \subset \pi$ is a finite union of basic squares. 

A region $\cR$ is a \emph{cubiculated cylinder} or \emph{multiplex region} if there exist a basic plane $\pi$ with normal vector $\tv \in \Delta$,
a simply connected planar region $\cD \subset \pi$ with connected interior 
and a positive integer $N$ 
such that 
$$\cR = \cD + [0,N]\tv = \{p + s\tv | p \in \cD, s \in [0,N]\};$$ 
we usually call $\cR$ a \emph{cylinder} or \emph{multiplex} for brevity. The cylinder $\cR$ above has \emph{base} $\cD$, \emph{axis} $\tv$ and \emph{depth} $N$. 
For instance, a cylinder with axis $\ez$ and depth $N$ can be written as $\cD \times [k,k+N]$, where $\cD \subset \RR^2$. 
A $\tv$-cylinder means a cylinder with axis $\tv$. A \emph{duplex region} or \emph{duplex} is a cylinder with depth $2$ (and hence the alternative name of multiplex for cylinders).

We sometimes want to point out that the hypothesis of simple connectivity (of a cylinder) is not being used: therefore, a \emph{pseudocylinder} (or \emph{pseudomultiplex}) with base $\cD$, axis $\tv$ and depth $N$ has the same definition as above, except that the planar region $\cD \subset \pi$ is only assumed to have connected interior (and is not necessarily simply connected).

A \emph{box} is a region of the form $\cB = [L_0, L_1] \times [M_0, M_1] \times [N_0, N_1]$, where $L_i, M_i, N_i \in \ZZ$. Boxes are special cylinders, in the sense that we can take any vector $\tv \in \Delta$ as the axis. In fact, boxes are the only regions that satisfy the definition of cylinder for more than one axis.

Regarding notation, Figures \ref{fig:notation2Dexample}, \ref{fig:flipExample} and \ref{fig:negtrit_example} were drawn with $\beta = (\ex,\ey,\ez)$ in mind. However, any $\beta \in \bB$ allows for such representations, as follows: we draw $\vbeta_3$ as perpendicular to the paper (pointing towards the paper). If $\pi = \vbeta_3^{\perp}$, we then draw each floor $\cR \cap (\pi + [n,n+1]\vbeta_3)$ as if it were a plane region. Floors are drawn from left to right, in increasing order of $n$.

The \emph{flip connected component} of a tiling $t$ of a region $\cR$ is the set of all tilings of $\cR$ that can be reached from $t$ after a sequence of flips. 



%
%

Suppose $t$ is a tiling of a region $\cR$, and let $\cB = [l,l+2] \times [m,m+2] \times [n,n+2]$, with $l,m,n \in \NN$. Suppose $\cB \cap \cR$ contains exactly three dominoes of $t$, no two of them parallel: notice that this intersection can contain six, seven or eight basic cubes of $\cR$. 
Also, a rotation (it can even be a rotation, say, in the $XY$ plane), can take us either to the left drawing or to the right drawing in Figure \ref{fig:postrit}.

\begin{figure}[ht]
\centering
\includegraphics[width=0.6\columnwidth]{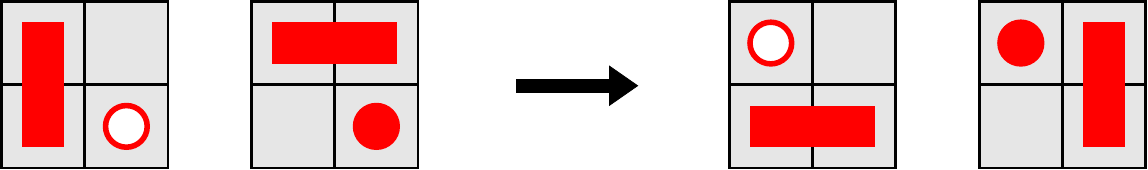}%
\caption{The anatomy of a positive trit (from left to right). The trit that takes the right drawing to the left one is a negative trit. The squares with no dominoes represent basic cubes that may or may not be in $\cR$ (see Figure \ref{fig:negtrit_example} for an example).}%
\label{fig:postrit}%
\end{figure}

If we remove the three dominoes of $t$ contained in $\cB \cap \cR$, there is only one other possible way we can place them back. This defines a move that takes $t$ to a different tiling $t'$ by only changing dominoes in $\cB \cap \cR$: this move is called a \emph{trit}. If the dominoes of $t$ contained in $\cB \cap \cR$ form a plane rotation of the left drawing in Figure \ref{fig:postrit}, then the trit is \emph{positive}; otherwise, it's \emph{negative}. Notice that the sign of the trit is unaffected by translations (colors of cubes don't matter) and rotations in $\RR^3$ (provided that these transformations take $\ZZ^3$ to $\ZZ^3$). A reflection, on the other hand, switches the sign (the drawing on the right can be obtained from the one on the left by a suitable reflection).

\chapter{The two-floored case}
\label{chap:twofloors}

A \emph{two-story region} is a connected and simply connected region $\cR \subset \pi + [0,2]\vu$, where $\pi$ is a basic plane with normal $\vu \in \Delta$, such that both sets $\cR \cap (\pi + [0,1] \vu)$ and $\cR \cap (\pi + [1,2] \vu)$ are simply connected.
We often think of these two sets as the two ``floors'' of $\cR$.
In this chapter (unlike the next), rotations will not play an important role, so we shall assume that $\pi = \RR^2 \times \{0\}$ and $\vu = \ez$. Likewise, duplex regions will be assumed to have axis $\ez$, and will often be denoted $\cD \times [0,2]$ where $\cD \subset \RR^2$ has connected interior and is simply connected. 

Most of the material in this chapter is also covered in \cite{primeiroartigo}.
 
\section{Duplex regions}
\label{sec:twoIdenticalFloors}
Let $\cD \subset \RR^2$ be a quadriculated simply connected plane region, and let $\cR = \cD \times [0,2] \subset \RR^3$ be a duplex region. Our goal throughout this section is to associate to each tiling $t$ of $\cR$ a polynomial $P_t \in \ZZ[q,q^{-1}]$ which always coincides for tilings in the same flip connected component.


Consider the two floors of a tiling $t$ of $\cR$, each dimer of $t$ being oriented from the white cube that it contains to the black one, as illustrated in the left of Figure \ref{fig:tiling742_invariant}. If we project the dimers of $t$ to the plane $z = 0$,    
we will see two plane tilings of $\cD$ with \emph{jewels} (which occupy exactly one square), which are the projections of dimers parallel to $\ez$, as shown in Figure \ref{fig:tiling742_invariant}. 
 A white (resp. black) jewel is a jewel that happens in a white (resp. black) square; we write $\ccol(j) = 1$ if $j$ is black, and $-1$ if it is white.

\begin{figure}[ht]
\centering
\includegraphics[width=0.75\columnwidth]{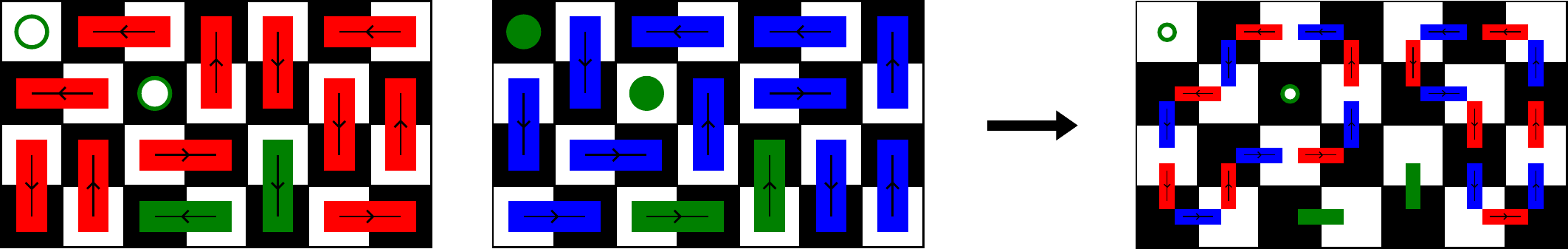}%
\caption{A tiling of a $7 \times 4 \times 2$ box and its associated drawing. The associated drawing has two jewels and four cycles, two of which are trivial ones. The jewels have opposite color, and both cycles spin counterclockwise. }%
\label{fig:tiling742_invariant}%
\end{figure} 

Thus, the resulting drawing can be seen as a set of disjoint plane cycles (projections of dimers that are not parallel to $\ez$ maintain their orientations) together with jewels. A cycle is called \textit{trivial} if it has length $2$. In order to construct $P_t$, consider a jewel $j$, and let $k_t(j) = \sum_{\gamma} \wind(\gamma,j)$, where the sum is taken over all the cycles $\gamma$ in the associated drawing, and $\wind(\gamma,j)$ denotes the winding number of $\gamma$ around $j$. Then 
$$P_t(q) = \sum_{j}\ccol(j)q^{k_t(j)}.\label{def:ptduplex}$$
For instance, for the tiling $t$ in Figure \ref{fig:tiling742_invariant}, $P_t(q) = q - 1$. For now, define the twist of a tiling to be $\Tw(t) = P_t'(1).$ (see Chapter \ref{chap:multiplex} for a general definition).

We now show that $P_t$ is, in fact, flip invariant.

\begin{prop}
\label{prop:twoFloorFlipInvariant}
Let $\cR$ be a duplex region, and let $t_0$ be a tiling of $\cR$. If $t_1$ is obtained from $t_0$ by performing a single flip on $t_0$, then $P_{t_1} = P_{t_0}$. 
\end{prop}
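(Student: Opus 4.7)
My plan is a case analysis on the orientation of the $2\times 2\times 1$ slab in which the flip takes place. There are three possibilities for the thin direction of the slab: $\ez$ (a flip inside a single floor) versus $\ex$ or $\ey$ (symmetric to each other), so effectively two cases to handle.

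Suppose first that the thin direction is $\ez$, so both dominoes of the flip are horizontal and lie in the same floor. No $\ez$-parallel domino can occupy the $2\times 2$ footprint of the flip (in either floor), since such a domino would share a cube with one of the horizontal dominoes being flipped; hence this footprint contains no jewels, and the flip creates none. So the set of jewels and their colors are unchanged, and the only modification in the associated drawing is confined to the $2\times 2$ block. For any jewel $j$, whose center lies outside the block, I will interpret $k_t(j)=\sum_\gamma \wind(\gamma,j)$ as the signed intersection number of $\bigcup_\gamma \gamma$ with a generic ray from the center of $j$ to infinity, and choose this ray so that it avoids the block entirely. Every signed intersection then occurs in a region of the drawing untouched by the flip, so $k_t(j)$ is preserved, and with it $P_t$.

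Suppose now that the thin direction is $\ex$ (the case $\ey$ being symmetric). The slab straddles both floors, and the two dominoes of the flip are either both horizontal (one in each floor, sharing the same XY-projection on a pair $A,B$ of adjacent squares of $\cD$) or both vertical (becoming jewels at $A$ and $B$). The flip toggles between these configurations; by symmetry it suffices to treat the horizontal-to-vertical direction. In the initial tiling the two horizontal dominoes project to a trivial $2$-cycle between $A$ and $B$; after the flip, this cycle is gone and two new jewels $j_1$ at $A$ and $j_2$ at $B$ have appeared, with nothing else changed elsewhere. Since a trivial $2$-cycle has winding number zero around every point, its removal leaves $k_t$ unchanged for every jewel other than $j_1$ and $j_2$. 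It therefore suffices to show that the contribution $\ccol(j_1) q^{k_t(j_1)}+\ccol(j_2) q^{k_t(j_2)}$ of the two new jewels vanishes.

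Two observations finish the argument. First, $A$ and $B$ are adjacent squares of $\cD$, hence of opposite colors, giving $\ccol(j_1)=-\ccol(j_2)$. Second, I claim $k_t(j_1)=k_t(j_2)$: in the new tiling $A$ and $B$ are jewels, so no cycle of the associated drawing has a vertex at either of them, and in particular the edge between $A$ and $B$ is used by no cycle. Consequently, the straight segment joining the center of $j_1$ to the center of $j_2$ lies in the complement of every cycle, placing both jewel centers in the same connected component of that complement and forcing $\wind(\gamma,j_1)=\wind(\gamma,j_2)$ for each cycle $\gamma$; summing gives $k_t(j_1)=k_t(j_2)$. Combined with the opposite colors, the two new terms cancel, establishing $P_{t_1}=P_{t_0}$. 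I expect this last winding-number equality, obtained by a connectivity argument in the cycle-free complement, to be the only genuinely non-obvious step; the rest is bookkeeping of which cycles and jewels appear in each configuration.
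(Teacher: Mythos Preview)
Your proof is correct. Your second case (the vertical slab creating two jewels) is essentially the paper's Case~1, with a slightly more explicit justification that the two new jewels have equal $k_t$ via the connectivity of the segment $j_1j_2$ in the complement of all cycles; the paper simply asserts they are ``enclosed by exactly the same cycles.''

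The genuine difference is in your first case (the flip contained in one floor). The paper analyzes how the cycle decomposition changes under such a flip, splitting into two sub-cases (two side-by-side cycles of the same orientation merging into one, versus two nested cycles of opposite orientation merging), and then checks in each sub-case that every jewel's total winding number is preserved. You bypass this structural analysis entirely: you observe that $k_t(j)$ is the signed intersection number of a generic ray from $j$ with the full oriented $1$-chain $\bigcup_\gamma \gamma$, and since the flip alters that $1$-chain only inside the $2\times 2$ block (which contains no jewel), a ray avoiding the block sees no change. This is shorter and avoids the case split; the paper's version, on the other hand, makes the cycle-merging picture explicit, which is useful later when the same case analysis reappears in the sock language.
\qed
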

\begin{proof}
Let us consider the tiling $t_0$ and its associated drawing as a plane tiling with jewels; we want to see how this drawing is altered by a single flip. We will split the proof into cases, and the reader may find it easier to follow by looking at Figure \ref{fig:flipCasesTwoFloors}.

\begin{figure}[ht]
\centering
\subfloat[Case \ref{case:flipzdimer}: $P_t(q) = 1 - q^{-2}$ in both tilings]{\includegraphics[scale=0.4]{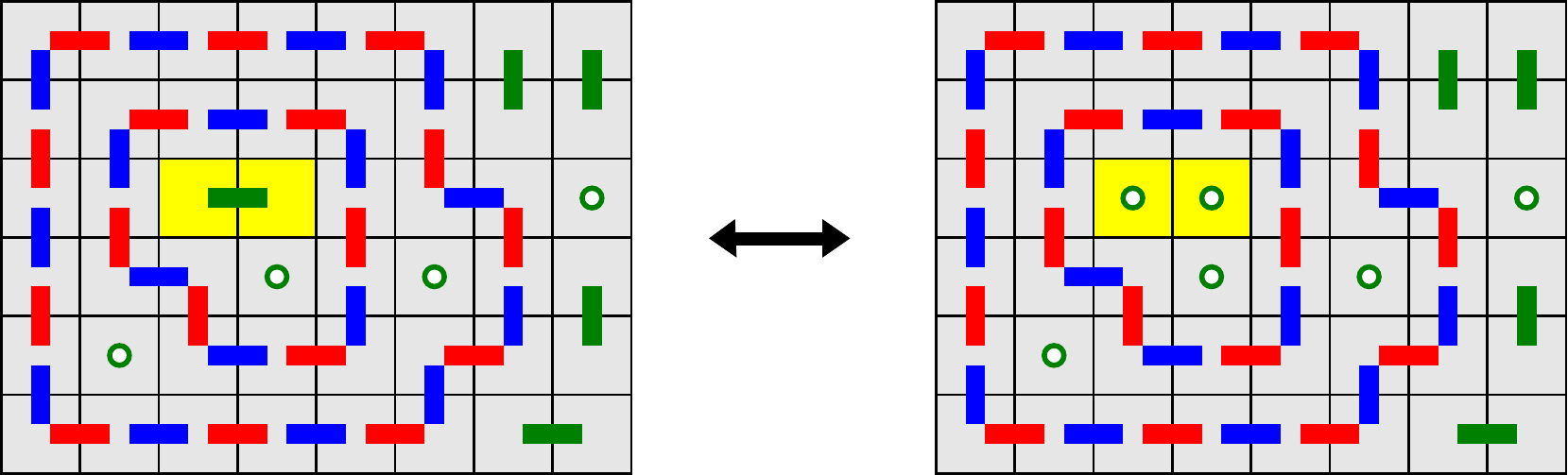}\label{fig:flipcase1}} \\
\subfloat[Case \ref{enum:sameorient}: $P_t(q) = q - 1$ in both tilings. Both cycles must have the same orientation to allow a flip.]{\includegraphics[scale=0.4]{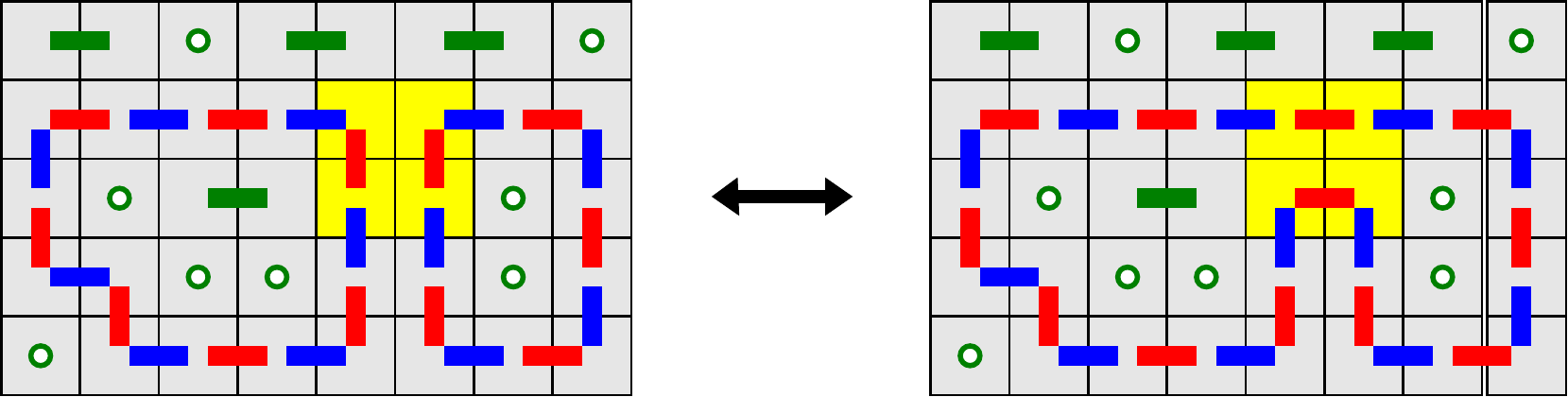}\label{fig:flipcase2_1}} \\
\subfloat[Case \ref{enum:differentorient}: $P_t(q) = q - 1$ in both tilings. Notice that the two nested cycles must have opposite orientation.]{\includegraphics[scale=0.4]{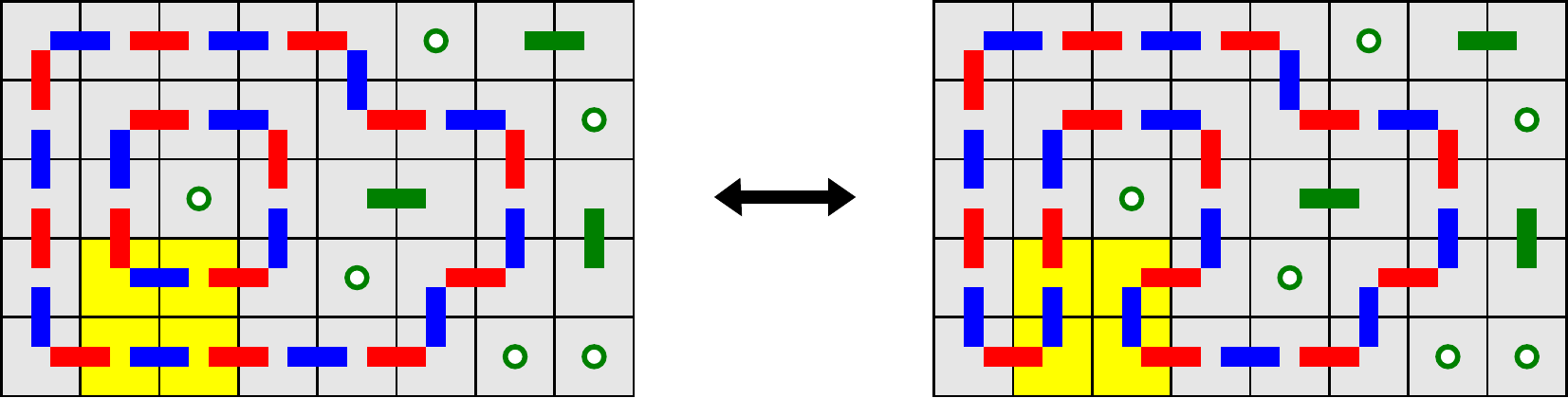}\label{fig:flipcase2_2}}
\caption{Examples illustrating the effects of flips in each of the cases. The flip positions are highlighted in yellow.}
\label{fig:flipCasesTwoFloors}
\end{figure}

\begin{case} 
\label{case:flipzdimer}
A flip that takes two non-$\ez$ dominoes which are in the same position in both floors to two adjacent $\ez$ dominoes (or the reverse of this flip)
\end{case}
The non-$\ez$ dominoes have no contribution to $P_{t_0}$. On the other hand, the two $\ez$ dominoes that appear after the flip have opposite colors (since they are adjacent), and are enclosed by exactly the same cycles. Hence, their contributions to $P_{t_1}$ cancel out, and thus $P_{t_0} = P_{t_1}$ in this case.

\begin{case}
\label{case:fliponefloor}
A flip that is completely contained in one of the floors.
\end{case}
If we look at the effect of such a flip in the associated drawing, two things can happen:
\begin{enumerate}[label=2.\arabic*., ref=2.\arabic*]
	\item\label{enum:sameorient} It connects two cycles that are not enclosed in one another and have the same orientation, and creates one larger cycle with the same orientation as the original ones, or it is the reverse of such a move;
	\item\label{enum:differentorient} It connects two cycles of opposite orientation such that one cycle is enclosed by the other (or it is the reverse of such a flip). The new cycle has the same orientation as the outer cycle.
\end{enumerate} 

In case \ref{enum:sameorient}, a jewel is enclosed by the new larger cycle if and only if it is enclosed by exactly one of the two original cycles. Hence, its contribution is the same in both $P_{t_0}$ and $P_{t_1}$.

In case \ref{enum:differentorient}, a jewel is enclosed by the new cycle if and only if it is enclosed by the outer cycle and not enclosed by the inner one. If it is enclosed by the new cycle, its contribution is the same in $P_{t_0}$ and $P_{t_1}$, because the new cycle has the same orientation as the outer one. On the other hand, if a jewel $j$ is enclosed by both cycles, their contributions to $k_t(j)$ cancel out, hence the jewel's contribution is also the same in $P_{t_0}$ and $P_{t_1}$. 

Hence, $P_{t_0} = P_{t_1}$ whenever $t_0$ and $t_1$ differ by a single flip.
\end{proof}
Notice that in cases \ref{enum:sameorient} and \ref{enum:differentorient} in the proof, one or both of the cycles involved may be trivial cycles. However, this does not change the analysis.

\section{General two-story regions}
\label{sec:generalTwoStory}

Let $\cR$ be a two-story region, and let $\cR'$ be the smallest duplex region containing $\cR$. The cubes in $\cR' \setminus \cR$ are called \emph{holes}. If we project the dominoes of a tiling $t$ into the plane $z = 0$, we end up with a set of disjoint (simple) paths, some of which may be cycles, while others have loose ends: an example is showed in the left of Figure \ref{fig:unequalFloors} (in that case, all nontrivial paths have loose ends).


\begin{figure}[ht]%
\centering
\includegraphics[width=0.5\columnwidth]{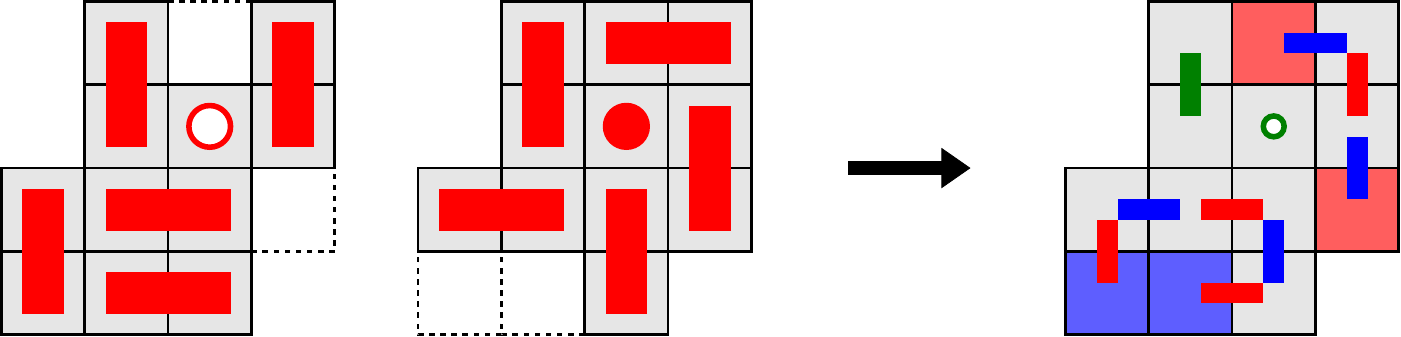}%
\caption{An example of tiled region with two simply connected but unequal floors. Holes in the leftmost floor are painted red in the associated drawing, while holes in the rightmost floor are painted blue.}%
\label{fig:unequalFloors}%
\end{figure}

Notice also that every white (resp. black) hole (regardless of which floor it is in) creates a loose end in the associated drawing where a domino is oriented in such a way that it is entering (resp. leaving) the square, which we call a \emph{source} (resp \emph{sink}): the names source and sink do not refer to the dominoes, but instead refer to the ghost curves, which are defined below.  Also, sources and sinks do not depend on the specific tiling and, since the number of black holes must equal the number of white holes, the number of sources always equals the number of sinks in a tileable region.

A \emph{ghost curve}\label{def:ghostCurve} in the associated drawing of $\cR$ is a curve that connects a source to a sink and which never touches the closure of a square that is common to both floors. Since the floors are simply connected, we can always connect any source to any sink via a ghost curve: an associated drawing of a tiling $t$ is then the ``usual'' associated drawing (from Section \ref{sec:twoIdenticalFloors}) together with a set of ghost curves such that each source and each sink is in exactly one ghost curve: this is shown in Figure \ref{fig:unequalFloorsJoined}.   


\begin{figure}[ht]%
\centering
\includegraphics[width=0.7\columnwidth]{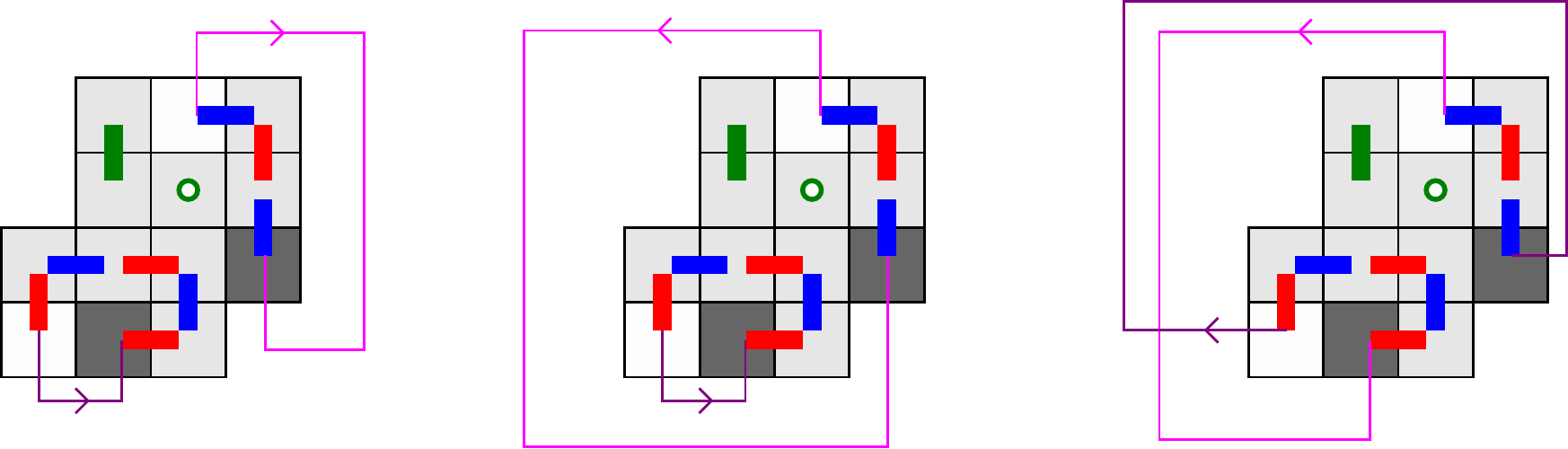}%
\caption{Three different ways to join sources and sinks. The sources are highlighted in very light grey (almost white), while the sinks are highlighted in dark grey. The invariants $P_t(q)$ for each case, from left to right, are $1$, $q$ and $1$.}%
\label{fig:unequalFloorsJoined}%
\end{figure}

For a two-story region $\cR$, fix a set of ghost curves such that every source and every sink intersects exactly one ghost curve. Therefore, the associated drawing of each tiling $t$ is a set of cycles and jewels, so that we may define $P_t(q)$ in the same way as we did for duplex regions. Namely, for a jewel $j$, let $k_t(j)$ denote the sum of the winding numbers of the newly formed cycles with respect to that jewel, and set $P_t(q) = \sum_{j}\ccol(j)q^{k_t(j)}$\label{def:ptTwoStory}. One important difference with respect to Section \ref{sec:twoIdenticalFloors} is that the winding number of a cycle with respect to a jewel is no longer necessarily $1$ or $-1$, but can be any integer (see Figure \ref{fig:unequalTwoFloor2}).

\begin{figure}[ht]%
\centering
\includegraphics[width=0.9\columnwidth]{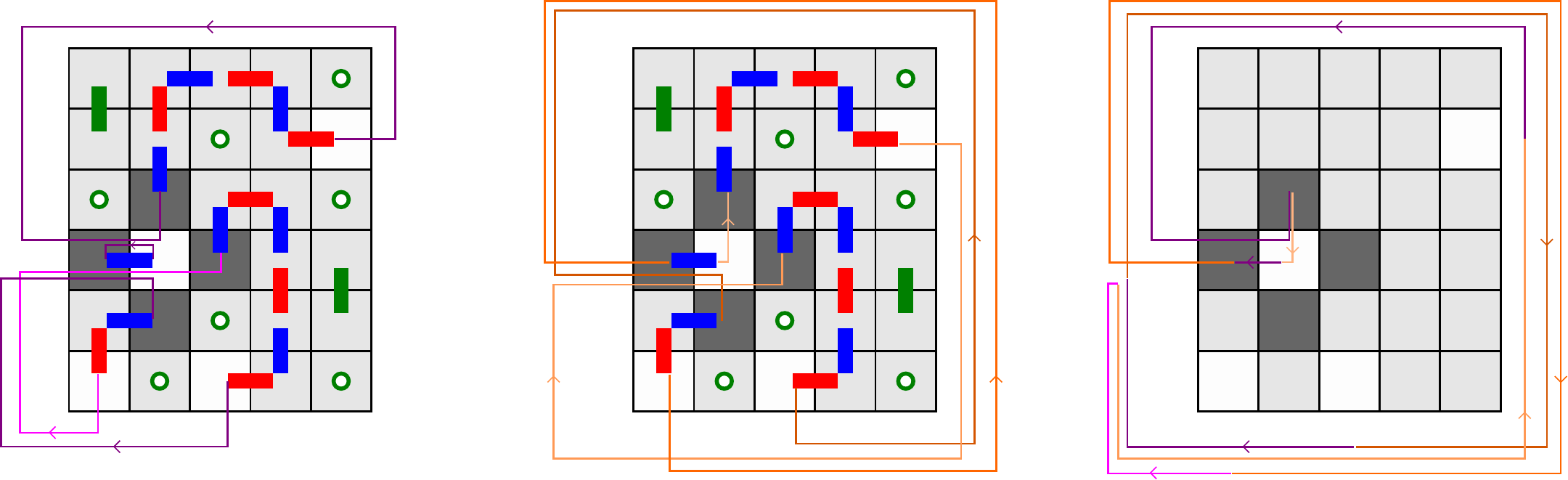}%
\caption{Two different ways to connect sources and sinks. The last diagram illustrates that the difference (first minus second) between these two connections forms a set of cycles, each of which has the same winding number with respect to each square where it is possible to have a jewel. In this case, this set has two cycles: one with winding number $0$ and the other with winding number $-1$ (with respect to every square that is common to both floors). Notice that the invariant is $-2q + 1 - 2q^{-1}$ in the first diagram and $-2q^2 + q - 2$ in the second, that is, the first invariant is indeed $q^{-1}$ times the second.}%
\label{fig:unequalTwoFloor2}%
\end{figure}

Strictly speaking, we should have made an explicit reference to the choice of ghost curves instead of simply writing $P_t$. However, it turns out that the following holds: if $P_{t,1}$ is the invariant associated with one choice of connection and $P_{t,2}$ is associated with another, then there exists $k \in \ZZ$ such that for every tiling $t$, $P_{t,1}(q) = q^k P_{t,2}(q)$. 


To see this, fix a tiling $t$. We want to look at the contributions of a jewel to $P_t$ for two given choices of source-sink connections. Since the exponent of the contribution of a jewel is the sum of the winding numbers of all the cycles with respect to it, it follows that the difference of exponents between two choices of connections is the sum of winding numbers of the cycles formed by putting the ghost curves from both source-sink connections together in the same picture, as illustrated in Figure \ref{fig:unequalTwoFloor2}. Now this sum of winding numbers is the same for every jewel: since the set of ghost curves never touches the closure of a square common to both floors (and the intersection of the two floors is connected and simply connected), each of the newly formed cycles must enclose every jewel in the same way. 
Hence, the effect of changing the connection is multiplying the contribution of each jewel by the same integer power of $q$, and we shall simply write $P_t$, assuming that the choice of ghost curves is given (and fixed).

\begin{prop} \label{prop:twoStoryFlipInvariant} Let $\cR$ be a two-story region, and suppose $t_1$ is obtained from a tiling $t_0$ of $\cR$ after a single flip. Then $P_{t_0} = P_{t_1}$.
\end{prop}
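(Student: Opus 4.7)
The plan is to follow the template of Proposition~\ref{prop:twoFloorFlipInvariant}, exploiting the fact that the set of ghost curves is fixed once and for all and does not depend on the tiling. Consequently, if $t_0$ and $t_1$ differ by a single flip, the ghost-curve portions of their associated drawings are identical, and any difference is confined to the projection of the $2\times 2\times 1$ flip slab. Write $c_i$ for the $1$-chain of real (non-ghost) oriented arrows in the associated drawing of $t_i$, and $C_i = c_i + g$ for the full closed $1$-chain, where $g$ is the fixed $1$-chain of ghost curves. Since $C_i$ is a disjoint union of oriented closed curves, $k_{t_i}(j) = \wind(C_i, j)$ for every jewel $j$, so $k_{t_1}(j) - k_{t_0}(j) = \wind(c_1 - c_0, j)$. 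It therefore suffices to analyze $c_1 - c_0$ in the two types of flips considered in Proposition~\ref{prop:twoFloorFlipInvariant}.

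In a Case 1 flip (a swap between two $\ez$-dominoes and two non-$\ez$ dominoes occupying the same positions in both floors), the projection replaces a trivial length-$2$ cycle between two adjacent squares by a pair of jewels $j_1, j_2$ of opposite colors at those squares (or the reverse). Since a trivial cycle vanishes as a $1$-chain, $c_1 - c_0 = 0$; hence $k_{t_0}(j) = k_{t_1}(j)$ for every jewel $j$ appearing in both drawings. For the newly created pair $j_1, j_2$, their winding numbers in $t_1$ coincide because the common edge of their squares is crossed by no real arrow (no domino lies there) and by no ghost curve (both squares are common to both floors, and ghost curves avoid such closures by definition). Since $\ccol(j_1) = -\ccol(j_2)$, their contributions cancel in $P_{t_1}$.

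In a Case 2 flip (contained entirely in a single floor), the projection replaces two parallel arrows inside a $2\times 2$ square $Q$ by two perpendicular arrows. A direct computation with the four arrows involved shows that, as $1$-chains, $c_1 - c_0$ traces out the boundary of $Q$: a simple $4$-cycle enclosing the center of $Q$ once. Consequently $\wind(c_1 - c_0, p) = 0$ for every $p$ outside $Q$. No jewel can lie at a position inside $Q$: a $\ez$-domino there would conflict with one of the flip's dominoes, which already covers the corresponding square in the flipped floor. Hence $k_{t_0}(j) = k_{t_1}(j)$ for every jewel $j$, and since the set of jewels is itself unchanged, $P_{t_0} = P_{t_1}$.

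The one subtle point, which I expect to require the most care, is verifying in Case 1 that the edge between the two new jewels is crossed by no ghost curve; this is precisely where the defining property of ghost curves, namely that they avoid the closures of squares common to both floors, becomes essential. Once this is in place, the $1$-chain/winding-number computation handles both cases uniformly, sidestepping the nested-versus-non-nested subcase analysis that appeared in the duplex argument.
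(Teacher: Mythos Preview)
Your proof is correct. For Case~1 it coincides with the paper's argument: both hinge on the observation that a flip involving $\ez$-dominoes can only occur at squares common to both floors, so by the defining property of ghost curves the two new adjacent jewels have the same exponent and cancel.

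For Case~2 you take a genuinely different route. The paper simply defers to the subcase analysis of Proposition~\ref{prop:twoFloorFlipInvariant} (Cases~\ref{enum:sameorient} and~\ref{enum:differentorient}), tracking how cycles merge or split and checking in each subcase that every jewel's exponent is preserved. Your $1$-chain computation replaces this: since $c_1 - c_0$ is the oriented boundary of the $2\times 2$ square $Q$, the winding-number change $\wind(c_1-c_0,j)$ vanishes for every point outside $Q$, and no jewel lies at the four vertices of $Q$. This is shorter and more uniform, and it transfers to the two-story setting without modification (which is ultimately why the paper can say ``literally nothing changes''). The trade-off is that one must be comfortable with additivity of winding numbers for formal $1$-chains rather than reasoning pictorially about cycle topology.
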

\begin{proof}
The proof is basically the same as that of Proposition \ref{prop:twoFloorFlipInvariant}. In fact, for Case \ref{case:fliponefloor} (flips contained in one floor) in that proof, literally nothing changes, whereas Case \ref{case:flipzdimer} (flips involving jewels) can only happen in a pair of squares that are common to both floors. Since the ghost curves must never touch such squares, it follows that the pair of adjacent jewels that form a flip position have the same $k_t(j)$, hence their contributions cancel out.
\end{proof}


\section{The effect of trits on $P_t$}     
\label{sec:effectOfTritsOnPt}

\begin{figure}[ht]
\centering
\includegraphics[width=0.4\columnwidth]{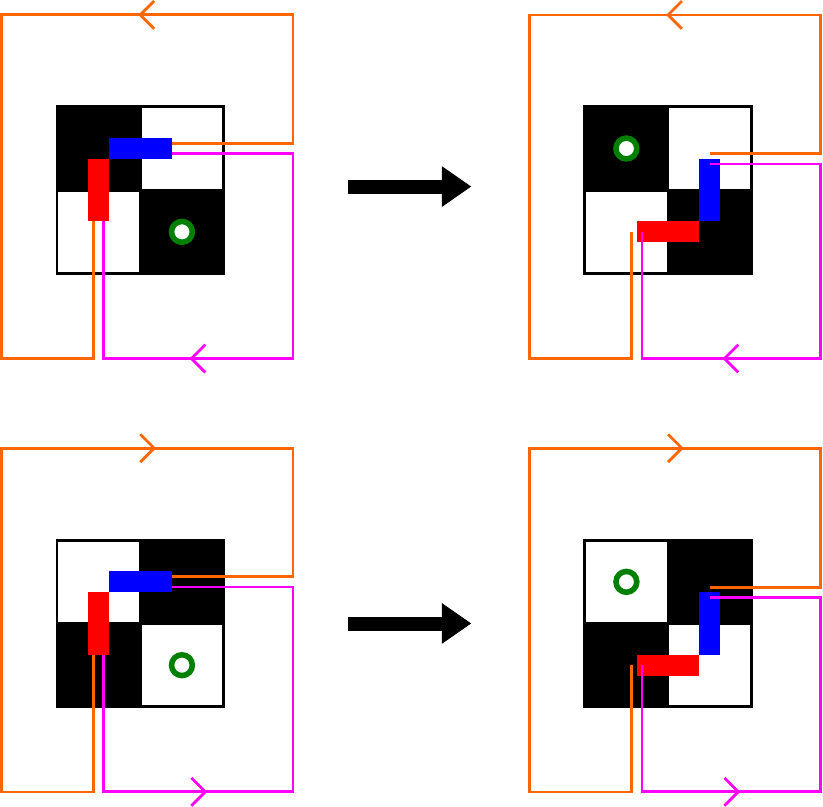}%
\caption{Schematic drawing of the effect of positive trits, with the magenta (shorter bottom right line) and orange lines (longer line) indicating (schematically) the two possible relative positions of the cycle $\gamma$ altered by the trit (the magenta and orange segments represent cycles that may have ghostly parts or not). It is clear that, in either case, the contribution of the portrayed jewel changes from $q^k$ to $q^{k+1}$ if it is black, and from $-q^k$ to $-q^{k-1}$ if it is white.}%
\label{fig:postrit_contrib}%
\end{figure}

We now turn our attention to the effect of a trit on $P_t$. By looking at Figure \ref{fig:postrit_contrib}, we readily observe that a trit affects only the contribution of the jewel $j$ that takes part in the trit (a trit always changes the position of precisely one jewel, since it always involves exactly one domino parallel to $\ez$). Furthermore, it either pulls a jewel out of a cycle, or pushes it into a cycle, but the jewel maintains its color. Hence, if $\pm q^k$ is the contribution of $j$ to $P_t$, then after a trit involving $j$ its contribution becomes $\pm q^{k + 1}$ or $\pm q^{k - 1}$.

A more careful analysis, however, as portrayed in Figure \ref{fig:postrit_contrib}, shows that a positive trit involving a black jewel always changes its contribution from $q^k$ to $q^{k+1}$, and a positive trit involving a white jewel changes $-q^k$ to $-q^{k-1}$. Hence, we have proven the following:

\begin{prop} \label{prop:posTrit} Let $t_0$ and $t_1$ be two tilings of a region $\cR$ which has two simply connected floors, and suppose $t_1$ is reached from $t_0$ after a single positive trit. Then, for some $k \in \ZZ$, 
\begin{equation}
P_{t_1}(q) - P_{t_0}(q) =  q^k(q - 1).
\label{eq:effectOnPt}
\end{equation}
\end{prop}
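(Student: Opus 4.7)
The plan is to observe that Proposition \ref{prop:posTrit} is essentially a direct repackaging of the local analysis preceding it. First, I would isolate the effect of a single trit on the associated drawing. By definition, a trit involves three mutually non-parallel dominoes inside a $2 \times 2 \times 2$ box, with exactly one of them parallel to $\ez$. Hence precisely one jewel is displaced: from a corner square $J_0$ of the $2 \times 2$ base to the diagonally opposite corner $J_1 = J_0 + (\pm 1, \pm 1)$, while the other two trit dominoes merely rearrange two local arrows so that the cycle (or path) passes on the opposite side of the jewel. Since $J_1 - J_0$ has both coordinates of absolute value one, the parity of $x+y$ is preserved and the jewel keeps its color; and because no other domino of $t_0$ is altered, every jewel contribution except that of $j$, and every cycle outside the affected $2 \times 2$ window, is identical in $t_0$ and $t_1$.

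Next, I would invoke the signed winding-number computation summarized in Figure \ref{fig:postrit_contrib}: in a positive trit,
\begin{equation*}
k_{t_1}(j) - k_{t_0}(j) = \begin{cases} +1, & \ccol(j) = 1, \\ -1, & \ccol(j) = -1. \end{cases}
\end{equation*}
Since only the contribution of $j$ changes, setting $k = k_{t_0}(j)$ yields
\begin{equation*}
P_{t_1}(q) - P_{t_0}(q) = \ccol(j)\bigl(q^{k_{t_1}(j)} - q^{k}\bigr).
\end{equation*}
If $\ccol(j) = 1$ this equals $q^{k+1} - q^k = q^k(q-1)$; if $\ccol(j) = -1$ it equals $-(q^{k-1} - q^k) = q^{k-1}(q-1)$. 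In either case the difference has the desired form $q^m(q-1)$, with $m = k$ or $m = k-1$ respectively.

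The main obstacle is justifying the signed winding-number change above. Color preservation and the localization of the trit are routine bookkeeping; what requires care is checking that the sign of the change is exactly $\ccol(j)$ (rather than $-\ccol(j)$) in every local configuration. This means examining the three rotational placements of the trit inside its $2\times 2\times 2$ box, the choice of which corner of the base houses the jewel, and the two possible relative arrangements of the surrounding cycle indicated by the magenta and orange segments of Figure \ref{fig:postrit_contrib} (including the case where part of that cycle is a ghost curve, as defined in Section \ref{sec:generalTwoStory}). In each case one must verify that a positive trit—oriented as in Chapter \ref{chap:notation}—sweeps the cycle past the jewel in the direction forced by the white-to-black orientation convention on dominoes. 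Once this handedness is checked uniformly, the algebraic step above finishes the proof.
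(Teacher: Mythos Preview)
Your proposal is correct and follows essentially the same route as the paper's proof: both arguments localize the trit to a single jewel, note that the jewel keeps its color, invoke the case analysis of Figure~\ref{fig:postrit_contrib} to conclude that the winding-number exponent changes by $+1$ for a black jewel and $-1$ for a white jewel under a positive trit, and then read off the difference $q^k(q-1)$. Your explicit parity argument for color preservation (diagonal displacement in the $2\times 2$ base) is a nice detail the paper leaves implicit.
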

A closer look at Equation \eqref{eq:effectOnPt} shows that $\Tw(t_1) - \Tw(t_0) = P_{t_1}'(1) - P_{t_0}'(1) = 1$ whenever $t_1$ is reached from $t_0$ after a single positive trit. This gives the following easy corollary: 

\begin{coro} \label{coro:tritstwofloors} Let $t_0$ and $t_1$ be two tilings of a region $\cR$ with two simply connected floors, and suppose we can reach $t_1$ from $t_0$ after a sequence $S$ of flips and trits. Then $$\#(\text{positive trits in } S) - \#(\text{negative trits in } S) = \Tw(t_1) - \Tw(t_0).$$ 
\end{coro}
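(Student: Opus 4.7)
The plan is to track how the twist $\Tw(t) = P_t'(1)$ changes along the sequence $S$, move by move, using the results already established in Propositions \ref{prop:twoStoryFlipInvariant} and \ref{prop:posTrit}.

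First, I would observe that flips leave $\Tw$ unchanged. Indeed, Proposition \ref{prop:twoStoryFlipInvariant} says that $P_t$ is invariant under flips, and since $\Tw(t)$ is defined as the evaluation of the derivative $P_t'$ at $q=1$, any flip contributes $0$ to the total change in $\Tw$ along $S$.

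Next, I would analyze the effect of a single trit. For a positive trit from $t$ to $t'$, Proposition \ref{prop:posTrit} gives $P_{t'}(q) - P_t(q) = q^k(q-1)$ for some $k \in \ZZ$. Differentiating and evaluating at $q=1$ yields
\begin{equation}
\Tw(t') - \Tw(t) = \left.\frac{d}{dq}\bigl(q^{k+1} - q^k\bigr)\right|_{q=1} = (k+1) - k = 1.
\end{equation}
For a negative trit from $t$ to $t'$, I would simply note that its reverse is a positive trit from $t'$ to $t$; hence by the previous calculation $\Tw(t) - \Tw(t') = 1$, i.e.\ $\Tw(t') - \Tw(t) = -1$.

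Finally, writing $S = (s_1, s_2, \ldots, s_N)$ as a sequence of moves producing intermediate tilings $t_0 = \tau_0, \tau_1, \ldots, \tau_N = t_1$, I would telescope:
\begin{equation}
\Tw(t_1) - \Tw(t_0) = \sum_{i=1}^{N} \bigl(\Tw(\tau_i) - \Tw(\tau_{i-1})\bigr).
\end{equation}
By the three cases above, each summand is $+1$ when $s_i$ is a positive trit, $-1$ when $s_i$ is a negative trit, and $0$ when $s_i$ is a flip, so the total is exactly $\#(\text{positive trits in } S) - \#(\text{negative trits in } S)$, as claimed. No part of this is really delicate; the only thing to be careful with is invoking the negative trit case via reversibility, since Proposition \ref{prop:posTrit} is stated only for positive trits.
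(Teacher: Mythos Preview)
Your proof is correct and follows essentially the same approach as the paper: the paper simply observes (just before stating the corollary) that a positive trit changes $P_t'(1)$ by $+1$, and then presents the corollary as an immediate consequence, leaving the telescoping and the treatment of flips and negative trits implicit. Your write-up makes these steps explicit in the natural way.
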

\section{Examples}
\label{sec:twofloorExamples}
For the examples below, we wrote programs in the C$^\sharp$ language.
\begin{example}[The $3 \times 3 \times 2$ box]
\label{example:332box}
\begin{table}[ht]
\centering
\begin{tabular}{|c|c|c|c|c|}
\hline
 \parbox[c]{2cm}{\centering Connected \\Component} & \parbox[c]{2cm}{\centering Number of \\tilings} & Tiling &$P_t(q)$&$\Tw(t)$\\ \hline
0 & 227 & \includegraphics[scale=0.3]{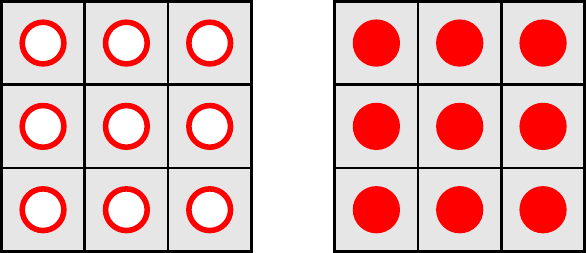} & $-1$ & $0$  \\ \hline
 1 & 1 & \includegraphics[scale=0.3]{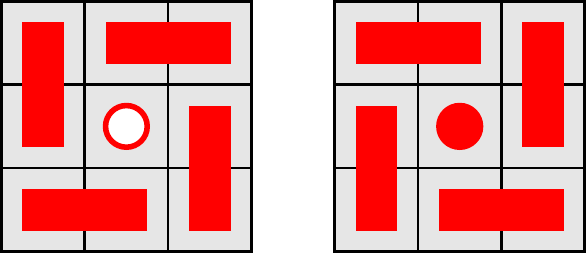} & $-q$ & $-1$  \\ \hline
 2 & 1 & \includegraphics[scale=0.3]{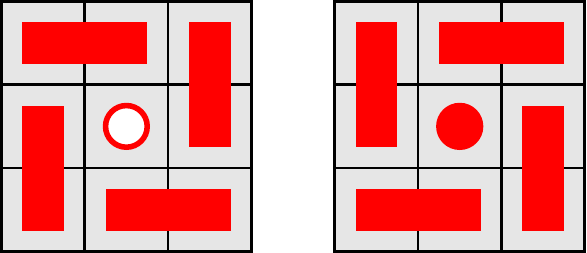} & $-q^{-1}$ & $1$  \\ \hline
\end{tabular}
\caption{Flip connected components of a $3 \times 3 \times 2$ box.}
\label{table:CC332}
\end{table}

The $3 \times 3 \times 2$ box is the smallest box whose space of domino tilings is not connected by flips (this can proved using techniques from Chapter \ref{chap:multiplex}), and it has $229$ tilings: the tiling $t$ shown in Figure \ref{fig:negtrit_example} has twist $-1$ and is alone in its flip connected component; the tiling obtained by reflecting $t$ on the plane $z = 1$ has twist $1$ (and is also alone in its flip connected component). The third component contains the remaining $227$ tilings, with twist $0$. This information is summarized in Table \ref{table:CC332}. Finally, the space of tilings is connected by flips and trits (via the trit shown in \ref{fig:negtrit_example} and its reflection).

\end{example}


\begin{example}[The $7 \times 3 \times 2$ box] \label{example:732box} The $7 \times 3 \times 2$ box has a total of $880163$ tilings, and thirteen flip connected components. Table \ref{table:CC732} contains information about the invariants of these connected components. 

\begin{table}[h]
\begin{tabular}{|c|c|c|c|c|}
\hline
 \parbox[c]{2cm}{\centering Connected \\Component} & \parbox[c]{2cm}{\centering Number of \\tilings} & Tiling &$P_t(q)$&$\Tw(t)$\\ \hline
 0 & 856617 & \includegraphics[scale=0.3]{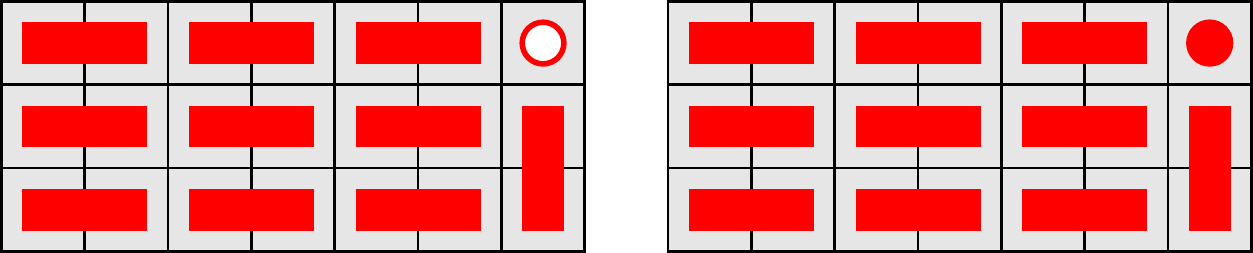} & $-1$ & $0$  \\ \hline
 1 & 8182 & \includegraphics[scale=0.3]{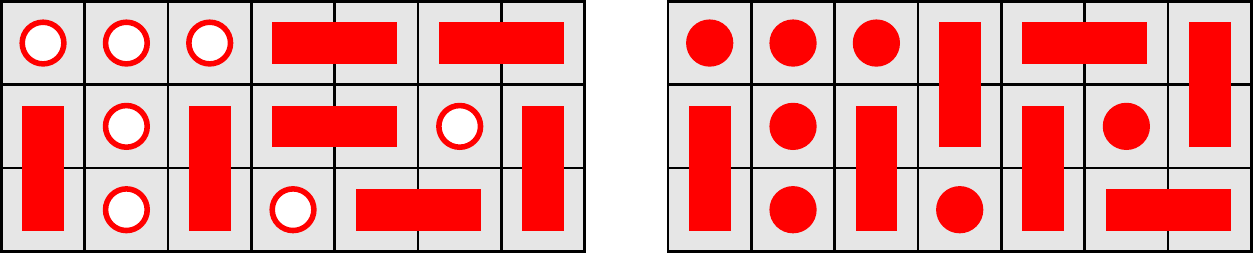} & $-q$ & $-1$  \\ \hline
 2 & 8182 & \includegraphics[scale=0.3]{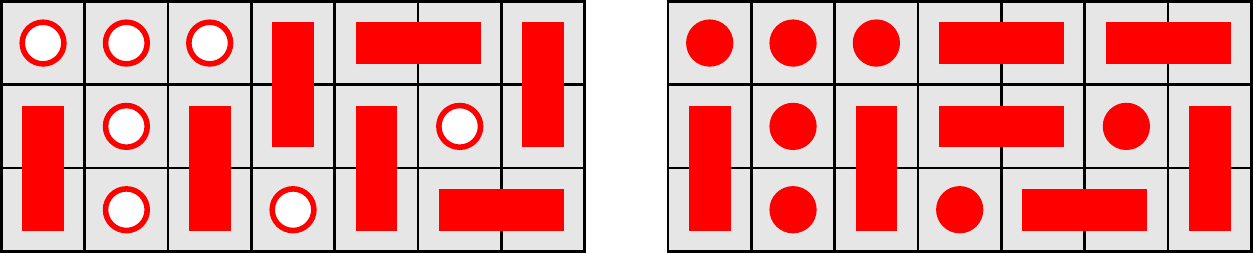} & $-q^{-1}$ & $1$  \\ \hline
 3 & 3565 & \includegraphics[scale=0.3]{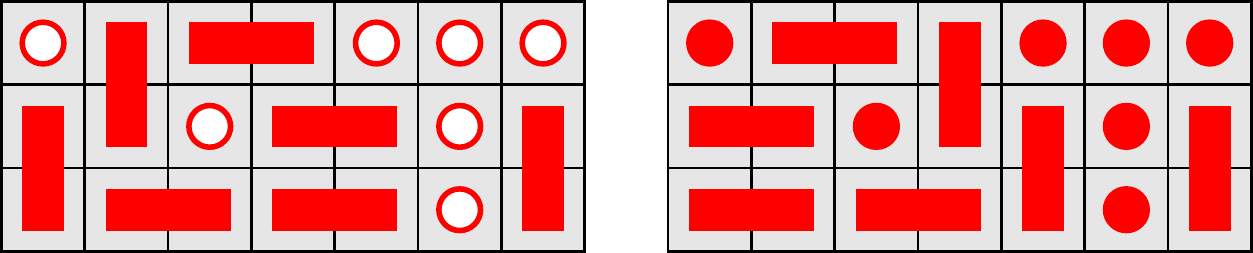} & $-2 + q^{-1}$ & $-1$  \\ \hline
 4 & 3565 & \includegraphics[scale=0.3]{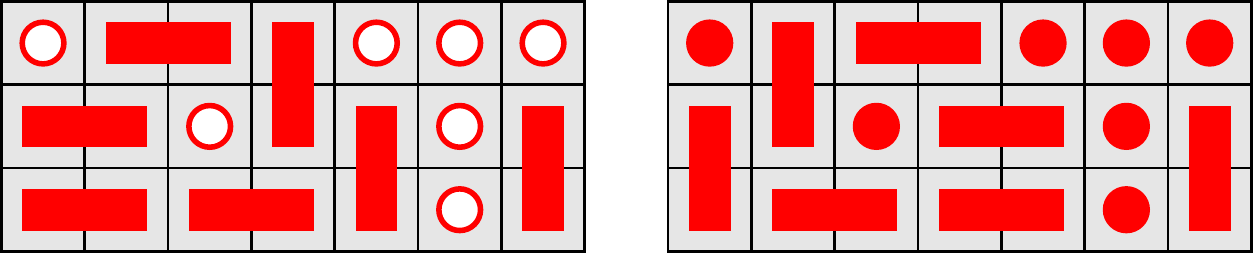} & $q - 2$ & $1$  \\ \hline
 5 & 9 & \includegraphics[scale=0.3]{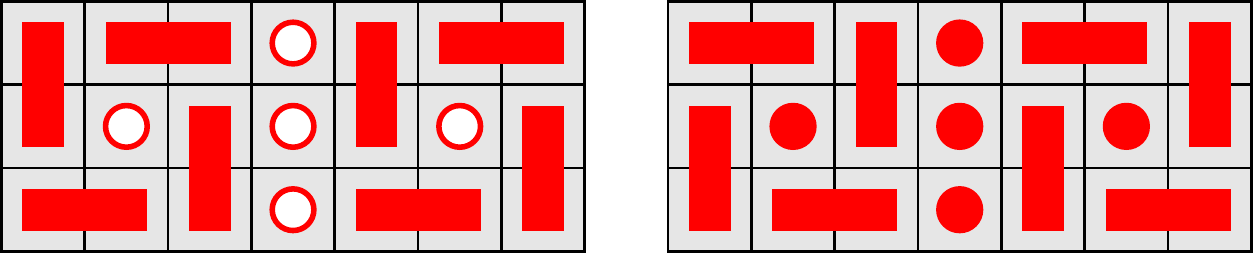} & $-2q + 1$ & $-2$  \\ \hline
 6 & 9 & \includegraphics[scale=0.3]{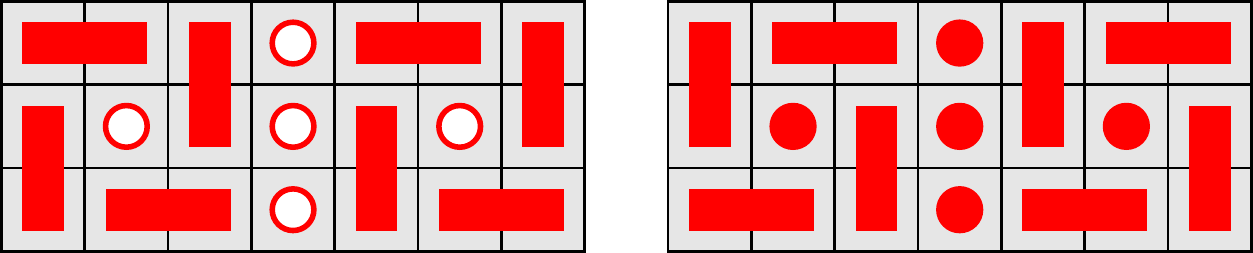} & $1 - 2q^{-1}$ & $2$   \\ \hline 
 7 & 7 & \includegraphics[scale=0.3]{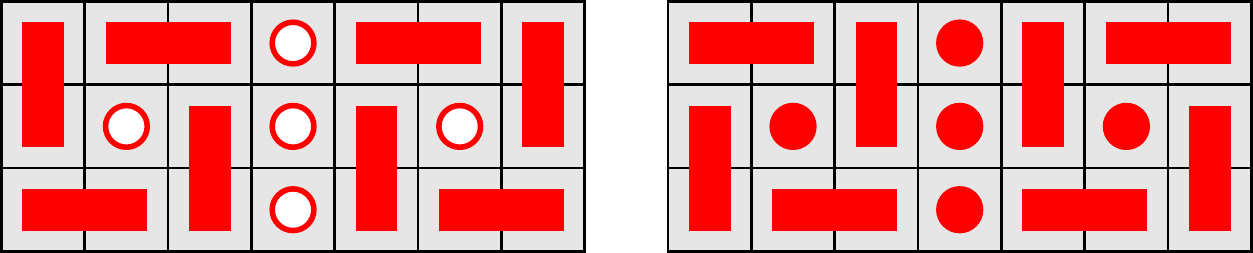} & $-q + 1 - q^{-1}$ & $0$  \\ \hline 
 8 & 7 & \includegraphics[scale=0.3]{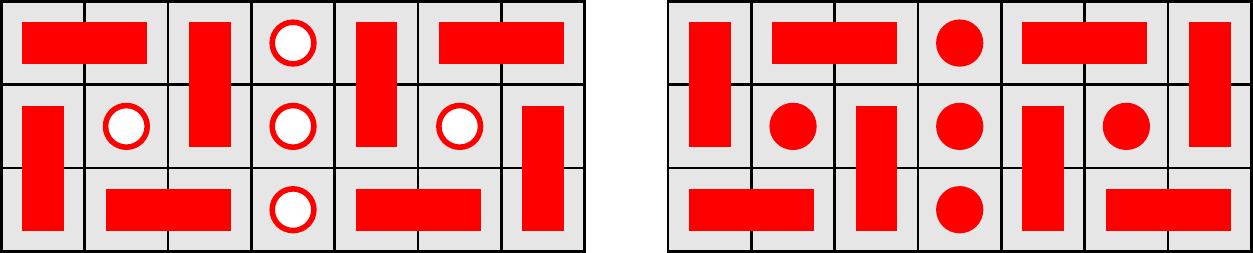} & $-q + 1 - q^{-1}$ & $0$  \\ \hline
 9 & 5 & \includegraphics[scale=0.3]{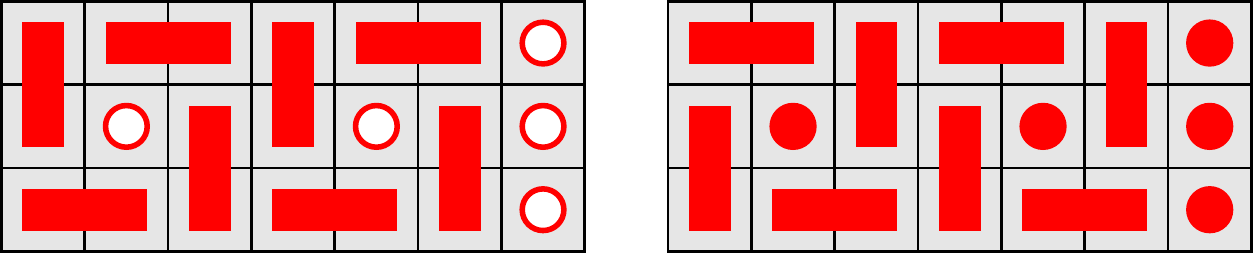} & $-q - 1 + q^{-1}$ & $-2$  \\ \hline
 10 & 5 & \includegraphics[scale=0.3]{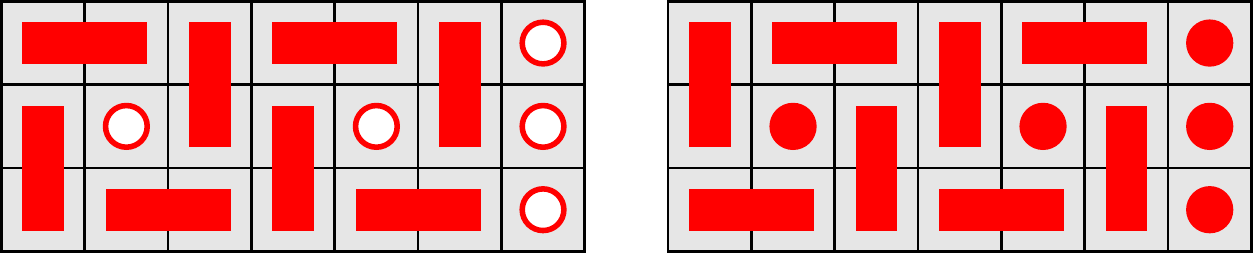} & $q - 1 - q^{-1}$ & $2$  \\ \hline
 11 & 5 & \includegraphics[scale=0.3]{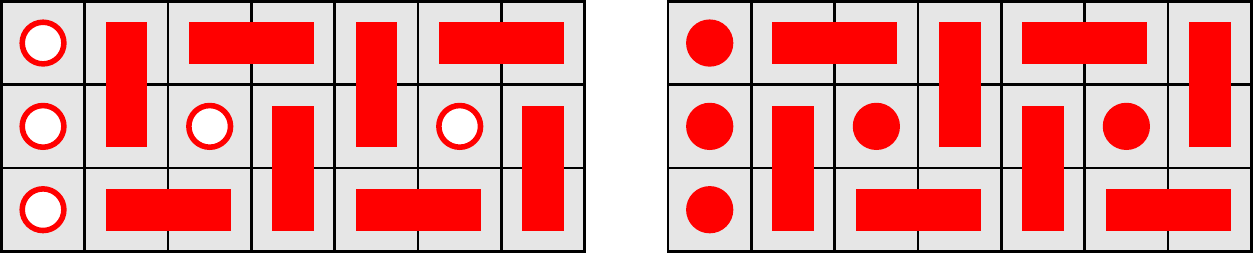} & $-q - 1 + q^{-1}$ & $-2$  \\ \hline
 12 & 5 & \includegraphics[scale=0.3]{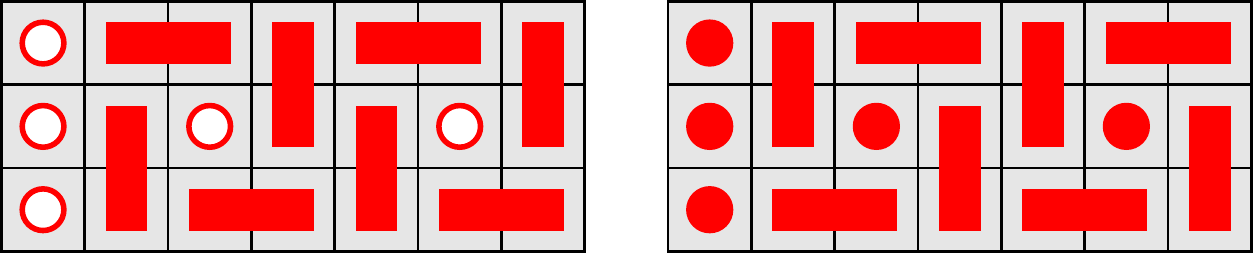} & $q - 1 - q^{-1}$ & $2$   \\ \hline

\end{tabular}
\caption{Flip connected components of a $7 \times 3 \times 2$ box.}
\label{table:CC732}
\end{table}

We readily notice that the invariant does a good job of separating flip connected components, albeit not a perfect one: the pairs of connected components 7 and 8, 9 and 11, and 10 and 12 have the same $P_t(q)$. 

Figure \ref{fig:box732_CCdiagram} shows a diagram of the flip connected components, arranged by their twists. We also notice that we can always reach a tiling from any other via a sequence of flips and trits.

\begin{figure}[ht]
\centering
\includegraphics[width=0.5\columnwidth]{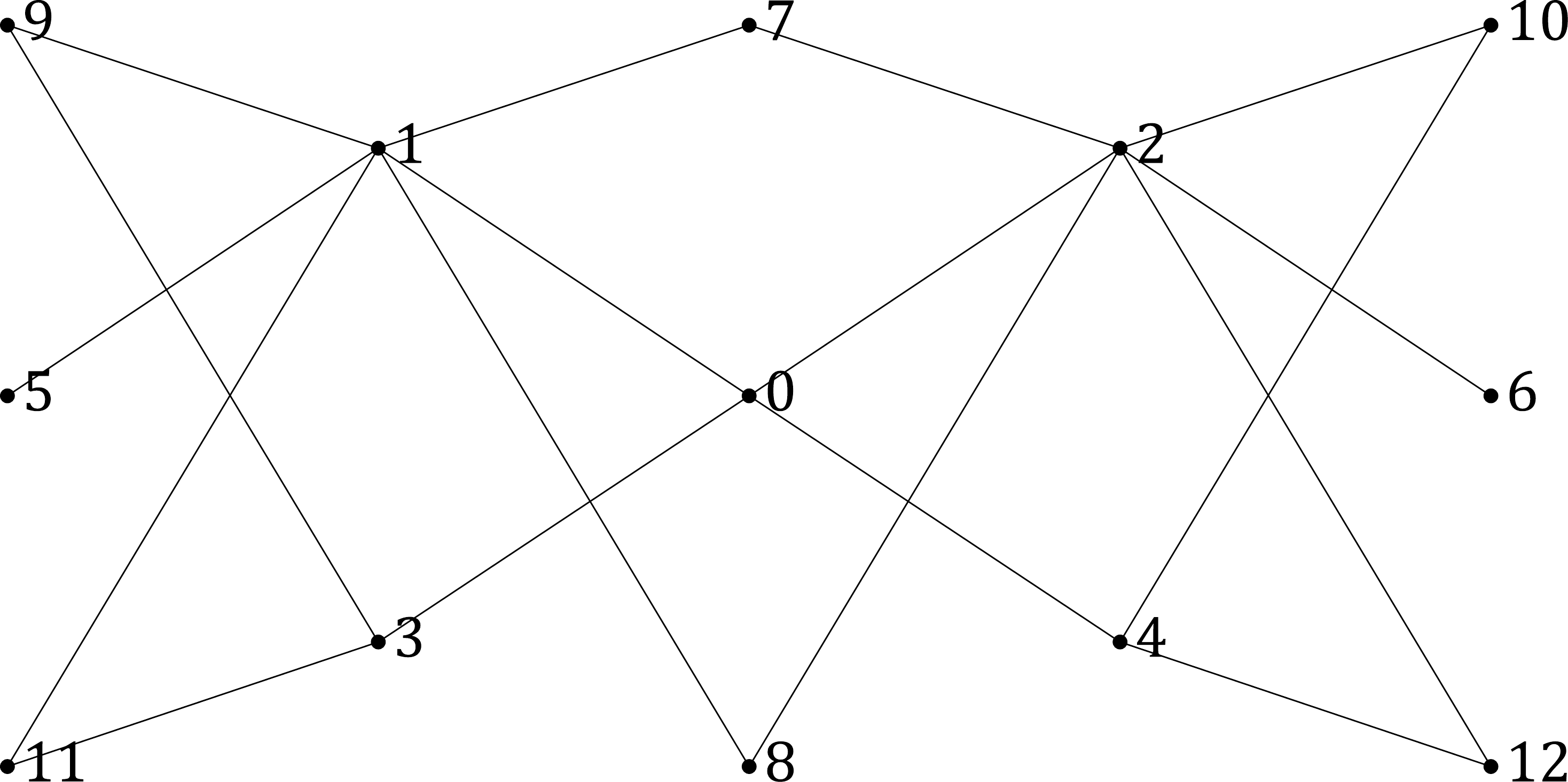}%
\caption{Flip connected components of the $7 \times 3 \times 2$ box, arranged by twist in increasing order. The numbering is the same as in table \ref{table:CC732}, and two dots $A$ and $B$ are connected if there exists a trit that takes a tiling in connected component $A$ to a tiling in connected component $B$. As we proved earlier, a trit from left to right in the diagram is always a positive trit; and a trit from right to left is always a negative one. }%
\label{fig:box732_CCdiagram}%
\end{figure}
\end{example}

\begin{example}[A region with two unequal floors] \label{example:unequalFloors}

\begin{figure}[ht]%
\centering
\includegraphics[width=0.6\columnwidth]{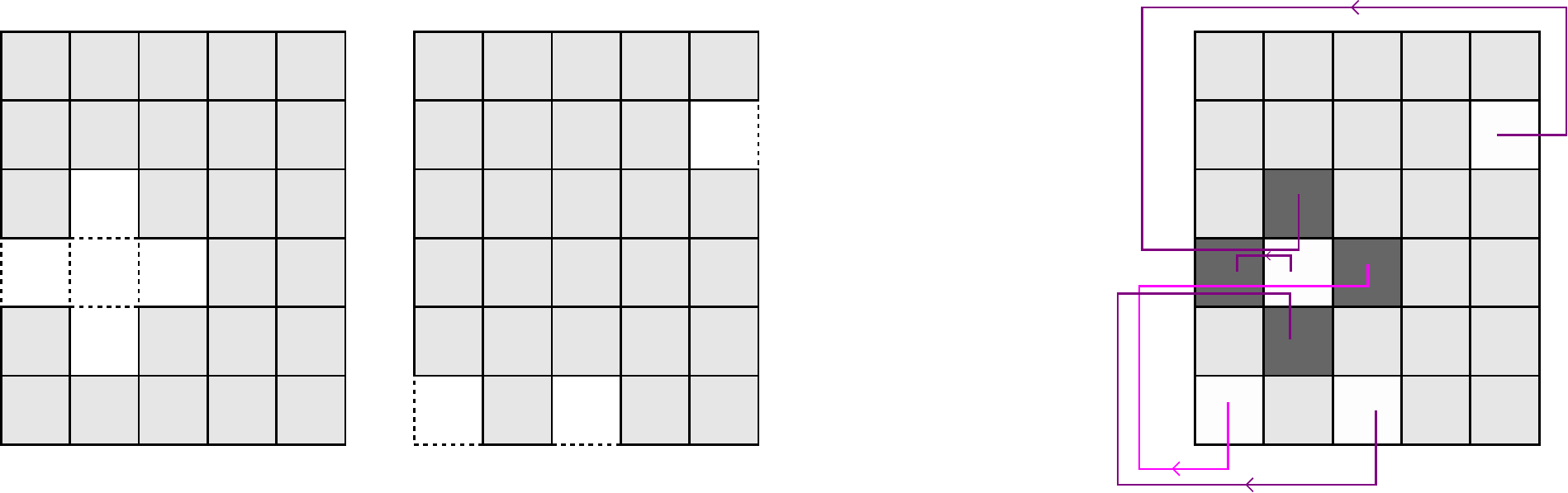}%
\caption{A region with two unequal floors, together with a choice of connections between the sources and the sinks. This choice of connections is the one used for the calculations in Table \ref{table:biggerRegion_CC}.}
\label{fig:bigger_region_sourcesAndSinks}%
\end{figure}

Figure \ref{fig:bigger_region_sourcesAndSinks} shows a region with two unequal floors, together with a choice of how to join sources and sinks. It has $642220$ tilings, and $30$ connected components. 

Table \ref{table:biggerRegion_CC} shows some information about these components and Figure \ref{fig:bigGraph_CC_unequal} shows a diagram of the flip connected components, arranged by their twists. This graph is not as symmetric as the one in the previous example; nevertheless, the space of tilings is also connected by flips and trits in this case.

\begin{table}[hpt]
\centering
\footnotesize
\begin{tabular}{|c|c|c|c|}
\hline
 \parbox[c]{2cm}{\centering Connected \\ Component} & \parbox[c]{2cm}{\centering Number of \\tilings} &$P_t(q)$ & $\Tw(t)$\\ \hline
 
0 & \num{165914} & $-2q -q^{-1}$ & $-1$ \\ \hline
1 & \num{153860} & $-q -1 -q^{-1}$ & $0$ \\ \hline
2 & \num{92123} & $-2q -1$ & $-2$ \\ \hline
3 & \num{56936} & $-q -1 -q^{-2}$ & $1$ \\ \hline
4 & \num{50681} & $-q -2$ & $-1$ \\ \hline
5 & \num{41236} & $-2q -q^{-2}$ & $0$ \\ \hline
6 & \num{17996} & $-2 -q^{-1}$ & $1$ \\ \hline
7 & \num{13448} & $-q -2q^{-1}$ & $1$ \\ \hline
8 & \num{11220} & $-3q$ & $-3$ \\ \hline
9 & \num{8786} & $-2 -q^{-2}$ & $2$ \\ \hline
10 & \num{7609} & $-q -q^{-1} -q^{-2}$ & $2$ \\ \hline
11 & \num{6423} & $-2q + 1 -2q^{-1}$ & $0$ \\ \hline
12 & \num{4560} & $-3q + 1 -q^{-1}$ & $-2$ \\ \hline
13 & \num{4070} & $-3$ & $0$ \\ \hline
14 & \num{3299} & $-2q + 1 -q^{-1} -q^{-2}$ & $1$ \\ \hline
15 & \num{2097} & $-1 -2q^{-1}$ & $2$ \\ \hline
16 & \num{1382} & $-1 -q^{-1} -q^{-2}$ & $3$ \\ \hline
17 & \num{221} & $-q + 1 -3q^{-1}$ & $2$ \\ \hline
18 & \num{137} & $-q + 1 -2q^{-1} -q^{-2}$ & $3$ \\ \hline
19 & \num{51} & $-3q^{-1}$ & $3$ \\ \hline
20 & \num{48} & $-2q -1$ & $-2$ \\ \hline
21 & \num{36} & $-2q^{-1} -q^{-2}$ & $4$ \\ \hline
22 & \num{17} & $-3q^{-1}$ & $3$ \\ \hline
23 & \num{17} & $-2q^{-1} -q^{-2}$ & $4$ \\ \hline
24 & \num{16} & $-1 -2q^{-1}$ & $2$ \\ \hline
25 & \num{16} & $-1 -q^{-1} -q^{-2}$ & $3$ \\ \hline
26 & \num{12} & $-2q + 2 -3q^{-1}$ & $1$ \\ \hline
27 & \num{7} & $-2q + 2 -2q^{-1} -q^{-2}$ & $2$ \\ \hline
28 & \num{1} & $1 -4q^{-1}$ & $4$ \\ \hline
29 & \num{1} & $1 -3q^{-1} -q^{-2}$ & $5$ \\ \hline

\end{tabular}
\caption{Information about the flip connected components of the region $\cR$ from Figure \ref{fig:bigger_region_sourcesAndSinks}.}
\label{table:biggerRegion_CC}
\end{table}

\begin{figure}[ht]%
\centering
\includegraphics[width=0.6\columnwidth]{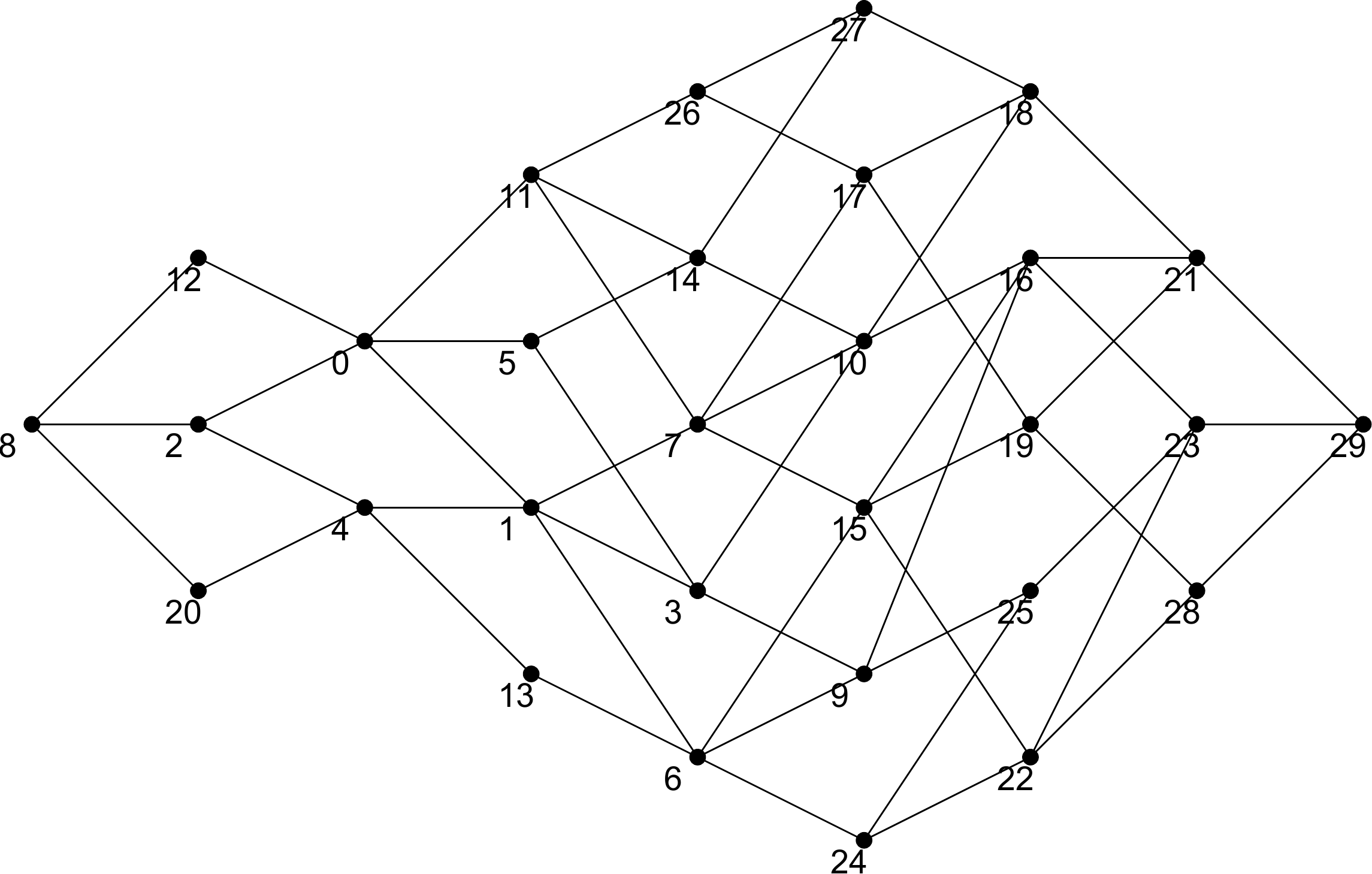}%
\caption{Graph with $30$ vertices, each one representing a connected component of the region. As in Figure \ref{fig:box732_CCdiagram}, two vertices are connected if there exists a trit taking a tiling in one component to a tiling in the other; a trit from left to right is always positive.}%
\label{fig:bigGraph_CC_unequal}%
\end{figure}

\end{example}

\section{The invariant in more space}
\label{sec:twoFloorsMoreSpace}
We already know that tilings that are not in the same flip connected component may have the same polynomial invariant, even in the case with two equal simply connected floors (see, for instance, Example \ref{example:732box}). However, as we will see in this section, this is rather a symptom of lack of space than anything else. 

More precisely, suppose $\cR$ is a duplex region and $t$ is a tiling of $\cR$. If $\cB$ is an $L \times M \times 2$ box (or a two-floored box) containing $\cR$, then $\cB \setminus \cR$ can be tiled in an obvious way (using only dominoes parallel to $\ez$). Thus $t$ induces a tiling $\hat{t}$ of $\cB$ that contains $t$; we call this tiling $\hat{t}$ the \emph{embedding} of $t$ in $\cB$.

\begin{figure}[ht]%
\centering
\includegraphics[width=0.5\columnwidth]{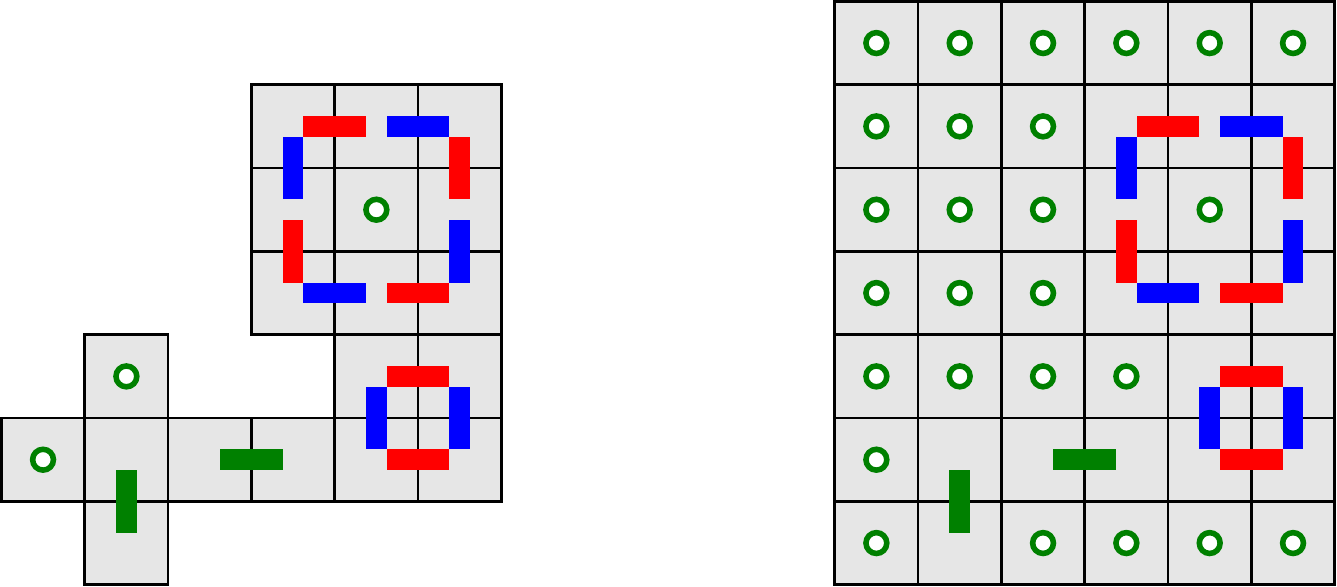}%
\caption{The associated drawing of a tiling, and its embedding in a $6 \times 7 \times 2$ box. }%
\label{fig:embeddingTwoFloors}%
\end{figure}

Another way to look at the embedding $\hat{t}$ of a tiling $t$ is the following: start with the associated drawing of $t$ (which is a plane region), add empty squares until you get an $L \times M$ rectangle, and place a jewel in every empty square. Since the newly added jewels are outside of any cycle in $\hat{t}$, it follows that $P_{\hat{t}}(q) - P_t(q) = k \in \ZZ$, where $k$ is the number of new black jewels minus the number of new white jewels (which depends only on the choice of box $\cB$ and not on the tiling $t$). Our goal for the section is to prove the following:

\begin{prop}
\label{prop:moreSpace}
Let $\cR$ be a duplex region, and let $t_0, t_1$ be two tilings that have the same invariant, i.e., $P_{t_0} = P_{t_1}$. Then there exists a two-floored box containing $\cR$ such that the embeddings $\hat{t_0}$ and $\hat{t_1}$ of $t_0$ and $t_1$ lie in the same flip connected component.  
\end{prop}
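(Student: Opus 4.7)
The first reduction is to transfer everything to the ambient box. For any two-floored box $\cB$ containing $\cR$, the embedding $\hat{t}$ of a tiling $t$ of $\cR$ differs from $t$ only by filling $\cB \setminus \cR$ with $\ez$-dominoes, contributing a constant $K(\cB) \in \ZZ$ to $P_{\hat{t}}$ which is independent of $t$. Therefore $P_{t_0} = P_{t_1}$ if and only if $P_{\hat{t_0}} = P_{\hat{t_1}}$, and the proposition reduces to showing that in a sufficiently large two-floored box, any two tilings with equal polynomial invariants lie in the same flip component.

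The strategy I would adopt is to define a \emph{canonical form} depending only on $P_t(q)$ and to show that every tiling of a sufficiently large box reduces to its canonical form by flips. Writing $P_t(q) = \sum_k c_k q^k$, the canonical tiling has, for each nonzero monomial $c_k q^k$, exactly $|c_k|$ disjoint ``widgets'', each consisting of a jewel of color $\sgn(c_k)$ enclosed in $|k|$ nested cycles whose common orientation is determined by the sign of $k$. These widgets would be placed on a coarse grid inside $\cB$, with the remaining cells filled by canceling pairs of adjacent opposite-color jewels. The contribution of each widget to $P_t$ is $\sgn(c_k)q^k$, canceling pairs contribute nothing, so the invariant of this canonical tiling equals $P_t(q)$ exactly (the global balance $P_t(1)$, which depends only on $\cB$, is automatically respected).

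To reduce an arbitrary tiling to this form, I would establish a small toolkit of flip-manipulations in the associated drawing: (i) a pair of adjacent opposite-color jewels can be converted to a trivial cycle (and back) via the first case in the proof of Proposition \ref{prop:twoFloorFlipInvariant}; (ii) cycles can be shrunk, split, merged, and translated by single-floor flips whenever there is jewel-rich space around them, using the two subcases of Case~\ref{case:fliponefloor}; (iii) a jewel can be translated by two cells in any coordinate direction through a sequence of flips that temporarily convert it and a neighbor into a trivial cycle and then back in a shifted position. Using these primitives inductively, one would first isolate cycles from each other, simplify each cycle down to its minimal widget shape, then route jewels into their canonical positions and annihilate any redundant opposite-color pairs along the way.

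The main technical obstacle is verifying that each manipulation can actually be executed in the presence of other cycles and jewels: structures can get tangled, and not every local flip is available when neighboring cells are already occupied by something essential. The idea is to show that in a large enough box, obstructing features can always be temporarily pushed aside by flips and later restored, since each elementary manipulation needs only a bounded amount of workspace. Thus a uniformly large $\cB$ accommodates all the rearrangements simultaneously, and this ``space argument'' is precisely where the freedom to enlarge $\cB$ enters decisively.
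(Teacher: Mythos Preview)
Your outline matches the paper's strategy almost exactly: your ``widgets'' are what the paper calls \emph{boxed jewels}, and your canonical form is the paper's \emph{untangled sock}. The reduction $P_{t_0}=P_{t_1}\Leftrightarrow P_{\hat t_0}=P_{\hat t_1}$ and the passage to socks in $\ZZ^2$ are exactly as in the paper. So at the level of architecture you are on the same track.

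Where the proposal is genuinely incomplete is the reduction to canonical form, and this is precisely the heart of the argument. Your toolkit items (i)--(iii) all carry the caveat ``whenever there is space around'', and your last paragraph openly names this as the main obstacle but resolves it only by a hand-wave (``obstructing features can always be temporarily pushed aside''). The trouble is that pushing cycle $B$ aside so that you can simplify cycle $A$ is itself a simplification problem of the same type, and your sketch gives no well-founded induction to break the circularity; nor is it clear that a fixed finite box, however large, suffices for an unspecified sequence of such detours. Primitive (iii) in particular fails outright when the jewel sits surrounded by cycle edges with no adjacent jewel of the opposite colour.

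The paper supplies exactly the missing well-founded measure: it inducts on the total \emph{area} enclosed by the sock. Given a nonempty sock, one locates the rightmost--bottommost vertex $v$ of any cycle; below and to the right of $v$ there are no cycles at all, so there is guaranteed free space. Walking northwest along the diagonal from $v$, one hits a first vertex $w$ that breaks the staircase pattern. If $w$ is not a jewel a single flip move strictly reduces area; if $w$ is a jewel, it can be \emph{extracted} downward as a boxed jewel into the guaranteed empty half-plane, and what remains has strictly smaller area. This is Lemma~\ref{lemma:homotopicToUntangled}, and it is the step your proposal is missing. Once every sock is flip homotopic to an untangled one, the comparison of two untangled socks with equal invariant (your ``canonical form matches'') is the comparatively easy Lemma~\ref{lemma:untangledSocksInvariant}.
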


If $t_0$ and $t_1$ already lie in the same flip connected component in $\cR$, then their embeddings $\hat{t_0}$ and $\hat{t_1}$ in any two-floored box will also lie in the same connected component, because you can reach $\hat{t_1}$ from $\hat{t_0}$ using only flips already available in $\cR$.

Also, notice that $P_{\hat{t_0}} = P_{\hat{t_1}}$ if and only if $P_{t_0} = P_{t_1}$, because  $P_{\hat{t_1}}(q) - P_{t_1}(q) = P_{\hat{t_0}}(q) - P_{t_0}(q)$. Therefore, Proposition \ref{prop:moreSpace} states that two tilings have the same invariant if and only if there exists a two-floored box where their embeddings lie in the same flip connected component.

The reader might be wondering why we're restricting ourselves to duplex regions. One reason is that for general regions with two simply connected floors, it is not always clear that you can embed them in a large box in a way that their complement is tileable, let alone tileable in a natural way. 

Although it is technically possible to prove Proposition \ref{prop:moreSpace} only by looking at associated drawings, it will be useful to introduce an alternative formulation for the problem. 

Let $G = G(\cR)$ be the undirected plane graph whose vertices are the centers of the squares in the associated drawing of $\cR$, and where two vertices are joined by an edge if their Euclidean distance is exactly $1$. A \emph{system of cycles}, or \emph{sock}, in $G$ is a (finite) directed subgraph of $G$ consisting only of disjoint oriented (simple) cycles. An \emph{edge} of a sock is an (oriented) edge of one of the cycles, whereas a \emph{jewel} is a vertex of $G$ that is not contained in the system of cycles.

\begin{figure}%
\centering
\includegraphics[width=0.6\columnwidth]{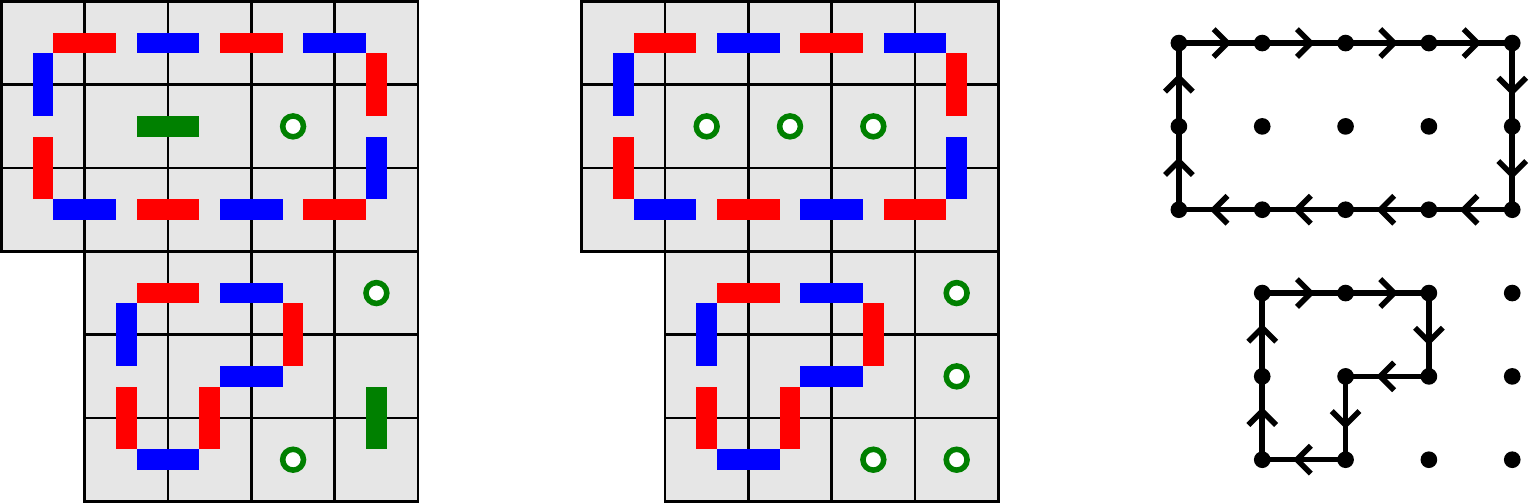}%
\caption{A tiling with two trivial cycles; the same tiling with the two trivial cycles flipped into jewels; and the system of cycles that corresponds to both of them.}%
\label{fig:tilingTwoFloors_withSOC}%
\end{figure}
There is an ``almost'' one-to-one correspondence between the systems of cycles of $G$ and the tilings of $\cR$, which is illustrated in Figure \ref{fig:tilingTwoFloors_withSOC}. In fact, tilings with trivial cycles have no direct interpretation as a system of cycles; but since all trivial cycles can be flipped into a pair of adjacent jewels, we can think that every sock represents a set of tilings, all in the same flip connected component.

We would now like to capture the notion of a flip from the world of tilings to the world of socks. This turns out to be rather simple: a \emph{flip move} on a sock is one of three types of moves that take one sock into another, shown in Figure \ref{fig:flipsteps}. Notice that performing a flip move on a sock corresponds to performing one or more flips on its corresponding tiling. 

\begin{figure}[ht]%
\centering
\def\svgwidth{0.3\columnwidth}
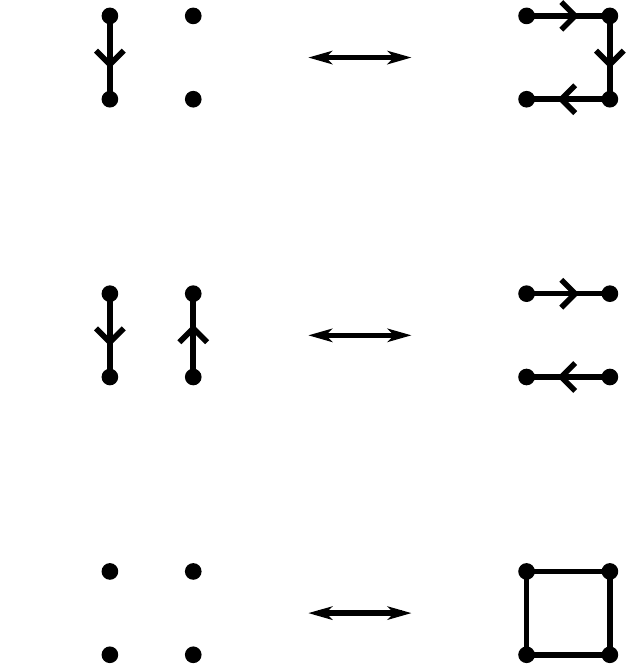
\caption{The three types of flip moves. The square in type c) is not oriented because it can have either one of the two possible orientations.}%
\label{fig:flipsteps}%
\end{figure}

A \emph{flip homotopy} in $G$ between two socks $s_1$ and $s_2$ is a finite sequence of flip moves taking $s_1$ into $s_2$. If there exists a flip homotopy between two socks, they are said to be \emph{flip homotopic} in $G$. Notice that two tilings are in the same flip connected component of $\cR$ if and only if their corresponding socks are flip homotopic in $G$, because every flip can be represented as one of the flip moves (and the flip that takes a trivial cycle into two jewels does not alter the corresponding sock). Figure \ref{fig:flipStepsExample} shows examples of flips and their corresponding flip moves.

\begin{figure}[ht]%
\centering
\includegraphics[width=0.7\columnwidth]{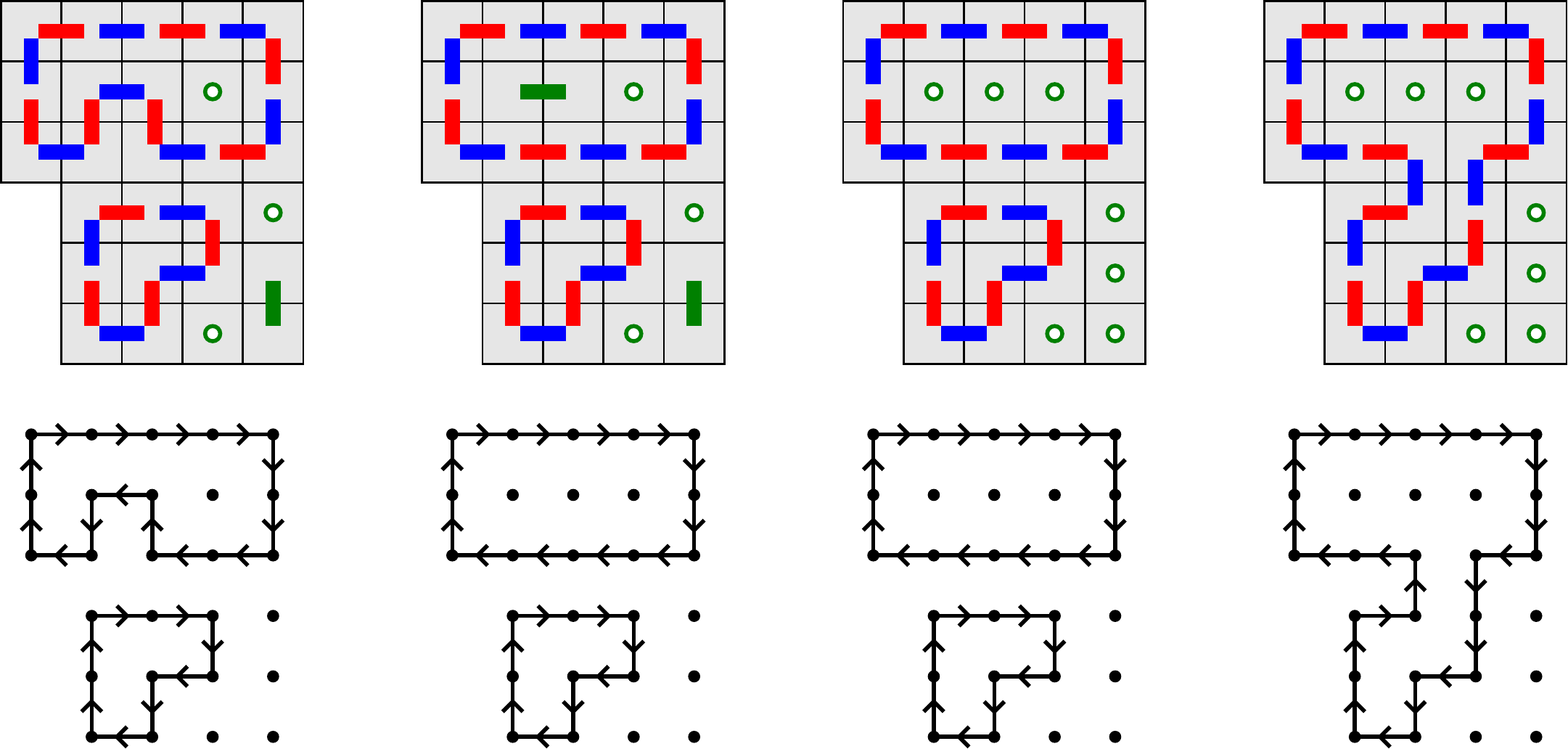}%
\caption{Examples of how flips affect the corresponding sock. The first flip induces a flip move of type (a), The second one does not alter the corresponding sock, and the third one induces a flip move of type (b). }%
\label{fig:flipStepsExample}%
\end{figure}

One advantage of this new interpretation is that we can easily add as much space as we need without an explicit reference to a box. In fact, notice that $G$ is a subgraph of the infinite graph $\ZZ^2$, so that a sock in $G$ is also a finite subgraph of $\ZZ^2$, so that we may see it as a system of cycles in $\ZZ^2$.

\begin{lemma}
\label{lemma:equivEmbeddingHomotopy}
For two tilings $t_0$ and $t_1$ of a region $\cR$, the following assertions are equivalent:
\begin{enumerate}[label=(\roman*)]
	\item \label{item:embeddedBox} There exists a two-floored box $\cB$ containing $\cR$ such that the embeddings of $t_0$ and $t_1$ in $\cB$ lie in the same flip connected component.
	\item \label{item:flipHomotopicSOC} The corresponding systems of cycles of $t_0$ and $t_1$ are flip homotopic in $\ZZ^2$.
\end{enumerate}  
\end{lemma}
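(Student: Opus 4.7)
The plan is to show the two implications by translating between flip sequences on tilings and flip homotopies on socks via the embedding operation.

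First, I would observe that the embedding operation fills $\cB \setminus \cR$ with dominoes parallel to $\ez$, which appear as jewels in the associated drawing. Consequently, the sock associated with $\hat{t_i}$ (viewed as a subgraph of $G(\cB)$) is literally the same finite subgraph of $\ZZ^2$ as the sock associated with $t_i$, since the extra vertices introduced by the embedding are all jewels and contribute no edges.

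For \ref{item:embeddedBox}$\Rightarrow$\ref{item:flipHomotopicSOC}, suppose we have a flip sequence $\hat{t_0} = \tau_0, \tau_1, \ldots, \tau_n = \hat{t_1}$ inside $\cB$. Each flip $\tau_j \to \tau_{j+1}$ either (a) involves two parallel non-$\ez$ dominoes lying in the same floor, or (b) involves a pair of adjacent $\ez$ dominoes together with two non-$\ez$ dominoes (Case \ref{case:flipzdimer} in the proof of Proposition \ref{prop:twoFloorFlipInvariant}). In either situation, comparing the associated drawings of $\tau_j$ and $\tau_{j+1}$ one checks directly that the corresponding socks either coincide (when the flip only toggles a trivial cycle into a pair of jewels) or differ by exactly one of the flip moves of type (a), (b), or (c) shown in Figure \ref{fig:flipsteps}. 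Concatenating, one obtains a flip homotopy in $G(\cB)$, which in particular is a flip homotopy in $\ZZ^2$.

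For \ref{item:flipHomotopicSOC}$\Rightarrow$\ref{item:embeddedBox}, let $s_0 = \sigma_0, \sigma_1, \ldots, \sigma_n = s_1$ be a flip homotopy in $\ZZ^2$ between the socks of $t_0$ and $t_1$. Each flip move has bounded support, so the union $\bigcup_j \sigma_j$, together with the supports of all the flip moves, is contained in some finite rectangle $R \subset \ZZ^2$. Choose a two-floored box $\cB$ whose floor projection is a rectangle containing both $\cD$ (so that $\cR \subset \cB$) and $R$. Each sock $\sigma_j$ now determines a tiling $\tau_j$ of $\cB$ by filling in every vertex of the floor projection of $\cB$ not covered by $\sigma_j$ with a vertical domino (a jewel). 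In particular $\tau_0 = \hat{t_0}$ and $\tau_n = \hat{t_1}$. Each flip move $\sigma_j \to \sigma_{j+1}$ is then realized by either a single flip in $\cB$ (for moves of type (a) or (b), which exchange a non-$\ez$ pair or split/merge cycles) or a short sequence of flips in $\cB$ (for moves of type (c), where one first creates a trivial cycle out of two adjacent jewels and then, if necessary, rotates its orientation); in all cases, the dominoes involved lie inside $\cB$ because $R \subset \cB$. Concatenating yields a flip sequence from $\hat{t_0}$ to $\hat{t_1}$ inside $\cB$.

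The main technical point, and the step that requires the most care, is the bookkeeping for \ref{item:flipHomotopicSOC}$\Rightarrow$\ref{item:embeddedBox}: one must verify case by case that each abstract flip move on a sock can be realized as an honest flip (or pair of flips) on tilings once enough jewels are available to accommodate the move, which is exactly what enlarging to a box containing $R$ guarantees. The converse direction is essentially a direct reading of the flip cases already analyzed in Proposition \ref{prop:twoFloorFlipInvariant}.
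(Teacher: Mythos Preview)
Your proposal is correct and follows essentially the same approach as the paper. The paper's proof is much terser because the correspondence you spell out in detail—that two tilings of a region lie in the same flip connected component if and only if their socks are flip homotopic in $G(\cR)$—is stated in the text immediately preceding the lemma and then invoked as a black box; with that in hand, the paper simply chooses $\cB$ large enough that every intermediate sock in the given $\ZZ^2$-homotopy already lies in $G(\cB)$, and conversely notes that a flip homotopy in $G(\cB)$ is trivially one in $\ZZ^2$.
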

\begin{proof}
To see that \ref{item:flipHomotopicSOC} implies \ref{item:embeddedBox}, notice the following:
if $s_0, s_1, \ldots, s_n$ are the socks involved in the flip homotopy between the socks of $t_0$ and $t_1$ in $\ZZ^2$, let $\cB$ be a sufficiently large two-floored box such that $(\ZZ^2 \setminus G(\cB))$ contains only vertices of $\ZZ^2$ that are jewels in all the $n+1$ socks $s_0, s_1, \ldots, s_n$. Then the socks of $t_0$ and $t_1$ are flip homotopic in $G(\cB)$, so the embeddings of $t_0$ and $t_1$ lie in the same flip connected component.

The converse is obvious, since if the socks of $t_0$ and $t_1$ are flip homotopic in $G(\cB)$ for some two-floored box $\cB$, then they are flip homotopic in $\ZZ^2$.
\end{proof} 

A vertex $v \in \ZZ^2$ is said to be white (resp. black) if the sum of its coordinates is even (resp. odd), and we write $\ccol(v) = -1$ (resp. $1$). If $s$ is a system of cycles in $\ZZ^2$, we define the graph invariant of $s$ as $$P_s(q) = \sum_{j : k_s(j) \neq 0}\ccol(j)q^{k_s(j)},$$
where $k_s(j)$ is the sum of the winding numbers of all the cycles in $s$ (as curves) with respect to $j$; this is a finite sum because the number of jewels enclosed by cycles of $s$ is finite. Notice that if $t$ is a tiling of $\cR$ and $s$ is its corresponding sock in $\ZZ^2$, $P_s(q) = P_t(q) - P_t(0)$, so that $P_s$ is completely determined by $P_t$. Conversely,
$P_t(q) - P_s(q) = P_t(1) - P_s(1) = P_t(0)$: 
since $P_t(1)$ equals the number of black squares minus the number of white squares in $\cR$ (thus does not depend on $t$), it follows that $P_t(q)$ is also completely determined by $\cR$ and $P_s(q)$.

A corollary of Proposition \ref{prop:twoFloorFlipInvariant} is that if two systems of cycles $s_0$ and $s_1$ are flip homotopic in $\ZZ^2$, then $P_{s_0} = P_{s_1}$. We now set out to prove that the converse also holds, which will establish Proposition \ref{prop:moreSpace}.

A \emph{boxed jewel} is a subgraph of $\ZZ^2$ formed by a single jewel enclosed by a number of square cycles (cycles that are squares when thought of as plane curves), all with the same orientation. Figure \ref{fig:boxedJewelsExample} shows examples of boxed jewels. Working with boxed jewels is easier, for if they have ``free space'' in one direction (for instance, if there are no cycles to the right of it), they can move an arbitrary even distance in that direction; the simplest case is illustrated in Figure \ref{fig:boxedJewelsMove}. More complicated boxed jewels move just as easily: we first turn the outer squares into rectangles, then we move the inner boxed jewel, and finally we close the outer squares again.

\begin{figure}[ht]%
\centering
\subfloat[]{\includegraphics[width=0.06\columnwidth]{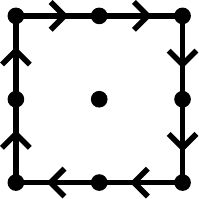}}\qquad
\subfloat[]{\includegraphics[width=0.24\columnwidth]{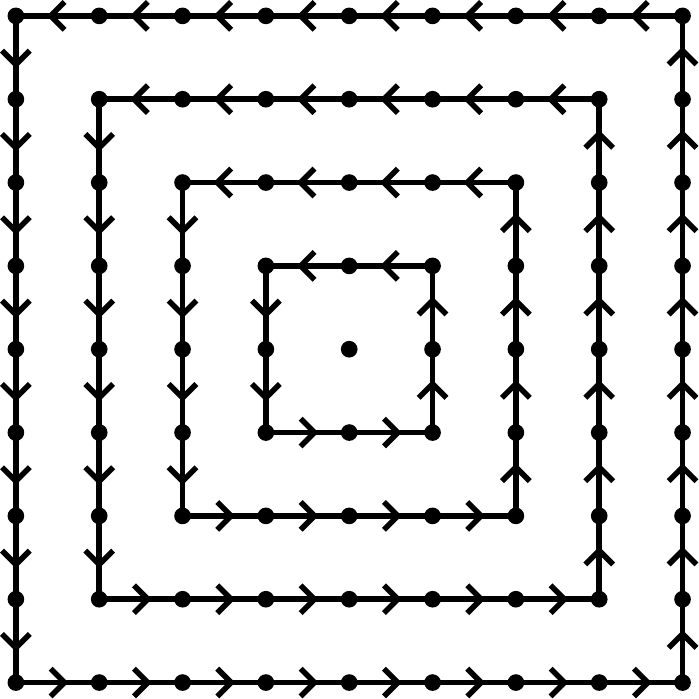}}
\caption{Two examples of boxed jewels.}%
\label{fig:boxedJewelsExample}%
\end{figure} 

\begin{figure}%
\centering
\includegraphics[width=0.7\columnwidth]{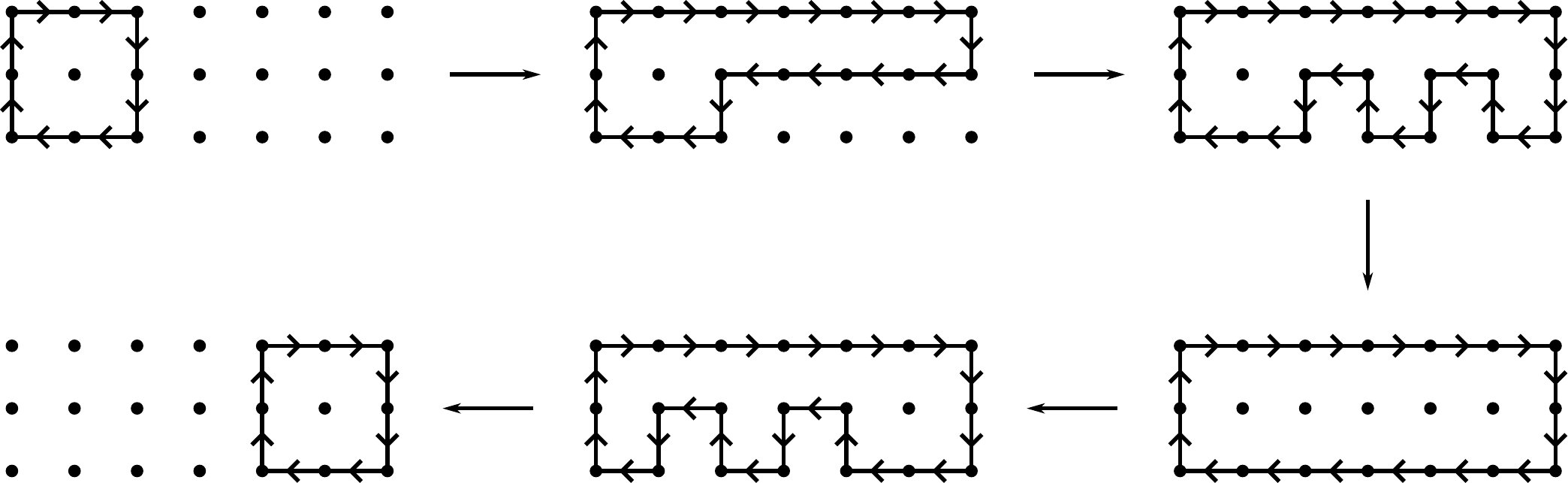}%
\caption{A boxed jewel with ``free space'' to the right. Starting from the first sock, we perform: four flip moves of type (a); two flip moves of type (a); two flip moves of type (a) to create a rectangle; two flip moves of type (a); and finally, the last sock is obtained by performing six flip moves of type (a) on the penultimate sock.}%
\label{fig:boxedJewelsMove}%
\end{figure}

An \emph{untangled} sock is a sock that contains only boxed jewels, and such that the center of each boxed jewel is of the form $(n, 0)$ for some $n \in \ZZ$ (that is, all the enclosed jewels lie in the $x$ axis), as illustrated in Figure \ref{fig:untangledSockExample}. Therefore, each boxed jewel in an untangled sock moves very easily: it has free space both downwards and upwards.

\begin{figure}%
\centering
\includegraphics[width=0.6\columnwidth]{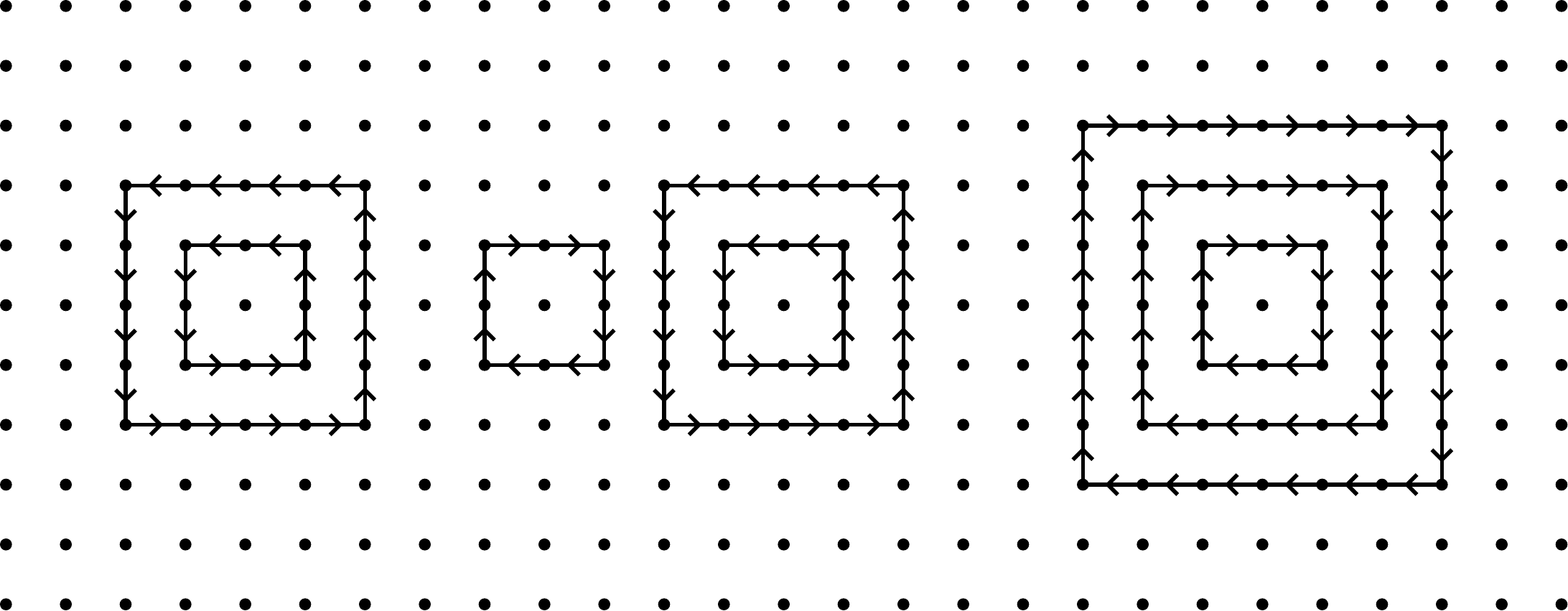}%
\caption{Example of an untangled sock.}%
\label{fig:untangledSockExample}%
\end{figure}


\begin{lemma}
\label{lemma:untangledSocksInvariant}
Two untangled socks that have the same invariant are flip homotopic in $\ZZ^2$.
\end{lemma}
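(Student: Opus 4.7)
An untangled sock decomposes as a disjoint union of boxed jewels whose centers lie on the $x$-axis, and because a boxed jewel with center color $c \in \{-1, +1\}$ and $k$ concentric cycles contributes $c\, q^{\pm k}$ to $P_s(q)$ (with the sign determined by the common orientation), the invariant is simply the sum of these contributions. Therefore two untangled socks have the same invariant if and only if their multisets of (color, winding) pairs for boxed jewels differ only by pairs consisting of one black boxed jewel of winding $k$ together with one white boxed jewel of winding $k$. The strategy is to reduce every untangled sock to a canonical form (listing all boxed jewels in a fixed order along the $x$-axis, with matching black-white pairs suppressed) using two operations realizable by flip homotopy.

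The first operation is \emph{permutation}: any two adjacent boxed jewels can be swapped. For this, one uses the observation already exploited in Figure \ref{fig:boxedJewelsMove} that a boxed jewel can be translated by an arbitrary even amount into free space; because every other boxed jewel of an untangled sock is confined to a neighborhood of the $x$-axis, a given boxed jewel has essentially unlimited free space both above and below. Hence one can lift a boxed jewel high above the $x$-axis, translate it horizontally past its neighbor, and lower it back down, effecting the swap.

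The second operation is \emph{annihilation}: a pair consisting of a black boxed jewel with winding $k$ and a white boxed jewel with winding $k$ is flip-homotopic to the empty sock. I would argue this by induction on $k$. The base case $k=0$ is essentially trivial, since a pair of lone opposite-colored adjacent vertices corresponds to the ``no sock elements'' configuration, as in the discussion of Figure \ref{fig:tilingTwoFloors_withSOC}. For the inductive step, bring the two boxed jewels adjacent along the $x$-axis (by permutation); then, using type-(a) flip moves, extend the outermost cycle of one boxed jewel until it is adjacent to the outermost cycle of the other; now apply a type-(b) flip move to merge these two same-orientation outermost cycles into a single large cycle enclosing both boxed-jewel interiors. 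Proceeding inward, merge each subsequent pair of cycles in the same manner; this leaves $k$ concentric cycles enclosing a pair of adjacent opposite-colored vertices. By the inductive hypothesis applied to the inner structure (two boxed jewels of winding $k-1$, opposite color), the interior is flip-homotopic to the empty sock, and the $k$ concentric outer cycles can then be shrunk one by one via type-(a) moves and killed off via type-(c) moves, as they no longer enclose anything with nonzero net color.

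The main obstacle, and the step that requires careful bookkeeping, is the annihilation operation: one must verify that the merging and subsequent contraction of concentric cycles can be realized by a concrete sequence of flip moves without collisions, and that the invariant is indeed preserved at every intermediate stage (so that the inductive reduction is legitimate). The permutation operation, by contrast, is essentially a direct generalization of Figure \ref{fig:boxedJewelsMove} and should be straightforward once the untangled hypothesis is exploited to extract enough free space.
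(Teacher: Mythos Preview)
Your approach is essentially the same as the paper's: reduce to permutation plus annihilation of cancelling pairs. The paper carries out annihilation by a direct pictorial argument (bring the two boxed jewels together, then exhibit an explicit sequence of flip moves dissolving the pair; see Figures~\ref{fig:boxedJewelsCancellation} and~\ref{fig:boxedJewelsCancellationSteps}), while you opt for induction on $k$.

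One wrinkle in your inductive step: after you merge all $k$ pairs of cycles, the interior of the outermost merged cycle is \emph{not} ``two boxed jewels of winding $k-1$, opposite color'' --- it is $k-1$ concentric merged cycles enclosing a single pair of adjacent opposite-colored jewels. So the inductive hypothesis, as you stated it (about pairs of boxed jewels), does not literally apply to that configuration. The fix is easy and you have two options: either (i) merge only the outermost pair of cycles, leaving two genuine boxed jewels of winding $k-1$ inside a single large cycle, apply the inductive hypothesis to annihilate those (there is room, since the outer cycle can first be enlarged), and then shrink and kill the now-empty outer cycle; or (ii) keep your merge-all-pairs step but change the inductive statement to ``$k$ concentric same-orientation cycles enclosing two adjacent opposite-colored jewels is flip-homotopic to the empty sock,'' which then goes through cleanly. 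Either way the argument is sound; it is only the labeling that slipped.
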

In this and in other proofs, we omit easy (but potentially tedious) details when we think the picture is sufficiently clear. 
\begin{proof}
If the two socks consist of precisely the same boxed jewels but in a different order, then they are clearly flip homotopic, since we can easily move the jewels around and switch their positions as needed. We only need to check that boxed jewels that cancel out (that is, they refer to terms with the same exponent but opposite signs) can be ``dissolved'' by flip moves.

\begin{figure}%
\centering
\includegraphics[width=0.6\columnwidth]{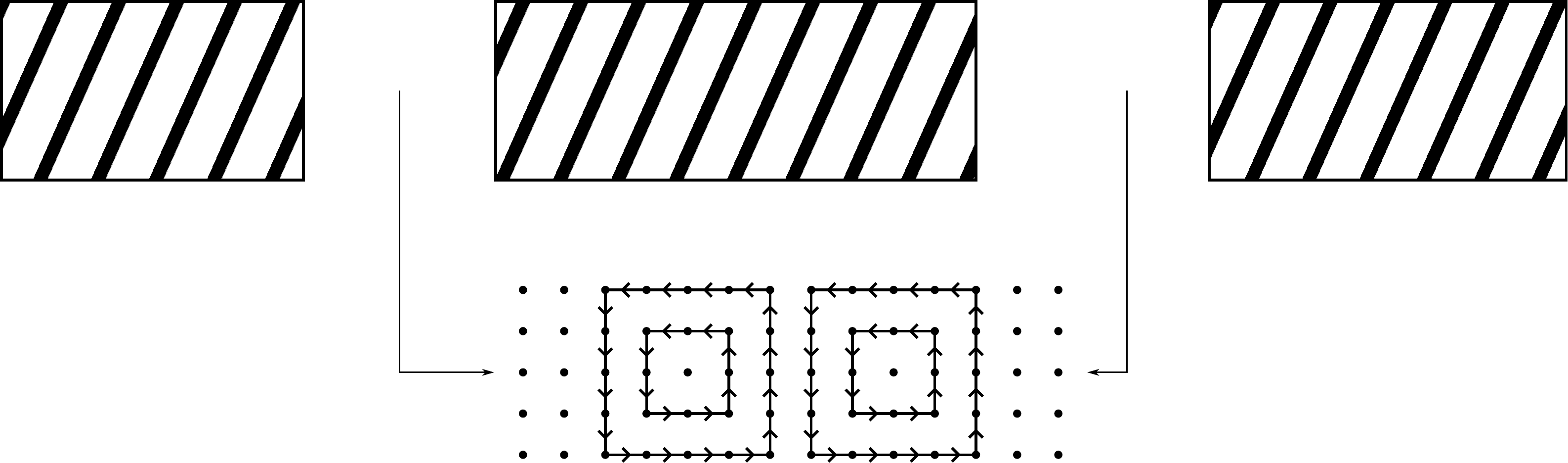}%
\caption{Illustration of how boxed jewels that cancel out can be brought close. The striped rectangles indicate areas where there may be other boxed jewels.}%
\label{fig:boxedJewelsCancellation}%
\end{figure}
\begin{figure}[ht]%
\centering
\includegraphics[width=0.6\columnwidth]{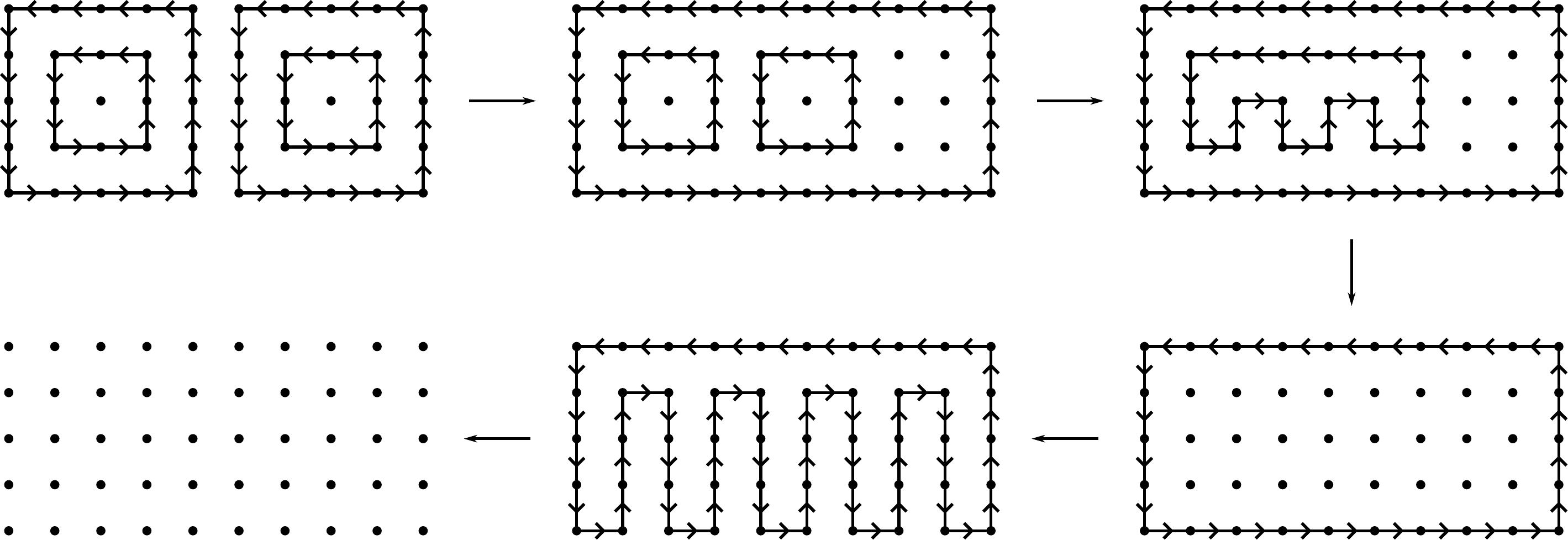}%
\caption{Some intermediate steps in the flip homotopy that ``dissolves'' a pair of cancelling boxed jewels}%
\label{fig:boxedJewelsCancellationSteps}%
\end{figure}

In order to see this, start by moving the boxed jewels that cancel out downwards, then toward each other until they are next to each other, as illustrated in Figure \ref{fig:boxedJewelsCancellation}. Once they are next to each other, they can be elliminated by a sequence of flip moves. Figure \ref{fig:boxedJewelsCancellationSteps} shows some of the steps involved in the flip homotopy that eliminates this pair of cancelling jewels. 
 
\end{proof}

If $s$ is a sock, we define the \emph{area} of $s$ to be the sum of the areas enclosed by each cycle of $s$, thought of as a plane curve. The areas count as positive regardless of the orientation of the cycles. As an example, the boxed jewels shown in Figure \ref{fig:boxedJewelsExample} have areas $4$ and $120,$ respectively, and the untangled sock in Figure \ref{fig:untangledSockExample} has area $20 + 4 + 20 + 56 = 100$. Naturally, the only sock with zero area is the empty sock (the sock with no cycles).

The following Lemma is the key step in the proof of Proposition \ref{prop:moreSpace}:

\begin{lemma}
\label{lemma:homotopicToUntangled}
Every sock is flip homotopic to an untangled sock in $\ZZ^2$.
\end{lemma}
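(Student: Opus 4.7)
The plan is to prove this by strong induction on the area $A(s)$ of the sock $s$, where $A(s)$ is the sum over the cycles of $s$ of the areas they enclose. The base case $A(s)=0$ is immediate: the empty sock is already untangled.

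For the inductive step, I would extract one boxed jewel $B$ from $s$, transport it out of the way, and recurse on $s\setminus B$. Let $\gamma$ be an innermost cycle of $s$, i.e., one whose interior contains no other cycle; then every vertex of $\interior(\gamma)$ is a jewel. If $\gamma$ encloses a single jewel $j$, I shrink $\gamma$ by flip moves of types (a) and (c) supported inside $\interior(\gamma)$ into the minimal cycle around $j$, producing a boxed jewel $B$. If instead $\gamma$ encloses $n\geq 2$ jewels, I first perform a pinching step: I deform $\gamma$ via type~(a) moves until two of its strands lie side by side, and then split $\gamma$ into two disjoint simple cycles $\gamma'$ and $\gamma''$ whose interiors partition the enclosed jewels; iterating this reduces to the case $n=1$.

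Once $B$ has been produced, I transport it far from the rest of $s$ by the moves illustrated in Figure~\ref{fig:boxedJewelsMove}, which apply because a boxed jewel can be inflated, translated, and deflated along any path through $\ZZ^2$ devoid of other cycles. The remaining sock $s\setminus B$ has strictly smaller area than $s$, since $B$ has enclosed area at least $1$, so by induction it is flip homotopic to an untangled sock $u'$ supported on the $x$-axis. To conclude, I bring $B$ back along the $x$-axis from far away and slide it into an unoccupied position among the boxed jewels of $u'$, again using the free-space argument of Figure~\ref{fig:boxedJewelsMove}. This produces an untangled sock flip homotopic to $s$, completing the induction.

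The main obstacle is the pinching step for $n\geq 2$: I must verify that every intermediate configuration during the split of $\gamma$ is a valid sock (a collection of disjoint simple oriented cycles), and also check that the colors of the jewels being separated are compatible with producing simple cycles at each stage. The cleanest way to handle this is to first normalize $\gamma$ into an axis-aligned rectangle by iteratively flipping reflex corners inward via type~(a) moves (which is possible because $\interior(\gamma)$ contains only jewels, never other cycles), and then carry out the split along an axis-parallel slice of the rectangle, which reduces to a short, explicitly describable sequence of elementary flip moves whose effect on the cycle structure is transparent. Flip-invariance of the polynomial $P_s(q)$ is automatic from Proposition~\ref{prop:twoStoryFlipInvariant} in its sock formulation, so no bookkeeping of the invariant is needed during the inductive construction.
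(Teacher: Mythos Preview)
Your induction-on-area framework is the same as the paper's, but the extraction step has a genuine gap. You produce a single-layer boxed jewel $B$ from the \emph{innermost} cycle $\gamma$ and then say you will ``transport it far from the rest of $s$ \dots\ along any path through $\ZZ^2$ devoid of other cycles.'' But nothing prevents $\gamma$ from being nested inside outer cycles $\gamma_1,\gamma_2,\dots$, in which case no such path exists. Worse, a single-layer boxed jewel cannot be pushed through an enclosing cycle by flip moves at all: if the jewel inside $B$ is black, $B$'s cycle is counterclockwise, and $\gamma_1$ is also counterclockwise, then that jewel contributes $q^{2}$ to $P_s$; after moving $B$ outside $\gamma_1$ the same jewel (translated by an even vector, hence still black) would contribute $q^{1}$, contradicting flip-invariance of $P_s$. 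So the transport step fails precisely in the nested case, and your induction never gets off the ground there.

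The paper's proof is engineered around exactly this obstruction. Instead of working with an arbitrary innermost cycle, it locates the rightmost-bottommost vertex $v$ of the whole sock and walks up the northwest diagonal until it finds a vertex $w$. If $w$ is not a jewel, a single flip strictly reduces area. If $w$ is a jewel, the diagonal staircase forces all cycles passing near $w$ to be stacked concentrically at that corner; one pulls them down into the free half-plane below $v$ and cuts them off (type~(b) moves, innermost first) as a \emph{multi-layer} boxed jewel --- one layer per enclosing cycle. That boxed jewel is now genuinely outside every remaining cycle, so it can be parked and the induction applied to what is left. Your pinching/normalization step has the same underlying problem: straightening a reflex corner of $\gamma$ means pushing into the \emph{exterior} of $\gamma$, which may be occupied by those same outer cycles. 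The fix is not local to $\gamma$; you need a global extraction point with guaranteed free space, which is what the corner argument provides.
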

\begin{proof}
Suppose, by contradiction, that there exists a sock which is not flip homotopic to an untangled sock. Of all the examples of such socks, pick one, $s_0$, that has minimal area (which is greater than zero, because the empty sock is already untangled).

Among all the cycles in $s_0$, consider the ones who have vertices that are furthest bottom. Among all these vertices, pick the rightmost one, which we will call $v$. In other words, assuming the axis are as in Figure \ref{fig:notation2Dexample}: if $m = \max \{n : (k,n) \mbox{ is a vertex of } s_0 \mbox{ for some } k \in \ZZ\}$ and $l = \max \{k : (k,m) \mbox{ is a vertex of } s_0\}$, then $v = (l,m)$. 

Clearly $v$ is the right end of a horizontal edge, and the bottom end of a vertical edge, as portrayed in Figure \ref{fig:rightmostSouthmostPoint}. We may assume without loss of generality that these edges are oriented as in the aforementioned Figure.    
\begin{figure}[ht]%
\centering
\def\svgwidth{0.25\columnwidth}
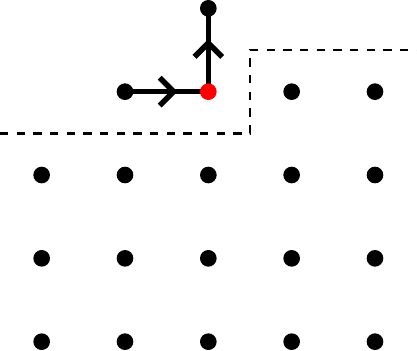
\caption{The rightmost bottommost vertex ($v$) in the nonempty sock $s_0$. By definition, there can be no cycle parts in $s_0$ below the dotted line.}%
\label{fig:rightmostSouthmostPoint}%
\end{figure}

Consider the diagonal of the form $v - (n,n), n \geq 0, n \in \ZZ$, starting from $v$ and pointing northwest, and let $w$ be the first point (that is, the one with the smallest $n$) in this diagonal that is not the right end of a horizontal edge pointing to the right and the bottom end of a vertical edge pointing up (see Figure \ref{fig:firstInTheDiagonal}).

\begin{figure}[ht]%
\centering
\setlength{\unitlength}{0.4\columnwidth}
  \begin{picture}(1,0.95832508)%
    \put(0,0){\includegraphics[width=\unitlength]{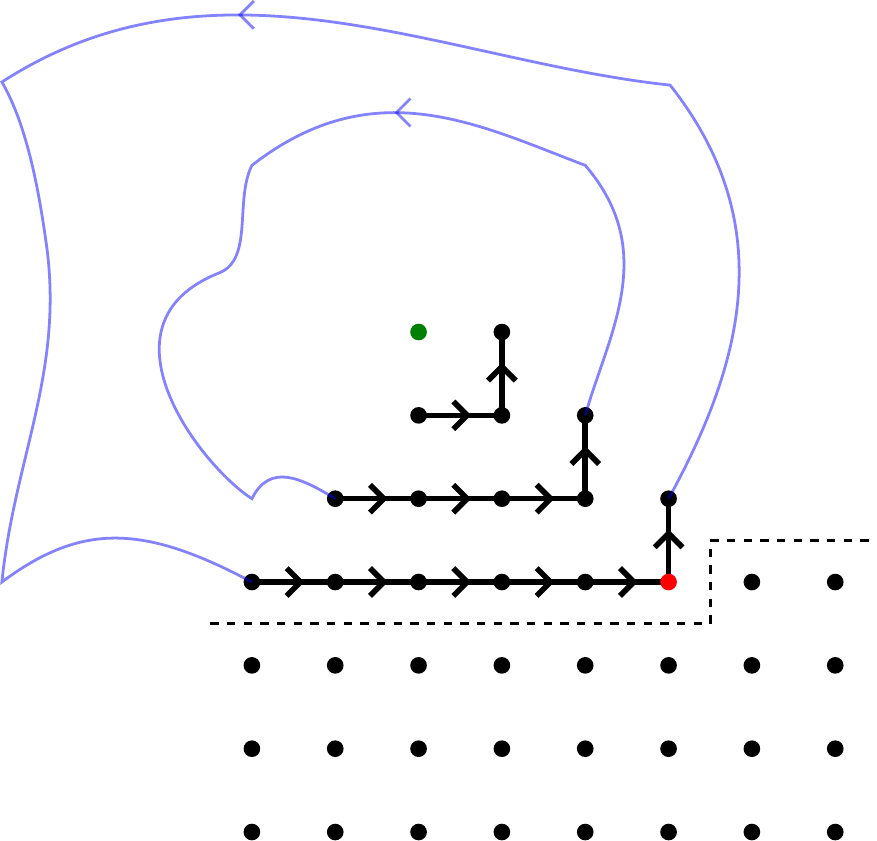}}%
    \put(0.77214768,0.25517476){\color[rgb]{0,0,0}\scalebox{0.9}{\makebox(0,0)[lb]{\smash{$v$}}}}%
    \put(0.4352189,0.59879322){\color[rgb]{0,0,0}\scalebox{0.9}{\makebox(0,0)[lb]{\smash{$w$}}}}%
  \end{picture}%
\caption{The vertices $v$ and $w$; $w$ is the first vertex in the diagonal that does not follow the pattern (``edgewise'') of the other three. The curved blue segments represent (schematically) the relative positions of two of the cycles, which must be in this way because there can be no cycle parts below the dashed line. }%
\label{fig:firstInTheDiagonal}%
\end{figure}

We will now see that if $w$ is not a jewel, then we can immediately reduce the area of $s_0$ with a single flip move. This leads to a contradiction, because the sock obtained after this flip move cannot be flip homotopic to an untangled sock, but has smaller area than $s_0$.

Suppose $w$ is not a jewel. With Figure \ref{fig:firstInTheDiagonal} in mind, consider the vertex directly below $w$. It is either the bottom end of a vertical edge pointing downward (case 1), or the right end of a horizontal edge pointing to the right. In case 1, a single flip move of type (a) will immediately reduce the area, as shown in the first drawing in Figure \ref{fig:firstInTheDiagonal_notJewel}. The other three drawings handle all the possibilities for the other case (again, we omit the easy details): notice that in each case there is a flip move that reduces the area.

\begin{figure}[ht]%
\centering
\includegraphics[width=0.7\columnwidth]{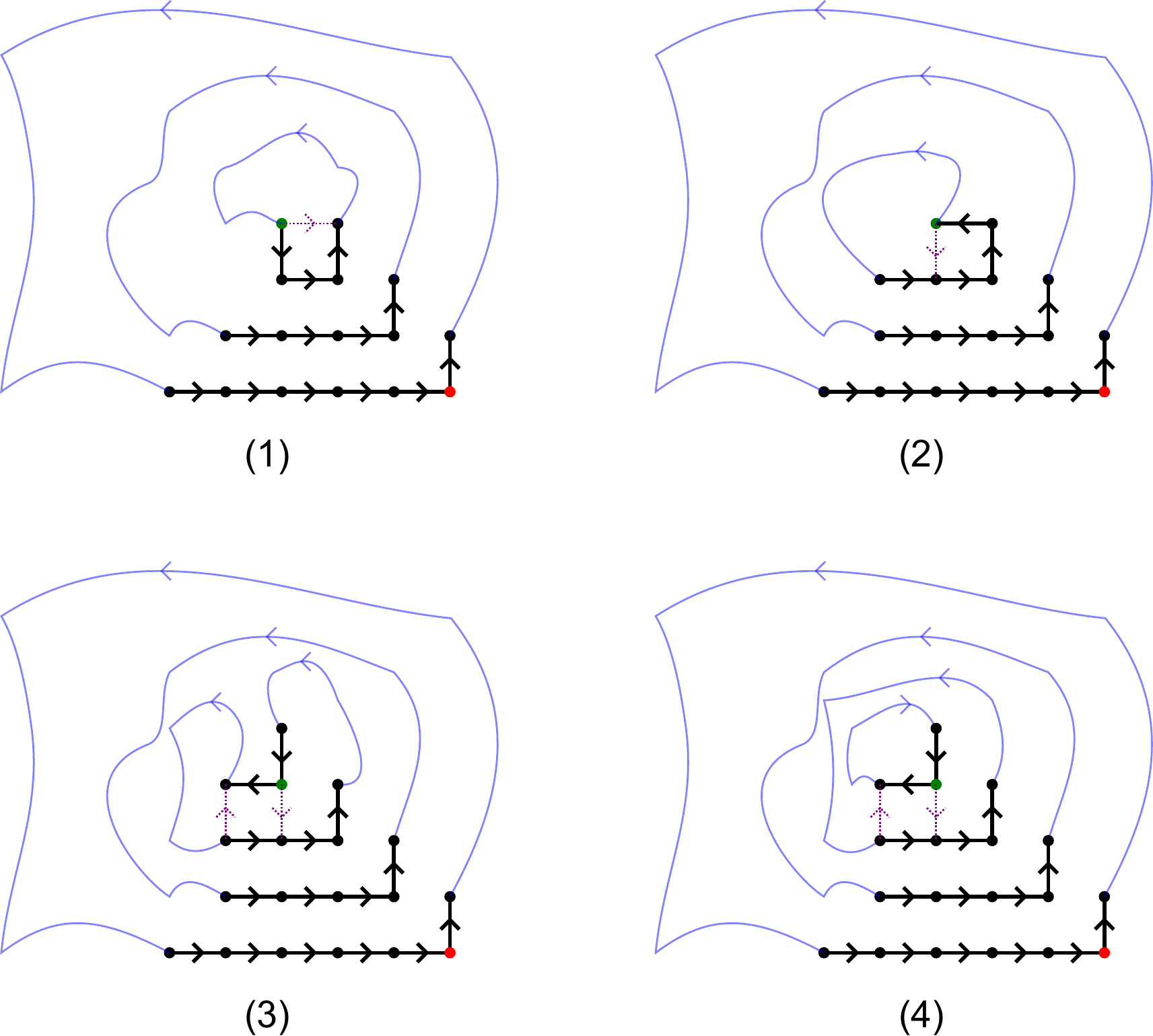}%
\caption{The four possibilities when $w$ is not a jewel. Notice that, in each case, there is a flip move that reduces the area, which are indicated by the dotted purple lines.}%
\label{fig:firstInTheDiagonal_notJewel}%
\end{figure} 

The most interesting case is when $w$ is a jewel. The key observation here is that the jewel may be then ``extracted'' from all the cycles as a boxed jewel, and what remains has smaller area. Figure \ref{fig:jewelExtraction} illustrates the steps involved in extracting a jewel.

\begin{figure}[ht]%
\centering
\includegraphics[width=\columnwidth]{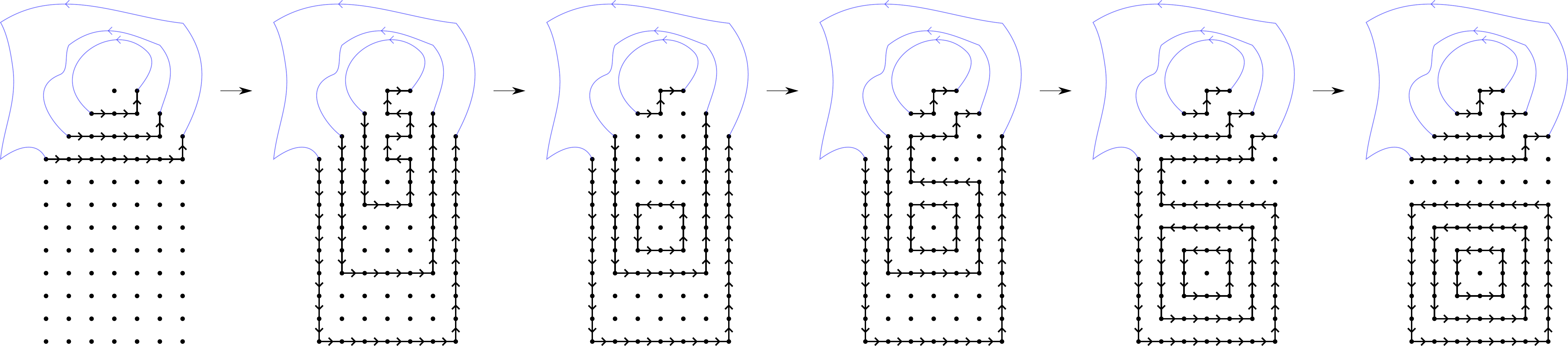}%
\caption{Some intermediate steps in the extraction of a jewel. Notice that this flip homotopy reduced the area of the cycles that previously enclosed the jewel.}%
\label{fig:jewelExtraction}%
\end{figure}

Suppose $w$ is a jewel. As in Figure \ref{fig:jewelExtraction}, the jewel $w$ can be extracted via flip moves. We first pull the cycles downward with flip moves of type $(a)$ and then ``cut'' each cycle (innermost cycles first) with flip moves of type $(b)$, so that the jewel leaves the cycles as a boxed jewel (easy details are left to the reader). Let $s_1$ be the sock obtained at the end of this flip homotopy, and let $\tilde{s_1} = \tilde{s_2}$ be the sock consisting of all the cycles of $s_1$ except the extracted boxed jewel. Clearly the area of $\tilde{s_2}$ is less than the area of $s_0$, and since $s_0$ is a sock that has minimal area among those that are not flip homotopic to an untangled sock, it follows that there exists a flip homotopy, say  $\tilde{s_2},\tilde{s_3},\ldots, \tilde{s_n}$ such that $\tilde{s_n}$ is an untangled sock.

Let $M = \max \{k : (l,k) \mbox{ is a vertex of } \tilde{s_i} \mbox{ for some } k \in \ZZ, 2 \leq i \leq n\}$. Recall that boxed jewels can move an arbitrary even distance if they have free space in some direction; by the definition of $v$, there can be no cycles or cycle parts below $v$ in $s_0$, so we can define $s_2$ (homotopic to $s_1$) by pulling the extracted jewel in $s_1$ down as much as we need, namely so that all vertices of the boxed jewel have $y$ coordinate strictly larger than $M$. Notice that $\tilde{s_2}$ is obtained from $s_2$ by removing this boxed jewel. Clearly we can perform on $s_2$ all the flip moves that took $\tilde{s_2}$ to $\tilde{s_n}$, so that we obtain a flip homotopy $s_2, s_3, \ldots, s_n$, where $s_n$ consists of all the cycles in $\tilde{s_n}$ plus a single boxed jewel down below. The other cycles in $s_n$ are also boxed jewels whose centers have $y$ coordinate equal to zero, because $\tilde{s_n}$ is untangled; therefore, the boxed jewel down below can be brought up and sideways as needed, thus obtaining a flip homotopy from $s_0$ to an untangled sock $s_{n+1}$. This contradicts the initial hypothesis, and thus the proof is complete.     
\end{proof}

This proof also yields an algorithm for finding the flip homotopy from an arbitrary sock to an untangled sock, although probably not a very efficient one. Start with the initial sock, and find $v$ and $w$ as in the proof. If $w$ is not a jewel, perform the flip move that reduces the area, and recursively untangle this new sock. If $w$ is a jewel, extract the boxed jewel, recursively untangle the sock without the boxed jewel (which has smaller area), and calculate how much space you needed to solve it: this will tell how far down the boxed jewel needs to be pulled. The algorithm stops recursing when we reach a sock with zero area: all that's left to do then is to ``organize'' the boxed jewels.


\begin{proof}[Proof of Proposition \ref{prop:moreSpace}]
Let $t_0$ and $t_1$ be two tilings of a duplex region, and suppose $P_{t_0} = P_{t_1}$. Let $s_0$ and $s_1$ denote their corresponding socks in $\ZZ^2$. Since $P_{s_0}(q) = P_{t_0}(q) - P_{t_0}(0) = P_{t_1}(q) - P_{t_1}(0) = P_{s_1}(q)$, it follows by Lemma \ref{lemma:homotopicToUntangled} that $s_0$ and $s_1$ are flip homotopic to untangled socks $\tilde{s}_0$ and $\tilde{s}_1$. By \ref{lemma:untangledSocksInvariant}, $\tilde{s}_0$ and $\tilde s_1$ are flip homotopic in $\ZZ^2$, and therefore so are $s_0$ and $s_1$. Finally, by Lemma \ref{lemma:equivEmbeddingHomotopy}, there exists a two-floored box such that the embeddings $\hat{t}_0$ and $\hat{t}_1$ of $t_0$ and $t_1$ lie in the same flip connected component.
\end{proof}

As a conclusion, the polynomial invariant here presented is, in a sense, complete: if two tilings have the same invariant and sufficient space is added to the region, then there is a sequence of flips taking one to the other. 
  
\section{Connectivity by flips and trits}
\label{sec:connectedFlipsTrits}
In Section \ref{sec:twofloorExamples}, we pointed out that the graph of flip connected components for Examples \ref{example:732box} and \ref{example:unequalFloors}, shown in Figures \ref{fig:box732_CCdiagram} and \ref{fig:bigGraph_CC_unequal}, is connected in both cases. There, two components are joined if there exists a trit taking a tiling in one component to a tiling in the other. Hence, this graph is connected if and only if for any two tilings of the given region, we can reach one from the other using flips and trits.

A natural question is, therefore: for two-story regions, is it true that one can always reach a tiling from any other via flips and trits? The answer is, in general, no, as Figure \ref{fig:unequal_twofloors_notConnectedFlipsTrits} shows. 

\begin{figure}%
\centering
\subfloat[]{\includegraphics[width=0.21\columnwidth]{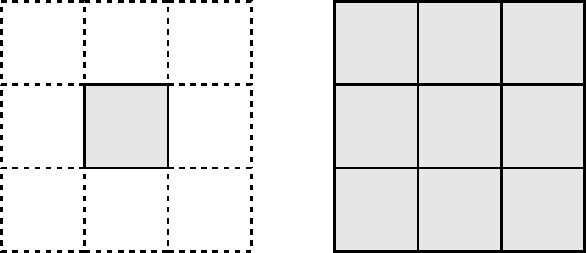}}\qquad
\subfloat[]{\includegraphics[width=0.33\columnwidth]{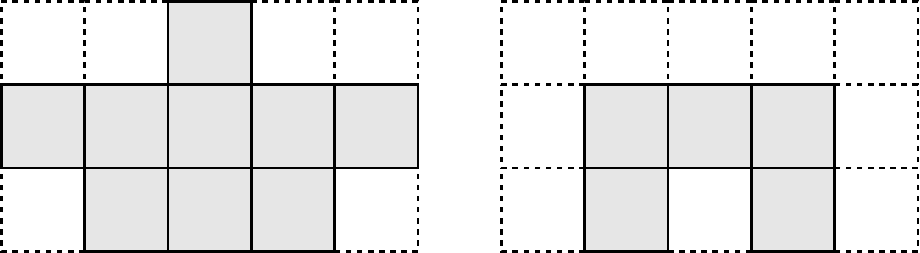}}
\caption{Two regions, each with two tilings where neither a flip nor a trit is possible.}%
\label{fig:unequal_twofloors_notConnectedFlipsTrits}%
\end{figure} 

Nevertheless, this is true for duplex regions:
\begin{prop}
\label{prop:connectivityFlipsTrits}
If $\cR$ is a duplex region and $t_0,t_1$ are two tilings of $\cR$, there exists a sequence of flips and trits taking $t_0$ to $t_1$.
\end{prop}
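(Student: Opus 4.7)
My plan is to reduce every tiling of $\cR = \cD \times [0,2]$ to the canonical \emph{all-vertical} tiling $t_*$, in which each column of $\cD$ is covered by a single $\ez$-domino, via a sequence of flips and trits. Since this normal form is intrinsic to $\cR$, any two tilings $t_0, t_1$ may then be joined through $t_*$.

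I would proceed by strong induction on the lexicographic pair $(A(t), N(t))$, where $A(t)$ is the total lattice area enclosed by the nontrivial cycles of the associated drawing of $t$, and $N(t)$ is the number of trivial $2$-cycles in that drawing. The minimum $(0,0)$ is realized only by $t_*$. When $A(t) = 0$ and $N(t) > 0$, any trivial $2$-cycle can be eliminated by the type-(c) flip from Case \ref{case:flipzdimer} of the proof of Proposition \ref{prop:twoFloorFlipInvariant}, strictly decreasing $N(t)$ (and leaving $A(t)$ at zero). It remains to handle the case $A(t) > 0$.

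When $A(t) > 0$, pick an innermost nontrivial cycle $\gamma$; by first applying type-(c) flips to any trivial $2$-cycles inside $I(\gamma)$ (which does not increase $A(t)$), I may assume that $I(\gamma) \subset \cD$ is a simply connected region tiled entirely by jewels. Every simple closed lattice polygon has at least four convex corners, and at any such convex corner $v$ of $\gamma$ the interior cell of $I(\gamma)$ diagonally adjacent to $v$ must be a jewel $j$. The vertical dimer $j$, together with the two horizontal dimers (one in each floor) representing the two edges of $\gamma$ meeting at $v$, fit inside a single $2 \times 2 \times 2$ sub-box of $\cR$ as three mutually perpendicular dimers. This is precisely the local configuration of one side of a trit. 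Performing the trit moves $j$ diagonally across the $2 \times 2$ base and replaces $\gamma$ by a cycle $\gamma'$ that cuts the corner $v$ off, so that $|I(\gamma')| = |I(\gamma)| - 1$, and $A(t)$ strictly decreases. Iterating this procedure therefore terminates at $t_*$.

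The main obstacle is verifying that the trit described above is indeed valid and has the claimed local effect. Validity amounts to checking that the two cubes of the $2 \times 2 \times 2$ sub-box not covered by $j$ and the two highlighted edges of $\gamma$ are each paired with dimers extending outside the sub-box; this follows because the two extra dimers would otherwise force an edge of $\gamma$ incompatible with the innermost assumption. The claim that the trit cuts the corner off and reduces $|I(\gamma)|$ by one is a short local computation using Figure \ref{fig:postrit} and the sign/color bookkeeping of Section \ref{sec:effectOfTritsOnPt}: the data at $v$ (the colors of the four squares involved and the orientations of the two edges of $\gamma$) uniquely determine which of the two trits (positive or negative) is applicable, and in either sign the resulting drawing has the diagonal cell of $I(\gamma)$ promoted to $\gamma'$ and the former corner $v$ demoted to a jewel exterior to $\gamma'$, giving the stated decrease in $A(t)$.
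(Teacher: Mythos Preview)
Your overall strategy—reduce every sock to the empty one by repeatedly finding a local move that strictly decreases the enclosed area—is exactly the paper's approach (Lemma \ref{lemma:flipAndTritHomotopy}). The gap is in your treatment of the case $A(t)>0$: the assertion that ``at any convex corner $v$ of $\gamma$ the interior cell of $I(\gamma)$ diagonally adjacent to $v$ must be a jewel'' is false in general, because that diagonal cell may lie on $\gamma$ itself rather than in $I(\gamma)$. The simplest example is a cycle of length $4$ (a $2\times 2$ block of vertices), which has $I(\gamma)=\emptyset$; at every corner the diagonal cell is the opposite corner, a vertex of $\gamma$. More generally, any thin rectangular cycle on a $2\times n$ block of vertices has empty interior, and at each of its four convex corners the diagonal cell is on $\gamma$. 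In these situations no trit is available at any corner; a flip is required to decrease the area, and your induction stalls.

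The paper's proof handles precisely this dichotomy. At the rightmost--bottommost vertex $v$ of an innermost cycle $\gamma$ one looks at $w = v - (1,1)$. If $w$ is a jewel, one performs the trit (your case). If $w$ is not a jewel, then since $\gamma$ is innermost $w$ must lie on $\gamma$; there are then three subcases depending on how the edges of $\gamma$ sit at $w$, and in each of them a flip move reduces the area (see Figure~\ref{fig:tritAndFlipHomotopy_cases}). Your argument becomes correct once you add this flip branch; without it, thin cycles are not handled.
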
 

In order to prove this result, we'll once again make use of the concept of systems of cycles, or socks, that were introduced in Section \ref{sec:twoFloorsMoreSpace}.

Understanding the effect of a trit on a sock turns out to be quite easy: in fact, the effect of a trit can be captured to the world of socks via the insertion of a new move, which we will call the \emph{trit move} (in addition to the three flip moves shown in Figure \ref{fig:flipsteps}). The trit move is shown in Figure \ref{fig:tritstep}.

\begin{figure}[ht]%
\centering
\includegraphics[width=0.3\columnwidth]{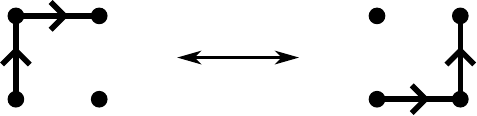}%
\caption{The trit move.}%
\label{fig:tritstep}%
\end{figure}

Another way to look at trit moves is that it either pulls a jewel out of a cycle or pushes one into a cycle. Two socks $s_1$ and $s_2$ are \emph{flip and trit homotopic} in a graph $G$ (which contains both $s_1$ and $s_2$) if there is a finite sequence of flip and/or trit moves taking $s_1$ to $s_2$. Notice that, unlike in Section \ref{sec:twoFloorsMoreSpace}, where we were mainly interested in flip homotopies in $\ZZ^2$, we are now interested in flip and trit homotopies in the finite graphs $G(\cR)$. Recall from Section \ref{sec:twoFloorsMoreSpace} that if $\cR$ is a duplex region, $G(\cR)$ is the planar graph whose vertices are the centers of the squares in the associated drawing of $\cR$, and where two vertices are joined by an edge if their Euclidean distance is exactly $1$.

Again, for two tilings $t_0, t_1$, there exists a sequence of flips and trits taking $t_0$ to $t_1$ if and only if their corresponding socks are flip and trit homotopic in $G(\cR)$.

\begin{lemma} \label{lemma:flipAndTritHomotopy} If $\cR$ is duplex region, and $s$ is a sock in $G(\cR)$, then $s$ is flip and trit homotopic to the empty sock (the sock with no cycles) in $G(\cR)$.
\end{lemma}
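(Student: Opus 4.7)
The plan is to proceed by induction on the \emph{area} of $s$, mimicking the strategy used to prove Lemma \ref{lemma:homotopicToUntangled} but replacing the boxed-jewel extraction step with a single trit move. The base case, area zero, is immediate since it forces $s$ to be the empty sock; for the inductive step, I will exhibit either a flip move or a trit move that strictly decreases the area, after which the inductive hypothesis closes the argument.

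First I would pick exactly the same extremal vertex as in Lemma \ref{lemma:homotopicToUntangled}: among all vertices of cycles of $s$ with minimum $y$-coordinate, let $v$ be the one with the maximum $x$-coordinate, so that (as in Figure \ref{fig:rightmostSouthmostPoint}) there are no cycle parts below or to the right of $v$ on the bottom row, and $v$ sits at a convex corner of the innermost cycle $\gamma$ containing it. Next I would examine the diagonal vertex $w$ at position $v+(-1,+1)$; simple-connectivity of $\cD$ guarantees $w\in G(\cR)$. If $w$ is a vertex of some cycle of $s$, then the four local pictures of Figure \ref{fig:firstInTheDiagonal_notJewel} apply verbatim and each produces a flip move that reduces the area by one. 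If instead $w$ is a jewel, then $\gamma$ must enclose $w$, and the local geometry around $v$ and $w$ matches precisely one side of the trit move in Figure \ref{fig:tritstep}; a single trit then pulls $w$ outside $\gamma$ and again reduces the area by one. Either way the resulting sock has strictly smaller area and is, by the inductive hypothesis, flip-and-trit homotopic in $G(\cR)$ to the empty sock; composing moves yields the desired homotopy from $s$.

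The main obstacle I expect is precisely the jewel case. In Lemma \ref{lemma:homotopicToUntangled} we had the luxury of unlimited space in $\ZZ^2$ and could extract $w$ as a boxed jewel, then send it off to a faraway empty region via a flip homotopy (see Figure \ref{fig:jewelExtraction}); here we are confined to the finite graph $G(\cR)$ and no such extraction is available, so the trit move is essential rather than optional. The delicate verification is therefore to check that extremality of $v$ (nothing below it, nothing further right on the bottom row) combined with the fact that $\gamma$ is innermost (so there is no other cycle between $\gamma$ and $w$) really forces the local configuration to match a valid trit pattern. Once this is checked, the induction goes through unchanged, and the lemma follows.
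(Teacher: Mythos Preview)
Your plan is the paper's own argument: reduce the area by one via a local move at a suitably extremal corner, using a flip when the diagonal neighbour $w$ lies on a cycle and a trit when $w$ is a jewel. The paper phrases it as a minimal counterexample rather than explicit induction, but that is immaterial.

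One point needs tightening. You choose $v$ as the \emph{global} rightmost--bottommost cycle vertex and then refer to ``the innermost cycle $\gamma$ containing it,'' but the (unique) cycle through such a $v$ need not be innermost: picture two nested rectangles where only the outer one reaches the bottom row. In that situation $w$ can lie on a \emph{different} cycle $\gamma'$ strictly inside $\gamma$, and your blanket appeal to Figure~\ref{fig:firstInTheDiagonal_notJewel} is shaky --- those four pictures come from the diagonal-walk argument of Lemma~\ref{lemma:homotopicToUntangled} and assume a specific staircase of same-cycle corners between $w$ and $v$, which you do not have here. The paper avoids this entirely by first selecting an \emph{innermost} cycle $\gamma$ (one enclosing no other cycle) and only then taking $v$ to be the rightmost--bottommost vertex of $\gamma$; simple connectivity still forces $w\in G(\cR)$, and now if $w$ is not a jewel it must lie on $\gamma$ itself, so the local case analysis (the paper's own Figure~\ref{fig:tritAndFlipHomotopy_cases}, not Figure~\ref{fig:firstInTheDiagonal_notJewel}) has only three flip cases plus the one trit case. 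Make this adjustment and your sketch becomes a complete proof.
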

\begin{proof}
Suppose, by contradiction, that there exists a sock contained in $G(\cR)$ that is not flip and trit homotopic to the empty sock in $G(\cR)$. Of all the examples of such socks, let $s$ be one with minimal area (the concept of area of a sock was defined in Section \ref{sec:twoFloorsMoreSpace}). We will show that there exists either a flip move or a trit move that reduces the area of $s$, which is a contradiction.

Let $\gamma$ be a cycle of $s$ such that there is no other cycle inside $\gamma$: hence, if there is any vertex enclosed by $\gamma$, it must be a jewel. Similar to the proof of Lemma \ref{lemma:homotopicToUntangled}, let $v$ be the rightmost among the bottommost vertices of $\gamma$ (notice that we are only considering the vertices of $\gamma$, and not all the vertices in $s$), or, in other words: if $m = \max \{n : (k,n) \mbox{ is a vertex of } \gamma \mbox{ for some } k \in \ZZ\}$ and $l = \max \{k : (k,m) \mbox{ is a vertex of } \gamma\}$, let $v = (l,m)$. Notice that $v$ is the right end of a horizontal edge and the bottom end of a vertical edge: we may assume without loss of generality that this horizontal edge points to the right. 

\begin{figure}[ht]%
\centering
\includegraphics[width=0.45\columnwidth]{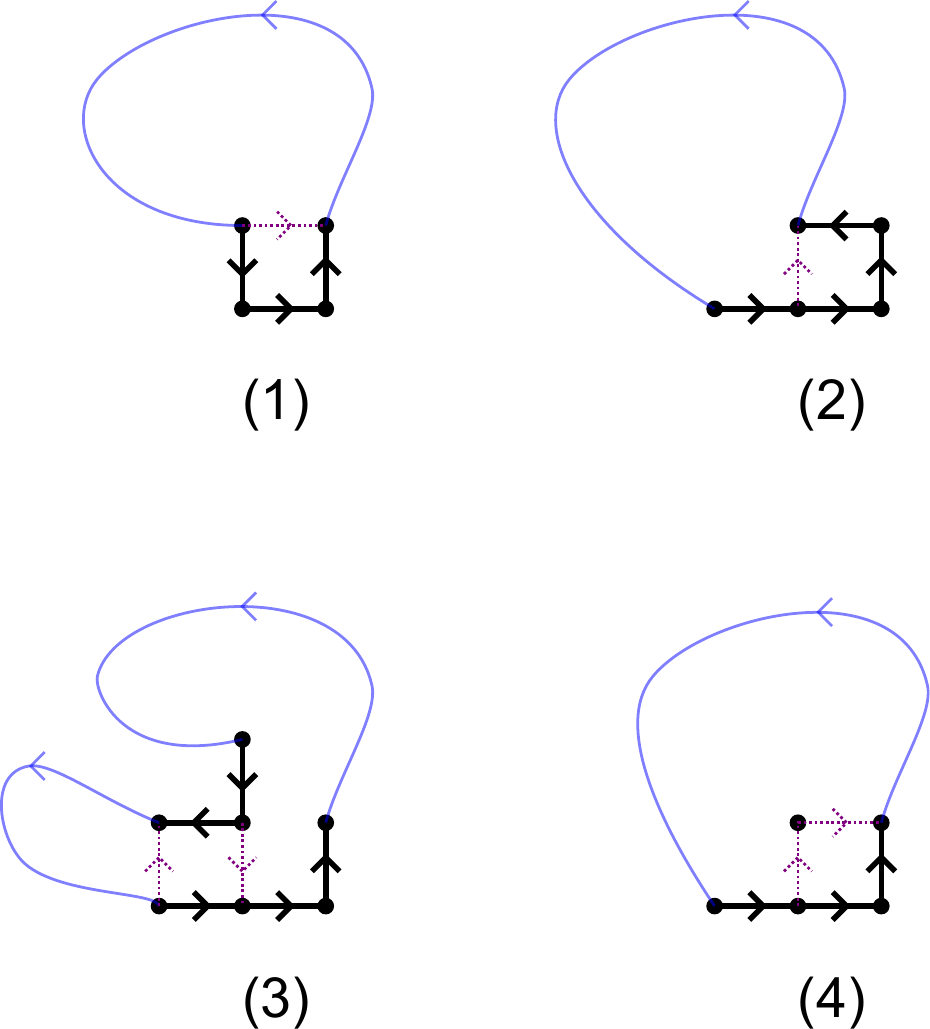}%
\caption{The four cases in the proof of Lemma \ref{lemma:flipAndTritHomotopy}, where the curved blue lines indicate schematically the position of the edges in $\gamma$ that are not directly portrayed. The move that reduces the area is indicated by the dashed purple line: in the first three cases, it is a flip move; the last one is a trit move.}%
\label{fig:tritAndFlipHomotopy_cases}%
\end{figure}

Notice that the vertex $w = v - (1,1)$ is necessarily in the graph $G(\cR)$, because otherwise $\gamma$ would have a hole inside, which contradicts the hypothesis that the (identical) floors of $\cR$ are simply connected. Suppose first that $w$ is not a jewel. Since there are no cycles inside $\gamma$, it follows that $w$ must be a vertex of $\gamma$. If $w$ is the topmost end of a vertical edge poiting downward, we have the first case in Figure \ref{fig:tritAndFlipHomotopy_cases}, where we clearly have a flip move that reduces the area. If this is not the case, it follows, in particular, that $u = v - (2,0)$ must be in the graph $G(\cR)$: Cases (2) and (3) of Figure \ref{fig:tritAndFlipHomotopy_cases} show the two possibilities for the edges that are incident to $w$, and it is clear that there exists a flip move which reduces the area of $s$.

Finally, the case where $w$ is a jewel is shown in case (4) of Figure \ref{fig:tritAndFlipHomotopy_cases}: the available trit move clearly reduces the area. Hence, there is always a flip or trit move that reduces the area of the sock, which contradicts the minimality of the area of $s$.    
\end{proof}

Therefore, we have established Proposition \ref{prop:connectivityFlipsTrits}. Another observation is that the above proof also yields an algorithm for finding the flip and trit homotopy from a sock to the empty sock: while there is still some cycle in the sock, find one cycle that contains no other cycle. Then find $v$, as in the proof, and do the corresponding flip or trit move, depending on the case. Since each move reduces the area, it follows that we'll eventually reach the only sock with zero area, which is the empty sock.
 
\section{The invariant when more floors are added}
\label{sec:embedFourFloors}
In this section, we'll discuss the fact that the invariant $P_t(q)$ is not preserved when the tilings are embedded in ``big'' regions with more than two floors.

As in Section \ref{sec:twoFloorsMoreSpace}, we'll consider duplex regions. Recall from that section that we defined the embedding of such a tiling in a two-floored box $\cB$. Here, we'll extend this notion to embeddings in boxes with four floors in a rather straightforward manner.

If $t$ is a tiling of a duplex region $\cR$, and $\cB$ is a box with four floors such that $\cR$ is contained in the top two floors of $\cB$, then the embedding $\hat{t}$ of $t$ in $\cB$ is the tiling obtained by first embedding $t$ in the top two floors of $\cB$ (as in Section \ref{sec:twoFloorsMoreSpace}), and then filling the bottom two floors with dominoes parallel to $\ez$. This is illustrated in Figure \ref{fig:embeddingFourFloorsExample}.

\begin{figure}[ht]%
\centering
\includegraphics[width=0.5\columnwidth]{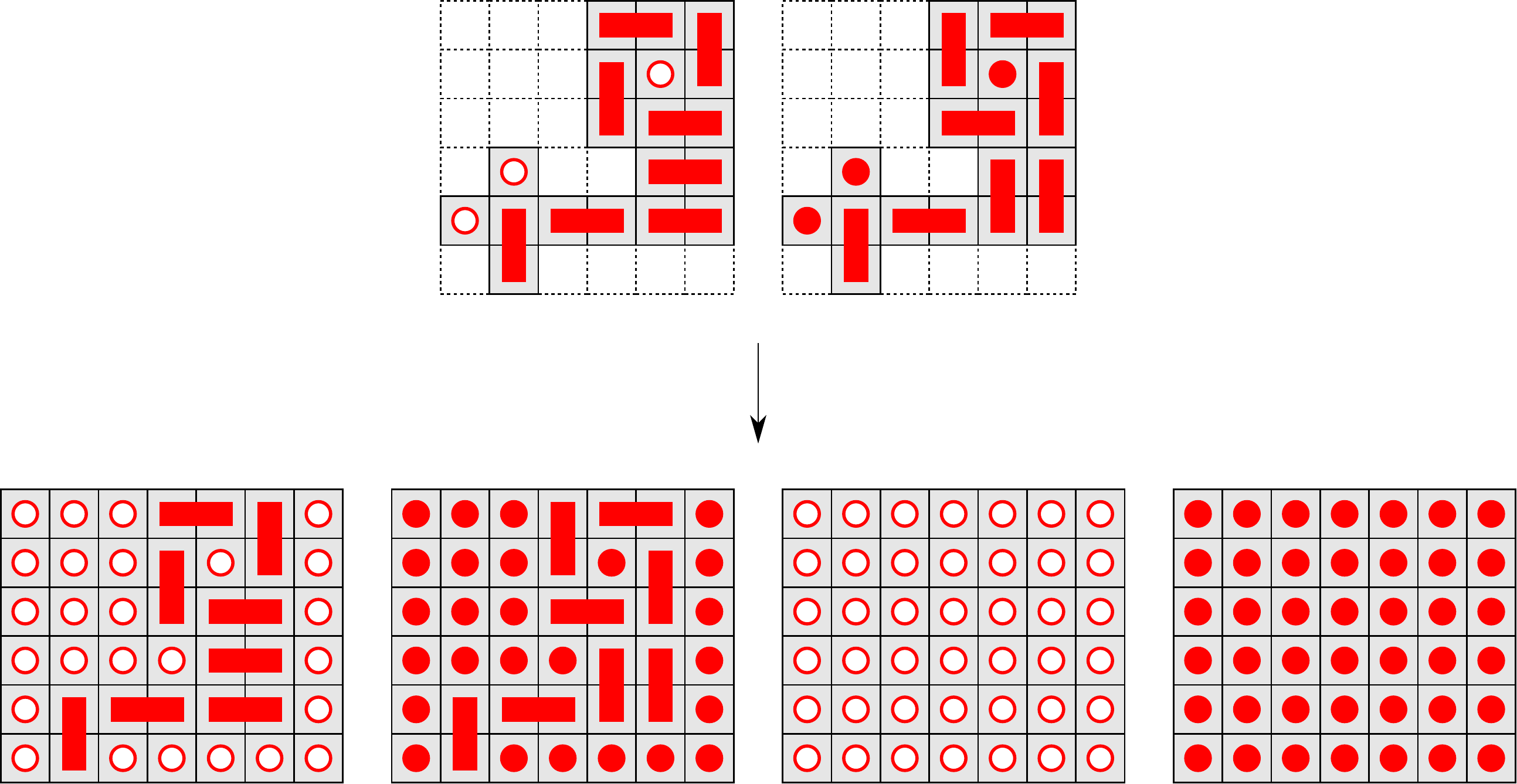}%
\caption{A tiling of a two-floored region and its embedding in a $7 \times 6 \times 4$ box.}%
\label{fig:embeddingFourFloorsExample}%
\end{figure}

\begin{prop}
\label{prop:embedFourFloors}
If $t_0$ and $t_1$ are two tilings of a duplex region and $P_{t_0}'(1) = P_{t_1}'(1)$, then there exists a box with four floors $\cB$ such that the embeddings of $t_0$ and $t_1$ in $\cB$ lie in the same flip connected component.
\end{prop}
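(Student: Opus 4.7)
The plan is to combine the flip-and-trit connectivity of Proposition \ref{prop:connectivityFlipsTrits} with an explicit simulation of cancelling trit pairs by flip sequences that use the two extra floors as workspace.

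First, I would apply Proposition \ref{prop:connectivityFlipsTrits} to obtain a sequence $t_0 = s_0, s_1, \ldots, s_n = t_1$ of tilings of $\cR$ in which consecutive tilings differ by a single flip or trit. Corollary \ref{coro:tritstwofloors}, combined with the hypothesis $P_{t_0}'(1) = P_{t_1}'(1)$, implies that this sequence contains equal numbers of positive and negative trits. A routine move-commutation argument (interchanging any two consecutive moves whose affected $2 \times 2 \times 1$ or $2 \times 2 \times 2$ windows are disjoint) allows me to reorder the sequence so that the trits appear in consecutive positive-negative pairs separated only by flips. The flips in this reordered sequence will embed directly as flips in any four-floor box $\cB$ containing $\cR$ in its top two floors, so the whole problem reduces to simulating a single positive-negative trit pair by flips in $\cB$.

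The core of the proof is then the following key lemma: if $u, u'$ are tilings of $\cR$ such that $u'$ is obtained from $u$ by a positive trit at a location $A$ immediately followed by a negative trit at a location $B$, then for a sufficiently large four-floor box $\cB$ containing $\cR$ in its top two floors, the embeddings $\hat{u}$ and $\hat{u'}$ lie in the same flip connected component of $\cB$. Granting this lemma, the proposition follows by induction on the number of trit pairs. To prove the lemma, I would first invoke Proposition \ref{prop:moreSpace} to enlarge $\cR$ (within the top half of $\cB$) so that $A$ and $B$ lie in roomy subregions where local flip manoeuvres are unobstructed. Then I would construct an explicit flip sequence in $\cB$: the bottom two floors, initially filled with $\ez$-dominoes, act as a reservoir. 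By flipping pairs of vertical dominoes into horizontal ones one opens a ``channel'' beneath $A$ that lets the effect of the positive trit be realized via flips straddling the plane $z = 2$; a symmetric construction at $B$ cancels the negative trit; and the resulting local discrepancies in the bottom half are propagated and eliminated using the sock-homotopy machinery of Section \ref{sec:twoFloorsMoreSpace} applied to the lower duplex half of $\cB$.

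The main obstacle is the explicit combinatorial verification that after performing the simulation the bottom two floors return exactly to their original all-$\ez$ configuration; equivalently, that the ``channel opening'' and ``channel closing'' flips in the bottom half can be made to cancel out. This is plausible on invariant-theoretic grounds because the net change in $P$ produced by a positive-negative trit pair is $(q^k - q^{k'})(q-1)$, which vanishes at $q = 1$ and whose derivative at $q = 1$ is $k - k'$, so the pair has trivial twist and carries no obstruction beyond what the sock-homotopy of the bottom half can absorb. Putting it all together with Lemma \ref{lemma:homotopicToUntangled} applied to the bottom-floor sock yields the desired flip connectivity of $\hat{t}_0$ and $\hat{t}_1$ in $\cB$.
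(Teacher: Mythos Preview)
Your approach diverges from the paper's: the paper first reduces (via Proposition~\ref{prop:moreSpace} and Lemma~\ref{lemma:homotopicToUntangled}) to tilings whose socks are untangled, and then shows directly, by explicit flip sequences that use the extra two floors (Lemmas~\ref{lemma:jewelGoesDown} and~\ref{lemma:jewelsCancel}), that a boxed jewel of degree~$n$ can be split into $n$ boxed jewels of degree~$1$ and that boxed jewels of degrees $1$ and $-1$ with the same sign cancel. This reduces everything to comparing untangled socks whose polynomials are $a_0 + P_t'(1)q$, and then Proposition~\ref{prop:moreSpace} finishes the job.

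Your proposal has a genuine gap at the reordering step. The claim that a ``routine move-commutation argument'' lets you rearrange the sequence so that the trits occur in adjacent positive--negative pairs is not justified, and in general it fails: you can only interchange two consecutive moves when their supports are disjoint, but nothing prevents a flip from overlapping the window of a trit that you want to slide past it. A simple scenario already illustrates the obstruction: a positive trit at $A$, followed by a flip whose $2\times 2\times 1$ slab meets $A$, followed later by the matching negative trit. The first trit cannot be pushed forward through that flip, so the pairing you need cannot be obtained by local commutations alone. Without this reordering, the reduction to your key lemma collapses.

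The key lemma itself is plausible, but your sketch of its proof (``open a channel beneath~$A$ \ldots\ realize the effect of the trit via flips straddling $z=2$ \ldots\ clean up the bottom with sock homotopy'') hides exactly the nontrivial content that the paper isolates in Lemmas~\ref{lemma:jewelGoesDown} and~\ref{lemma:jewelsCancel}. If you try to make this precise you will find that what you are really doing is transferring a single boxed jewel down to the bottom duplex and back; once you have that move, you no longer need the commutation/reordering step at all --- you can process the trits one at a time, accumulating their effect in the bottom half and then cancelling at the end. That is essentially the paper's argument, just arrived at from a different starting point.
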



Proposition \ref{prop:embedFourFloors} is essentially stating that the invariant $P_t(q)$ no longer survives when more floors are added, although the twist $\Tw(t)$ still does, as we will see in Chapter \ref{chap:multiplex}. 
The converse of Proposition \ref{prop:embedFourFloors}, i.e., the fact that the twist is invariant by flips, follows from item \ref{item:flips} of Proposition \ref{prop:flipsAndTrits}.

Recall from Section \ref{sec:twoFloorsMoreSpace} the definitions of boxed jewel and sock. The key fact in the proof of Proposition \ref{prop:embedFourFloors} is that a boxed jewel associated to a $q^n$ term in $P_t(q)$ can be transformed into a number of smaller boxed jewels, their terms adding up to $nq$. In what follows, the \emph{sign} of a boxed jewel is the sign of its contribution to $P_t$ (i.e., $1$ if the jewel is black, and $-1$ if it is white) and its \emph{degree} is the number of cycles it contains, if all the cycles spin counterclockwise; it is minus this number if all the cycles are clockwise.
In other words, if $t$ is a tiling of a two-floored region whose sock is untangled and $\{b_i\}_i$ is the set of boxed jewels in this sock, then 
$$P_t(q) = a_0 + \sum_i \sgn(b_i) q^{\deg(b_i)},$$
where $a_0 = P_t(1) - \sum_i \sgn(b_i)$, $\sgn(b_i)$ is the sign of $b_i$ and $\deg(b_i)$, its degree. 

\begin{lemma}
\label{lemma:jewelGoesDown}
If $t_0$ and $t_1$ are tilings of a two-floored region such that their associated socks are both untangled (consist only of boxed jewels) and:
\begin{enumerate}[label=(\roman*)]
	\item The associated sock of $t_0$ consists of a single boxed jewel of degree $n > 0$.
	\item The associated sock of $t_1$ consists of $n$ boxed jewels of degree $1$ and same sign as the boxed jewel in $t_0$.
\end{enumerate} 
Then there exists a box with four floors $\cB$ where their embeddings lie in the same connected components. 
\end{lemma}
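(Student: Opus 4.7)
The plan is to proceed by induction on $n$, reducing to the case where we peel off a single outermost cycle from the degree-$n$ boxed jewel, producing a configuration consisting of a boxed jewel of degree $n-1$ together with a fresh boxed jewel of degree $1$ of the same sign. Once we can realize one peeling step via flips in a four-floor box, the full statement follows by applying the step $n-1$ times and then using the two-floor machinery (Proposition \ref{prop:moreSpace}, applied within the top two floors, or its analogue for untangled socks, Lemma \ref{lemma:untangledSocksInvariant}) to rearrange the resulting $n$ degree-$1$ boxed jewels into the configuration prescribed by $t_1$.

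Without loss of generality assume the boxed jewel is positive (the jewel is black) and all cycles are counterclockwise; the opposite sign case follows by reflecting everything in a horizontal plane, which preserves the sign but reverses orientations consistently. Set up coordinates so that $\cR$ sits in the top two floors of a four-floor box $\cB = [0,L]\times[0,M]\times[0,4]$, with $L,M$ chosen large enough to provide ample free space around the nested cycles. The embedded tiling $\hat{t}_0$ consists of the $3$D realization of the single degree-$n$ boxed jewel in $z\in[2,4]$, together with $\ez$-parallel dominoes filling $z\in[0,2]$.

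The key step is to use the bottom two floors as scratch space to perform the peeling. Concentrate on the outermost cycle of the boxed jewel: in the $3$D picture it is a ring of horizontal dominoes living in one of the top two floors. The idea is to perform flips across the interface $z=2$ to convert the horizontal dominoes along this ring into $\ez$-parallel dominoes bridging floors $2$ and $3$, then to open up the ring below by analogous flips across $z=1$, and finally to close the rearranged configuration into an isolated degree-$1$ boxed jewel sitting in the bottom two floors at a fresh $(x,y)$ location far away from the inner $n-1$ cycles. This sequence leaves the inner $n-1$ nested cycles, and the central jewel, undisturbed (since all the activity happens along the outermost cycle and in the vertical column beneath it), so the net effect on the associated sock of the embedded tiling is to split off one of the nested cycles as a separate degree-$1$ boxed jewel. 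This is consistent with the invariant calculation: the contribution $\sgn\cdot q^n$ becomes $\sgn\cdot q^{n-1} + \sgn\cdot q$, which has the same derivative at $q=1$.

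The main obstacle will be making the peeling construction rigorous, i.e.\ exhibiting an explicit template of tilings on a small four-floor subregion that contains one full ring and its scratch column, showing that each consecutive pair differs by a single flip, and verifying that outside this template the tiling is unchanged. I would present this as a single labeled figure giving the intermediate tilings (compare the style of Figures \ref{fig:boxedJewelsMove} and \ref{fig:boxedJewelsCancellationSteps}), omitting the routine verification of each individual flip. A secondary (easier) point is to confirm that the required scratch region fits inside $\cB$ and does not interfere with the other nested cycles; this is handled by choosing $L$ and $M$ large, and then using Lemma \ref{lemma:equivEmbeddingHomotopy} together with Lemma \ref{lemma:untangledSocksInvariant} to reorganize the $n$ resulting degree-$1$ boxed jewels (after all $n-1$ peelings) into the exact untangled sock of $t_1$ within some possibly larger four-floor box.
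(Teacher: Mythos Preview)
Your inductive framework and the final two-floor rearrangement via Proposition~\ref{prop:moreSpace} match the paper's strategy. The divergence is in the peeling step itself: you propose removing the \emph{outermost} cycle, whereas the paper transports the \emph{innermost} degree-$1$ boxed jewel (the central jewel together with its innermost surrounding square) down into the bottom two floors. The paper's move is local---it acts on a $3\times 3$ block---and Figure~\ref{fig:jewelGoesDown} exhibits an explicit short flip sequence accomplishing it; once in the bottom floors (which are otherwise all $\ez$-parallel), the small boxed jewel moves freely, slides out from under the remaining $n-1$ cycles, and is brought back up at a fresh location. What remains up top is then, after easy two-floor flips, a degree-$(n-1)$ boxed jewel.

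Your mechanism, by contrast, has a real gap. A cycle in a two-floor sock corresponds to horizontal dominoes distributed across \emph{both} floors $z\in[2,3]$ and $z\in[3,4]$, not just the lower one; your sentence about ``flips across the interface $z=2$ converting the horizontal dominoes along this ring into $\ez$-parallel dominoes'' ignores the dominoes in the upper floor, and the dominoes you do mention are not parallel to the $\ez$-dominoes directly below them, so no flip is immediately available. Lowering a large ring intact while leaving the interior cycles undisturbed is a genuinely nonlocal operation, and you have not exhibited a template for it. The paper avoids the difficulty entirely by moving the one piece that is already a small, self-contained degree-$1$ boxed jewel and hence admits a short explicit construction.
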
 
\begin{proof}
\begin{figure}[ht]%
\centering
\includegraphics[width=0.4\columnwidth]{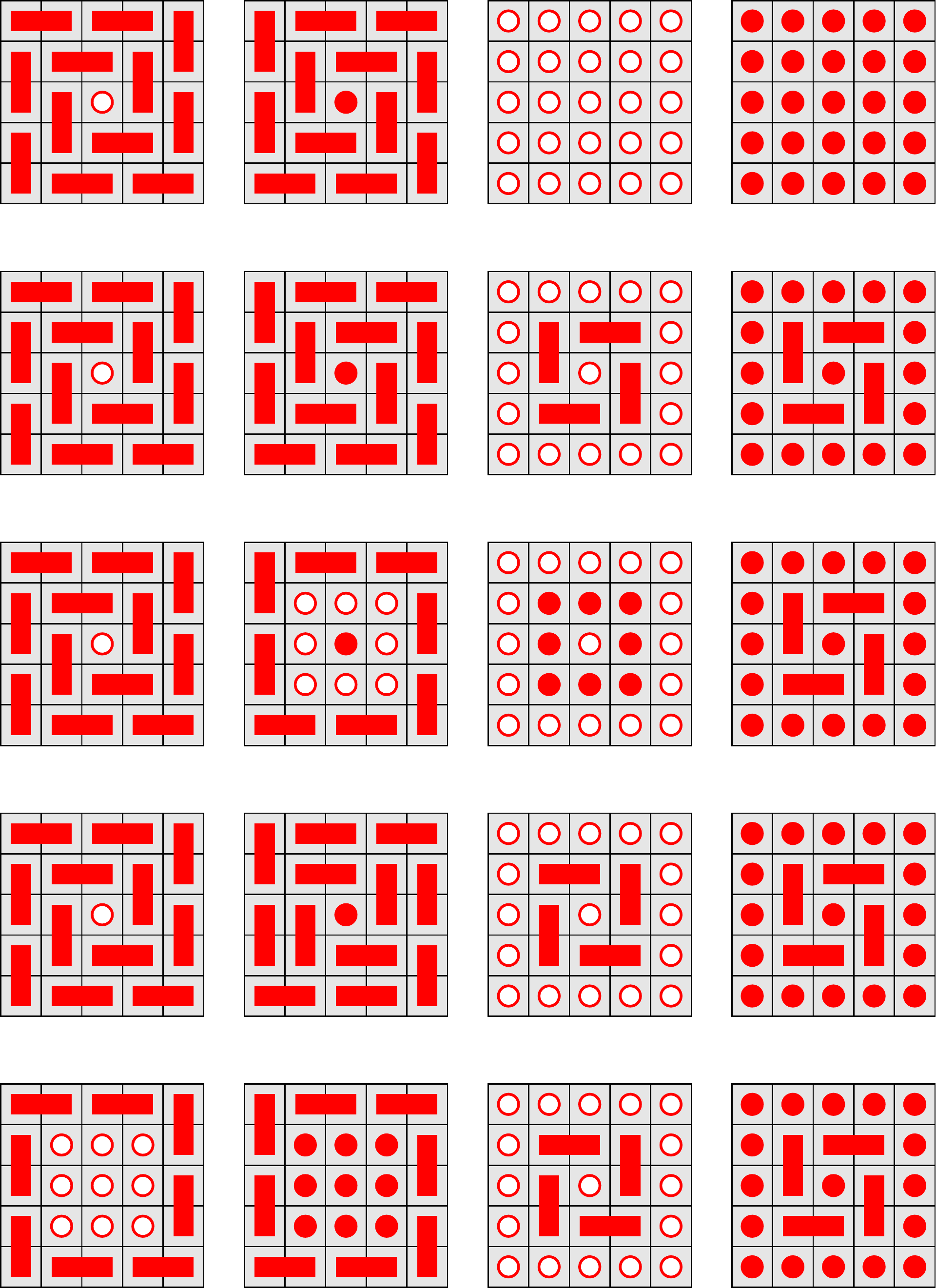}%
\caption{Five steps that bring a small boxed jewel from the top two floors to the bottom two floors, each consisting of four flips. In this case, a boxed jewel with degree $2$ in the top two floors is transformed into a boxed jewel with degree $1$ in the bottom two floors plus a cycle that can be easily flipped to a boxed jewel with degree $1$ in the top two floors. Since the bottom jewel can move freely in the bottom floors, it can be brought back up in a different position.}%
\label{fig:jewelGoesDown}%
\end{figure}
We omit some easy details, but 
Figure \ref{fig:jewelGoesDown} illustrates the key step in the proof: that the innermost boxed jewel can be transported to the bottom two floors via flips. If the box $\cB$ is big enough, the innermost boxed jewel can freely move in the bottom two floors, and can eventually be brought back up outside of any other cycles. Since after this maneuver the bottom two floors are back as they originally were, the result of this maneuver is the embedding of a tiling whose sock is flip homotopic to an untangled sock with two boxed jewels, one with degree $n-1$ and another with degree $1$ (but both have the same sign as the original boxed jewel). Proceeding by induction and using Proposition \ref{prop:moreSpace}, we obtain the result.
\end{proof}

Therefore, boxed jewels with degree $n > 0$ (resp. degree $ -n < 0$) can be flipped into $n$ boxed jewels with degree $1$ (resp. $-1$). It only remains to see that boxed jewels with degrees $1$ and $-1$ and same sign cancel out.

\begin{lemma}
\label{lemma:jewelsCancel}
 If $t$ is a tiling of a duplex region whose sock is untangled and consists of two boxed jewels with degrees $1$ and $-1$ but same sign, then there exists a box with four floors where the embedding of $t$ is in the same flip connected component as the tiling consisting only of ``jewels'', that is, the tiling containing only dominoes parallel to $\ez$.
\end{lemma}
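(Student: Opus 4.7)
My plan is to construct an explicit flip homotopy in a sufficiently large 4-floor box $\cB$ containing $\cR$ as its top two floors, following the pattern of the proof of Lemma \ref{lemma:jewelGoesDown}. Write $b_+$ and $b_-$ for the degree-$+1$ (counterclockwise) and degree-$-1$ (clockwise) boxed jewels of $t$ respectively, both enclosing jewels of the same color.

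First, by Proposition \ref{prop:moreSpace} combined with Lemmas \ref{lemma:homotopicToUntangled} and \ref{lemma:untangledSocksInvariant}, I may freely reshape $t$ within the top two floors (after further embedding in a two-floored box if needed) so as to place both $b_+$ and $b_-$ in their minimal $3 \times 3$ form, positioned side by side with their central jewels in the same horizontal row.

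Next, I apply the ``bring-down'' half of the maneuver from Lemma \ref{lemma:jewelGoesDown} to $b_-$: using flips that temporarily involve dominoes spanning floors $2$ and $3$, I transport $b_-$ into the bottom two floors of $\cB$. After this step, the top two floors contain only $b_+$, and the bottom two floors contain a single clockwise boxed jewel around a jewel of the same original color (a translate of $b_-$). By flip moves of type (a) in the bottom duplex sock (as in Figure \ref{fig:boxedJewelsMove}), I then slide this bottom copy so that its central jewel lies in the same $xy$-column as the central jewel of $b_+$. The configuration has thus been reduced to the stacked one: a ccw boxed jewel in the top and a cw boxed jewel in the bottom, aligned in the same $xy$-column around two jewels of equal color.

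The crux of the argument is to exhibit a local flip sequence, confined to the resulting $3 \times 3 \times 4$ sub-region, showing that this stacked configuration is flip-equivalent to the all-vertical-dominoes tiling of the same sub-region. Once this local equivalence is in hand, both cycles annihilate simultaneously, the bottom two floors return to their original trivial tiling, and the top two floors are left with only two free jewels in place of the original boxed jewels, giving exactly the all-jewels tiling of $\cR$. The main obstacle is this local verification, which is a case-by-case inspection of flips analogous to Figure \ref{fig:jewelGoesDown}; the decisive new ingredient is that the opposite orientations of the two cycles, together with the equality of the enclosed jewel colors, allow the matching pairs of horizontal dominoes in the top and bottom to be paired across the boundary between floors $2$ and $3$ and then resolved into vertical dominoes. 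Thus, unlike in Lemma \ref{lemma:jewelGoesDown} where the same-orientation boxed jewel could only be split, here the stacked pair can actually be collapsed entirely, and this yields the desired cancellation.
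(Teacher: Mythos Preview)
Your proposal is correct and follows essentially the same approach as the paper: transport one of the two boxed jewels to the bottom pair of floors via the maneuver of Figure \ref{fig:jewelGoesDown}, slide it so that it sits directly beneath the other, and then perform a local flip sequence in the resulting $3\times 3\times 4$ column to collapse both cycles into vertical dominoes. The paper relegates the final local verification to Figure \ref{fig:jewelsCancel} just as you relegate it to a ``case-by-case inspection,'' so the two arguments are at the same level of detail.
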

\begin{proof}
\begin{figure}[ht]%
\centering
\includegraphics[width=0.55\columnwidth]{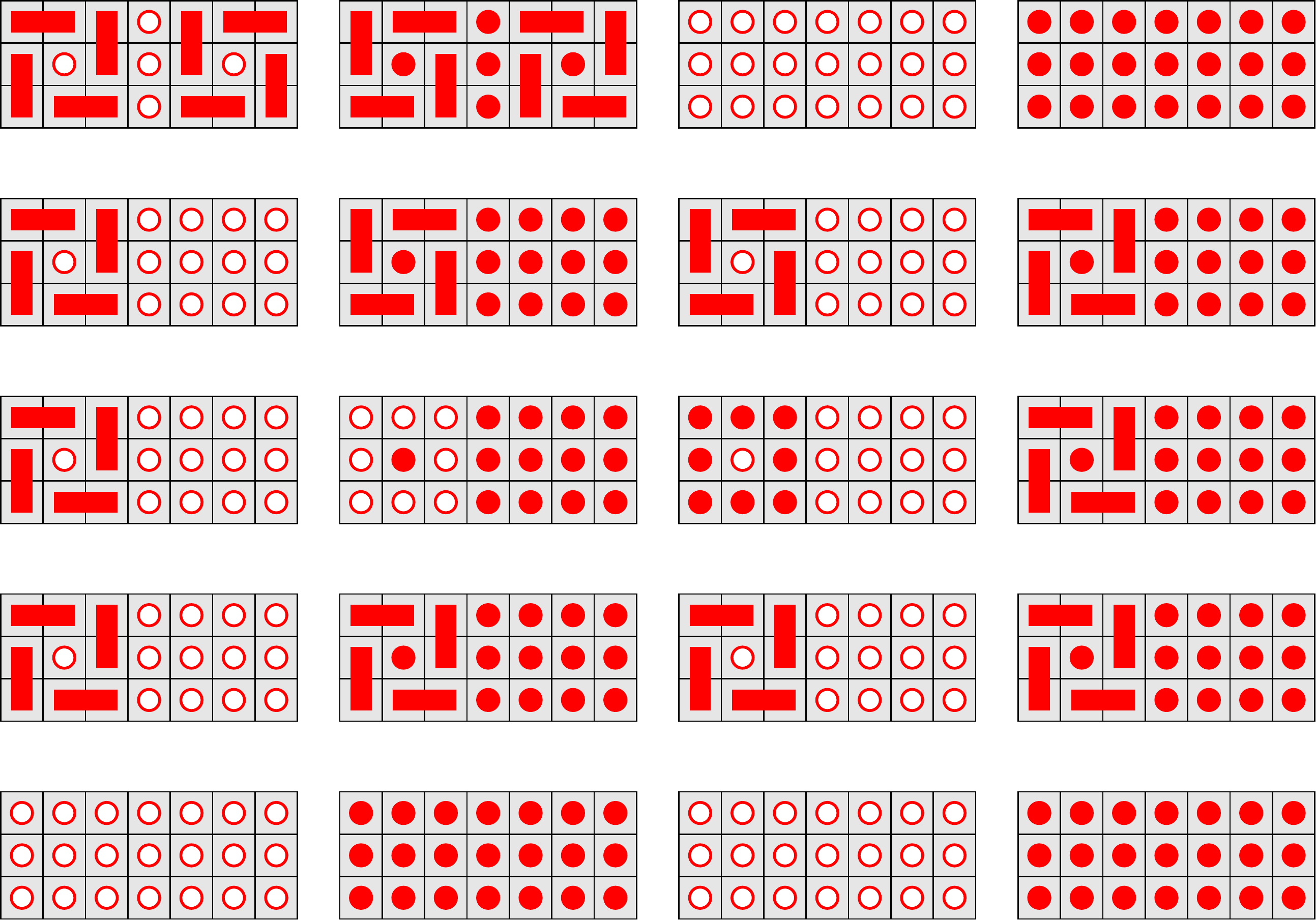}%
\caption{Illustration of how two boxed jewels with same sign but opposite degree cancel. One boxed jewel is transported to the bottom floor and there it moves so that it is exactly under the other boxed jewel (this is the first step). From then on, it is a relatively straightforward sequence of flips, and the three bottom drawings show some of the intermediate steps.}%
\label{fig:jewelsCancel}%
\end{figure}
The basic procedure is illustrated in Figure \ref{fig:jewelsCancel}. First, one jewel can be transported to the bottom floor using the procedure in Figure \ref{fig:jewelGoesDown}. There it can be moved so that it is right under the other boxed jewel, when they can easily be flipped into a tiling with all dominoes parallel to $\ez$.
\end{proof}

In short, what Lemmas \ref{lemma:jewelGoesDown} and \ref{lemma:jewelsCancel} imply is that a tiling $t$ whose sock is untangled and whose invariant is $P_t(q) = a_0 + \sum_{n \neq 0} a_n q^n$ can be embedded in a box with four floors in such a way that this embedding is in the same flip connected component as the embedding of another tiling $\tilde{t}$ (of a duplex region) whose sock is untangled and whose invariant is $P_{\tilde{t}}(q) = \tilde{a_0} + \left(\sum_{n \neq 0} na_n \right) q = \tilde{a_0} + P_t'(1)q$.

\begin{proof}[Proof of Proposition \ref{prop:embedFourFloors}]
Let $t_0$ and $t_1$ be two tilings of a duplex region such that $P_{t_0}'(1) = P_{t_1}'(1)$. By Lemmas \ref{lemma:homotopicToUntangled} and \ref{lemma:equivEmbeddingHomotopy}, there exists a two-floored box $\tilde{\cB}$ where the embeddings of $t_0$ and $t_1$ are in the same connected component, respectively, as $\tilde{t_0}$ and $\tilde{t_1}$, two tilings whose sock is untangled. By Lemmas \ref{lemma:jewelGoesDown} and \ref{lemma:jewelsCancel} and the previous paragraph, there exists a box with four floors $\cB$ where the embeddings of $\tilde{t_0}$ and $\tilde{t_1}$ lie in the same connected component, respectively, as the embeddings of $\hat{t_0}$ and $\hat{t_1}$ (of the same duplex region), with invariants $P_{\hat{t_0}}(q) = a_0 + P_{t_0}'(1)q$ and $P_{\hat{t_1}}(q) = a_1 + P_{t_1}'(1) q$. Since $P_{t_0}'(1) = P_{t_1}'(1)$, it follows that $a_0 = a_1$ and so $P_{\hat{t_0}}(q) = P_{\hat{t_1}}(q)$. By Proposition \ref{prop:moreSpace}, the box $\cB$ can be chosen such that the embeddings of $\hat{t_0}$ and $\hat{t_1}$ lie in the same flip connected component; this concludes the proof.  
\end{proof}

\chapter{The general case}
\label{chap:multiplex}

Most of the material in this chapter is also covered in \cite{segundoartigo}.

\section{The twist for cylinders}
\label{sec:combTwistBoxes}
For a domino $d$, define $\tv(d) \in \Phi$ to be the center of the black cube contained in $d$ minus the center of the white one. We sometimes draw $\tv(d)$ as an arrow pointing from the center of the white cube to the center of the black one.

For a set $X \subset \RR^3$ and $\vu \in \Phi$, we define the (\emph{open}) $\vu$-\emph{shade} of $X$ as
$$\cS^{\vu}(X) = \interior((X + [0,\infty)\vu) \setminus X) = \interior\left(\{ x + s \vu \in \RR^3 | x \in X, s \in [0,\infty)\} \setminus X\right),$$
where $\interior(Y)$ denotes the interior of $Y$.  
The \emph{closed} $\vu$-\emph{shade} $\clS^{\vu}(X)$ is the closure of $\cS^{\vu}(X)$.
We shall only refer to $\vu$-shades of unions of basic cubes or basic squares, such as dominoes.


\begin{figure}[ht]
\centering
\includegraphics[width=0.7\columnwidth]{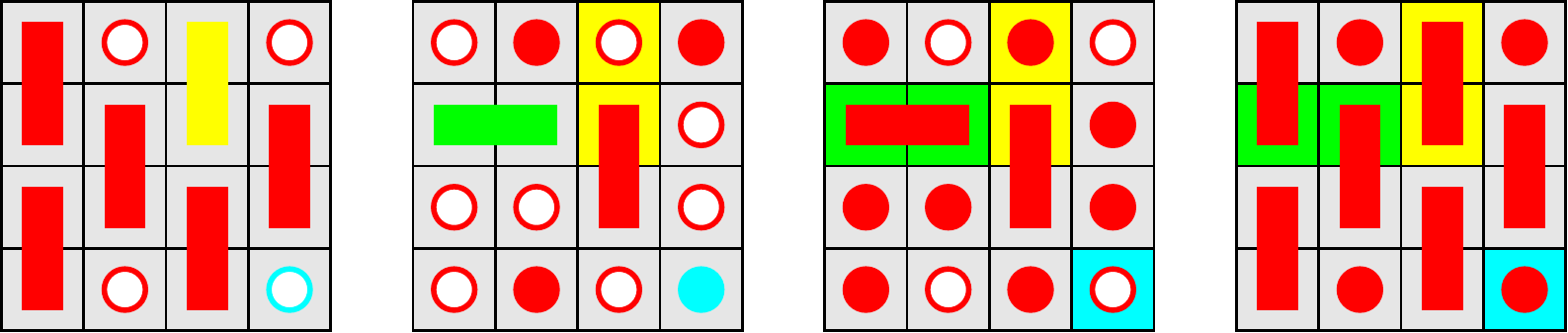}%
\caption{Tiling of a $4 \times 4 \times 4$ box, with three distinguished dominoes (painted yellow, green and cyan), whose $\ez$-shades are highlighted in the same color as they are. Notice that the yellow shade intersects four dominoes, the green shade intersects three, and the cyan shade, only one.}%
\label{fig:shadowExample}%
\end{figure} 

Given two dominoes $d_0$ and $d_1$ of $t$, we define the \emph{effect of} $d_0$ \emph{on} $d_1$ \emph{along} $\vu$, as:
$$\tau^{\vu}(d_0,d_1) = \begin{cases} \frac{1}{4} \det(\tv(d_1),\tv(d_0),\vu), &d_1 \cap \cS^{\vu}(d_0) \neq \emptyset \\ 0, &\mbox{otherwise} \end{cases} \label{def:effect}$$

In other words, $\tau^{\vu}(d_0, d_1)$ is zero unless the following three things happen: $d_1$ intersects the $\vu$-shade of $d_0$; neither $d_0$ nor $d_1$ are parallel to $\vu$; and $d_0$ is not parallel to $d_1$. When $\tau^{\vu}(d_0,d_1)$ is not zero, it's either $1/4$ or $-1/4$ depending on the orientations of $\tv(d_0)$ and $\tv(d_1)$.

For example, in Figure \ref{fig:shadowExample}, for $\vu = \ez$, the yellow domino $d_Y$ has no effect on any other domino: $\tau^{\ez}(d_Y,d) = 0$ for every domino $d$ in the tiling. The green domino $d_G$, however, affects the two dominoes in the rightmost floor which intersect its $\ez$-shade, and $\tau^{\ex}(d_G,d) = 1/4$ for both these dominoes.    

If $t$ is a tiling, we define the $\vu$-\emph{pretwist} as 
$$
T^{\vu}(t) = \sum_{d_0,d_1 \in t} \tau^{\vu}(d_0,d_1).\label{def:pretwist}
$$

For example, the tiling on the left of Figure \ref{fig:negtrit_example} has $\ez$-pretwist equal to $1$. To see this, notice that each of the four dominoes of the leftmost floor that are not parallel to $\ez$ has nonzero effect along $\ez$ on exactly one domino of the rightmost floor, and this effect is $1/4$ in each case. The reader may also check that the $\ez$-pretwist of the tiling in Figure \ref{fig:shadowExample} is $0$.


\begin{lemma}
\label{lemma:twistNegatingDirection}
For any pair of dominoes $d_0$ and $d_1$ and any $\vu \in \Phi$,
$\tau^{\vu}(d_0,d_1) = \tau^{-\vu}(d_1,d_0)$. In particular, for a tiling $t$ of a region we have $T^{-\vu}(t) = T^{\vu}(t)$. 
\end{lemma}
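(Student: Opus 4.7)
The plan is to first establish the pointwise identity $\tau^{\vu}(d_0,d_1) = \tau^{-\vu}(d_1,d_0)$, after which the global identity $T^{\vu}(t) = T^{-\vu}(t)$ drops out by a change of summation variables. Indeed, once the pointwise claim is in hand,
\[ T^{-\vu}(t) = \sum_{d_0,d_1 \in t} \tau^{-\vu}(d_0,d_1) = \sum_{d_0,d_1 \in t} \tau^{\vu}(d_1,d_0) = T^{\vu}(t), \]
where the middle equality applies the pointwise identity to the pair $(d_1,d_0)$ with direction $\vu$ (using $-(-\vu) = \vu$), and the last equality is just a relabeling of the two summation indices.

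For the pointwise identity itself, I would separate the algebraic and geometric aspects. Algebraically, assuming both sides are nonzero, the values agree because of the determinant identity
\[ \det(\tv(d_0), \tv(d_1), -\vu) = -\det(\tv(d_0), \tv(d_1), \vu) = \det(\tv(d_1), \tv(d_0), \vu), \]
obtained by pulling the minus sign out of the third argument (multilinearity) and then swapping the first two arguments (alternation). Dividing by $4$ matches the two nonzero values.

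Geometrically, I need to show that the two shade conditions are equivalent: $d_1 \cap \cS^{\vu}(d_0) \neq \emptyset$ if and only if $d_0 \cap \cS^{-\vu}(d_1) \neq \emptyset$. The underlying intuition is that a point $y \in d_1$ lies in the open half-prism $\cS^{\vu}(d_0)$ precisely when there is $x \in d_0$ and $s > 0$ with $y = x + s\vu$; rewriting this as $x = y + s(-\vu)$ realizes $x$ as a point of $d_0$ lying in the corresponding open half-prism $\cS^{-\vu}(d_1)$, and conversely. I expect this equivalence to be the one point that requires care, since $\cS^{\vu}$ is defined via an interior operation and one must check that the open shade of $d_0$ meets $d_1$ exactly when the open shade of $d_1$ meets $d_0$. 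This is true because dominoes are full-dimensional polyhedra (each equals the closure of its interior), so any intersection with an open shade contains a non-empty open set, and the symmetric construction picks this up on the other side. Once this is verified, the lemma follows by combining it with the determinant identity in the nonzero case, and observing that in the zero case both sides vanish simultaneously.
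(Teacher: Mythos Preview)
Your proposal is correct and follows essentially the same approach as the paper: the paper's proof consists of exactly the two observations you identify, namely the equivalence $d_1 \cap \cS^{\vu}(d_0) \neq \emptyset \Leftrightarrow d_0 \cap \cS^{-\vu}(d_1) \neq \emptyset$ and the determinant identity $\det(\tv(d_1),\tv(d_0),\vu) = \det(\tv(d_0),\tv(d_1),-\vu)$. Your write-up simply fills in a bit more detail (the determinant manipulation, the topological justification for the shade equivalence, and the relabeling argument for the sum) than the paper's two-line version.
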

\begin{proof}
Just notice that
$d_1 \cap \cS^{\vu}(d_0) \neq\emptyset$ if and only if $d_0 \cap \cS^{-\vu}(d_1) \neq \emptyset,$ 
and $\det(\tv(d_1),\tv(d_0),\vu) = \det(\tv(d_0),\tv(d_1),-\vu)$.
\end{proof}
Translating both dominoes by a vector with integer coordinates clearly does not affect $\tau^{\vu}(d_0,d_1)$, as $\det(\tv(d_1),\tv(d_0),\vu) = \det(-\tv(d_1),-\tv(d_0),\vu)$. Therefore, if $t$ is a tiling and $f(p) = p + b$, where $b \in \ZZ^3$, then $T^{\vu}(f(t)) = T^{\vu}(t)$.

\begin{lemma}
\label{lemma:twistReflection}
Let $\cR$ be a region, and let $\vw \in \Delta$. Consider the reflection $r = r_{\vw}: \RR^3 \to \RR^3: p \mapsto p - 2(p\cdot \vw)\vw$; notice that $r(\cR)$ is a region.
If $t$ is a tiling of $\cR$ and $\vu \in \Phi$, then the tiling $r(t) = \{r(d), d \in t\}$ of $r(\cR)$ satisfies $T^{\vu}(r(t)) = - T^{\vu}(t)$. 
\end{lemma}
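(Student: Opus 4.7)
The plan is to reduce everything to checking how the reflection $r$ acts on the three ingredients of $\tau^{\vu}$: the vectors $\tv(d)$, the shade $\cS^{\vu}$, and the determinant. None of the regularity of $t$ or $\cR$ will be used; the identity is purely local, domino by domino.

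First I would verify the key transformation rule $\tv(r(d)) = -r(\tv(d))$. The point is that $r = r_{\vw}$ swaps the colors of basic cubes, because reflecting $C(\plshalf{x},\plshalf{y},\plshalf{z})$ across a coordinate plane $\vw^{\perp}$ changes one coordinate from $\plshalf{x}$ to $\plshalf{(-x-1)}$, altering the parity of $x+y+z$. Hence the cube of $d$ that was white becomes black in $r(d)$ and vice versa, so $\tv(r(d))$ is obtained by reflecting $\tv(d)$ \emph{and} flipping its sign.

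Next I would translate shades under $r$. Because $r$ is a linear involution, $r(X) + s\vu = r(X + s \cdot r(\vu))$ for every $s \in \RR$, so $\cS^{\vu}(r(X)) = r(\cS^{r(\vu)}(X))$, and therefore $r(d_1) \cap \cS^{\vu}(r(d_0)) \neq \emptyset$ iff $d_1 \cap \cS^{r(\vu)}(d_0) \neq \emptyset$. Combining this with the previous step and using that $\det r = -1$, a direct computation gives
\[
\tau^{\vu}(r(d_0), r(d_1)) = \tfrac{1}{4}\det\bigl(-r(\tv(d_1)),\, -r(\tv(d_0)),\, \vu\bigr) = -\tau^{r(\vu)}(d_0,d_1),
\]
with the shade-overlap conditions matching on both sides.

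Summing over all pairs $(d_0, d_1) \in t \times t$ then yields $T^{\vu}(r(t)) = -T^{r(\vu)}(t)$. To finish, observe that since $\vu \in \Phi$ and $\vw \in \Delta$, the vector $\vu$ is either parallel or perpendicular to $\vw$, so $r(\vu) \in \{\vu, -\vu\}$. When $r(\vu) = \vu$ there is nothing more to do, and when $r(\vu) = -\vu$ the desired equality $T^{r(\vu)}(t) = T^{\vu}(t)$ is exactly the content of Lemma \ref{lemma:twistNegatingDirection}. In both cases we conclude $T^{\vu}(r(t)) = -T^{\vu}(t)$. The only place demanding any care is the bookkeeping of signs in step three (two sign flips from $\tv(r(d_i))$ and one from $\det r = -1$), and keeping track of whether the shade condition matches; everything else is mechanical.
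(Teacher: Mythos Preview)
Your proof is correct and follows essentially the same approach as the paper's: establish $\tv(r(d)) = -r(\tv(d))$ and $\cS^{\vu}(r(d)) = r(\cS^{r(\vu)}(d))$, deduce $\tau^{\vu}(r(d_0),r(d_1)) = -\tau^{r(\vu)}(d_0,d_1)$, sum, and invoke Lemma~\ref{lemma:twistNegatingDirection} since $r(\vu) = \pm\vu$. Your explicit justification of the color swap under $r$ is a nice addition that the paper leaves implicit.
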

\begin{proof}
Given a domino $d$ of $t$, notice that $\tv(r(d)) = - r(\tv(d))$ and that $\cS^{\vu}(r(d)) = r(\cS^{r(\vu)}(d))$. Therefore, $r(d_1) \cap \cS^{\vu}(r(d_0)) \neq \emptyset \Leftrightarrow d_1 \cap S^{r(\vu)}(d_0) \neq \emptyset$ and
\begin{align*}
&\det(\tv(r(d_1)),\tv(r(d_0)),\vu) = \det(-r(\tv(d_1)), -r(\tv(d_0)), \vu)\\ 
&= \det(r(\tv(d_1)), r(\tv(d_0)), r(r(\vu))) = - \det(\tv(d_1), \tv(d_0), r(\vu)).  
\end{align*}
Therefore, $\tau^{\vu}(r(d_0),r(d_1)) = - \tau^{r(\vu)}(d_0,d_1)$ and thus $T^{\vu}(r(t)) = - T^{r(\vu)}(t)$. Since $r(\vu) = \pm \vu$, Lemma \ref{lemma:twistNegatingDirection} implies that $T^{\vu}(r(t)) = - T^{\vu}(t)$, completing the proof.
\end{proof}

 A natural question at this point concerns how the choice of $\vu$ affects $T^{\vu}$. It turns out that it will take us some preparation before we can tackle this question.

\begin{prop}
\label{prop:equalTwistsMultiplex}
If $\cR$ is a cylinder and $t$ is a tiling of $\cR$,
$$T^{\ex}(t) = T^{\ey}(t) = T^{\ez}(t) \in \ZZ.$$
\end{prop}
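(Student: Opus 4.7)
The strategy I would adopt is to interpret $T^{\vu}(t)$ knot-theoretically, by building an oriented link $L(t) \subset \RR^3$ from the tiling such that the pretwist is, up to a normalization, a sum of linking numbers of components of $L(t)$. Since linking numbers are topological invariants computable from any generic projection, this interpretation would simultaneously yield direction-independence (for any $\vu \in \Phi$) and integrality.

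Concretely, to each tiling $t$ of a cylinder $\cR$ I would associate oriented curves as follows. Replace each domino $d$ by its oriented \emph{core segment}, a translate of $\tv(d)$ running from the center of the white cube to the center of the black cube of $d$. Then close these segments into smooth disjoint oriented loops using small arcs inside the basic cubes and a canonical set of closing arcs lying just outside $\cR$ along the axis $\vu_0$; the simple connectivity of the base $\cD$ is what allows this closure to be defined canonically. I would then verify by a local calculation that, for any $\vu \in \Phi$ placed in sufficiently general position with respect to $L(t)$, every non-zero term $\tau^{\vu}(d_0,d_1) = \pm 1/4$ corresponds to one signed crossing of the core segments of $d_0$ and $d_1$ in the projection of $L(t)$ along $\vu$; conversely, every crossing of distinct core segments arises this way. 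Pairing contributions via Lemma \ref{lemma:twistNegatingDirection} to symmetrize, the sum $T^{\vu}(t)$ would then match, up to constants and possible contributions from closing arcs, the total linking number of the components of $L(t)$, which is a topological invariant of $L(t)$ and in particular independent of $\vu$.

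The main obstacle, and the step that requires most care, is handling the contributions from (i) self-crossings of individual components of $L(t)$ (which involve writhe, not linking number, and are \emph{not} \emph{a priori} projection-invariant), and (ii) crossings involving the auxiliary closing arcs. For (i), I would show that the scheme of closing arcs can be chosen so that each closed component of $L(t)$ bounds an essentially flat disk in a plane transverse to $\vu_0$, so that its writhe in any projection is zero. For (ii), I would argue that the closing arcs can be pushed to a common region outside all core segments, so that any linking they introduce depends only on $\cR$ and $\vu$, not on $t$; evaluating on a convenient reference tiling (for instance, one consisting entirely of $\vu_0$-bricks in a box containing $\cR$, where a direct inspection gives $T^{\vu} = 0$ for every $\vu \in \Phi$) would then pin the residual constant to zero and yield $T^{\ex}(t) = T^{\ey}(t) = T^{\ez}(t) \in \ZZ$.
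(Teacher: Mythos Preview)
Your intuition that $T^{\vu}(t)$ should be a knot-theoretic quantity is exactly the paper's intuition too, but the proposal has two genuine gaps.

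First, the link $L(t)$ is not well defined as you describe it. The core segments of a single tiling $t$ are pairwise disjoint arcs, one endpoint in each cube center, and there is no natural way to ``close these segments into loops using small arcs inside the basic cubes'': each cube center is an endpoint of exactly one segment, so there is nothing to join it to. What the paper does instead is pair $t$ with a reference tiling $\tbase$ (the all-axis-parallel tiling, available only when the depth is even) and look at $t \sqcup (-\tbase)$; now every cube center has exactly two segment endpoints and the union decomposes into honest closed curves $\Gamma^*(t,\tbase)$. No closing arcs outside $\cR$ are needed, and the simple connectivity of $\cD$ is used elsewhere, not for closing.

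Second, your plan to kill the self-crossing (writhe) contributions by making each component bound a flat disk cannot work. In the paper's decomposition
\[
T^{\vw}(t)=\sum_i \frac{\Wr^+(\gamma_i)+\Wr^-(\gamma_i)}{2}+2\sum_{i<j}\Link(\gamma_i,\gamma_j),
\]
the writhe terms are generally nonzero and carry real information; you cannot choose the closure so that they vanish without destroying the identification with $T^{\vu}(t)$. Direction-independence comes not from writhe being zero but from the fact that the directional writhe $\Wr(\gamma,\vu)$, though not projection-invariant, takes only two values $\Wr^\pm(\gamma)$ over the relevant cone of directions, and the pretwist averages them. Integrality is a separate parity computation (Lemma~\ref{lemma:writheDifference}) showing $\Wr^+(\gamma)+\Wr^-(\gamma)$ is even; it does not fall out of ``linking numbers are integers.'' Finally, the odd-depth case has no $\tbase$ and needs its own argument (doubling to even depth, then a delicate tree-based construction of a reference tiling with integer pretwist), which your outline does not address.
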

\begin{proof}
Follows directly from Propositions \ref{prop:equalTwistsIntNEven} and \ref{prop:equalTwistsIntNOdd} below.
\end{proof}
This result doesn't hold in pseudocylinders or in more general simply connected regions; see Section \ref{sec:examples} for counterexamples.




\begin{definition}
\label{def:twist}
For a tiling $t$ of a cylinder $\cR$, we define the \emph{twist} $\Tw(t)$ as
\begin{equation*}
\Tw(t) = T^{\ex}(t) = T^{\ey}(t) = T^{\ez}(t).
\end{equation*} 
\end{definition}

Until Section \ref{sec:differentDirections}, we will not use Proposition \ref{prop:equalTwistsMultiplex}, and will only refer to pretwists.

Let $\vu \in \Delta$, and let $\beta = (\vbeta_1,\vbeta_2,\vbeta_3) \in \bB$ be such that $\vbeta_3 = \vu$.
A region $\cR$ is said to be \emph{fully balanced with respect to} $\vu$ if for each square $Q = p + [0,2]\vbeta_1 + [0,2] \vbeta_2$, where $p \in \ZZ^3$ and $Q \subset \cR$, each of the two sets $\cA^{\vu} = \cR \cap \clS^{\vu}(Q)$ and $\cA^{-\vu} = \cR \cap \clS^{-\vu}(Q)$  contains as many black cubes as white ones. In other words,
$$ \sum_{C \subset \cA^{\vu}} \ccol(C) =  \sum_{C \subset \cA^{-\vu}} \ccol(C) = 0.$$ 
$\cR$ is \emph{fully balanced} if it is fully balanced with respect to each $\vu \in \Delta$.

\begin{lemma}
\label{lemma:fullyBalancedMultiplex}
Every pseudocylinder (in particular, every cylinder) is fully balanced.
\end{lemma}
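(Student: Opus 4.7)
My plan is to fix $\vu \in \Delta$, pick an arbitrary $2 \times 2$ square $Q = p + [0,2]\vbeta_1 + [0,2]\vbeta_2 \subset \cR$ (with $\beta = (\vbeta_1,\vbeta_2,\vu) \in \bB$), and show directly that the cubes of $\cA^{\vu} = \cR \cap \clS^{\vu}(Q)$ can be partitioned into pairs of opposite colour; the treatment of $\cA^{-\vu}$ will be identical with the roles of ``forward'' and ``backward'' reversed. Writing $\cR = \cD + [0,N]\tv$, I would split the argument into two cases according to whether $\vu$ coincides with the axis $\tv$ or not.

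When $\vu = \tv$, the square $Q$ lies in a basic plane perpendicular to $\tv$, so $Q = Q_0 + s_0 \tv$ for some $2 \times 2$ planar square $Q_0 \subset \cD$ and some integer $s_0 \in [0,N]$. Then $\cA^\vu$ is exactly the $2 \times 2 \times (N - s_0)$ box $Q_0 + [s_0, N]\tv$; each of its $\tv$-layers is a $2 \times 2$ block of four cubes containing two of each colour, so $\cA^{\vu}$ is balanced layer by layer.

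When $\vu \neq \tv$, the two distinct vectors of $\Delta$ are orthogonal, and since $\vbeta_3 = \vu$ one gets $\tv \in \{\pm\vbeta_1,\pm\vbeta_2\}$. Therefore $Q$ extends two units in the $\tv$-direction, and every cube of $\cA^\vu$ lies in one of the two consecutive $\tv$-layers at levels $s_0$ and $s_0+1$, where $s_0$ is the minimum $\tv$-coordinate of $Q$ (and $s_0, s_0+1 \in \{0,\dots,N-1\}$ follows from $Q \subset \cR$). I would then define the involution $\phi \colon \cA^\vu \to \cA^\vu$ swapping the two layers by $C \mapsto C \pm \tv$. The condition $C \subset \cR$ is invariant under this swap because $\cR$ is a $\tv$-cylinder, and $C \subset \clS^\vu(Q)$ depends only on the coordinates orthogonal to $\tv$, so $\phi$ is well-defined. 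Since translating by the unit vector $\pm \tv$ flips the parity of $x+y+z$, one has $\ccol(\phi(C)) = -\ccol(C)$, and $\cA^\vu$ decomposes into pairs of opposite colour.

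I do not anticipate any genuine obstacle beyond organising the case split cleanly. The only subtlety is in Case 2, checking that the pairing $\phi$ actually lands in $\cA^\vu$; this uses only that $\cR$ is a $\tv$-cylinder, and in particular makes no use of simple connectivity of $\cD$, which is why the statement holds for all pseudocylinders.
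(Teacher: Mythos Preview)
Your proposal is correct and follows essentially the same approach as the paper: both split into the cases $\vu = \tv$ and $\vu \perp \tv$, and in the second case exploit that $\cA^{\vu}$ is a thickness-$2$ slab in the axis direction. The only cosmetic difference is that the paper writes $\cA^{\pm\vu} = \cD^{\pm} + [k,k+2]\vbeta_1$ for a planar region $\cD^{\pm}$, whereas you phrase the same observation as a colour-reversing involution $C \mapsto C \pm \tv$; these are two ways of saying the same thing.
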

\begin{proof}
Let $\cR$ be a pseudocylinder with base $\cD$ and depth $n$, let $\vu \in \Delta$ and let $Q = p_0 + [0,2]\vbeta_1 + [0,2] \vbeta_2 \subset \cR$, where $\beta \in \bB$ is such that $\vbeta_3 = \vu$ and $p_0 \in \ZZ^3$.
Consider $\cA^{\pm\vu} = \cR \cap \clS^{\pm\vu}(Q)$.

If $\vu$ is the axis of the pseudocylinder, then $Q = Q' + k\vu$, for some square $Q' \subset \cD$ and some $0 \leq k \leq n$. Now $\cA^{\vu} = Q' + [k,n]\vu$, which clearly contains $2(n-k)$ black cubes and $2(n-k)$ white ones; similarly, $\cA^{-\vu} = Q' + [0,k]\vu$ contains $2k$ black cubes and $2k$ white ones.

If $\vu$ is perpendicular to the axis of the pseudocylinder, assume without loss of generality that $\vbeta_1$ is the axis. Let $\Pi$ denote the orthogonal projection on $\cD$, and let $\cD^{\pm} = \Pi\left(\clS^{\pm\vu}(Q)\right) \cap \cD$, which are planar regions, since they are unions of squares of $\cD$. If $p_0 - \Pi(p_0) = k \vbeta_1$, we have $\cA^{\pm\vu} = \cD^{\pm} + [k,k+2]\vbeta_1$, which clearly has the same number of black squares as white ones. 
%
%
%
%
\end{proof}



\begin{prop}
\label{prop:flipsAndTrits}
Let $\cR$ be a region that is fully balanced with respect to $\vu \in \Phi$.
\begin{enumerate}[label=\upshape(\roman*),topsep = 0.1px]
	\item \label{item:flips} If a tiling $t_1$ of $\cR$ is reached from $t_0$ after a flip, then $T^{\vu}(t_0) = T^{\vu}(t_1)$
	\item \label{item:trits} If a tiling $t_1$ of $\cR$ is reached from $t_0$ after a single positive trit, then $T^{\vu}(t_1) = T^{\vu}(t_0) + 1$.
\end{enumerate}
\end{prop}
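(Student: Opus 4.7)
The plan is to decompose $\Delta T^{\vu} := T^{\vu}(t_1) - T^{\vu}(t_0)$ as a sum of ordered-pair contributions, split into an \emph{internal} part (both dominoes among those changed by the move) and an \emph{external} part (pairing one changed domino with some $s \in S := t_0 \cap t_1$). Since $\tau^{\vu}(d_0, d_1) = 0$ whenever $\tv(d_0)$ or $\tv(d_1)$ is parallel to $\vu$, only the $\vu$-perpendicular dominoes of the move play a role. After classifying each $s$ by $\tv(s)$ and carefully analyzing when its $\vu$-shade overlaps the changed dominoes, the total external contribution should reduce to a signed sum of $\pm\frac{1}{4}\chi(c)$ over certain basic cubes $c$ in the $2 \times 2$ column through the move region, where $\chi$ denotes color; the fully balanced hypothesis is the tool for evaluating these sums.

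For part (i), the internal contribution vanishes: within each parallel pair $\{d_i, d_i'\}$, opposite $\tv$'s force the determinants to zero and the shades do not overlap; cross-pair terms vanish similarly. I handle the external part by splitting into two cases based on the slab's orientation. If the slab's normal is perpendicular to $\vu$, one pair is $\vu$-parallel (contributing nothing) and the other consists of two horizontal dominoes stacked along $\vu$ with identical $xy$-projection; contributions from $s$ strictly above or strictly below the stack then cancel pairwise (both stacked dominoes have the same $xy$-overlap with $s$), and $s$ at the stack's $\vu$-level is forbidden by tiling-disjointness. If the slab's normal is parallel to $\vu$, all four flip dominoes are horizontal in perpendicular directions; the external change reduces to $\frac{1}{4}$ times signed color sums over horizontal dominoes straddling the $2 \times 2$ slab column, above and below. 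Applying the fully balanced hypothesis to the top and bottom $2 \times 2$ faces of the slab forces these sums to vanish, because any vertical ``boundary'' dominoes that would otherwise appear in the full-balance identity are blocked: their in-slab cube is already covered by one of the flip dominoes themselves.

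For part (ii), the $\vu$-parallel dominoes of both trit triples contribute nothing, so only the $\vu$-perpendicular trit dominoes (one $\ex$-oriented and one $\ey$-oriented in each configuration) matter. A direct computation of the internal $\tau^{\vu}$ values, combined with the external analysis (in the same framework as Case~A of part (i), now applied to the $2 \times 2$ faces of the trit cube $\cB$), should yield the net change $\Delta T^{\vu} = +1$. The crucial new feature compared to the flip is that only six of the eight basic cubes of $\cB$ are covered by trit dominoes, so certain vertical ``boundary'' contributions blocked in the flip case are no longer blocked; the fully balanced identity then produces a nonzero net contribution of exactly $+1$ for a positive trit (with the sign reversing for a negative one). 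I expect the main obstacle to be the meticulous bookkeeping of this column/straddling analysis — identifying which dominoes can occur as straddling or vertical-boundary pieces under tiling-disjointness, tracking how the determinant sign in each contribution combines with the color of the in-column cube, and verifying that the fully balanced identity distributes correctly to give clean cancellation for flips and a clean $+1$ for positive trits.
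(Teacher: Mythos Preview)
Your overall plan is exactly the paper's: decompose $\Delta T^{\vu}$ into internal and external parts, handle the flip by cases according to whether the slab normal is perpendicular or parallel to $\vu$, and in the nontrivial case invoke the fully balanced hypothesis on the $2\times 2$ column through the move. Your flip analysis matches the paper's two cases cleanly.

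One caution on the trit: the mechanism producing the net $+1$ is not that ``vertical boundary dominoes are unblocked'' at the two hole cubes of $\cB$. Vertical dominoes contribute nothing to $\tau^{\vu}$ regardless, and whether the holes lie in $\cR$ turns out to be irrelevant to the count. What the paper actually does is set $\cA^{\vu}=\clS^{\vu}(D)\cap\cR$, where $D$ is the union of the four $\vu$-perpendicular trit dominoes, and observe that $\clS^{\vu}(Q)\cap\cR = \cA^{\vu}\cup C_1\cup C_2\cup C_3$ for the unique $2\times 2$ square $Q\subset D$ with normal $\vu$. The three cubes $C_i$ are \emph{trit} cubes (not holes), with two white and one black, so $\sum_{C\subset\cA^{\vu}}\ccol(C)=1$; this gives external contributions $\tfrac14+\tfrac14$ above and below, which combined with the internal change $\tfrac14-(-\tfrac14)=\tfrac12$ yields $+1$. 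Your bookkeeping framework is correct, but when you carry it out, track the offset between $\clS^{\vu}(D)$ and $\clS^{\vu}(Q)$ rather than the hole cubes.
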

\begin{proof}
In this proof, $\vu$ points towards the paper in all the drawings.
We begin by proving \ref{item:flips}.
Suppose a flip takes the dominoes $d_0$ and $\tilde{d_0}$ in $t_0$ to $d_1$ and $\tilde{d_1}$ in $t_1$. 
Notice that $\tv(d_0) = -\tv(\tilde{d_0})$ and $\tv(d_1) = - \tv(\tilde{d_1})$. 
For each domino $d \in t_0 \cap t_1$, define 
$$E^{\pm\vu}(d) = \tau^{\pm\vu}(d,d_1) + \tau^{\pm\vu}(d,\tilde{d_1}) - \tau^{\pm\vu}(d,d_0) - \tau^{\pm\vu}(d,\tilde{d_0}).$$ 
Notice that
$$T^{\vu}(t_1) - T^{\vu}(t_0) = \sum_{d \in t_0 \cap t_1} {E^{\vu}(d) + E^{-\vu}(d)}.$$


\begin{case}
\label{case:d1parz}
Either $d_0$ or $d_1$ is parallel to $\vu$.
\end{case}
\begin{figure}[ht]%
\centering
\includegraphics[width=\columnwidth]{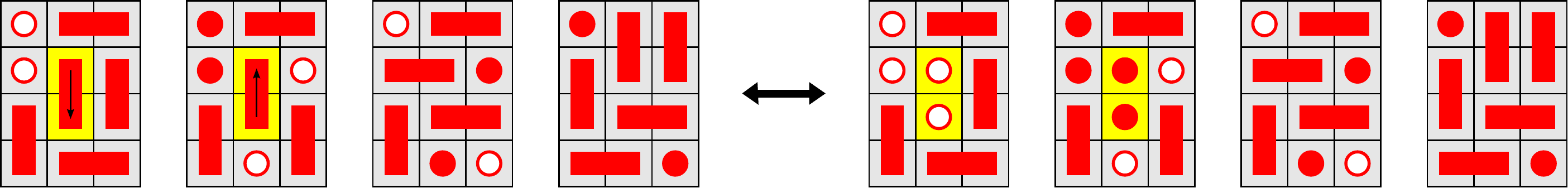}%
\caption{An example of Case \ref{case:d1parz}, where the black arrows represent $\tv(d_0)$ and $\tv(\tilde{d_0})$. It is clear that the effects of $d_0$ and $\tilde{d_0}$ cancel out.}%
\label{fig:flip_shadow_c1}%
\end{figure}
Assume, without loss of generality, that $d_1$ (and thus also $\tilde{d_1}$) is parallel to $\vu$.
By definition, $\tau^{\pm\vu}(d,d_1) = \tau^{\pm\vu}(d,\tilde{d_1}) = 0$ for each domino $d$. Now notice that $d_0$ and $\tilde{d_0}$ are parallel and in adjacent floors (see Figure \ref{fig:flip_shadow_c1}) : since $\tv(d_0) = -\tv(\tilde{d_0})$, it follows that $\tau^{\pm\vu}(d,d_0) + \tau^{\pm\vu}(d,\tilde{d_0}) = 0$ for each domino $d$, so that $E^{\pm\vu}(d) = 0$ and thus $T^{\vu}(t_1) = T^{\vu}(t_0)$. 
%
%
%

\begin{case}
\label{case:d1notparz}
Neither $d_0$ nor $d_1$ is parallel to $\vu$.
\end{case}
\begin{figure}[ht]%
\centering
\subfloat[The flip position is highlighted in yellow in both tilings, and $\cA^{\vu}$ is highlighted in green. The vectors $\tv(d)$ have been drawn for the most relevant dominoes.]{\includegraphics[width=0.4\columnwidth]{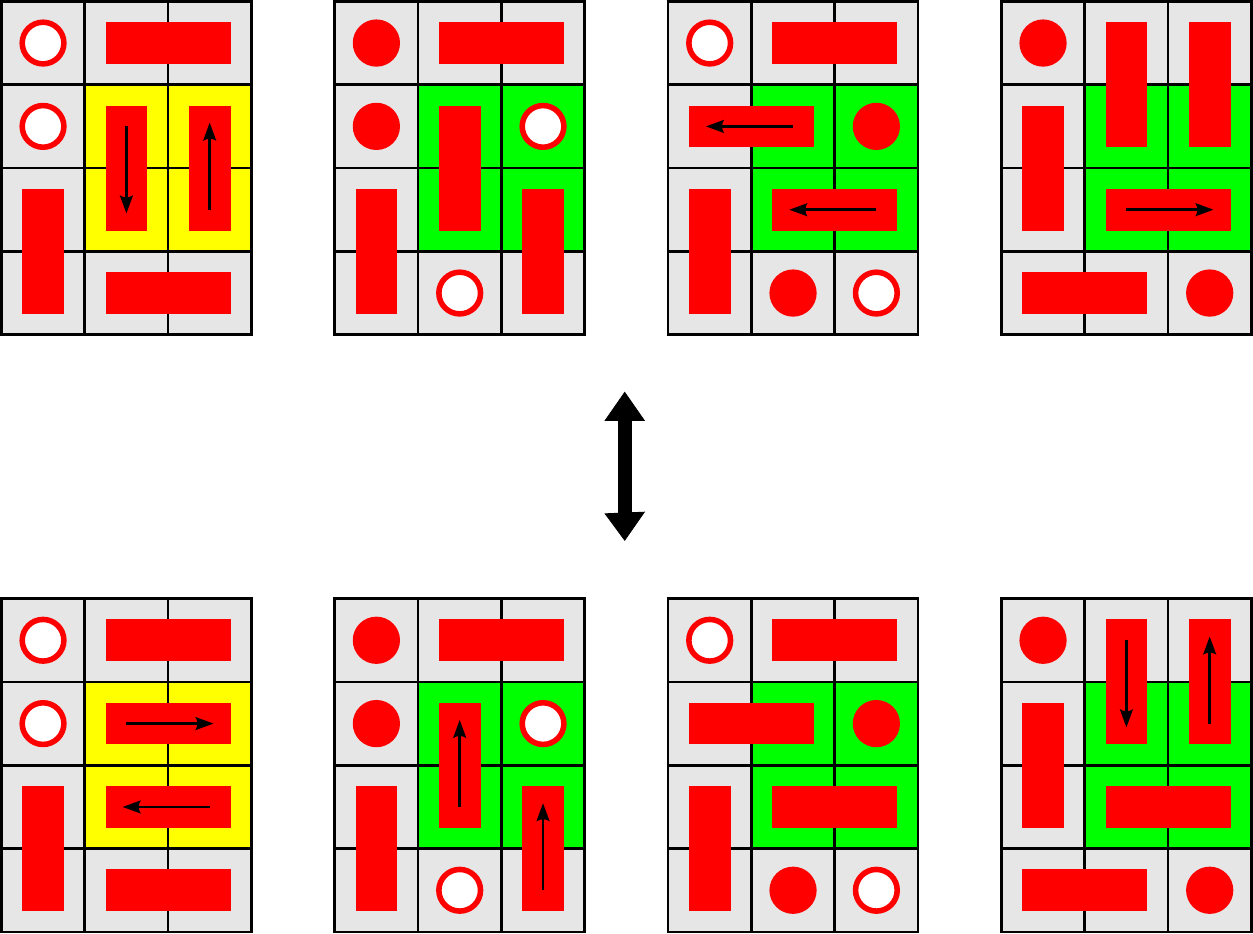}\label{fig:flip_shadow_c2}} \qquad \qquad
\subfloat[This refers to the tilings in \protect\subref{fig:flip_shadow_c2}, but only the arrows are drawn (not the dominoes). Notice that we have drawn $-\tv(d_0)$ and $-\tv(\tilde{d_0})$.]{\def\svgwidth{0.4\columnwidth}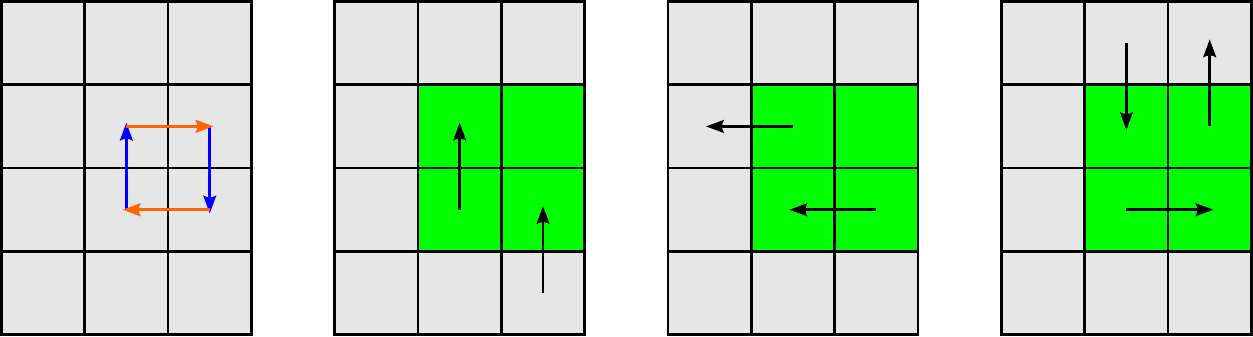\label{fig:flip_shadow_c2_scheme}}
\caption{Example of a flip in Case \ref{case:d1notparz}, together with a schematic drawing portraying $\tv(d)$ for the relevant dominoes.}
\label{fig:flip_shadow_c2_complete}
\end{figure}

In this case, $d_0 \cup \tilde{d_0} = d_1 \cup \tilde{d_1} = Q + [0,1]\vu \subset \cR$ for some square $Q$ of side $2$ and normal vector $\vu$. 

Notice that $\clS^{\vu}(d_0) \cup \clS^{\vu}(\tilde{d_0}) = \clS^{\vu}(d_1) \cup \clS^{\vu}(\tilde{d_1}) = \clS^{\vu}(Q + \vu)$; let $\cA^{\vu} = \cR \cap \clS^{\vu}(Q + \vu)$. 

Let $d$ be a domino that is completely contained in $\Aout$: we claim that $\tau(d_0,d) + \tau(\tilde{d_0},d) = 0 = \tau(d_1,d) + \tau(\tilde{d_1},d)$. This is obvious if $d$ is parallel to $\vu$; if not, we can switch the roles of $t_0$ and $t_1$ if necessary and assume that $d$ is parallel to $d_0$, which implies that $\tau(d_0,d) = \tau(\tilde{d_0},d) = 0$. Now notice that $d$ is in the $\vu$-shades of both $d_1$ and $\tilde{d_1}$, so that $\tau(d_1,d) = - \tau(\tilde{d_1},d)$. Hence, if $d \subset \cA^{\vu}$ (or if $d \cap \cA^{\vu} = \emptyset$), $E^{-\vu}(d) = 0$.

For dominoes $d$ that intersect $\Aout$ but are not contained in it, first observe that by switching the roles of $t_0$ and $t_1$ and switching the colors of the cubes (i.e., translating) if necessary, we may assume that the vectors are as shown in Figure \ref{fig:flip_shadow_c2}. By looking at Figure \ref{fig:flip_shadow_c2_scheme} and working out the possible cases, we see that
$$
E^{-\vu}(d) = 
\begin{cases}
 -\frac{1}{4}, &\text{if } \tv(d) \text{ points into } \Aout; \\
\frac{1}{4}, &\text{if } \tv(d) \text{ points away from } \Aout.\end{cases}
 $$  

Now for such dominoes, $\tv(d)$ points away from the region if and only if $d$ intersects a white cube of $\Aout$, and points into the region if and only if $d$ intersects a black cube in $\Aout$: hence,
$$\sum_{d \in t_0 \cap t_1} E^{-\vu}(d) = \sum_{C \subset \cA^{\vu}} (-\ccol(C)) = 0,$$ 
because $\cR$ is fully balanced with respect to $\vu$.
A completely symmetrical argument shows that $\sum_{d \in t_0 \cap t_1} E^{\vu}(d) = 0$, so we are done.

We now prove \ref{item:trits}. Suppose $t_1$ is reached from $t_0$ after a single positive trit. By rotating $t_0$ and $t_1$ in the plane $\vu^{\perp} = \{ \vw | \vw \cdot \vu = 0\}$ (notice that this does not change $T^{\vu}$), we may assume without loss of generality that the dominoes involved in the positive trit are as shown in 
Figure \ref{subfig:postrit_diff_dominoes}. Moreover, by translating if necessary, we may assume that the vectors $\tv(d)$ are as shown in Figure \ref{fig:postrit_diff}.

A trit involves three dominoes, no two of them parallel. Since dominoes parallel to $\vu$ have no effect along $\vu$, we consider only the four dominoes involved in the trit that are not parallel to $\vu$: $d_0, \tilde{d_0} \in t_0$, and $d_1, \tilde{d_1} \in t_1$. Define $E^{\pm\vu}$ with the same formulas as before.  

By looking at Figure \ref{fig:postrit}, the reader will see that $\tau(d_0,\tilde{d_0}) + \tau(\tilde{d_0},d_0) = -1/4$ and $\tau(d_1,\tilde{d_1}) + \tau(\tilde{d_1},d_1) = 1/4$. 

Let $D = d_0 \cup \tilde{d_0} \cup d_1 \cup \tilde{d_1}$: $\clS^{\vu}(D)$ is shown in Figure \ref{fig:postrit_diff}. 
$D$ contains a single square $Q$ of side $2$ and normal vector $\vu$.
Define $\Aout = \clS^{\vu}(D) \cap \cR$, and notice that (see Figure \ref{fig:postrit_diff}) $\clS^{\vu}(Q) \cap \cR = \Aout \cup C_1 \cup C_2 \cup C_3$, where $C_i$ are three basic cubes: if we look at the arrows in Figure \ref{fig:postrit_diff}, we see that two of them are white and one is black. Since $\cR$ is fully balanced with respect to $\vu$, 
$$\sum_{C \subset \Aout} \ccol(C) = \sum_{C \subset \clS^{\vu}(Q) \cap \cR}{\ccol(C)} - \sum_{1 \leq i \leq 3}\ccol(C_i) = 1.$$  




\begin{figure}[ht]%
\centering
\subfloat[]{\includegraphics[width=0.48\columnwidth]{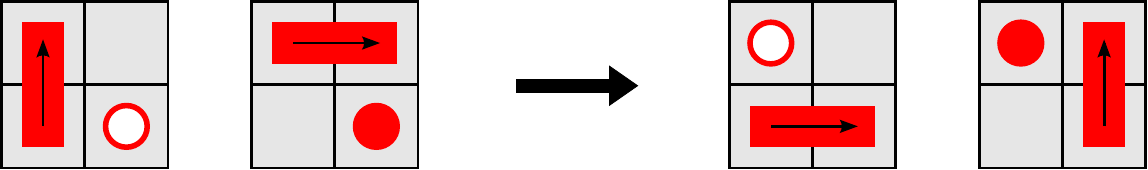}\label{subfig:postrit_diff_dominoes}}
\qquad
\subfloat[]{
\def\svgwidth{0.45\columnwidth}%
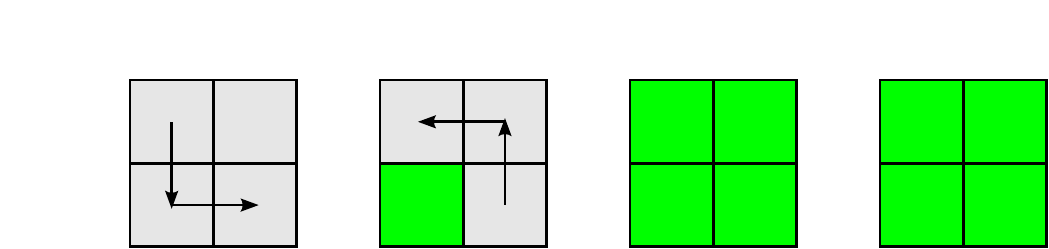\label{subfig:postrit_diff_effects}}
\caption{Illustration of a positive trit position. In \protect\subref{subfig:postrit_diff_dominoes}, the dominoes and the corresponding vectors $\tv(d)$ are shown, while in \protect\subref{subfig:postrit_diff_effects}, the highlighted cubes represent $\clS^{\vu}(D)$, and the vectors $-\tv(d_0)$, $-\tv(\tilde{d_0})$, $\tv(d_1)$ and $\tv(\tilde{d_1})$ are shown.}%
\label{fig:postrit_diff}%
\end{figure} 

By looking at Figure \ref{fig:postrit_diff}, we see that we have a situation that is very similar to Figure \ref{fig:flip_shadow_c2_scheme}; for each $d \in t_0 \cap t_1$, we have 
$$E^{-\vu}(d) = \begin{cases} 0, &\text{if } d \subset \Aout \text{ or } d \cap \Aout = \emptyset;\\
 \frac{1}{4}, &\text{if } \tv(d) \text{ points into } \Aout; \\
-\frac{1}{4}, &\text{if } \tv(d) \text{ points away from } \Aout\end{cases} $$  
(when we say that $\tv(d)$ points into or away from $\Aout$, we are assuming that $d$ intersects one cube of $\Aout$).
Hence, $$\sum_{d \in t_0 \cap t_1}E^{-\vu}(d) = \frac{1}{4} \sum_{C \subset \cA^{\vu}}\ccol(C) = \frac{1}{4}.$$

A completely symmetrical argument shows that $\sum_{d \in t_0 \cap t_1}E^{\vu}(d) = 1/4$, and hence
\begin{align*}
&T^{\vu}(t_1) - T^{\vu}(t_0) = (\tau(d_1,\tilde{d_1}) + \tau(\tilde{d_1},d_1)) - (\tau(d_0,\tilde{d_0}) + \tau(\tilde{d_0},d_0))\\ &+ \sum_{d \in t_0 \cap t_1}E^{-\vu}(d) + \sum_{d \in t_0 \cap t_1}E^{\vu}(d)
= \frac{1}{4} + \frac{1}{4} + \frac{1}{4} + \frac{1}{4}  = 1,
\end{align*}
which completes the proof.   
\end{proof}

\section{Topological groundwork for the twist}
\label{sec:topologicalGroundwork}

In this section, we develop a topological interpretation of tilings and twists. Dominoes are (temporarily) replaced by dimers, which, although formally different objects, are really just a different way of looking at dominoes. Although we will tend to work with dimers in this and the following section, we may in later sections switch back and forth between these two viewpoints.

We will now give some useful (but somewhat technical) definitions. The reader that is familiar with perfect matchings in graphs may find it easier to follow by recalling that domino tilings of a region can be seen as perfect matchings of a dual bipartite graph, and that the symmetric difference of two perfect matchings of the same graph is a disjoint set of cycles: we want to look at these cycles as oriented curves in $\RR^3$.    

Let $\cR$ be a region. A \emph{segment} $\ell$ of $\cR$ is a straight line of unit length connecting the centers of two cubes of $\cR$; in other words, $\ell:[0,1] \to \RR^3$ with $\ell(s) = p_0 + (p_1 - p_0)s$, where $p_0$ and $p_1$ are the centers of two cubes that share a face: this segment is a \emph{dimer} if $p_0 = \ell(0)$ is the center of a white cube. We define $\tv(\ell) = \ell(1) - \ell(0)$ (compare this with the definition of $\tv(d)$ for a domino $d$). If $\ell$ is a segment, $(-\ell)$ denotes the segment $s \mapsto \ell(1-s)$: notice that either $\ell$ or $-\ell$ is a dimer.

Two segments $\ell_0$ and $\ell_1$ are \emph{adjacent} if $\ell_0 \cap \ell_1 \neq \emptyset$ (here we make the usual abuse of notation of identifying a curve with its image in $\RR^3$); nonadjacent segments are \emph{disjoint}. In particular, a segment is always adjacent to itself. 

A \emph{tiling} of $\cR$ by dimers is a set of pairwise disjoint dimers such that the center of each cube of $\cR$ belongs to exactly one dimer of $t$. If $t$ is a tiling, $(-t)$ denotes the set of segments $\{-\ell | \ell \in t\}$.   

Given a map $\gamma:[m,n] \to \RR^3$, a segment $\ell$ and an integer $k \in [m,n-1]$, we abuse notation by making the identification $\gamma|_{[k,k+1]} = \ell$ if $\gamma(s) = \ell(s-k)$ for each $s \in [k,k+1]$. 
A \emph{curve} of $\cR$ is a map $\gamma:[0,n] \to \RR^3$ such that $\gamma|_{[k,k+1]}$ is (identified with) a segment of $\cR$ for $k=0,1,\ldots,n-1$.
We make yet another abuse of notation by also thinking of $\gamma$ as a sequence or set of segments of $\cR$, and we shall write $\ell \in \gamma$ to denote that $\ell = \gamma|_{[k,k+1]}$ for some $k$. 

A curve $\gamma:[0,n] \to \RR^3$ of $\cR$ is \emph{closed} if $\gamma(0) = \gamma(n)$; it is \emph{simple} if $\gamma$ is injective in $[0,n)$. A closed curve $\gamma:[0,2] \to \RR^3$ of $\cR$ is called \emph{trivial}: notice that, in this case, $\gamma|_{[0,1]} = -(\gamma|_{[1,2]})$ (when identified with their respective segments of $\cR$). A \emph{discrete rotation} on $[0,n]$ is a function $\rho:[0,n] \to [0,n]$ with $\rho(s) = (s + k) \bmod n$, for a fixed $k \in \ZZ$. If $\gamma_0:[0,n] \to \RR^3$ and $\gamma_1:[0,m] \to \RR^3$ are two closed curves, we say $\gamma_0 = \gamma_1$ if $n = m$ and $\gamma_1 = \gamma_0 \circ \rho$ for some discrete rotation $\rho$ on $[0,n]$.

Given two tilings $t_0$ and $t_1$, there exists a unique (up to discrete rotations) finite set of disjoint closed curves $\Gamma(t_0,t_1) = \{\gamma_i | 1 \leq i \leq m\}$ such that $t_0 \cup (-t_1) = \{ \ell | \ell \in \gamma_i \text{ for some } i\}$ and such that every nontrivial $\gamma_i$ is simple. Figure \ref{fig:auxiliaryLinesExample} shows an example. We define $\Gamma^*(t_0,t_1) := \{\gamma \in \Gamma(t_0,t_1) | \gamma \text{ nontrivial } \}$. 

Translating effects from the world of dominoes to the world of dimers is relatively straightforward. For $\vu \in \Phi$, $\Pi^{\vu}$ will denote the orthogonal projection on the plane $\pi^{\vu} = \vu^{\perp} = \{\vw \in \RR^3 | \vw \cdot \vu = 0\}$. Given two segments $\ell_0$ and $\ell_1$, we set:

$$
\tau^{\vu}(\ell_0, \ell_1) = 
\begin{cases}
\frac{1}{4} \det(\tv(\ell_1), \tv(\ell_0), \vu), &\Pi^{\vu}(\ell_0) \cap \Pi^{\vu}(\ell_1) \neq \emptyset, \ell_0(0) \cdot \vu < \ell_1(0) \cdot \vu; \\
0, &\text{otherwise.} 
\end{cases}
\label{def:effectSegments}
$$
Notice that this definition is analogous to the one given in Section \ref{sec:combTwistBoxes} for dominoes.


The definition of $\tau^{\vu}$ is given in terms of the orthogonal projection $\Pi^{\vu}$. From a topological viewpoint, however, this projection is not ideal, because it gives rise to nontransversal intersections between projections of segments. In order to solve this problem, we consider small perturbations of these projections.


Recall that $\bB$ is the set of positively oriented basis $\beta = (\vbeta_1,\vbeta_2,\vbeta_3)$ with vectors in $\Phi$. If $\beta \in \bB$ and $a,b \in \RR$, $\Pi^{\beta}_{a,b}$ will be used to denote the projection on the plane $\pi^{\vbeta_3} = \vbeta_3^{\perp} = \{\vu \in \RR^3 | \vu \cdot \vbeta_3 = 0\}$ whose kernel is the subspace (line) generated by the vector $\vbeta_3 + a \vbeta_1 + b \vbeta_2$. For instance, if $\beta = (\ex, \ey, \ez)$ is the canonical basis, $\Pi^{\beta}_{a,b}(x,y,z) = (x - az, y - bz, 0)$. 
Notice that $\Pi^{\beta}_{0,0} = \Pi^{\vbeta_3}$ is the orthogonal projection on the plane $\pi^{\vbeta_3}$, and, for small $(a,b) \neq (0,0)$, $\Pi^{\beta}_{a,b}$ is a nonorthogonal projection on $\pi^{\vbeta_3}$ which is a slight perturbation of $\Pi^{\vbeta_3}$.

Given $\beta \in \bB$, $\vu = \vbeta_3$ and small nonzero $a,b \in \RR$, set the \emph{slanted effect}
$$
\tau^{\beta}_{a,b}(\ell_0,\ell_1) =
\begin{cases}
\det(\tv(\ell_1), \tv(\ell_0), \vu), &\Pi^{\beta}_{a,b}(\ell_0) \cap \Pi^{\beta}_{a,b}(\ell_1) \neq \emptyset, \vu \cdot \ell_0(0)  < \vu \cdot \ell_1(0);\\
0, &\text{otherwise.} 
\end{cases}
\label{def:slantedEffect}
$$ 

Recall from knot theory the concept of crossing (see, e.g., \cite[p.18]{knotbook}). Namely, if $\gamma_0: I_0 \to \RR^3$, $\gamma_1: I_1 \to \RR^3$ are two continuous curves, $s_j \in \interior(I_j)$ and $\Pi$ is a projection from $\RR^3$ to a plane, then $(\Pi, \gamma_0, s_0, \gamma_1, s_1)$ is a \emph{crossing} if $\gamma_0(s_0) \neq \gamma_1(s_1)$ but $\Pi(\gamma_0(s_0)) = \Pi(\gamma_1(s_1))$. If, furthermore, $\gamma_j$ is of class $C^1$ in $s_j$ and the vectors $\gamma_1'(s_1)$, $\gamma_0'(s_0)$ and $\gamma_1(s_1) - \gamma_0(s_0)$ are linearly independent, then the crossing is \emph{transversal}; its \emph{sign} is the sign of $\det(\gamma_1'(s_1), \gamma_0'(s_0),\gamma_1(s_1) - \gamma_0(s_0))$. We are particularly interested in the case where the curves are segments of a region $\cR$.

%

For a region $\cR$ and $\vu \in \Phi$, we define the $\vu$-\emph{length} of $\cR$ as
 $$N = \max_{p_0, p_1 \in R} |\vu \cdot (p_0 - p_1)|.$$

\begin{lemma}
\label{lemma:transversalCrossings}
Let $\cR$ be a region, and fix $\beta \in \bB$. Let $N$ be the $\vbeta_3$-length of $\cR$, and let $a,b \in \RR$ with $0 < |a|,|b| < 1/N$.
Then $\tau^{\beta}_{a,b}(\ell_0,\ell_1) + \tau^{\beta}_{a,b}(\ell_1,\ell_0) \neq 0$ if and only if there exist $s_0, s_1 \in [0,1]$ such that $\ell_0(s_0) \neq \ell_1(s_1)$ but $\Pi^{\beta}_{a,b}(\ell_0(s_0)) = \Pi^{\beta}_{a,b}(\ell_1(s_1))$. 

Moreover, if the latter condition holds for $s_0, s_1$, then $(\Pi^{\beta}_{a,b},\ell_0, s_0, \ell_1, s_1)$ is a transversal crossing whose sign is given by $\tau^{\beta}_{a,b}(\ell_0,\ell_1) + \tau^{\beta}_{a,b}(\ell_1,\ell_0)$.

%
%
 %
\end{lemma}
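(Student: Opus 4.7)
The plan is to recast the intersection condition algebraically in the basis $\beta$. Since $\ker \Pi^{\beta}_{a,b}$ is spanned by $\vbeta_3 + a\vbeta_1 + b\vbeta_2$, the equality $\Pi^{\beta}_{a,b}(\ell_0(s_0)) = \Pi^{\beta}_{a,b}(\ell_1(s_1))$ is equivalent to $\ell_0(s_0) - \ell_1(s_1) = \lambda(\vbeta_3 + a\vbeta_1 + b\vbeta_2)$ for some $\lambda \in \RR$, and $\ell_0(s_0) \neq \ell_1(s_1)$ to $\lambda \neq 0$ (since the spanning vector has $\vbeta_3$-component $1$). Setting $n_j = (\ell_0(0) - \ell_1(0)) \cdot \vbeta_j \in \ZZ$ and $\epsilon^j_i = \tv(\ell_i) \cdot \vbeta_j \in \{-1,0,1\}$, the condition decomposes into
\begin{equation*}
u_j := n_j + s_0 \epsilon^j_0 - s_1 \epsilon^j_1 = \lambda \alpha_j, \qquad (\alpha_1, \alpha_2, \alpha_3) = (a, b, 1).
\end{equation*}
The key a priori bound is $|\lambda| = |u_3| \leq N - 1$: any point on a segment of $\cR$ has $\vbeta_3$-coordinate in the convex hull of cube-center $\vbeta_3$-coordinates, which spans a range of length $N - 1$.

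The forward implication is immediate: if $\tau^{\beta}_{a,b}(\ell_0, \ell_1) \neq 0$, then $\det(\tv(\ell_1), \tv(\ell_0), \vbeta_3) \neq 0$ forces $\epsilon^3_0 = \epsilon^3_1 = 0$ and hence $\lambda = u_3 = n_3$; by the height hypothesis $\vbeta_3 \cdot \ell_0(0) < \vbeta_3 \cdot \ell_1(0)$ built into the definition of $\tau$, $n_3 < 0$, so $\lambda \neq 0$, and the parameters $s_0, s_1$ are supplied by the projection-intersection condition.

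For the reverse implication, assume a crossing exists, so $\lambda \neq 0$. For each $j \in \{1, 2\}$ the bound gives $|u_j| = |\lambda \alpha_j| \leq (N-1)/N < 1$; if both $\epsilon^j_0$ and $\epsilon^j_1$ vanished, then $u_j = n_j \in \ZZ$ would be forced to zero, making $\lambda \alpha_j = 0$ and hence $\lambda = 0$. Hence for each $j \in \{1, 2\}$ at least one of $\epsilon^j_0, \epsilon^j_1$ is nonzero. Since each $\tv(\ell_i) \in \Phi$ has exactly one nonzero $\beta$-coordinate, this simultaneously rules out $\tv(\ell_i) = \pm \vbeta_3$ (which would vanish in both $j = 1$ and $j = 2$) and $\tv(\ell_0) \parallel \tv(\ell_1)$ (which would leave some $j \in \{1,2\}$ with both coordinates vanishing). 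Thus $\tv(\ell_0), \tv(\ell_1) \in \{\pm \vbeta_1, \pm \vbeta_2\}$ with distinct axes, whence $\lambda = n_3 \in \ZZ \setminus \{0\}$, and exactly one of $\tau^{\beta}_{a,b}(\ell_0, \ell_1), \tau^{\beta}_{a,b}(\ell_1, \ell_0)$ is nonzero according to the sign of $n_3$.

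For the transversality and sign claim, $\tv(\ell_0), \tv(\ell_1) \in \{\pm\vbeta_1,\pm\vbeta_2\}$ with distinct axes implies $\det(\tv(\ell_1), \tv(\ell_0), \vbeta_j) = 0$ for $j = 1, 2$. Combining with $\ell_1(s_1) - \ell_0(s_0) = -\lambda(\vbeta_3 + a \vbeta_1 + b \vbeta_2)$ gives
\begin{equation*}
\det(\tv(\ell_1), \tv(\ell_0), \ell_1(s_1) - \ell_0(s_0)) = -\lambda \det(\tv(\ell_1), \tv(\ell_0), \vbeta_3) \neq 0,
\end{equation*}
establishing transversality. A short sign check (splitting into $n_3 < 0$ and $n_3 > 0$) shows this agrees with $\tau^{\beta}_{a,b}(\ell_0, \ell_1) + \tau^{\beta}_{a,b}(\ell_1, \ell_0)$: in each case only one term in the sum survives, and its sign combines with the factor $-\lambda = -n_3$ precisely to match the sign of the determinant. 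The main obstacle is identifying the uniform bound $|\lambda| \leq N - 1$ and exploiting it together with the integrality of $n_j$; once in hand, the rest is straightforward sign and parity bookkeeping.
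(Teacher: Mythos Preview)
Your argument is essentially the paper's: both write the crossing condition as $\ell_0(s_0) - \ell_1(s_1) = \lambda(\vbeta_3 + a\vbeta_1 + b\vbeta_2)$, bound $|\lambda|$ by the $\vbeta_3$-length, and then use that $|\lambda a|, |\lambda b| < 1$ together with integrality of the $\vbeta_1$- and $\vbeta_2$-components of $\ell_0(0)-\ell_1(0)$ to force $\tv(\ell_0),\tv(\ell_1)\in\{\pm\vbeta_1,\pm\vbeta_2\}$ on distinct axes. Your packaging (``for each $j\in\{1,2\}$ at least one of $\epsilon^j_0,\epsilon^j_1$ is nonzero'') is a tidy rephrasing of the paper's contradiction argument, but the substance is the same.

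One point you skip: the paper's definition of \emph{crossing} requires $s_0,s_1\in(0,1)$, not merely $[0,1]$, and this is used downstream (e.g.\ in the smoothing step of Lemma~\ref{lemma:directionalWrithingNumber}). The paper checks this explicitly by taking the inner product of the kernel equation with $\tv(\ell_i)$. In your notation the same integrality trick works: once $\tv(\ell_0)=\pm\vbeta_1$, say, then $u_1=n_1\pm s_0$; if $s_0\in\{0,1\}$ this is an integer of absolute value $<1$, hence $0$, forcing $\lambda=0$. So this is an easily patched omission rather than a structural gap.
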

\begin{proof}
Suppose $\tau^{\beta}_{a,b}(\ell_0,\ell_1) + \tau^{\beta}_{a,b}(\ell_1,\ell_0) \neq 0$. We may without loss of generality assume $\tau^{\beta}_{a,b}(\ell_0,\ell_1) \neq 0$. By definition, we have $\Pi^{\beta}_{a,b}(\ell_0(s_0)) = \Pi^{\beta}_{a,b}(\ell_1(s_1))$ for some $s_0, s_1 \in [0,1]$ and $\vbeta_3 \cdot \ell_0(0) < \vbeta_3 \cdot \ell_1(0)$. Since $\det(\tv(\ell_1), \tv(\ell_0), \vbeta_3) \neq 0$, we have
$$\vbeta_3 \cdot \ell_0(s_0) = \vbeta_3 \cdot (\ell_0(0) + s_0 \tv(\ell_0)) = \vbeta_3 \cdot \ell_0(0) <\vbeta_3 \cdot \ell_1(0) = \vbeta_3 \cdot \ell_1(s_1),$$
and thus $\ell_0(s_0) \neq \ell_1(s_1)$. 

Conversely, suppose $\ell_0(s_0) \neq \ell_1(s_1)$ but $\Pi^{\beta}_{a,b}(\ell_0(s_0)) = \Pi^{\beta}_{a,b}(\ell_1(s_1))$: this can be rephrased as
\begin{equation}
\ell_1(s_1) - \ell_0(s_0) = c (\vbeta_3 + a \vbeta_1 + b \vbeta_2)
\label{eq:projectionKernel}
\end{equation}
for some $c \neq 0$. Notice that $c = \vbeta_3 \cdot (\ell_1(s_1) - \ell_0(s_0))$, so that $|c| \leq N$.

We now observe that $\det(\tv(\ell_1), \tv(\ell_0), \vbeta_3) \neq 0$. Suppose, by contradiction, that $\det(\tv(\ell_1), \tv(\ell_0), \vbeta_3) = 0$. Then, at least one of the following statements must be true: $\vbeta_1 \cdot \tv(\ell_0) = \vbeta_1 \cdot \tv(\ell_1) = 0$; or $\vbeta_2 \cdot \tv(\ell_0) = \vbeta_2 \cdot \tv(\ell_1) = 0$. Assume that the first statement holds (i.e., $\vbeta_1 \cdot \tv(\ell_i) = 0$). 
By definition of segment, 
$\ell_i(s_i) = \ell_i(0) + s_i \tv(\ell_i)$. By taking the inner product with $\vbeta_1$ on both sides of \eqref{eq:projectionKernel}, $a c = \vbeta_1 \cdot (\ell_1(s_1) - \ell_0(s_0)) = \vbeta_1 \cdot (\ell_1(0) - \ell_0(0))$. Now $\ell_0(0), \ell_1(0) \in (\plshalf{\ZZ})^3$, so that $a c = \vbeta_1 \cdot (\ell_1(0) - \ell_0(0)) \in \ZZ$. Since $|a| < 1/N$, $|ac| < 1$ and thus $c = 0$, which is a contradiction.  

Finally, since  $\vbeta_3 \cdot \tv(\ell_0) = \vbeta_3 \cdot \tv(\ell_1) = 0$, we have $\vbeta_3 \cdot (\ell_1(0) - \ell_0(0)) = \vbeta_3 \cdot (\ell_1(s_0) - \ell_0(s_1)) = c \neq 0$. From the definition of $\tau^{\beta}_{a,b}$, we see that $\tau^{\beta}_{a,b}(\ell_0, \ell_1) + \tau^{\beta}_{a,b}(\ell_1, \ell_0) \neq 0$. 

To see the last claim, we first note that $s_i \in (0,1)$: since $\tv(\ell_i) \in \{\pm\vbeta_1, \pm \vbeta_2\}$, we may take the inner product with $\tv(\ell_i)$ on both sides of \eqref{eq:projectionKernel} to get that $s_i$ equals  either $|a c|$ or $|bc|$, and hence $s_i \in (0,1)$. Since $\tv(\ell_0) \perp \tv(\ell_1)$, this proves that $(\Pi^{\beta}_{a,b},\ell_0, s_0, \ell_1, s_1)$ is a transversal crossing. If $\vw = \vbeta_3 + a\vbeta_1 + b\vbeta_2$, the sign of this crossing is given by the sign of $\det(\tv(\ell_1), \tv(\ell_0), c\vw)$. By switching the roles of $\ell_0$ and $\ell_1$ if necessary, we may assume that $c > 0$, so that this sign equals $\det(\tv(\ell_1), \tv(\ell_0), \vw) = \det(\tv(\ell_1), \tv(\ell_0), \vbeta_3) = \tau_{a,b}(\ell_0,\ell_1)$, completing the proof. 
\end{proof}

\begin{lemma}
\label{lemma:crossings}
Let $\cR$ be a region, and let $\beta \in \bB$. Let $N$ denote the $\vbeta_3$-length of $\cR$, and suppose $0 < \epsilon < 1/N$. Given two segments $\ell_0$ and $\ell_1$,
$$\tau^{\vbeta_3}(\ell_0,\ell_1) = \frac{1}{4}\sum_{i,j \in \{-1,1\}} \tau^{\beta}_{i\epsilon, j\epsilon}(\ell_0,\ell_1) .$$
\end{lemma}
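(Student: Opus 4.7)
My plan is to reduce the identity to a small case analysis by first observing that the determinant factor $D = \det(\tv(\ell_1), \tv(\ell_0), \vbeta_3)$ appears identically (up to the $1/4$) in $\tau^{\vbeta_3}$ and in each $\tau^{\beta}_{i\epsilon, j\epsilon}$, so the only thing that matters is comparing the four indicator functions of non-empty projection intersections. The height condition $\vbeta_3 \cdot \ell_0(0) < \vbeta_3 \cdot \ell_1(0)$ is the same in both definitions, so if it fails, or if $D = 0$, both sides vanish trivially. This reduces the proof to the case where $D \neq 0$ and $c_0 := \vbeta_3 \cdot \ell_0(0) < c_1 := \vbeta_3 \cdot \ell_1(0)$. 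In this case $\tv(\ell_0), \tv(\ell_1)$ are perpendicular to $\vbeta_3$ and to each other.

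Next I would compute the slanted projection explicitly. Since $\tv(\ell_i) \perp \vbeta_3$, the quantity $\ell_i(s) \cdot \vbeta_3 = c_i$ is constant along the segment, and a direct substitution in the formula defining $\Pi^{\beta}_{a,b}$ gives
\[
\Pi^{\beta}_{a,b}(\ell_i(s)) = \Pi^{\vbeta_3}(\ell_i(s)) - c_i(a\vbeta_1 + b\vbeta_2).
\]
Therefore $\Pi^{\beta}_{a,b}(\ell_0) \cap \Pi^{\beta}_{a,b}(\ell_1) \neq \emptyset$ if and only if $\Pi^{\vbeta_3}(\ell_0) \cap \bigl(\Pi^{\vbeta_3}(\ell_1) - \delta(a\vbeta_1 + b\vbeta_2)\bigr) \neq \emptyset$, where $\delta = c_1 - c_0 > 0$. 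Since $\delta \leq N$ and $|a|, |b| < 1/N$, the translation amounts $|\delta a|, |\delta b|$ lie strictly in $(0, 1)$.

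The heart of the argument is a case analysis in $(\vbeta_1, \vbeta_2)$ coordinates. Without loss of generality $\tv(\ell_0) = \vbeta_1$ and $\tv(\ell_1) = \vbeta_2$, so $\Pi^{\vbeta_3}(\ell_0)$ is the horizontal unit segment $\{(p_1 + s, p_2) : s \in [0,1]\}$ and $\Pi^{\vbeta_3}(\ell_1)$ is the vertical unit segment $\{(q_1, q_2 + s) : s \in [0,1]\}$, with all coordinates in $\ZZ + 1/2$. The orthogonal projections intersect iff $q_1 - p_1 \in \{0, 1\}$ and $p_2 - q_2 \in \{0, 1\}$, giving exactly four configurations. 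After translating $\Pi^{\vbeta_3}(\ell_1)$ by $-\delta(i\epsilon \vbeta_1 + j\epsilon \vbeta_2)$ with $i, j \in \{\pm 1\}$, the integrality constraint together with $|\delta\epsilon| \in (0,1)$ pins down each sign choice to a unique intersection configuration: $(i, j) = (+, +)$ forces $(q_1 - p_1, p_2 - q_2) = (1, 0)$, $(+, -)$ forces $(1, 1)$, $(-, +)$ forces $(0, 0)$, and $(-, -)$ forces $(0, 1)$.

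Thus the four slanted-intersection indicators partition the orthogonal-intersection indicator: at most one of the four slanted projections intersects, and this happens exactly when the orthogonal ones do. Multiplying through by the common factor $D$ and dividing by $4$ yields the claimed identity. The main obstacle is just bookkeeping the four sign cases carefully; everything else is direct computation from the definitions.
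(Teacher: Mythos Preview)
Your argument is correct and in spirit very close to the paper's proof, but the execution differs in one direction. Both proofs first reduce to the case $D \neq 0$ and $c_0 < c_1$, and both ultimately show that among the four sign choices $(i,j)$ exactly one slanted projection meets, and that this happens precisely when the orthogonal projections meet. For the implication ``orthogonal $\Rightarrow$ unique slanted'' the paper does essentially the same coordinate computation you do. The difference is in the converse: for ``slanted $\Rightarrow$ orthogonal'' the paper introduces the closed sets $A_{ij} = \{\delta \in [0,\epsilon] : \text{the $\delta$-slanted projections meet}\}$ and argues via Lemma~\ref{lemma:transversalCrossings} that $\min A_{ij}$ cannot be positive, a continuity/minimality argument. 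Your approach bypasses this entirely by writing the slanted projection as the orthogonal projection composed with a translation by $-\delta(i\epsilon\vbeta_1 + j\epsilon\vbeta_2)$, then reading off the integer constraints directly; this handles both implications at once and does not invoke the previous lemma. Your route is slightly more elementary and self-contained; the paper's route reuses machinery already set up for the writhe interpretation.

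One small point: your ``without loss of generality $\tv(\ell_0) = \vbeta_1$, $\tv(\ell_1) = \vbeta_2$'' is a little brisk, since a priori each vector is only $\pm\vbeta_1$ or $\pm\vbeta_2$. It is harmless here because the projected segments as point sets are unaffected by the sign, and the determinant $D$ is a common factor on both sides; but it would be worth a half-sentence saying so.
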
  
 
\begin{proof}
We may assume that $\vbeta_3 \cdot \ell_0(0) < \vbeta_3 \cdot \ell_1(0)$ and that $\det(\tv(\ell_1), \tv(\ell_0), \vbeta_3) \neq 0$ (otherwise both sides would be zero). Since rotations in the $\vbeta_3^{\perp}$ plane leave both sides unchanged, we may assume that $\tv(\ell_1) = \pm\vbeta_1$, $\tv(\ell_0) = \pm\vbeta_2$ (see Figure \ref{fig:projectionDimerCrossings}).

\begin{figure}[ht]%
\centering
\includegraphics[width=0.9\columnwidth]{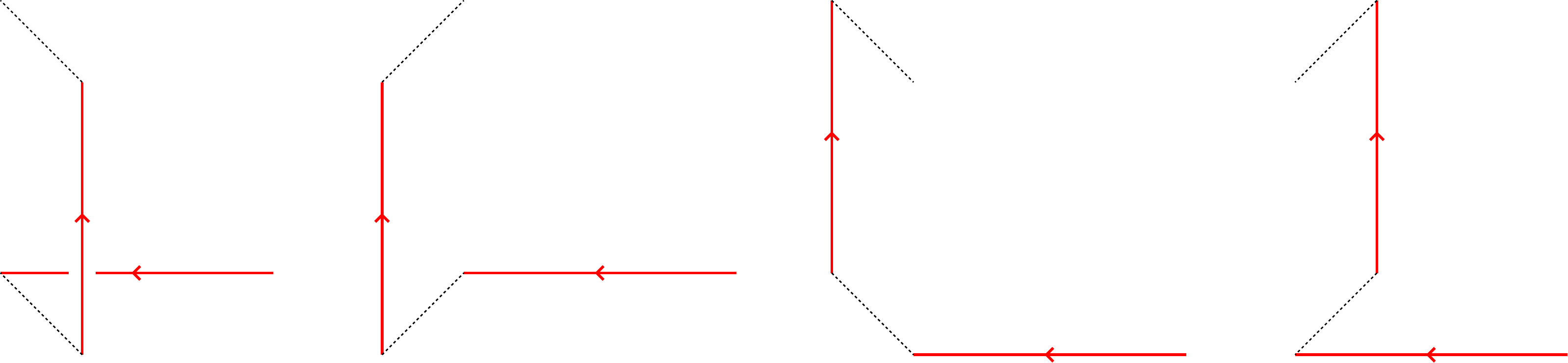}%
\caption{Illustrations of the four different projections $\Pi^{\beta}_{\pm\epsilon, \pm \epsilon}$ of two segments $\ell_0, \ell_1$ with $\tau^{\vbeta_3}(\ell_0,\ell_1) = 1/4$. The dotted lines represent the projection of lines which are parallel to $\vbeta_3$, in each of the four cases. Notice that the segments are involved in a crossing for exactly one of the projections, and this crossing is positive.}%
\label{fig:projectionDimerCrossings}%
\end{figure}

Our strategy is to show these two facts:
\begin{enumerate}[label=(\roman*)]
	\item \label{item:epsImpliesZero} If $\tau^{\beta}_{i\epsilon, j\epsilon}(\ell_0,\ell_1) \neq 0$ for some $(i,j) \in \{-1,1\}^2$, then $\tau^{\vbeta_3}(\ell_0,\ell_1) \neq 0$.
	\item \label{item:zeroImpliesEps} If $\tau^{\vbeta_3}(\ell_0,\ell_1) \neq 0$, then there exists a unique $(i,j) \in \{-1,1\}^2$ such that  $\tau^{\beta}_{i\epsilon, j\epsilon}(\ell_0,\ell_1) \neq 0$. 
\end{enumerate}
Once we prove \ref{item:epsImpliesZero} and \ref{item:zeroImpliesEps}, we get the result. 

Let $c = \vbeta_3 \cdot (\ell_1(0) - \ell_0(0))$, and consider the closed sets 
$$
A_{ij} = \left\{\delta \in [0,\epsilon] | \exists s_0,s_1 \in [0,1],  \ell_1(s_1) - \ell_0(s_0) = c(\vbeta_3 + i \delta \vbeta_1 + j \delta \vbeta_2)\right\}.
$$
Notice that $\epsilon \in A_{ij}$ if and only if $\tau^{\beta}_{i\epsilon, j\epsilon}(\ell_0,\ell_1) \neq 0$, and $0 \in A_{ij}$ if and only if $\tau^{\vbeta_3}(\ell_0,\ell_1) \neq 0$.

Suppose $\epsilon \in A_{ij}$ for some $(i,j) \in \{-1,1\}^2$, and let $\delta = \min A_{ij}$. If $\delta > 0$, $\ell_1(s_1) - \ell_0(s_0) = c(\vbeta_3 + i \delta \vbeta_1 + j \delta \vbeta_2)$ implies, by Lemma \ref{lemma:transversalCrossings},  that $s_0,s_1 \in (0,1)$. Hence, there must exist $\delta' < \delta$ such that $\delta' \in A$, a contradiction. Therefore, we must have $\delta = 0$, so that $0 \in A_{ij}$. We have proved \ref{item:epsImpliesZero}.

Now suppose $\tau^{\vbeta_3}(\ell_0,\ell_1) \neq 0$, that is, $\ell_1(k_1) - \ell_0(k_0) = c\vbeta_3$ for some $k_0,k_1 \in [0,1]$. Clearly $k_0, k_1 \in \{0,1\}$; for simplicity, assume that $k_0 = k_1 = 0$ (the other cases are analogous). Now for any $s_0, s_1 \in [0,1]$,
$\ell_1(s_1) - \ell_0(s_0) = c\vbeta_3 - s_0 \tv(\ell_0) + s_1 \tv(\ell_1)$. Thus, given $(i,j) \in \{-1,1\}^2$,
$$\epsilon \in A_{ij} \Leftrightarrow \exists s_0,s_1 \in [0,1]: \quad s_1 (\tv(\ell_1) \cdot \vbeta_1) = i \epsilon c ,\quad -s_0 (\tv(\ell_0) \cdot \vbeta_2) = j \epsilon c,$$
which occurs if and only if $i \epsilon c (\tv(\ell_1) \cdot \vbeta_1) > 0$ and $j \epsilon c (\tv(\ell_0) \cdot \vbeta_2) < 0$: this determines a unique $(i,j) \in \{-1,1\}^2$, so we have proved \ref{item:zeroImpliesEps}.
\end{proof}

If $A_0$ and $A_1$ are two sets of segments (curves are also seen as sets of segments), $\vu \in \Phi, \beta \in \bB$, define
$$T^{\vu}(A_0,A_1) = \sum_{\substack{\ell_0 \in A_0 \\ \ell_1 \in A_1}} \tau^{\vu}(\ell_0,\ell_1), \quad T^{\beta}_{a,b}(A_0,A_1) = \sum_{\substack{\ell_0 \in A_0 \\ \ell_1 \in A_1}} \tau^{\beta}_{a,b}(\ell_0,\ell_1).\label{def:effectsTwoParams}$$
For shortness, $T^{\vu}(A) = T^{\vu}(A,A)$ and $T^{\beta}_{a,b}(A) = T^{\beta}_{a,b}(A,A)$.

Consider two disjoint simple closed curves $\gamma_0, \gamma_1$ and a projection $\Pi$ from $\RR^3$ to some plane. Assume there exists finitely many crossings $(\Pi,\gamma_0, s_0, \gamma_1, s_1)$, all transversal.
Recall from knot theory (see, e.g., \cite[pp. 18--19]{knotbook}) that the \emph{linking number} $\Link(\gamma_0,\gamma_1)$ equals half the sum of the signs of all these crossings.

\begin{lemma}
\label{lemma:linkingNumber}
Let $\gamma_0$ and $\gamma_1$ be two disjoint simple closed curves of a region $\cR$. Fix $\beta \in \bB$, and let $N$ denote the $\vbeta_3$-length of $\cR$. Then
\begin{enumerate}[label=\upshape(\roman*),topsep=0.1em]
\item \label{item:linking_ab} If $0 < |a|,|b| < 1/N$, $T^{\vbeta}_{a,b}(\gamma_0, \gamma_1) + T^{\vbeta}_{a,b}(\gamma_1, \gamma_0) = 2\Link(\gamma_0,\gamma_1)$. 
\item \label{item:linking_00} $T^{\vbeta_3}(\gamma_0,\gamma_1) + T^{\vbeta_3}(\gamma_1,\gamma_0) = 2\Link(\gamma_0,\gamma_1).$
\end{enumerate}
\end{lemma}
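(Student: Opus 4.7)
The plan is to prove (i) directly from Lemma \ref{lemma:transversalCrossings}, and then deduce (ii) from (i) using Lemma \ref{lemma:crossings}.

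For (i), fix $a,b$ with $0 < |a|,|b| < 1/N$. The idea is that the slanted projection $\Pi^{\beta}_{a,b}$ is a regular projection of the link $\gamma_0 \sqcup \gamma_1$, so the linking number can be read off from the signs of its crossings. I would first argue that every crossing $(\Pi^\beta_{a,b}, \gamma_0, t_0, \gamma_1, t_1)$ comes from a unique pair of segments $(\ell_0, \ell_1) \in \gamma_0 \times \gamma_1$ with the crossing in the interior of the segments: if $\ell_0(s_0)$ and $\ell_1(s_1)$ were to project to the same point with at least one of $s_0, s_1$ an endpoint, then $\ell_0(s_0) - \ell_1(s_1) \in \ZZ^3$, and (repeating the argument from the proof of Lemma \ref{lemma:transversalCrossings}) taking the inner product of equation \eqref{eq:projectionKernel} with $\vbeta_1$ yields $ac \in \ZZ$ with $|c| \leq N$ and $|a| < 1/N$, forcing $c = 0$ and contradicting disjointness of $\gamma_0$ and $\gamma_1$. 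Thus the crossings between $\gamma_0$ and $\gamma_1$ are in bijection with pairs $(\ell_0,\ell_1) \in \gamma_0 \times \gamma_1$ for which $\tau^{\beta}_{a,b}(\ell_0,\ell_1) + \tau^{\beta}_{a,b}(\ell_1,\ell_0) \neq 0$, and by Lemma \ref{lemma:transversalCrossings} the sign of the crossing equals this sum. Summing over all such pairs gives the total sum of crossing signs, which equals $2\Link(\gamma_0,\gamma_1)$ by the definition of the linking number; this yields (i).

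For (ii), choose any $\epsilon$ with $0 < \epsilon < 1/N$. By Lemma \ref{lemma:crossings}, for every pair of segments $\ell_0, \ell_1$,
\[
\tau^{\vbeta_3}(\ell_0,\ell_1) = \frac{1}{4}\sum_{i,j \in \{-1,1\}} \tau^{\beta}_{i\epsilon, j\epsilon}(\ell_0,\ell_1).
\]
Summing this identity over all $(\ell_0, \ell_1) \in \gamma_0 \times \gamma_1$ and then adding the analogous identity with $\gamma_0$ and $\gamma_1$ swapped gives
\[
T^{\vbeta_3}(\gamma_0,\gamma_1) + T^{\vbeta_3}(\gamma_1,\gamma_0) = \frac{1}{4} \sum_{i,j \in \{-1,1\}} \bigl( T^{\beta}_{i\epsilon, j\epsilon}(\gamma_0,\gamma_1) + T^{\beta}_{i\epsilon, j\epsilon}(\gamma_1,\gamma_0) \bigr).
\]
By part (i), each of the four summands on the right equals $2\Link(\gamma_0,\gamma_1)$, so the right-hand side equals $\frac{1}{4} \cdot 4 \cdot 2\Link(\gamma_0,\gamma_1) = 2\Link(\gamma_0,\gamma_1)$, proving (ii).

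The only subtle point is the endpoint/vertex argument in (i) — ensuring that no crossing of the projected link is hidden at a segment endpoint (which would otherwise be miscounted by our segment-pair sum). This is handled cleanly by the integrality/smallness bound on $a$ and $b$, exactly as in the proof of Lemma \ref{lemma:transversalCrossings}; everything else is a matter of carefully matching the combinatorial sum to the knot-theoretic definition of $\Link$.
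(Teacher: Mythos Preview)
Your proof is correct and follows exactly the paper's approach: (i) is obtained directly from Lemma~\ref{lemma:transversalCrossings}, and (ii) from (i) together with Lemma~\ref{lemma:crossings}. Your separate endpoint argument is unnecessary (and the claim $\ell_0(s_0)-\ell_1(s_1)\in\ZZ^3$ need not hold when only one of $s_0,s_1$ is an endpoint): the ``moreover'' clause of Lemma~\ref{lemma:transversalCrossings} already guarantees $s_0,s_1\in(0,1)$, so no crossing can occur at a vertex.
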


\begin{proof}
By Lemma \ref{lemma:transversalCrossings}, the sum of signs of the crossings is given by $T^{\beta}_{a,b}(\gamma_0,\gamma_1) + T^{\beta}_{a,b}(\gamma_1,\gamma_0)$, which establishes \ref{item:linking_ab}. Also, \ref{item:linking_00} follows from \ref{item:linking_ab} and Lemma \ref{lemma:crossings}. 
\end{proof}


\begin{lemma}
\label{lemma:intersectionOfSegments}
Let $\ell_0$ and $\ell_1$ be two segments of $\cR$, and let $\vu \in \RR^3$ be a vector such that $\|\vu\| < 1$. Then these two statements are equivalent:
\begin{enumerate}[label=\upshape(\roman*)]
	\item \label{item:intersection} There exist $s_0,s_1 \in [0,1]$ such that $\ell_0(s_0) - \ell_1(s_1) = \vu$.
	\item \label{item:conditions} There exist $(i,j) \in \{0,1\}^2$ and $a_0,a_1 \in (-1,1)$ such that
	$\ell_0(i) = \ell_1(j)$ and $\vu = a_0 \tv(\ell_0) + a_1 \tv(\ell_1)$ with $(-1)^i a_0 \geq 0$ and $(-1)^j a_1 \leq 0$.
\end{enumerate}
\end{lemma}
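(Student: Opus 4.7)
The plan is to establish each implication separately. The direction (ii) $\Rightarrow$ (i) is a short computation, while (i) $\Rightarrow$ (ii) needs a case analysis based on the geometry of $\tv(\ell_0)$ and $\tv(\ell_1)$.

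For (ii) $\Rightarrow$ (i), the relation $\ell_0(i) = \ell_1(j)$ yields $\ell_0(0) - \ell_1(0) = j\tv(\ell_1) - i\tv(\ell_0)$, so that
\[
\ell_0(s_0) - \ell_1(s_1) = (s_0 - i)\tv(\ell_0) + (j - s_1)\tv(\ell_1).
\]
Setting $s_0 = i + a_0$ and $s_1 = j - a_1$ then gives $\ell_0(s_0) - \ell_1(s_1) = \vu$. The sign conditions translate directly into $s_0, s_1 \in [0,1]$: for $i=0$, the inequalities $0 \leq a_0 < 1$ give $s_0 \in [0, 1)$, while for $i = 1$, the inequalities $-1 < a_0 \leq 0$ give $s_0 \in (0, 1]$; the argument for $s_1$ is analogous.

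For (i) $\Rightarrow$ (ii), the key observation is that $\ell_0(0) - \ell_1(0) \in \ZZ^3$, since centers of basic cubes lie in $(\plshalf{\ZZ})^3$. I would split into cases according to whether $\tv(\ell_0)$ and $\tv(\ell_1)$ are perpendicular or parallel/antiparallel (recall both belong to $\Phi$). In the perpendicular case, after a rigid rotation assume $\tv(\ell_0) = \ex$ and $\tv(\ell_1) = \ey$; writing $\ell_0(0) - \ell_1(0) = (a,b,c) \in \ZZ^3$, the hypothesis
\[
\|\vu\|^2 = (a + s_0)^2 + (b - s_1)^2 + c^2 < 1
\]
forces $c = 0$, $a \in \{-1, 0\}$ and $b \in \{0, 1\}$. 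In each of the four resulting subcases, the pair $(i,j) = (-a, b)$ satisfies $\ell_0(i) = \ell_1(j)$, and then $a_0 = s_0 - i$, $a_1 = j - s_1$ automatically satisfy the sign conditions. The strict inequality $\|\vu\| < 1$ prevents $s_0, s_1$ from reaching the boundary values that would make $|a_0|$ or $|a_1|$ equal $1$.

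In the parallel/antiparallel case, both segments lie on parallel lines, and $\|\vu\| < 1$ forces them either to coincide or to share exactly one endpoint. Since $\tv(\ell_0)$ and $\tv(\ell_1)$ are linearly dependent, the equation $\vu = a_0 \tv(\ell_0) + a_1 \tv(\ell_1)$ has a one-parameter family of solutions, and I would exploit this extra freedom---balanced against the two sign conditions---to exhibit a valid pair in $(-1,1)^2$. The main obstacle is the exhaustive subcase checking: ensuring that $a_0, a_1$ remain strictly inside $(-1, 1)$ uses the strict inequality $\|\vu\| < 1$ essentially in every subcase, and the parallel case is slightly delicate because one must choose the decomposition so that both sign conditions can be met simultaneously.
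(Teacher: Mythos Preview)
Your argument is correct, but it is more laborious than the paper's. The paper bypasses the entire case split on the relative directions of $\tv(\ell_0)$ and $\tv(\ell_1)$ with a single geometric observation: if $\ell_0$ and $\ell_1$ are \emph{not} adjacent (i.e., $\ell_0 \cap \ell_1 = \emptyset$), then $\dist(\ell_0,\ell_1) \geq 1$, which immediately contradicts $\|\vu\| < 1$. Once adjacency is known, the endpoints of segments lie in $(\plshalf{\ZZ})^3$, so two intersecting segments must in fact share an endpoint $\ell_0(i) = \ell_1(j)$; from there the computation $\vu = (s_0 - i)\tv(\ell_0) + (j - s_1)\tv(\ell_1)$ is uniform in all cases, with the strictness $a_0, a_1 \in (-1,1)$ following from $\|\vu\| < 1$ (using, in the parallel case, the freedom in the decomposition that you also noticed).

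What your approach buys is explicitness: you never invoke the distance bound, and the perpendicular case is fully self-contained in coordinates. What the paper's approach buys is brevity and uniformity---no separate treatment of perpendicular versus parallel, and no subcase checking for the signs. Your parallel case sketch is correct in outline but would still require the exhaustive verification you flag as ``the main obstacle''; the paper's route avoids that entirely.
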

\begin{proof}
First, suppose \ref{item:intersection} holds. If $\ell_0$ and $\ell_1$ are not adjacent, then $\dist(\ell_0,\ell_1) \geq 1 > \|\vu\|$, which is a contradiction. Thus, $\ell_0$ and $\ell_1$ are adjacent, and thus $\ell_0(i) = \ell_1(j)$ for some $(i,j) \in \{0,1\}^2$: then 
\begin{align*}
\vu = \ell_0(s_0) - \ell_1(s_1) &= [\ell_0(i) + (s_0 - i) \tv(\ell_0)] - [\ell_1(j) + (s_1 - j)\tv(\ell_1)]\\
 &= (s_0 - i) \tv(\ell_0) + (j-s_1)\tv(\ell_1),
\end{align*}
that is, $\vu = a_0 \tv(\ell_0) + a_1 \tv(\ell_1)$ with $(i + a_0), (j - a_1) \in [0,1]$, which implies that $(-1)^i a_0 \geq 0$ and $(-1)^j a_1 \leq 0$. Also, since $\|\vu\| < 1$, we can take $a_0,a_1 \in (-1,1)$.

For the other direction, suppose \ref{item:conditions} holds, so that $\ell_0(i) = \ell_1(j)$ for some $(i,j) \in \{0,1\}^2$. Then setting  $s_0 = (i + a_0)$ and $s_1 = (j - a_1)$, we have $s_0,s_1 \in [0,1]$ and 
$\ell_0(i + a_0) - \ell_1(j - a_1) = [\ell_0(i) + a_0\tv(\ell_0)] - [\ell_1(j) - a_1\tv(\ell_1)] = \vu. $   
\end{proof}

For a map $\gamma:[0,n] \to \RR^3$ and a vector $\vu \in \RR^3$, let $(\gamma + \vu):[0,1] \to \RR^3: s \mapsto \gamma(s) + \vu$ denote the translation of $\gamma$ by $\vu$. 

\begin{lemma}
\label{lemma:translationIntersection}
Let $\gamma$ be a curve of $\cR$, let $\beta \in \bB$, and let $\vu = a \vbeta_1 + b \vbeta_2 + c \vbeta_3 \in \RR^3$. If $\|\vu\| < 1$ and $abc \neq 0$, then the curves $\gamma$ and $\gamma + \vu$ are disjoint.  
\end{lemma}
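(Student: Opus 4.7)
The plan is to argue by contradiction, reducing the question to the segment-level statement of Lemma~\ref{lemma:intersectionOfSegments} and then exploiting the fact that $\tv(\ell)$ is always one of the six axis vectors in $\Phi$.

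Suppose, for contradiction, that $\gamma$ and $\gamma+\vu$ are not disjoint. Since each curve is a finite concatenation of segments, there must exist segments $\ell_0,\ell_1\in\gamma$ and parameters $s_0,s_1\in[0,1]$ with $\ell_0(s_0)=\ell_1(s_1)+\vu$, i.e., $\ell_0(s_0)-\ell_1(s_1)=\vu$. Because $\|\vu\|<1$, Lemma~\ref{lemma:intersectionOfSegments} applies and furnishes scalars $a_0,a_1\in(-1,1)$ such that
\[
\vu = a_0\,\tv(\ell_0) + a_1\,\tv(\ell_1).
\]

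Now comes the key observation: since $\beta=(\vbeta_1,\vbeta_2,\vbeta_3)\in\bB$ consists of vectors in $\Phi$, each of $\tv(\ell_0)$ and $\tv(\ell_1)$ lies in $\{\pm\vbeta_1,\pm\vbeta_2,\pm\vbeta_3\}$. Consequently the linear span of $\{\tv(\ell_0),\tv(\ell_1)\}$ is contained in the span of at most two of the basis vectors $\vbeta_1,\vbeta_2,\vbeta_3$, so the expression $a_0\tv(\ell_0)+a_1\tv(\ell_1)$ has zero component along at least one of $\vbeta_1,\vbeta_2,\vbeta_3$. Writing $\vu = a\vbeta_1+b\vbeta_2+c\vbeta_3$, this forces at least one of $a,b,c$ to vanish, contradicting the hypothesis $abc\neq0$.

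There is essentially no obstacle here; the only thing to be careful about is invoking Lemma~\ref{lemma:intersectionOfSegments} correctly (which requires $\|\vu\|<1$, part of the hypothesis) and noting that the conclusion does not depend on whether $\tv(\ell_0)$ and $\tv(\ell_1)$ happen to be parallel or perpendicular, since in either case their span omits at least one coordinate direction in the basis $\beta$.
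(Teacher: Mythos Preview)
Your proof is correct and follows essentially the same approach as the paper: argue by contradiction, reduce to two segments, apply Lemma~\ref{lemma:intersectionOfSegments} to write $\vu = a_0\tv(\ell_0) + a_1\tv(\ell_1)$, and observe that since $\tv(\ell_0),\tv(\ell_1)\in\Phi$ this forces at least one $\beta$-coordinate of $\vu$ to vanish. Your write-up is in fact slightly more explicit about the span argument than the paper's version.
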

Notice that $\gamma + \vu$ is not a curve of $\cR$.
\begin{proof}
Suppose, by contradiction, that there exist $s_0, s_1 \in [0,n]$ (the domain of $\gamma$) such that $\gamma(s_0) = \gamma(s_1) + \vu.$ Let $k_0, k_1 \in \ZZ$ be such that $k_i \leq s_i \leq k_i + 1 \leq n$, and set $\tilde{s}_i = s_i - k_i$. Since $\gamma$ is a curve of $\cR$, $\ell_i = \gamma|_{[k_i,k_i+1]}$ are segments of $\cR$ such that $\ell_0(\tilde{s}_0) - \ell_1(\tilde{s}_1) = \gamma(s_0) - \gamma(s_1) = \vu$. By Lemma \ref{lemma:intersectionOfSegments}, $\vu = a_0 \tv(\ell_0) + a_1 \tv(\ell_1)$, which means that at least one of the three coordinates of $\vu$ is zero: this contradicts the fact that $abc \neq 0$. 
\end{proof}

Consider a simple closed curve $\gamma: I \to \RR^3$ and a vector $\vu \in \RR^3$, $\vu \neq 0$. Assume that there exists $\delta > 0$ such that for each $s \in (0,\delta]$, the curves $\gamma$ and $\gamma + s\vu$ are disjoint. Then define the \emph{directional writhing number}\label{def:directionalWrithe} in the direction $\vu$ by $\Wr(\gamma, \vu) = \Link(\gamma, \gamma + \delta\vu)$ (see \cite[\S 3]{writhingNumber}). Since $\Link$ is symmetric and invariant by translations, $\Wr(\gamma,\vu) = \Wr(\gamma,-\vu)$.

\begin{lemma}
\label{lemma:directionalWrithingNumber}
Fix $\beta \in \bB$, and let $\gamma$ be a simple closed curve of $\cR$. If $0 < |a|,|b| < 1/N$, where $N$ is the $\vbeta_3$-length of $\cR$, then 
$\Wr(\gamma, \vbeta_3 + a\vbeta_1 + b\vbeta_2) = T^{\beta}_{a,b}(\gamma).$
\end{lemma}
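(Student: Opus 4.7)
Set $\vu = \vbeta_3 + a\vbeta_1 + b\vbeta_2$, the spanning vector of the kernel of $\Pi^{\beta}_{a,b}$. My plan is to first check that $\Wr(\gamma,\vu)$ is well-defined, then compute the linking number $\Link(\gamma, \gamma + \delta\vu)$ by projecting with $\Pi^{\beta}_{a,b}$ itself, and finally identify the resulting sum of signed crossings with $T^{\beta}_{a,b}(\gamma)$ via Lemma \ref{lemma:transversalCrossings}.

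For the first step, write $\vu = a\vbeta_1 + b\vbeta_2 + 1\cdot\vbeta_3$. Since $a,b \neq 0$, all three coordinates of $\delta\vu$ with respect to $\beta$ are nonzero for any $\delta > 0$, and for sufficiently small $\delta$ we have $\|\delta\vu\| < 1$. Hence Lemma \ref{lemma:translationIntersection} (applied to the curve $\gamma$ of $\cR$ and the vector $\delta\vu$) guarantees that $\gamma$ and $\gamma + \delta\vu$ are disjoint for all small $\delta > 0$. Thus $\Wr(\gamma,\vu) = \Link(\gamma, \gamma+\delta\vu)$ is well-defined.

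For the second step, I compute this linking number by counting signed transversal crossings under the projection $\Pi^{\beta}_{a,b}$. The key observation is that $\Pi^{\beta}_{a,b}(\gamma+\delta\vu) = \Pi^{\beta}_{a,b}(\gamma)$, because $\delta\vu$ lies in $\ker\Pi^{\beta}_{a,b}$. Consequently the transversal crossings of the two curves under this projection arise precisely from pairs $(s_0,s_1)$ with $s_0\neq s_1$ such that $\Pi^{\beta}_{a,b}(\gamma(s_0)) = \Pi^{\beta}_{a,b}(\gamma(s_1))$, i.e.\ from the \emph{self}-crossings of $\gamma$ under $\Pi^{\beta}_{a,b}$. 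Each such self-crossing $(s_0,s_1)$ yields exactly two crossings between $\gamma$ and $\gamma + \delta\vu$: one at $(\gamma(s_0),(\gamma+\delta\vu)(s_1))$ and one at $(\gamma(s_1),(\gamma+\delta\vu)(s_0))$. Since $\gamma(s_1) - \gamma(s_0) = c\vu$ for some nonzero scalar $c$ (this is the defining condition of the self-crossing), each of these two crossings has sign equal to the sign of $\det(\gamma'(s_1),\gamma'(s_0),\vu)$; I would carry out the short determinant bookkeeping explicitly, using that $(c+\delta)$ and $-(c-\delta)$ have the same sign once $\delta$ is smaller than the minimum of $|c|$ over the finitely many self-crossings.

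The third step is then just combining the counts. The sum of signed crossings between $\gamma$ and $\gamma+\delta\vu$ equals twice the sum of signs of self-crossings of $\gamma$ under $\Pi^{\beta}_{a,b}$, so
\[
\Link(\gamma,\gamma+\delta\vu) \;=\; \tfrac{1}{2}\cdot 2\,\sum_{\text{self-crossings}} \sgn \;=\; \sum_{\text{self-crossings}}\sgn.
\]
By Lemma \ref{lemma:transversalCrossings}, for segments $\ell_0,\ell_1 \in \gamma$ the quantity $\tau^{\beta}_{a,b}(\ell_0,\ell_1)+\tau^{\beta}_{a,b}(\ell_1,\ell_0)$ is exactly the sign of the crossing between $\ell_0$ and $\ell_1$ (and is zero when no such crossing exists). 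Summing over ordered pairs of segments of $\gamma$ therefore recovers the sum of self-crossing signs, so the right-hand side equals $T^{\beta}_{a,b}(\gamma)$, finishing the proof. The main technical obstacle is the explicit sign-matching of the two companion crossings produced by a single self-crossing; everything else is a straightforward unraveling of definitions together with the projection-kernel trick that makes $\Pi^{\beta}_{a,b}$ collapse $\gamma$ and $\gamma+\delta\vu$ onto the same planar diagram.
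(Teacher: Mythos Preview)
Your argument has a genuine gap in the second step. You propose to compute $\Link(\gamma,\gamma+\delta\vu)$ by counting signed crossings under the projection $\Pi^{\beta}_{a,b}$, using the ``kernel trick'' that $\gamma$ and $\gamma+\delta\vu$ project to the \emph{same} planar curve. But this is precisely why the crossing formula for the linking number does not apply: the formula $\Link = \tfrac{1}{2}\sum\sgn$ requires a \emph{regular} link diagram, i.e.\ a projection with finitely many double points, all transversal. Here, for every parameter $s$ you have $\Pi^{\beta}_{a,b}(\gamma(s)) = \Pi^{\beta}_{a,b}((\gamma+\delta\vu)(s))$ while $\gamma(s)\neq(\gamma+\delta\vu)(s)$, so the pair $(s,s)$ is a double point at every $s$; these are all tangential (the tangent vectors coincide), not transversal. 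The diagram is maximally degenerate, and you cannot simply discard the tangential overlap and sum only the transversal crossings.

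What you are really trying to prove is exactly the classical fact that for a regular projection along $\vu$, the signed self-crossing count of $\gamma$ equals $\Link(\gamma,\gamma+\delta\vu)$. This needs an honest argument. The paper handles it by first disposing of the non-orthogonality of $\Pi^{\beta}_{a,b}$ (same kernel as the orthogonal projection along $\vu$, hence same crossings), and then smoothing $\gamma$ at its corners to a $C^\infty$ curve $\tilde\gamma$ without introducing or destroying any crossings (possible because Lemma~\ref{lemma:transversalCrossings} places all crossings in the interiors of segments). For the smooth $\tilde\gamma$ the equality $\Wr(\tilde\gamma,\vu)=\sum\sgn$ is the standard result, and both sides are unchanged by the smoothing. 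If you want to avoid smoothing, you would instead need to compute the linking number under a \emph{nearby but different} projection (so that the two curves separate in the plane) and then argue that the extra crossings introduced near the ``parallel'' strands cancel; this is essentially redoing the smooth proof by hand in the PL setting, and is not as effortless as your kernel trick suggests.
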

\begin{proof}
We would like to use the fact that the sums of the signs of the crossings of the orthogonal projection of a smooth curve in the direction of a vector $\vu$ equals its directional writhing number (in the direction of $\vu$): this is essentially what we're trying to prove for our curve, except that $\Pi^{\beta}_{a,b}$ is not the orthogonal projection and that $\gamma$ is not a smooth curve. However, these difficulties can be avoided, as the following paragraphs show.

The orthogonality of the projection makes no real difference, because the orthogonal projection in the direction of $(a,b,1)$ has the same kernel as $\Pi^{\beta}_{a,b}$, so the crossings occur in the same positions (and clearly have the same signs). Therefore, by Lemma \ref{lemma:transversalCrossings}, $T^{\beta}_{a,b}(\gamma)$ equals the sums of the signs of the crossings of the aforementioned orthogonal projection.

For the smoothness of the curve, there is a finite number of points where $\gamma$ is not smooth: precisely, the set of $k \in \ZZ$ such that the two segments of $\gamma$ that intersect at $\gamma(k)$ are not parallel. To simplify notation, let $[0,n]$ be the domain of $\gamma$, and for $k=0,1,\ldots,n-1$ let $\ell_k$ be the segment of $\gamma$ such that $\ell_k(0) = \gamma(k)$ (notice that $\ell_k(1) = \gamma(k+1)$). It is also convenient to set $\ell_{-1} := \ell_{n-1}$, so that $\ell_{-1}(1) = \ell_{n-1}(1) = \gamma(n) = \gamma(0)$.

Recall from Lemma \ref{lemma:transversalCrossings} that every crossing in the projections occur in the interiors of the segments: since the number of segments is finite, we can pick $0 < \epsilon < 1/2$ sufficiently small so that 
$\Pi^{\beta}_{a,b}(\gamma(U_{\epsilon}))$ 
contains no crossings, where 
$U_\epsilon = [0,n] \cap \left(\bigcup_{k \in \ZZ} [k - \epsilon, k + \epsilon]\right).$

Let $\phi_1: \RR \to \RR$ be a nondecreasing $C^{\infty}$ function such that $\phi_1(t) = 0$ whenever $t \leq -\epsilon$ and $\phi_1(t) = t$ whenever $t \geq \epsilon$.
Let $\phi_0(t) = t + \epsilon - \phi_1(t)$.
Consider the smooth simple closed curve of $\RR^3$, $\tilde{\gamma}:[0,n] \to \RR^3$, given by
$$ \tilde{\gamma}(s) = 
\begin{cases}
\gamma(k - \epsilon) + \phi_0(s-k)\tv(\ell_{k-1}) + \phi_1(s-k)\tv(\ell_k), &s \in (k-\epsilon, k+ \epsilon);\\
\gamma(s), &s \notin U_{\epsilon}.
\end{cases}$$
To simplify notation, write $\vw = \vbeta_3 + a\vbeta_1 + b\vbeta_2$ and fix $\delta < 1/\sqrt{1 + a^2 + b^2}$, so that $\|\delta\vw\| < 1$. By Lemma \ref{lemma:translationIntersection}, $\gamma$ and $\gamma + s\vu$ are disjoint whenever $s \in (0,\delta]$.
  
Clearly, the sums of the signs of the crossings in the orthogonal projection of $\tilde{\gamma}$ equals that of $\gamma$; moreover, $\Link(\tilde{\gamma},\tilde{\gamma} +  s\vw) = \Link(\gamma,\gamma +  s\vw)$ for sufficiently small $s > 0$. Since $\tilde{\gamma}$ is smooth, $T^{\beta}_{a,b}(\gamma) = \Wr(\tilde{\gamma},\vw) = \Link(\gamma,\gamma +  s\vw) = \Wr(\gamma,\vw)$. 
%
%
\end{proof}

The following rather technical Lemma will be used in the proof of Lemma \ref{lemma:differenceOfLinkingNumbers}:

\begin{lemma}
\label{lemma:projectionIntersections}
Let $\beta \in \bB$, and let $\ell_0$ and $\ell_1$ be two segments of a region $\cR$ whose $\vbeta_3$-length is $N$. Let $\vu = b \vbeta_2 + c \vbeta_3$ with
$bc \neq 0$ and $b^2 + c^2 < 1$.
Let $0 < \epsilon < \min\left(\frac{|b|}{N + |c|}, \frac{1 - |b|}{N + |c|}\right)$. 

If, for some $s_0, s_1 \in [0,1]$,
$\Pi^{\beta}_{\epsilon,\epsilon}(\ell_0(s_0) - \ell_1(s_1) - \vu) = 0$,
then $\ell_0$ and $\ell_1$ are not parallel, and $s_0,s_1 \in (0,1)$. 
\end{lemma}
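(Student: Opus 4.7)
The plan is to unfold the kernel condition into three scalar equations in the $\beta$-coordinates and then use the fact that $\ell_0(0) - \ell_1(0)$ has integer $\beta$-coordinates to turn each one into an integrality constraint. First I would rewrite the hypothesis as the existence of $t \in \RR$ satisfying
$$\ell_0(s_0) - \ell_1(s_1) = (t\epsilon)\vbeta_1 + (b + t\epsilon)\vbeta_2 + (c + t)\vbeta_3.$$
Since $\ell_0(s_0), \ell_1(s_1) \in \cR$, the $\vbeta_3$-length hypothesis gives $|c + t| \leq N$, hence $|t| \leq N + |c|$. Combined with the bound on $\epsilon$, this yields the two key inequalities $|t\epsilon| < |b|$ and $|t\epsilon| < 1 - |b|$; together with $b^2 + c^2 < 1$ they imply $|t\epsilon| < 1$, $0 < |b + t\epsilon| < 1$, and $0 < |b|, |c| < 1$.

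Next, writing $\ell_i(s_i) = \ell_i(0) + s_i\tv(\ell_i)$ and using $\ell_i(0) \in (\plshalf{\ZZ})^3$, each of the three $\beta$-coordinates of the displayed identity becomes an equation whose left-hand side is an integer and whose right-hand side equals one of $t\epsilon$, $b + t\epsilon$, or $c + t$, shifted by $-s_0\alpha_0 + s_1\alpha_1$, where each $\alpha_i \in \{0, \pm 1\}$ according to whether $\tv(\ell_i)$ points along that axis.

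A short case analysis on the directions $\tv(\ell_0), \tv(\ell_1) \in \Phi$ then finishes both claims. For non-parallelism: if $\tv(\ell_0) \parallel \tv(\ell_1) \parallel \vbeta_i$ for some $i$, then two of the three coordinate equations contain no $s_j$ contribution at all; one of them forces $t\epsilon \in \ZZ$ (hence $t = 0$, by $|t\epsilon| < 1$), and another then forces $b$ or $c$ into $\ZZ$, contradicting the bounds above. For the open-endpoint assertion, the configuration $\{\tv(\ell_0), \tv(\ell_1)\} \subset \{\pm\vbeta_1, \pm\vbeta_3\}$ is excluded immediately, because the $\vbeta_2$-equation would read (integer)$\,= b + t\epsilon$, impossible since $0 < |b + t\epsilon| < 1$. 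In every remaining configuration, substituting $s_i \in \{0,1\}$ into the coordinate equation attached to $\tv(\ell_i)$ forces one of $t\epsilon$, $b + t\epsilon$, or $c$ into $\ZZ$, and each such option is already ruled out.

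The main obstacle is purely bookkeeping: one must carefully identify, in each sub-case, which of the three coordinate equations delivers the integrality contradiction. Once the bounds $|t\epsilon| < \min(|b|, 1-|b|)$ and $0 < |b|, |c| < 1$ are in place, each sub-case collapses to a one-line argument.
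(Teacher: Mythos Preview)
Your approach is essentially the paper's: both set $\alpha_i = \vbeta_i \cdot (\ell_0(s_0) - \ell_1(s_1))$ (your $t\epsilon$, $b+t\epsilon$, $c+t$ are exactly $\alpha_1,\alpha_2,\alpha_3$), bound $|\alpha_3|\le N$ to obtain $|t\epsilon|<\min(|b|,1-|b|)$ and hence $0<|\alpha_2|<1$, and then exploit the integrality of the $\beta$-coordinates of $\ell_0(0)-\ell_1(0)$. The paper organizes the endgame more economically: rather than splitting on the pair $(\tv(\ell_0),\tv(\ell_1))$, it observes once that \emph{either} failure hypothesis (parallelism, or some $s_i\in\{0,1\}$) forces at least two of the three $\alpha_i$ to be integers; since $\alpha_2$ cannot be, both $\alpha_1$ and $\alpha_3$ must be, whence $\alpha_1=0$, so $\alpha_3-c=0$ and $c=\alpha_3\in\ZZ$, contradiction.

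One small gap in your sketch: the option ``$t\epsilon\in\ZZ$'' is \emph{not} by itself ruled out, since $|t\epsilon|<1$ only gives $t=0$. In the sub-cases where the coordinate equation attached to $\tv(\ell_i)$ yields $t\epsilon\in\ZZ$ (namely when $\tv(\ell_i)=\pm\vbeta_1$), you still need a second equation --- the $\vbeta_3$-equation, which in those sub-cases has no $s_0,s_1$ contribution and reads $c+t\in\ZZ$ --- to conclude $c\in\ZZ$. This is exactly the two-step chain the paper's unified argument encodes.
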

\begin{proof}
Suppose $\Pi^{\beta}_{\epsilon,\epsilon}(\ell_0(s_0) - \ell_1(s_1) - \vu) = 0$. Let $\alpha_i = \vbeta_i \cdot (\ell_0(s_0) - \ell_1(s_1)), i=1,2,3$, so that $\alpha_1 = \epsilon (\alpha_3 - c), \alpha_2 - b = \epsilon (\alpha_3 - c)$.
 
Suppose, by contradiction, that at least one of these things occurs: 
\begin{enumerate}[label=(\roman*), topsep = 0.1px, itemsep = 0.1px]
	\item \label{item:parallel} $\ell_0$ and $\ell_1$ are parallel;
	\item \label{item:borderline} $s_0 \in \{0,1\}$ or $s_1 \in \{0,1\}$.
\end{enumerate}
We claim that at least two of the three $\alpha_i$'s are integers. To see this, suppose first \ref{item:parallel}, so that $\tv(\ell_0), \tv(\ell_1) \parallel \vbeta_i$, so that for $j \neq i$, $\alpha_j = \vbeta_j \cdot (\ell_0(s_0) - \ell_1(s_1)) \in \ZZ$. On the other hand, if \ref{item:borderline} holds, say $s_1 \in \{0,1\}$, then $\ell_1(s_1) \in \plshalf{\ZZ}$ and $\tv(\ell_0) \parallel \vbeta_i$, so that, again, for $j \neq i$, $\alpha_j \in \ZZ$.

We claim that $\alpha_2 \notin \ZZ$. In fact, if $\alpha_2 \in \ZZ$ then we would have
$|\alpha_2| = |b + \epsilon(\alpha_3 - c)| < |b| + \frac{1 - |b|}{N + |c|}(N + |c|) = 1$, so that $\alpha_2 = 0$ and $|b| = \epsilon|\alpha_3 - c| < \frac{|b|}{N + |c|}(N + |c|)$, which is a contradiction.

Therefore, we must have $\alpha_1,\alpha_3 \in \ZZ.$ Then $|\alpha_1| = |\epsilon(\alpha_3 - c)| < |b| < 1$, so $\alpha_1 = 0 = \alpha_3 - c$. Thus $c = \alpha_3 \in \ZZ$ but $|c| \in (0,1)$, which is a contradiction.
%
%
%
\end{proof}

The following definition is specific for Lemmas \ref{lemma:differenceOfLinkingNumbers} and \ref{lemma:writheDifference}. Let $\gamma:[0,n] \to \RR^3$ be a simple closed curve of a region $\cR$ and $\beta \in \bB$. For $k=0,1,\ldots, n-1$, set $\ell_k = \gamma|_{[k,k+1]}$, and set also $\ell_n = \ell_0$. Finally, we define 
$$\eta^{\beta}_{\gamma}(k) = 
\begin{cases}
1, &(\tv(\ell_k), \tv(\ell_{k+1})) = (\vbeta_2,\vbeta_3) \text{ or } (-\vbeta_3,-\vbeta_2);\\
-1, & (\tv(\ell_k), \tv(\ell_{k+1})) = (-\vbeta_2,-\vbeta_3) \text{ or } (\vbeta_3,\vbeta_2);\\
0, &\text{otherwise.}
\end{cases}
$$

\begin{lemma}
\label{lemma:differenceOfLinkingNumbers}
Let $\gamma:[0,n] \to \RR^3$ be a simple closed curve of a region $\cR$. For $k=0,1,\ldots, n-1$, set $\ell_k = \gamma|_{[k,k+1]}$, and set also $\ell_n = \ell_0$; for shortness, write $\tv_k = \tv(\ell_k)$. Then if $\beta \in \bB$ and $a,b,c > 0$, then
$$
\Wr(\gamma, a \vbeta_1 + b \vbeta_2 + c \vbeta_3) - \Wr(\gamma, - a \vbeta_1 + b \vbeta_2 + c \vbeta_3) = \sum_{0 \leq k < n} \eta^{\beta}_{\gamma}(k).
$$
\end{lemma}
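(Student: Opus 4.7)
The plan is to interpret the two writhing numbers as linking numbers with push-off copies and count the signed crossing events that occur as the push-off direction is continuously deformed from $v_- = -a\vbeta_1 + b\vbeta_2 + c\vbeta_3$ to $v_+ = a\vbeta_1 + b\vbeta_2 + c\vbeta_3$. Using the definition of directional writhing number (and the fact that it is locally constant on chambers of $S^2$ away from critical directions), for any sufficiently small $\delta > 0$ we have $\Wr(\gamma, v_\pm) = \Link(\gamma, \gamma + \delta v_\pm)$. Connecting these by the linear path $v_t = ta\vbeta_1 + b\vbeta_2 + c\vbeta_3$ for $t \in [-1, 1]$, it suffices to compute the total jump in $\Link(\gamma, \gamma + \delta v_t)$ as $t$ traverses $[-1, 1]$.

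Integer jumps occur precisely at times $t^*$ where $\gamma + \delta v_{t^*}$ meets $\gamma$. Since non-adjacent segments of $\gamma$ have mutual distance at least $1 > \delta\|v_t\|$ for small $\delta$, Lemma \ref{lemma:intersectionOfSegments} implies that such crossings can only involve adjacent pairs $(\ell_k, \ell_{k+1})$ sharing a vertex, with the crossing condition $\delta v_{t^*} \in \pm\mathrm{cone}(\tv_k, \tv_{k+1})$, where the cone is a subset of the $2$-dimensional span of $\tv_k$ and $\tv_{k+1}$. As $b, c > 0$ are fixed and nonzero, the vector $v_t$ lies in a $2$-dimensional subspace spanned by two basis vectors from $\{\pm\vbeta_1, \pm\vbeta_2, \pm\vbeta_3\}$ only when its $\vbeta_1$-component vanishes, i.e., only at $t = 0$, and then $v_0 = b\vbeta_2 + c\vbeta_3$ lies in $\mathrm{span}(\vbeta_2, \vbeta_3)$. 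Hence the only candidate corners are those with $\{\tv_k, \tv_{k+1}\} \subset \{\pm\vbeta_2, \pm\vbeta_3\}$ satisfying $v_0 \in \pm\mathrm{cone}(\tv_k, \tv_{k+1})$; a direct inspection of the eight orientations shows that this selects precisely the four corner types for which $\eta^{\beta}_\gamma(k) \neq 0$, and each such corner contributes a single crossing event at $t = 0$.

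For each critical corner, the jump in $\Link(\gamma, \gamma + \delta v_t)$ as $t$ crosses $0$ is the sign of the transversal crossing between the fixed strand of $\gamma$ and the moving strand of $\gamma + \delta v_t$, governed by $\mathrm{sign}(\det(\tv_\gamma, \tv_{\gamma_t}, \partial(\gamma + \delta v_t)/\partial t|_{t=0}))$ (up to the standard orientation convention), with $\partial(\gamma + \delta v_t)/\partial t|_{t=0} = \delta a\vbeta_1$. Using Lemma \ref{lemma:intersectionOfSegments} to identify which of $\ell_k, \ell_{k+1}$ carries the tangent of $\gamma$ and which carries the tangent of $\gamma + \delta v_t$ at the crossing (this is governed by whether $\delta v_0$ lies in $\mathrm{cone}(\tv_k, \tv_{k+1})$ or in $-\mathrm{cone}(\tv_k, \tv_{k+1})$), one checks in each of the four cases that the determinant reduces to $\pm \delta a$ with the sign matching $\eta^{\beta}_\gamma(k)$. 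Summing over all critical corners yields the claimed identity. The main technical obstacle is the bookkeeping in this last step: correctly pairing each corner type with the correct case of Lemma \ref{lemma:intersectionOfSegments} and reconciling the resulting sign with the sign convention used in the definition of the linking-number jump.
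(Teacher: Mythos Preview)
Your approach is essentially the same as the paper's: vary the push-off in the $\vbeta_1$-coordinate, use Lemma~\ref{lemma:intersectionOfSegments} to localize the linking-number jumps to adjacent-segment corners with $\{\tv_k,\tv_{k+1}\}\subset\{\pm\vbeta_2,\pm\vbeta_3\}$ at the single critical parameter, and then check the four sign cases. The only cosmetic difference is that the paper carries this out via a fixed slanted projection $\Pi^{\beta}_{\epsilon,\epsilon}$ (using Lemma~\ref{lemma:projectionIntersections} to guarantee that crossings are stable and transversal, so the difference is a sum of sign flips), whereas you invoke directly the general principle that $\Link(\gamma,\gamma+\delta v_t)$ jumps exactly when the two curves meet; both routes land on the same case analysis, which the paper resolves pictorially (Figure~\ref{fig:cases_YDimers}) rather than via your determinant formula.
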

\begin{proof}
We may assume that $a^2 + b^2 + c^2 < 1$.
Let $0 < \epsilon < \min\left(\frac{|b|}{N + |c|}, \frac{1 - |b|}{N + |c|}\right),$ and set $\vu(s) = s \vbeta_1 + b \vbeta_2 + c \vbeta_3$.
By Lemma \ref{lemma:translationIntersection}, $\Link(\gamma, \gamma + \vu(a))$ depends only on the signs of $a$, $b$ and $c$. Therefore, we may, without loss of generality, assume that $a > 0$ is sufficiently small such that for every $s \in [-a,a]$ and every $i,j \in \{0,1, \ldots, n\}$,
$$\Pi^{\beta}_{\epsilon,\epsilon}(\ell_i) \cap \Pi^{\beta}_{\epsilon,\epsilon}(\ell_j + \vu(s)) \neq \emptyset \Leftrightarrow \Pi^{\beta}_{\epsilon,\epsilon}(\ell_i) \cap \Pi^{\beta}_{\epsilon,\epsilon}(\ell_j + \vu(0)) \neq \emptyset$$
(this is possible by Lemma \ref{lemma:projectionIntersections}). Therefore, clearly $\Link(\gamma, \gamma + \vu(a)) - \Link(\gamma, \gamma + \vu(-a))$ equals the number of pairs of segments $\ell_i, \ell_j$ such that $\Pi^{\beta}_{\epsilon,\epsilon}(\ell_i) \cap \Pi^{\beta}_{\epsilon,\epsilon}(\ell_j + \vu(s)) \neq \emptyset$ for every $s \in [-a,a]$ and such that the crossing changes its sign as $s$ goes from $-a$ to $a$. Now a crossing may only change its sign if $\ell_i \cap (\ell_j + \vu(s)) \neq \emptyset$ for some $s$: by Lemma \ref{lemma:translationIntersection}, this can only happen if $s = 0$. 

By Lemma \ref{lemma:intersectionOfSegments}, $\ell_i \cap (\ell_j + \vu(0)) \neq \emptyset$ if and only if for some $m_i,m_j \in \{0,1\}$, $\ell_i(m_i) = \ell_j(m_j)$ and
$\vu(0) = b \vbeta_2 + c \vbeta_3 = a_i \tv(\ell_i) + a_j \tv(\ell_j)$, with $(-1)^{m_i} a_i \geq 0, (-1)^{m_j} a_j \leq 0$.
Since $\ell_i$ and $\ell_j$ are segments of the simple curve $\gamma$, they can only be adjacent if, for some $k$, $\{\ell_i, \ell_j\} = \{\ell_k, \ell_{k+1}\}$. Now, $\ell_{k+1}(0) = \ell_k(1)$, so that 
$(0,b,c) = a_0\tv(\ell_{k+1}) - a_1\tv(\ell_k)$ with either $a_0,a_1 \geq 0$ or $a_0,a_1 \leq 0$ (depending on which is $\ell_i$ and which is $\ell_j$). Since $b,c > 0$, this implies that $\{\tv(\ell_k), \tv(\ell_{k+1})\} = \{\vbeta_2,\vbeta_3\} \text{ or } \{-\vbeta_2, -\vbeta_3\}$ and, therefore, $\eta^{\beta}_{\gamma}(k) = \pm 1$.
\begin{figure}[ht]
\centering
\def\svgwidth{0.35\columnwidth}
\subfloat[$\eta^{\beta}_{\gamma}(k) = 1$.]{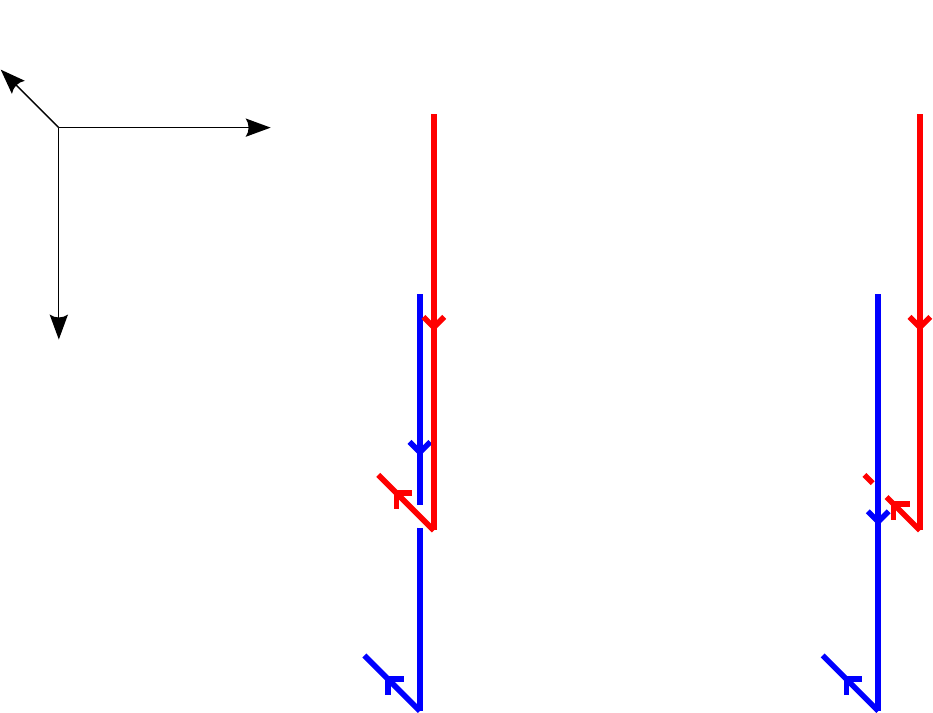\label{subfig:crossings_jk}} \qquad \qquad \qquad \qquad 
\def\svgwidth{0.35\columnwidth}
\subfloat[$\eta^{\beta}_{\gamma}(k) = -1$.]{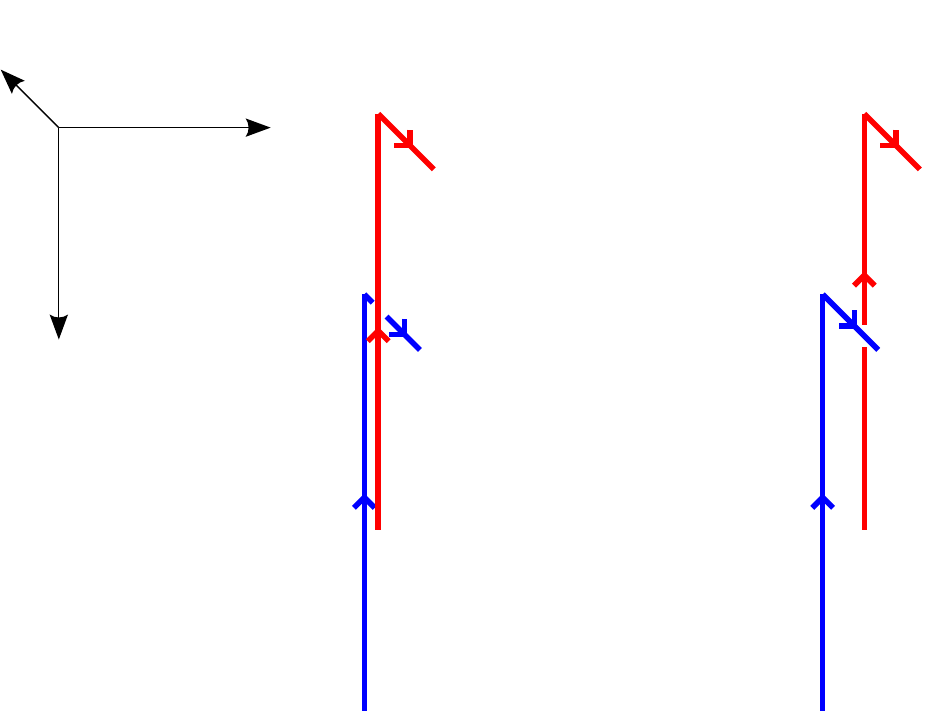\label{subfig:crossings_kj}}
\caption{Illustration of the crossings $\Pi^{\beta}_{\epsilon,\epsilon}(\gamma) \cap \Pi^{\beta}_{\epsilon,\epsilon}(\gamma+s\vbeta_1 + b \vbeta_2 + c \vbeta_3)$ for $s \in [-a,a]$. Notice that simultaneously switching the orientations of both segments does not change the signs of the crossings.}%
\label{fig:cases_YDimers}%
\end{figure}

We now analyze each of the four possible cases for $(\tv(\ell_k), \tv(\ell_{k+1}))$ (as an ordered pair). When $(\tv(\ell_k), \tv(\ell_{k+1})) = (\vbeta_2,\vbeta_3) \text{ or } (-\vbeta_3,-\vbeta_2)$, so that $\eta^{\beta}_{\gamma}(k) = 1,$ we see a situation as illustrated in Figure \ref{subfig:crossings_jk} (perhaps with both orientations reversed): when $s > 0$, we have a positive crossing; when $s < 0$, we have a negative crossing.
Figure \ref{subfig:crossings_kj} illustrates (up to orientation) the case $(\tv(\ell_k), \tv(\ell_{k+1})) = (-\vbeta_2,-\vbeta_3) \text{ or } (\vbeta_3,\vbeta_2)$ ($\eta^{\beta}_{\gamma}(k) = -1)$: negative crossing for $s > 0$, and positive crossing for $s < 0$. These observations yield the result.
\end{proof}

\section{Writhe formula for the twist}
\label{sec:writheInterpretation}
 
Now that the groundwork is done, we set out to obtain a new formula for the twist of pseudocylinders of even depth (we work with pseudocylinders because the hypothesis of simple connectivity will not play any role). Pseudocylinders of even depth have the advantage of always admitting a tiling such that all dimers are parallel to its axis: for a $\vw$-pseudocylinder $\cR$ ($\vw \in \Delta$) with even depth, let $\tbase = \tbase(\cR)$ denote the tiling such that every dimer is parallel to $\vw$ (see Figure \ref{fig:auxiliaryLinesExample}). Not only does this tiling trivially satisfy $T^{\vw}(\tbase) = 0$, but also for any segment $\ell$ of $\cR$ and any dimer $\ell_0 \in \tbase$ we have $\tau^{\vw}(\ell_0,\ell) = \tau^{\vw}(\ell, \ell_0) = 0$. This allows for a direct interpretation of the twist via a set of curves, which, in particular, allows us to show that it is an integer.

\begin{figure}[ht]%
\centering
\subfloat[The tiling $t$.]{\includegraphics[width=0.45\columnwidth]{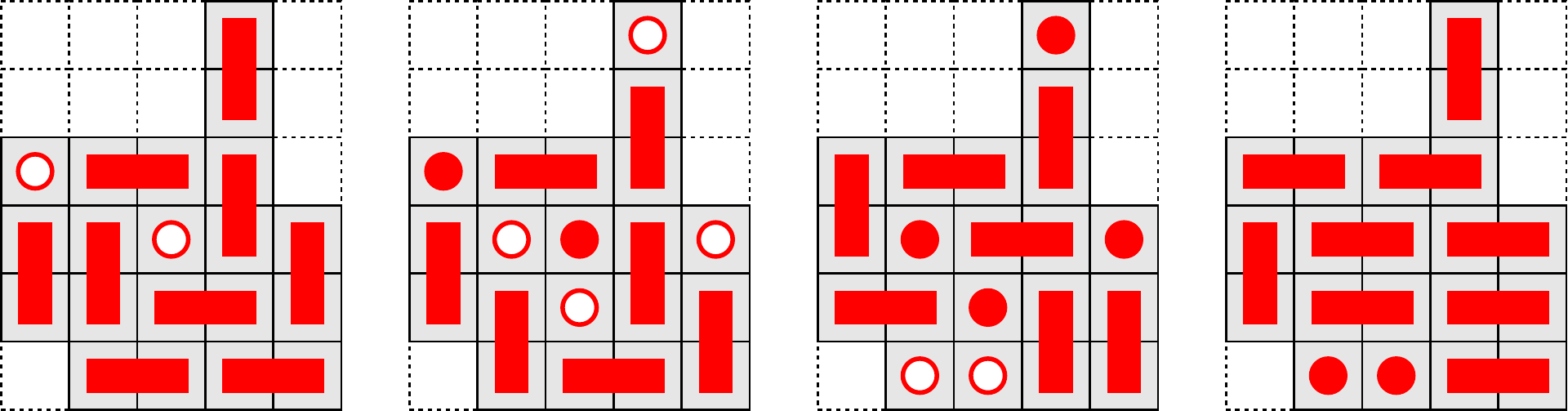}} \qquad
\def\svgwidth{0.47\columnwidth}
\subfloat[The curves in $\Gamma(t,\tbase)$.]{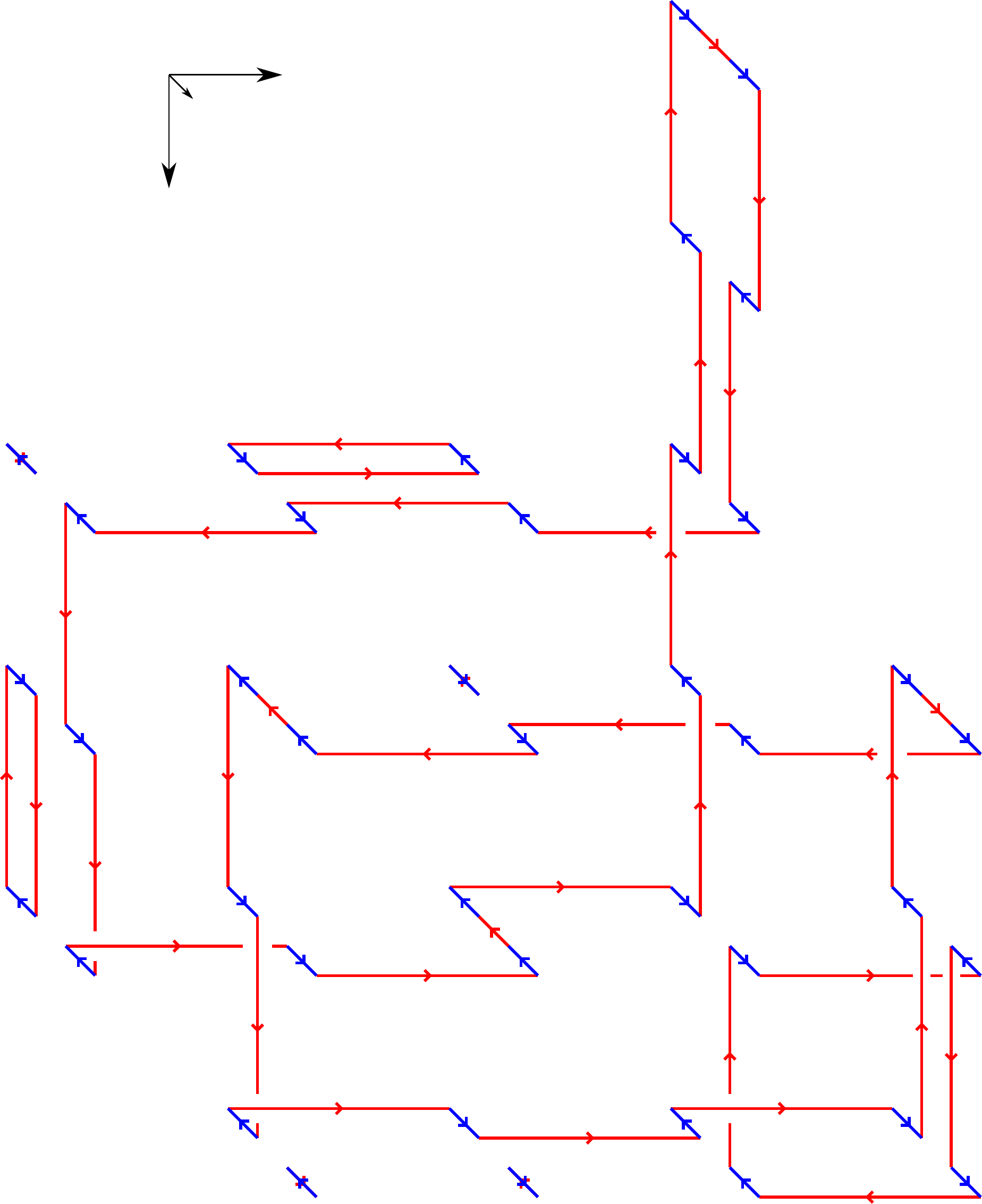\label{subfig:auxiliaryLinesDrawing}}%
\caption{A tiling $t$ of a $\vw$-cylinder with depth $4$, and $\Gamma(t,\tbase)$, where $\tbase$ is the tiling such that every dimer is parallel to $\vw$. The dimers of $t$ are the red segments, and the blue segments are the ones in $(-\tbase)$. We chose a basis $\beta \in \bB$ with $\vw = \vbeta_3$; $\vw$ points ``towards the paper''. $\Gamma(t,\tbase)$ consists of nine curves, four of which are trivial; the five nontrivial curves form $\Gamma^*(t,\tbase)$.}
\label{fig:auxiliaryLinesExample}%
\end{figure}

\begin{lemma}
\label{lemma:partialWritheFormula}
Given $\vw \in \Delta$, let $t$ be a tiling of a $\vw$-pseudocylinder of even depth $\cR$, and let $\tbase = \tbase(\cR)$. If $\Gamma^*(t, \tbase) = \{\gamma_i \,|\, 1 \leq i \leq m\}$, then 
$$T^{\vw}(t) = \sum_{1 \leq i \leq m} T^{\vw}(\gamma_i) + 2\sum_{1 \leq i < j \leq m} \Link(\gamma_i, \gamma_j).$$
\end{lemma}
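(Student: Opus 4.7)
The plan is to decompose $T^{\vw}(t)$ along the curves of $\Gamma(t,\tbase)$, exploiting the very special structure of $\tbase$: every dimer of $\tbase$ is parallel to $\vw$, and whenever either $\ell_0$ or $\ell_1$ is parallel to $\vw$ we have $\det(\tv(\ell_1),\tv(\ell_0),\vw)=0$, hence $\tau^{\vw}(\ell_0,\ell_1)=0$. Consequently any segment of $\pm\tbase$ contributes zero to every $\tau^{\vw}$, and so does every segment of a trivial curve of $\Gamma(t,\tbase)$ (each trivial curve is a back-and-forth along a single dimer of $t\cap\tbase$, which is parallel to $\vw$).

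Concretely, for each $\gamma\in\Gamma(t,\tbase)$ let $t|_{\gamma}$ denote the set of segments of $\gamma$ that belong to $t$. The collection $\{t|_{\gamma}\}_{\gamma\in\Gamma(t,\tbase)}$ is a partition of $t$, so
$$T^{\vw}(t)=\sum_{\gamma,\gamma'\in\Gamma(t,\tbase)}T^{\vw}(t|_{\gamma},t|_{\gamma'}).$$
By the previous paragraph, every term involving a trivial $\gamma$ or $\gamma'$ vanishes, so only the nontrivial curves $\gamma_i\in\Gamma^*(t,\tbase)$ contribute:
$$T^{\vw}(t)=\sum_{1\le i\le m}T^{\vw}(t|_{\gamma_i})+\sum_{i\neq j}T^{\vw}(t|_{\gamma_i},t|_{\gamma_j}).$$

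The next step is to replace each $t|_{\gamma_i}$ by the full curve $\gamma_i$. The segments of $\gamma_i$ that are not in $t|_{\gamma_i}$ lie in $(-\tbase)|_{\gamma_i}$ and are therefore parallel to $\vw$, so adding them changes no $\tau^{\vw}$ value; this gives $T^{\vw}(\gamma_i)=T^{\vw}(t|_{\gamma_i})$ and $T^{\vw}(\gamma_i,\gamma_j)=T^{\vw}(t|_{\gamma_i},t|_{\gamma_j})$ for $i\neq j$. Since the curves in $\Gamma^*(t,\tbase)$ are, by definition, pairwise disjoint simple closed curves of $\cR$, Lemma \ref{lemma:linkingNumber}\ref{item:linking_00} applies and yields $T^{\vw}(\gamma_i,\gamma_j)+T^{\vw}(\gamma_j,\gamma_i)=2\Link(\gamma_i,\gamma_j)$. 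Pairing the cross terms for $i<j$ and substituting produces the claimed identity.

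There is no substantial obstacle: once one isolates the trivial observation that axis-parallel segments annihilate $\tau^{\vw}$, the entire argument is a reorganization of the defining double sum for $T^{\vw}(t)$, combined with one previously established fact (Lemma \ref{lemma:linkingNumber}). The only care required is to verify that trivial curves contribute nothing, which is why the formula involves only $\Gamma^*$ rather than all of $\Gamma$.
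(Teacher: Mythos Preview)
Your proof is correct and follows essentially the same approach as the paper: both rely on the observation that segments parallel to $\vw$ (all of $\pm\tbase$, and hence all trivial curves) contribute nothing to $\tau^{\vw}$, then decompose along the curves of $\Gamma^*(t,\tbase)$ and apply Lemma~\ref{lemma:linkingNumber} to the cross terms. The paper condenses this into the single line $T^{\vw}(t) = T^{\vw}(t \sqcup (-\tbase)) = \sum_{i,j} T^{\vw}(\gamma_i,\gamma_j)$, adding all of $-\tbase$ at once rather than curve by curve as you do, but the content is the same.
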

\begin{proof}
Clearly,  
$$
T^{\vw}(t) = T^{\vw}(t \sqcup (-\tbase))  
= \sum_{i,j} T^{\vw}(\gamma_i, \gamma_j)  
= \sum_{i} T^{\vw}(\gamma_i) + 2\sum_{i < j} \Link(\gamma_i, \gamma_j),
$$
the last equality holding by Lemma \ref{lemma:linkingNumber}.
\end{proof}



For Lemmas \ref{lemma:gammaNoIntersection} and \ref{lemma:writhes}, assume $\vw \in \Delta$, $t$ is a tiling of a $\vw$-pseudocylinder with even depth $\cR$, and $\tbase = \tbase(\cR)$.

\begin{lemma}
\label{lemma:gammaNoIntersection}
Fix $\beta \in \bB$ such that $\vbeta_3 = \vw$.
If $\gamma$ is a curve of $\Gamma^*(t,\tbase)$ and $a^2 + b^2 + c^2 < 1$ and $ab \neq 0$, then $(\gamma+a \vbeta_1 + b \vbeta_2 + c \vbeta_3) \cap \gamma = \emptyset$.
\end{lemma}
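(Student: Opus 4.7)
The plan is to argue by contradiction, mirroring the strategy of Lemma \ref{lemma:translationIntersection} but exploiting the alternating structure of curves in $\Gamma^*(t, \tbase)$ to compensate for the fact that $c$ may vanish. Suppose $\gamma(s_0) = \gamma(s_1) + \vu$ for some $s_0, s_1$ in the domain of $\gamma$, with $\vu = a\vbeta_1 + b\vbeta_2 + c\vbeta_3$. Write $s_i = k_i + \tilde{s}_i$ with $k_i \in \ZZ$ and $\tilde{s}_i \in [0,1]$, so that the segments $\ell_i = \gamma|_{[k_i, k_i+1]}$ satisfy $\ell_0(\tilde{s}_0) - \ell_1(\tilde{s}_1) = \vu$ with $\|\vu\| < 1$. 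Lemma \ref{lemma:intersectionOfSegments} then forces $\ell_0$ and $\ell_1$ to be adjacent.

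Since $\gamma \in \Gamma^*(t, \tbase)$ is nontrivial and therefore simple, adjacent segments of $\gamma$ must either coincide or be consecutive along the curve (two segments with indices differing by more than one would share a point that is interior to at least one of them, contradicting injectivity on $[0,n)$). The case $\ell_0 = \ell_1$ is easy: we would get $\vu = (\tilde{s}_0 - \tilde{s}_1)\tv(\ell_0)$, a scalar multiple of a vector in $\Phi$, which has at most one nonzero coordinate in basis $\beta$ and hence contradicts $ab \neq 0$.

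The main step handles the consecutive case. Lemma \ref{lemma:intersectionOfSegments} then gives $\vu = a_0 \tv(\ell_0) + a_1 \tv(\ell_1)$. Here I would invoke the key structural fact that consecutive segments of any curve in $\Gamma(t, \tbase)$ must alternate between dimers of $t$ and reverses of dimers of $\tbase$: indeed, by the definition of a tiling, dimers belonging to the same tiling are pairwise non-adjacent, so two consecutive segments of $\gamma$ cannot both originate from $t$ or both from $\tbase$. Since every dimer of $\tbase = \tbase(\cR)$ is parallel to $\vw = \vbeta_3$, at least one of $\tv(\ell_0), \tv(\ell_1)$ equals $\pm\vbeta_3$; say $\tv(\ell_1) = -\vbeta_3$. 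Then the $\vbeta_1$- and $\vbeta_2$-components of $\vu$ (namely $a$ and $b$) must both come from $a_0 \tv(\ell_0)$ alone. But $\tv(\ell_0) \in \Phi$ has only one nonzero coordinate, so at most one of $a, b$ is nonzero, contradicting $ab \neq 0$.

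The main obstacle is recognizing this alternation property and seeing why it is precisely what is needed: once we know that every other segment of $\gamma$ is parallel to $\vbeta_3$, the basis coordinate $c$ becomes irrelevant to the obstruction, and the hypothesis $ab \neq 0$ (rather than $abc \neq 0$ as in Lemma \ref{lemma:translationIntersection}) suffices. The remainder of the argument is a routine adaptation.
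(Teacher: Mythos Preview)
Your proof is correct and follows essentially the same route as the paper's: argue by contradiction, use Lemma~\ref{lemma:intersectionOfSegments} to force the two segments to be adjacent, observe that the alternating structure of $\Gamma^*(t,\tbase)$ makes at least one of them a segment of $-\tbase$ (hence parallel to $\vbeta_3$), and then read off from $\vu = a_0\tv(\ell_0)+a_1\tv(\ell_1)$ that $a=0$ or $b=0$. Your version is in fact slightly more careful than the paper's, which asserts ``adjacent, so at least one is in $(-\tbase)$'' without pausing on the degenerate possibility $\ell_0=\ell_1$; you handle that case separately and also spell out why adjacency of segments in a simple closed curve forces them to be equal or consecutive.
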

Notice that the case $c \neq 0$ follows from Lemma \ref{lemma:translationIntersection}.
\begin{proof} 
Let $\vu = a \vbeta_1 + b \vbeta_2 + c \vbeta_3.$
Suppose, by contradiction, that $\gamma$ and $\gamma + \vu$ are not disjoint, and let 
$\ell_0$ and $\ell_1$ be two segments of $\gamma$ such that $\ell_0(s_0) = \ell_1(s_1) + \vu$ for some $s_0,s_1 \in [0,1]$. By Lemma \ref{lemma:intersectionOfSegments}, $\ell_0$ and $\ell_1$ must be adjacent, so that at least one of these two segments is in $(-\tbase)$, hence parallel to $\vu$. Lemma \ref{lemma:intersectionOfSegments} also implies that $\vu = a \vbeta_1 + b \vbeta_2 + c \vbeta_3 = a_0 \tv(\ell_0) + a_1 \tv(\ell_1)$. Since at least one of $\tv(\ell_0), \tv(\ell_1)$ is parallel to $\vw = \vbeta_3$, it follows that $a = 0$ or $b = 0$, which contradicts the hypothesis.  
\end{proof}

By Lemma \ref{lemma:gammaNoIntersection}, if $\gamma \in \Gamma^*(t,\tbase)$, $\Wr(\gamma, a\vbeta_1 + b\vbeta_2 + c\vbeta_3)$ is defined whenever $ab \neq 0$. Set
$$\Wr^{+}(\gamma) = \Wr(\gamma, \vbeta_1 + \vbeta_2), \quad \Wr^{-}(\gamma) = \Wr(\gamma, \vbeta_1 - \vbeta_2).$$
Clearly, 
\begin{equation}
\Wr(\gamma, a \vbeta_1 + b \vbeta_2 + c \vbeta_3) = \begin{cases} \Wr^+(\gamma), & ab > 0; \\
\Wr^-(\gamma), & ab < 0.
\end{cases}
\label{eq:writhes}
\end{equation}

\begin{lemma}
\label{lemma:writhes}
If $\gamma$ is a curve of $\Gamma^*(t,\tbase)$, then
$$T^{\vw}(\gamma) = \frac{\Wr^+(\gamma) + \Wr^-(\gamma)}{2}.$$
\end{lemma}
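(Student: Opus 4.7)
The plan is to chain together the three results just proved: Lemma \ref{lemma:crossings}, Lemma \ref{lemma:directionalWrithingNumber}, and Equation \eqref{eq:writhes}. Fix $\beta \in \bB$ with $\vbeta_3 = \vw$, and choose $\epsilon > 0$ small enough that $\epsilon < 1/N$ (where $N$ is the $\vbeta_3$-length of $\cR$) and that $(\epsilon \vbeta_1 \pm \epsilon \vbeta_2 + \vbeta_3)$ all have norm less than $1$, so that all three results apply.

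First I would apply Lemma \ref{lemma:crossings} to each ordered pair of segments of $\gamma$ and sum, obtaining
$$T^{\vw}(\gamma) = T^{\vbeta_3}(\gamma) = \frac{1}{4}\sum_{i,j \in \{-1,1\}} T^{\beta}_{i\epsilon,\, j\epsilon}(\gamma).$$
Next, since $\gamma \in \Gamma^*(t,\tbase)$ is a simple closed curve and $0 < \epsilon < 1/N$, Lemma \ref{lemma:directionalWrithingNumber} identifies each slanted pretwist with a directional writhing number:
$$T^{\beta}_{i\epsilon,\, j\epsilon}(\gamma) = \Wr(\gamma,\, \vbeta_3 + i\epsilon\vbeta_1 + j\epsilon\vbeta_2).$$

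Finally, I would invoke Equation \eqref{eq:writhes}: for $\gamma \in \Gamma^*(t,\tbase)$, the directional writhe $\Wr(\gamma,\, a\vbeta_1 + b\vbeta_2 + c\vbeta_3)$ depends only on the sign of $ab$, equalling $\Wr^+(\gamma)$ when $ab > 0$ and $\Wr^-(\gamma)$ when $ab < 0$. Among the four choices of $(i,j) \in \{-1,1\}^2$, exactly two satisfy $ij > 0$ and two satisfy $ij < 0$, so the sum collapses to $2\Wr^+(\gamma) + 2\Wr^-(\gamma)$ and dividing by $4$ gives
$$T^{\vw}(\gamma) = \frac{\Wr^+(\gamma) + \Wr^-(\gamma)}{2},$$
as required.

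There is no real obstacle here, since all of the substantive work has been front-loaded into the preceding lemmas: the smallness condition on $\epsilon$ guarantees simultaneous applicability of Lemmas \ref{lemma:crossings} and \ref{lemma:directionalWrithingNumber}, and the use of Equation \eqref{eq:writhes} is the crucial place where the hypothesis $\gamma \in \Gamma^*(t,\tbase)$ is invoked (through Lemma \ref{lemma:gammaNoIntersection}, which guarantees that the relevant translates of $\gamma$ are disjoint from $\gamma$, so that the directional writhe is well defined and locally constant in the direction).
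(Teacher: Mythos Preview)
Your proposal is correct and follows exactly the same approach as the paper: apply Lemma~\ref{lemma:crossings}, then Lemma~\ref{lemma:directionalWrithingNumber}, then Equation~\eqref{eq:writhes}. One small slip: the extra condition that $\epsilon\vbeta_1 \pm \epsilon\vbeta_2 + \vbeta_3$ have norm less than $1$ is impossible (the norm is $\sqrt{1+2\epsilon^2}>1$) and also unnecessary---none of the three cited results requires it, so simply drop that clause and the argument goes through verbatim.
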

\begin{proof}
Fix $\beta \in \bB$ with $\vbeta_3 = \vw$, and let $N$ denote the $\vw$-length of the pseudocylinder (which is equal to its depth).
By Lemmas \ref{lemma:crossings} and \ref{lemma:directionalWrithingNumber}, given $0 < \epsilon < 1/N,$
$$
T^{\vw}(\gamma) = \frac{1}{4}\sum_{i,j \in \{-1,1\}} T^{\beta}_{i\epsilon,j\epsilon}(\gamma) = \frac{1}{4}\sum_{i,j \in \{-1,1\}} \Wr(\gamma, i\epsilon\vbeta_1 + j \epsilon \vbeta_2 + \vbeta_3);  
$$
Equation \eqref{eq:writhes} completes the proof.
\end{proof}


\begin{lemma}
\label{lemma:writheDifference}
Let $\vw \in \Delta$, and let $t$ be a tiling of a $\vw$-pseudocylinder with even depth.
If $\gamma$ is a curve of $\Gamma^*(t,\tbase)$, then $(\Wr^+(\gamma) + \Wr^-(\gamma))/2 \in \ZZ$.
\end{lemma}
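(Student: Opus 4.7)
My plan is to reduce the integrality claim to a parity count. Both $\Wr^+(\gamma)$ and $\Wr^-(\gamma)$ are individually integers (as linking numbers of disjoint closed curves, disjointness being guaranteed by Lemma \ref{lemma:gammaNoIntersection}), so it suffices to show $\Wr^+(\gamma) + \Wr^-(\gamma) \equiv 0 \pmod 2$; since $\Wr^+ + \Wr^-$ and $\Wr^+ - \Wr^-$ have the same parity, the problem becomes proving the difference is even. To compute this difference, I would apply Lemma \ref{lemma:differenceOfLinkingNumbers} with $a=b=1$ and some small $c>0$, then let $c \to 0^+$ (the directional writhe is locally constant on the open set of directions in which $\gamma$ and its translate remain disjoint, by Lemma \ref{lemma:gammaNoIntersection}), and invoke the symmetry $\Wr(\gamma, \vu) = \Wr(\gamma, -\vu)$. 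This yields
$$\Wr^+(\gamma) - \Wr^-(\gamma) = \sum_{0 \leq k < n} \eta^\beta_\gamma(k),$$
reducing the goal to showing the right-hand side is even.

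Next I would exploit the alternating structure of $\gamma \in \Gamma^*(t,\tbase)$: its segments alternate between dimers of $t$ and reversed dimers of $\tbase$, the latter always parallel to $\pm\vbeta_3$. Label the $t$-dimers as $\ell^{(1)}, \ldots, \ell^{(m)}$ in cyclic order and the intervening reversed $\tbase$-segments as $\hat\ell^{(j)}$, and write $w_j = \tv(\ell^{(j)})$, $u_j = \tv(\hat\ell^{(j)}) \in \{\pm\vbeta_3\}$. Grouping corner contributions in pairs, let $c_j$ be the sum of $\eta^\beta_\gamma$ over the two corners flanking $\ell^{(j)}$. A direct inspection of the four cases in the definition of $\eta^\beta_\gamma$, under the constraint that $u_{j-1}, u_j \in \{\pm\vbeta_3\}$, shows that $c_j$ vanishes unless $w_j \in \{\pm\vbeta_2\}$ and $u_{j-1} \neq u_j$, in which case $c_j = \pm 1$.

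The main obstacle is showing that $w_j \in \{\pm\vbeta_2\}$ already forces $u_{j-1} \neq u_j$, for which I would track the parity $\sigma_j$ of the $\vbeta_3$-coordinate of the white endpoint of $\ell^{(j)}$. The explicit pairing in $\tbase$ yields both $u_j = +\vbeta_3 \iff \sigma_j + (w_j \cdot \vbeta_3) \equiv 0 \pmod 2$ and the recursion $\sigma_{j+1} \equiv \sigma_j + 1 + (w_j \cdot \vbeta_3) \pmod 2$; combining these gives $u_j \neq u_{j-1} \iff w_j \cdot \vbeta_3 = 0$, which in particular covers the case $w_j \in \{\pm\vbeta_2\}$. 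Hence $\sum_k \eta^\beta_\gamma(k) = \sum_j c_j \equiv M \pmod 2$ where $M := \#\{j : w_j \in \{\pm\vbeta_2\}\}$. Finally, the closure of $\gamma$ gives $\sum_j w_j + \sum_j u_j = 0$, and since each $u_j \perp \vbeta_2$, taking the $\vbeta_2$-component yields $\#\{j : w_j = \vbeta_2\} = \#\{j : w_j = -\vbeta_2\}$, so $M$ is even. This completes the reduction and establishes the lemma.
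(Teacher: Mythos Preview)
Your proof is correct and follows essentially the same approach as the paper: reduce to the parity of $\Wr^+(\gamma)-\Wr^-(\gamma)=\sum_k\eta^\beta_\gamma(k)$ via Lemma~\ref{lemma:differenceOfLinkingNumbers}, group the $\eta$-contributions around each $t$-dimer parallel to $\pm\vbeta_2$, observe each contributes $\pm 1$, and conclude the count of such dimers is even because $\gamma$ is closed. The only cosmetic differences are that you derive $\Wr^+-\Wr^-=\sum\eta$ by a limit-and-symmetry argument (the paper just cites Equation~\eqref{eq:writhes}, which already encodes that local constancy), and you spell out the parity-tracking for $u_{j-1}\neq u_j$ more explicitly than the paper's terse ``if $z_k$ is odd then $\tv(\ell_{k-1})=\vbeta_3=-\tv(\ell_{k+1})$''.
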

\begin{proof}
Pick $\beta \in \bB$ with $\vbeta_3 = \vw$. Assume without loss of generality that $\cR = \cD + [0,2N]\vbeta_3$, $\cD \subset \vbeta_3^{\perp}$. 
If $\gamma:[0,n] \to \RR^3$, set $\ell_k = \gamma|_{[k,k+1]}$ for $k=0,1,\ldots, n-1$, and set $\ell_n = \ell_0$. 

By definition and using Lemma \ref{lemma:differenceOfLinkingNumbers},
$
\Wr^+(\gamma) - \Wr^-(\gamma) = \sum_k \eta^{\beta}_{\gamma}(k).
$
We need to look at $k$ such that $\eta^{\beta}_{\gamma}(k) \neq 0$, i.e., $\{\tv(\ell_k),\tv(\ell_{k+1})\} = \{\vbeta_2,\vbeta_3\} \text{ or } \{-\vbeta_2,-\vbeta_3\}$. Since every segment of $-\tbase$ is parallel to $\vbeta_3$, we need to look at every segment of $t$ that is parallel to $\vbeta_2$.

For each segment $\ell_k$ of $t$ with $\tv(\ell_k) = \pm \vbeta_2$, let $\plshalf{z_k} = \vbeta_3 \cdot \ell_k(0)$, so that $z_k \in \ZZ$. If $z_k$ is odd, then, by definition of $\tbase$, $\tv(\ell_{k-1}) = \vbeta_3 = - \tv(\ell_{k+1})$, so that either $(\tv(\ell_{k-1}),\tv(\ell_k)) = (\vbeta_3,\vbeta_2)$ or $(\tv(\ell_k),\tv(\ell_{k+1})) = (-\vbeta_2,-\vbeta_3)$. Making a similar analysis for $z_k$ even, we see that $\eta^{\beta}_{\gamma}(k-1) + \eta^{\beta}_{\gamma}(k) = (-1)^{z_k}$. 
Working with congruence modulo $2$,
\begin{equation*}
\Wr^+(\gamma) + \Wr^-(\gamma) \equiv \Wr^+(\gamma) - \Wr^-(\gamma) = \sum_{\tv(\ell_k) = \pm \vbeta_2}(-1)^{z_k} \equiv \sum_{k} (\tv(\ell_k) \cdot \vbeta_2) = 0, 
\end{equation*}
which completes the proof.
\end{proof}

\begin{prop}
\label{prop:writheFormula}
If $\vw \in \Delta$, $\cR$ is a $\vw$-pseudocylinder with even depth, $t$ is a tiling of $\cR$, $\tbase = \tbase(\cR)$ and $\Gamma^*(t,\tbase) = \{\gamma_i \,|\, 1 \leq i \leq m\}$, then
$$
T^{\vw}(t) = \sum_{1 \leq i \leq m} \frac{\Wr^+(\gamma_i) + \Wr^-(\gamma_i)}{2} + 2\sum_{1 \leq i < j \leq m} \Link(\gamma_i, \gamma_j) \in \ZZ.
$$
\end{prop}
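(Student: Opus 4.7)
The plan is to assemble the statement directly from the three lemmas that immediately precede it, since all the substantive work has already been done. First, I would invoke Lemma \ref{lemma:partialWritheFormula}, which already decomposes $T^{\vw}(t)$ as $\sum_i T^{\vw}(\gamma_i) + 2\sum_{i<j} \Link(\gamma_i,\gamma_j)$ by writing $t \sqcup (-\tbase)$ as the disjoint union of the curves in $\Gamma(t,\tbase)$ and observing that trivial curves contribute nothing (the two segments of a trivial curve are $\ell$ and $-\ell$, which give cancelling contributions, and dimers of $\tbase$ are parallel to $\vw$ so have no $\vw$-effect).

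Next, I would replace each single-curve pretwist $T^{\vw}(\gamma_i)$ by its writhe expression using Lemma \ref{lemma:writhes}, namely $T^{\vw}(\gamma_i) = (\Wr^+(\gamma_i)+\Wr^-(\gamma_i))/2$. Substituting this into the formula from Lemma \ref{lemma:partialWritheFormula} yields exactly the displayed identity
\[
T^{\vw}(t) = \sum_{1 \leq i \leq m} \frac{\Wr^+(\gamma_i)+\Wr^-(\gamma_i)}{2} + 2\sum_{1 \leq i < j \leq m} \Link(\gamma_i,\gamma_j).
\]

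Finally, to conclude that $T^{\vw}(t) \in \ZZ$, I would observe that each linking number $\Link(\gamma_i,\gamma_j)$ is an integer (standard fact from knot theory, and consistent with Lemma \ref{lemma:linkingNumber}), so the second sum is an even integer; and each term $(\Wr^+(\gamma_i)+\Wr^-(\gamma_i))/2$ is an integer by Lemma \ref{lemma:writheDifference}. Hence the right-hand side is a sum of integers, and the result follows.

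Since every ingredient is already proven, there is no real obstacle here; the proof is essentially one line of bookkeeping. The only thing to be mildly careful about is confirming that the decomposition of $T^{\vw}(t)$ in Lemma \ref{lemma:partialWritheFormula} only has the $\Gamma^*$ curves contributing (trivial curves are killed by the two cancellations mentioned above), which justifies restricting the sums to $\Gamma^*(t,\tbase)$ in the final formula.
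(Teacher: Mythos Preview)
Your proposal is correct and matches the paper's proof exactly: the paper simply states that the result ``follows directly from Lemmas \ref{lemma:partialWritheFormula}, \ref{lemma:writhes} and \ref{lemma:writheDifference},'' which is precisely the three-step assembly you describe. Your extra remark about trivial curves is already absorbed into Lemma \ref{lemma:partialWritheFormula} (which is stated for $\Gamma^*$), so no additional justification is needed there.
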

\begin{proof}
Follows directly from Lemmas \ref{lemma:partialWritheFormula}, \ref{lemma:writhes} and \ref{lemma:writheDifference}.
\end{proof}

\section{Different directions of projection}
\label{sec:differentDirections}

Our goal for this section is to prove Proposition \ref{prop:equalTwistsMultiplex}, that is, that all pretwists coincide for a cylinder. 

\begin{lemma}
\label{lemma:xEffectsEqualsTwist}
Let $\vw \in \Delta$, and let $\cR$ be a $\vw$-pseudocylinder with even depth.
Let $t$ be a tiling of $\cR$, and let $\tbase$ be the tiling such that every dimer is parallel to $\vw$. If $\vu \in \Phi$, then 
$T^{\vu}(t \sqcup (-\tbase)) = T^{\vw}(t).$ 
\end{lemma}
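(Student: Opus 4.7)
The plan is to compare $T^{\vu}(t \sqcup (-\tbase))$ with the writhe--linking formula for $T^{\vw}(t)$ given by Lemma \ref{lemma:partialWritheFormula}, by reading both through the decomposition of $t \sqcup (-\tbase)$ into the simple closed curves of $\Gamma(t,\tbase)$. The case $\vu = \pm\vw$ is immediate: each segment of $-\tbase$ is parallel to $\vw$, so $\tau^{\pm\vw}(\ell,\cdot) = \tau^{\pm\vw}(\cdot,\ell) = 0$ whenever $\ell \in -\tbase$, giving $T^{\vw}(t \sqcup (-\tbase)) = T^{\vw}(t)$, and Lemma \ref{lemma:twistNegatingDirection} disposes of the sign.

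Assume henceforth $\vu \perp \vw$. Expanding $T^{\vu}(t \sqcup (-\tbase)) = \sum_{\gamma,\gamma' \in \Gamma(t,\tbase)} T^{\vu}(\gamma,\gamma')$, a direct check shows that trivial curves $\gamma = \{\ell,-\ell\}$ contribute nothing: the self-terms vanish (the strict inequality in $\tau^{\vu}$ fails for the pairs $(\ell,\ell)$ and $(-\ell,-\ell)$, while $\tv(-\ell) = -\tv(\ell)$ forces $\tau^{\vu}(\ell,-\ell) + \tau^{\vu}(-\ell,\ell) = 0$), and for any $\gamma' \in \Gamma^*$ the cross-contributions $\tau^{\vu}(\ell,\ell') + \tau^{\vu}(-\ell,\ell')$ cancel because $\ell \perp \vu$ activates the strict-inequality conditions for $\ell$ and $-\ell$ simultaneously while the determinants flip sign. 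Applying part \ref{item:linking_00} of Lemma \ref{lemma:linkingNumber} (in a basis $\beta'$ with $\vbeta'_3 = \vu$) to the remaining cross terms then yields
$$T^{\vu}(t \sqcup (-\tbase)) = \sum_{\gamma \in \Gamma^*(t,\tbase)} T^{\vu}(\gamma) + 2\sum_{i<j}\Link(\gamma_i,\gamma_j).$$
Since Lemma \ref{lemma:partialWritheFormula} gives the analogous formula for $T^{\vw}(t)$, the proof reduces to the pointwise equality $T^{\vu}(\gamma) = T^{\vw}(\gamma)$ for each $\gamma \in \Gamma^*(t,\tbase)$.

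For this, I would invoke Lemma \ref{lemma:crossings} in a basis $\beta'$ with $\vbeta'_3 = \vu$ to write $T^{\vu}(\gamma) = \frac{1}{4}\sum_{i,j \in \{-1,1\}} T^{\beta'}_{i\epsilon,j\epsilon}(\gamma)$, and then apply Lemma \ref{lemma:directionalWrithingNumber} to identify each summand with a writhe $\Wr(\gamma,\, \vu + i\epsilon\vbeta'_1 + j\epsilon\vbeta'_2)$; the required disjointness of $\gamma$ from its translate is supplied by Lemma \ref{lemma:gammaNoIntersection}, since the perturbed projection directions have nonzero coefficients on both $\vbeta_1$ and $\vbeta_2$ in the original basis $(\vbeta_1,\vbeta_2,\vw)$. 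Rewriting each of the four perturbed directions in this original basis as $a\vbeta_1 + b\vbeta_2 + c\vw$ and choosing $\beta'$ by cyclic permutation (so that one of $\vbeta'_1, \vbeta'_2$ equals $\vw$), one checks that the product $ab$ takes each sign exactly twice as $(i,j)$ ranges over $\{-1,1\}^2$. By \eqref{eq:writhes}, two of the writhes therefore equal $\Wr^+(\gamma)$ and two equal $\Wr^-(\gamma)$, so $T^{\vu}(\gamma) = (\Wr^+(\gamma) + \Wr^-(\gamma))/2 = T^{\vw}(\gamma)$ by Lemma \ref{lemma:writhes}.

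The main obstacle is precisely the sign-bookkeeping in the last step: confirming that the cyclic change of basis really does split the four perturbation directions evenly between the $ab > 0$ and $ab < 0$ regimes of \eqref{eq:writhes}. Otherwise the proof is a clean application of the writhe--linking machinery developed in Sections \ref{sec:topologicalGroundwork} and \ref{sec:writheInterpretation}.
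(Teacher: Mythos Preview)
Your proof is correct and follows essentially the same route as the paper's: decompose $T^{\vu}(t \sqcup (-\tbase))$ over the curves of $\Gamma^*(t,\tbase)$, identify the cross terms as linking numbers via Lemma~\ref{lemma:linkingNumber}, and express each $T^{\vu}(\gamma)$ as an average of four directional writhes via Lemmas~\ref{lemma:crossings} and~\ref{lemma:directionalWrithingNumber}, then invoke \eqref{eq:writhes} and Proposition~\ref{prop:writheFormula}. The sign-bookkeeping you flagged does work out: with $\beta' = (\vbeta_2,\vw,\vbeta_1)$ (say $\vu = \vbeta_1$), the direction $\vu + i\epsilon\vbeta'_1 + j\epsilon\vbeta'_2$ has original-basis coefficients $(a,b,c) = (1,i\epsilon,j\epsilon)$, so $ab = i\epsilon$ indeed takes each sign exactly twice; your explicit treatment of trivial curves and of the case $\vu = \pm\vw$ fills in details the paper leaves to ``Clearly''.
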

\begin{proof}
Suppose $\Gamma^*(t,\tbase) = \{\gamma_i \,|\, 1 \leq i \leq m\}$.  
Clearly, 
$$
T^{\vu}(t \sqcup (-\tbase)) = 
\sum_{i,j} T^{\vu}(\gamma_i,\gamma_j)
= \sum_i T^{\vu}(\gamma_i) + 2\sum_{i < j} \Link(\gamma_i, \gamma_j).
$$
Let $L$ be the $\vu$-length of $\cR$ and $0 < \epsilon < 1/L$. Let $\beta \in \bB$ such that $\vbeta_3 = \vu$. Then, by Lemmas \ref{lemma:crossings} and \ref{lemma:directionalWrithingNumber}, 
$$
T^{\vu}(\gamma_i) = \frac{1}{4}\sum_{k,l \in \{-1,1\}}T^{\beta}_{k \epsilon, l \epsilon}(\gamma_i) = \frac{1}{4}\sum_{k,l \in \{-1,1\}}\Wr(\gamma_i, k\epsilon \vbeta_1 + l\epsilon \vbeta_2 + \vbeta_3).
$$
By Equation \eqref{eq:writhes} and Proposition \ref{prop:writheFormula}, 
$$
T^{\vu}(t \sqcup (-\tbase)) = \sum_i \frac{\Wr^+(\gamma_i) + \Wr^-(\gamma_i)}{2} + 2\sum_{i < j} \Link(\gamma_i, \gamma_j) = T^{\vw}(t).
$$
\end{proof}

\begin{lemma}
\label{lemma:allPretwistsBoxes}
Let $\cB = [0,L] \times [0,M] \times [0,N]$ be a box that has at least one even dimension, and let $t$ be a tiling of $\cB$. Then $T^{\ex}(t) = T^{\ey}(t) = T^{\ez}(t)$.
\end{lemma}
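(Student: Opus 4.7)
The plan is to apply Lemma \ref{lemma:xEffectsEqualsTwist} with the box $\cB$ viewed as a pseudocylinder, and then control the cross-terms that appear when expanding $T^{\vu}(t \sqcup (-\tbase))$ by bilinearity. By permuting coordinates, we may assume $N$ is even, so $\cB$ is a $\ez$-cylinder of even depth and the tiling $\tbase = \tbase(\cB)$ (all dimers parallel to $\ez$) is well-defined. Lemma \ref{lemma:xEffectsEqualsTwist} then supplies $T^{\vu}(t \sqcup (-\tbase)) = T^{\ez}(t)$ for every $\vu \in \Phi$, which I will combine with an expansion to extract $T^{\vu}(t)$.

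Expanding bilinearly,
$$T^{\vu}(t \sqcup (-\tbase)) = T^{\vu}(t) + Y^{\vu}(t) + T^{\vu}(-\tbase), \quad Y^{\vu}(t) := T^{\vu}(t, -\tbase) + T^{\vu}(-\tbase, t).$$
Since every pair of segments in $-\tbase$ is parallel to $\ez$, $T^{\vu}(-\tbase) = 0$ for every $\vu$. Thus the claim reduces to showing $Y^{\vu}(t) = 0$ for $\vu \in \{\ex, \ey\}$; the case $\vu = \ez$ is immediate because $\tv(\ell_1) \parallel \ez$ forces $\tau^{\ez}(\cdot, \ell_1) = \tau^{\ez}(\ell_1, \cdot) = 0$.

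I will prove $Y^{\ex}(t) = 0$ by a direct computation, one dimer $\ell_0 \in t$ at a time. Only dimers with $\tv(\ell_0) = \pm\ey$ can contribute (otherwise $\det(\tv(\ell_1), \tv(\ell_0), \ex) = 0$ since $\tv(\ell_1) \parallel \ez$). Fix such an $\ell_0$, say with cube centers $(\plshalf{x_0}, \plshalf{y_0}, \plshalf{z_0})$ and $(\plshalf{x_0}, \plshalf{y_0+1}, \plshalf{z_0})$. Projecting onto the $yz$-plane, $\Pi^{\ex}(\ell_0)$ is a horizontal unit segment at height $z = \plshalf{z_0}$ spanning $y \in [\plshalf{y_0}, \plshalf{y_0+1}]$, while the segments of $-\tbase$ in column $(x_1, y_1)$ project to a ``dashed'' vertical line at $y = \plshalf{y_1}$ whose dashes cover every half-integer $z$-value exactly once. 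Hence for each $x_1 \in [0, L-1] \setminus\{x_0\}$ and each $y_1 \in \{y_0, y_0 + 1\}$, exactly one segment $\ell_1 \in -\tbase$ in column $(x_1, y_1)$ has projection meeting $\Pi^{\ex}(\ell_0)$, and its contribution $\tau^{\ex}(\ell_0, \ell_1) + \tau^{\ex}(\ell_1, \ell_0)$ computes to $\frac{1}{4}\sgn(x_1 - x_0)(-1)^{x_1 + y_1}$, the sign reflecting which cube of column $(x_1,y_1)$ is white. Summing over $y_1 \in \{y_0, y_0+1\}$, the identity $(-1)^{y_0} + (-1)^{y_0+1} = 0$ kills the contribution, giving $Y^{\ex}(t) = 0$ and hence $T^{\ex}(t) = T^{\ez}(t)$. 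The identity $T^{\ey}(t) = T^{\ez}(t)$ follows by the symmetric argument (swap the roles of $\ex$ and $\ey$).

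The main step requiring care is the per-column cancellation over $y_1$. It depends on the absence of boundary issues (both $y_0$ and $y_0 + 1$ must lie in $[0, M-1]$), which is automatic since $\ell_0 \in t$ sits inside $\cB$. It also depends on correctly tracking the orientation of $\tbase$-dimers — the sign $(-1)^{x_1+y_1}$ flips when moving from column $(x_1, y_0)$ to $(x_1, y_0+1)$, and this is precisely what produces the cancellation. No deeper topological machinery is needed beyond the bilinear expansion and this combinatorial bookkeeping.
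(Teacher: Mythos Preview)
Your proof is correct and follows essentially the same approach as the paper: reduce to $N$ even, apply Lemma~\ref{lemma:xEffectsEqualsTwist}, expand bilinearly, and show the cross-terms $T^{\vu}(t,-\tbasez)$ and $T^{\vu}(-\tbasez,t)$ vanish by pairing, for each dimer $\ell_0 \parallel \ey$, the two $\tbasez$-segments in adjacent columns $y_1 \in \{y_0, y_0+1\}$ at each fixed $x_1$. The paper presents this last cancellation via a picture (Figure~\ref{fig:example_boxEffects}) rather than the explicit formula you wrote; note your formula $\frac{1}{4}\sgn(x_1-x_0)(-1)^{x_1+y_1}$ is missing a constant factor $\sgn(\tv(\ell_0)\cdot\ey)$, but since this factor is independent of $y_1$ the cancellation over $y_1$ is unaffected.
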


\begin{proof}
By rotating, we may assume that $N$ is even, so that $\cB$ is a $\ez$-cylinder with even depth; let $\vu \in \Phi, \vu \perp \ez$. We want to show that $T^{\vu}(t) = T^{\ez}(t)$.

By Lemma \ref{lemma:xEffectsEqualsTwist}, 
$T^{\ez}(t) = T^{\vu}(t \sqcup (-\tbasez))$.
%
%
Now, 
$$T^{\vu}(t \sqcup (-\tbasez)) = T^{\vu}(t) + T^{\vu}(-\tbasez) + T^{\vu}(t, -\tbasez) + T^{\vu}(-\tbasez,t).$$ 
$T^{\vu}(-\tbasez) = 0$ because all segments of $(-\tbasez)$ are parallel. 
It remains to show that $T^{\vu}(t, -\tbasez) = T^{\vu}(-\tbasez,t) = 0$, which yields the result.
 
Let $\vw = \vu \times \ez$.
Given $\ell_0 \in t$, we now want to show that $\sum_{\ell \in \tbasez} \tau^{\vu}(\ell,\ell_0) = \sum_{\ell \in \tbasez} \tau^{\vu}(\ell_0,\ell) = 0$. 
This is obvious if $\ell_0$ is not parallel to $\vw$. 
Otherwise, effects cancel out, as illustrated in Figure \ref{fig:example_boxEffects}.
\begin{figure}[ht]%
\centering
\includegraphics[width=0.5\columnwidth]{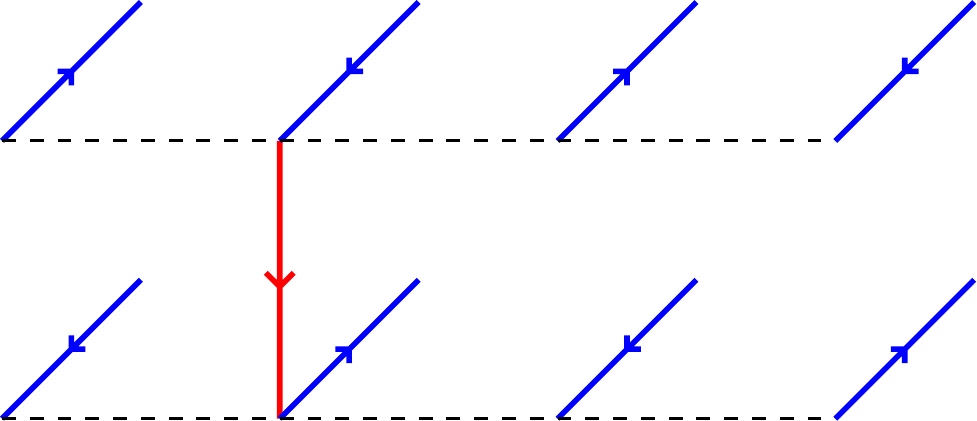}%
\caption{A dimer $\ell_0$ parallel to $\vw$, portrayed in red, and the pairs of segments (blue) of $\tbasez$ affected by it: $\vu$-effects cancel.}%
\label{fig:example_boxEffects}%
\end{figure} 
%
\end{proof}

If $Q \subset \pi$ is a basic square and $\vw \in \Delta$ is a normal vector for $\pi$, define the color of $Q$ to be the same as the color of the basic cube $Q - [0,1]\vw$; and 
$$\ccol(Q) =
\begin{cases}
1, &\text{if } Q \text{ is black;}\\
-1, &\text{if } Q \text{ is white.}
\end{cases}
$$

Recall the definition of $\vu$-shade from Section \ref{sec:combTwistBoxes}. If $A$ is a set of segments or a set of dominoes, $\vu \in \Phi$ and $Q$ is a basic square with normal $\vw \in \Delta$, we set
$$S(A,\vu,Q,n) = \{\ell \in A \,|\, \ell \cap \cS^{\vu}(Q + [0,n]\vw) \neq \emptyset\}.$$

\begin{lemma}
\label{lemma:alternativeFluxFormula}
Let $\cR$ be a $\vw$-cylinder ($\vw \in \Delta$) with base $\cD \subset \pi$ and even depth $N$. Let $Q \subset \pi$ be a basic square, $Q \not\subset \cD$, let $t$ be a tiling of $\cR$ and let $\vu \in \Phi$.
Then
$$\sum_{d \in S(t, \vu,Q,N)} \det(\tv(d), \vw,\vu) = 0.$$  
\end{lemma}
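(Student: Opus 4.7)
First, if $\vu = \pm\vw$ then $\det(\tv(d),\vw,\vu) = 0$ for every $d$ and the sum is trivially zero; so I assume $\vu \perp \vw$. After rotating I may take $\vw = \ez$ and $\vu = \ex$, so $\cR = \cD\times[0,N]$ and $Q = [a,a+1]\times[b,b+1]$ for some $a,b\in\ZZ$. The open shade is then $(a+1,\infty)\times(b,b+1)\times(0,N)$; the cubes of $\cR$ meeting it are precisely the ``strip cubes'' at $y_0 = b$ with $x_0 \ge a+1$, which I call $U$. A domino lies in $S(t,\ex,Q,N)$ iff it contains a strip cube. Since $\det(\tv(d),\ez,\ex) = \tv(d)\cdot\ey$, only $\ey$-dominoes contribute, and each contains exactly one strip cube, with its other cube at $y_0 = b+1$ (Case B) or $y_0 = b-1$ (Case A).

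Next I plan to reduce the sum to a single flux by a color-balance argument. Because $N$ is even, each column of $U$ contains $N/2$ cubes of each color, so $\omega_U = \beta_U$. Since every ``internal'' pairing inside $U$ (by an $\ex$- or $\ez$-domino) matches one black with one white cube, this balance forces $|A_b|+|B_b| = |A_w|+|B_w|$, i.e.\ $|A_b|-|A_w| = |B_w|-|B_b|$, where $|A_b|,|A_w|,|B_b|,|B_w|$ are the counts of Case-A/B dominoes whose strip cube is black/white. A direct check of $\tv(d)\cdot\ey$ in each of the four (case, color) combinations yields
\[ T = (|A_b|-|A_w|)+(|B_w|-|B_b|) = 2(|B_w|-|B_b|) = 2\Psi_{b+1}(t), \]
where $\Psi_{b+1}(t)$ is the signed count of $\ey$-dominoes of $t$ crossing the 2-dimensional surface $\Sigma = \{y=b+1,\,x\ge a+1\}\cap\cR$ in the $+\ey$ direction.

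The main step, and the hardest one, is to show that $\Psi_{b+1}(t)$ is independent of $t$. Assuming $U$ is nonempty (otherwise the sum is already zero), the cube $[a+1,a+2]\times[b,b+1]\times[0,1]$ is in $\cR$ while $Q$ is not in $\cD$, forcing the vertex $(a+1,b+1)$ onto $\partial\cD$; hence $\partial\Sigma \subset \partial\cR$. For two tilings $t_0,t_1$, the formal difference $\Gamma(t_0,t_1)$ (from Section~\ref{sec:topologicalGroundwork}) is a disjoint union of closed curves in $\cR$, and $\Psi_{b+1}(t_0)-\Psi_{b+1}(t_1)$ equals the sum of their algebraic intersection numbers with $\Sigma$. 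I will exploit that $\cR = \cD\times[0,N]$ is a product of simply connected factors, hence itself simply connected with $H_1(\cR;\ZZ)=0$: every closed $1$-cycle in $\cR$ bounds a $2$-chain, and its algebraic intersection with any $2$-surface whose boundary sits in $\partial\cR$ must vanish. Evaluating on $\tbase$, whose dimers are all parallel to $\vw$ so that there are no $\ey$-dominoes at all, gives $\Psi_{b+1}(\tbase)=0$, whence $\Psi_{b+1}(t)=0$ and $T=0$ for every $t$. Simple connectivity is essential here: if $\cD$ has a hole then $\cR$ is only a pseudocylinder, $H_1(\cR)\ne 0$, and an $\ey$-dimer linking the hole can produce $T\ne 0$, so the hypothesis that $\cR$ be a cylinder cannot be dropped.
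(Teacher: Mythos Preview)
Your proof is correct and takes a genuinely different route from the paper's. The paper argues directly via the product structure: it writes the sum as $\sum_{\gamma \in \Gamma^*(t,\tbasew)} 2\wind(\Pi\circ\gamma, p_Q)$, where $\Pi$ is the orthogonal projection onto $\cD$ and $p_Q$ is the center of $Q$; since $p_Q \notin \cD$ and $\cD$ is simply connected, each winding number vanishes. This reduces at once to a 2D statement. Your approach is more three-dimensional: you first use a color-balance argument (exploiting that $N$ is even) to rewrite $T$ as twice the flux $\Psi_{b+1}(t)$ through a surface $\Sigma$, then invoke $H_1(\cR)=0$ and $\partial\Sigma\subset\partial\cR$ to show the flux is tiling-independent, and finally evaluate on $\tbasew$. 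The paper's argument is shorter; yours separates more cleanly the roles of ``even depth'' (needed for the color balance) and ``simply connected'' (needed for the flux invariance), and makes the failure for pseudocylinders more transparent, as you note.

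One small inaccuracy: your justification that $(a+1,b+1)\in\partial\cD$ assumes that the cube at $(a+1,b)$ lies in $\cR$, but this does not follow from $U\neq\emptyset$ alone (the nearest strip cube could sit at some $x_0>a+1$). The correct and simpler reason is that $(a+1,b+1)$ is a vertex of $Q\not\subset\cD$, so it lies either on $\partial\cD$ or outside $\cD$ altogether; in the latter case the segment $\{(a+1,b+1)\}\times[0,N]$ is not in $\cR$ and hence not in $\partial\Sigma$. Either way $\partial\Sigma\subset\partial\cR$, and your intersection-number argument goes through.
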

\begin{figure}[ht]%
\centering
\def\svgwidth{0.6\columnwidth}
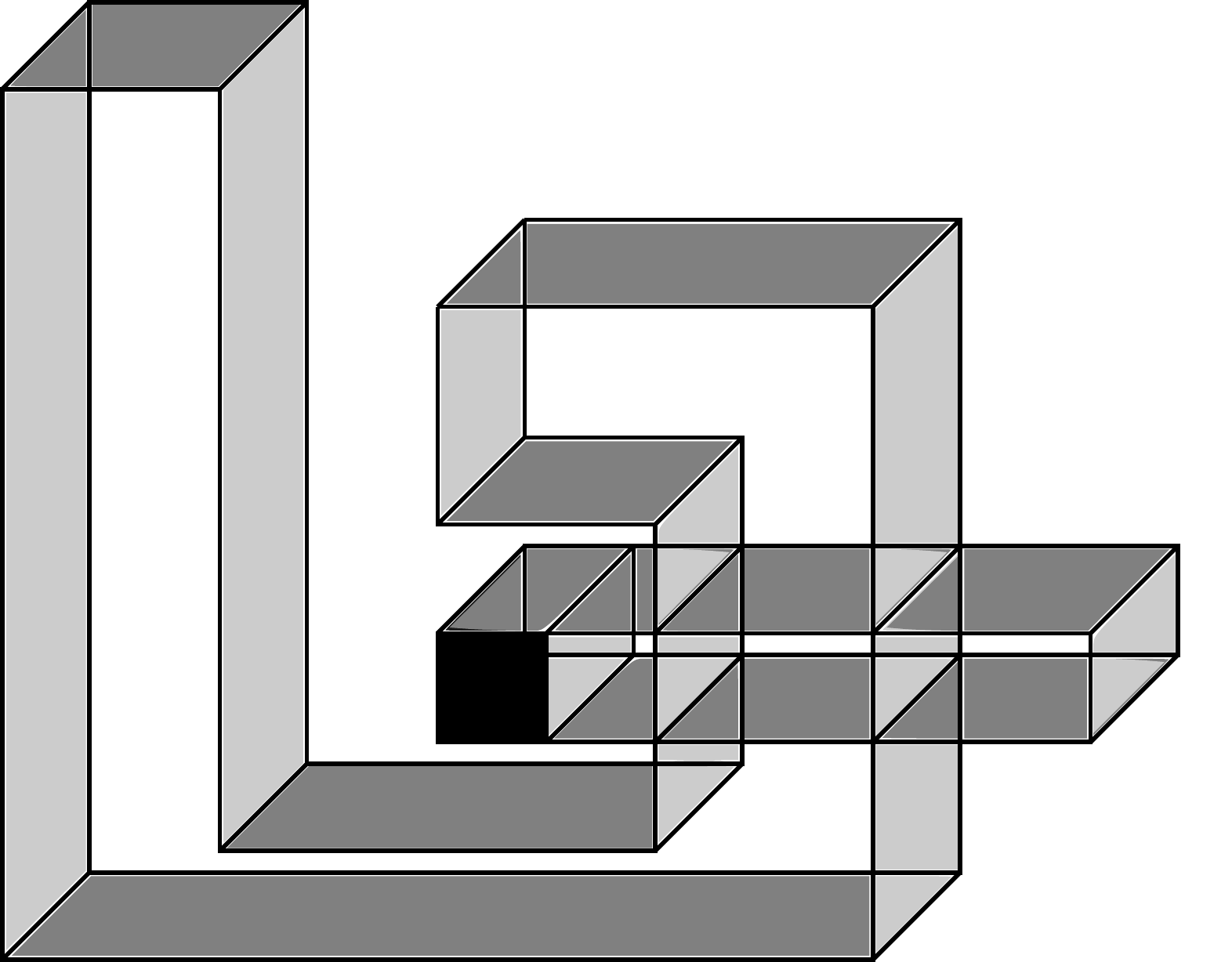
\caption{A cylinder $\cR$ with base $\cD \subset \pi$ and depth $N$, a basic square $Q \subset \pi$, $Q \not\subset \cD$ and the shade $S^{\vu}(Q + [0,N]\vw)$. }%
\label{fig:squareWalls}%
\end{figure}
\begin{proof}
The reader may want to follow by looking at Figure \ref{fig:squareWalls}.
Let $\tbasew = \tbasew(\cR)$, $S_t = S(t,\vu, Q,N)$, and for each $\gamma \in \Gamma^*(t,\tbasew)$, 
let $S_\gamma$ denote $S(\gamma,\vu,Q,N)$.
Clearly,
$$ \sum_{d \in S_t} \det(\tv(d), \vw,\vu) = \sum_{\substack{\gamma \in \Gamma^*(t,\tbasew) \\ \ell \in S_\gamma}} \det(\tv(\ell),\vw,\vu)).$$
Let $p_Q$ be the center of the square $Q$, and let $\Pi$ denote the orthogonal projection on $\pi$. For each $\gamma \in \Gamma^*(t,\tbasew)$, $\Pi \circ \gamma$ is a polygonal curve, so that the winding number of $\gamma$ around $p_Q$ equals (see, e.g., \cite{polygonalWindingNumber} for an algorithmic discussion of winding numbers)
\begin{align*} \wind(\Pi \circ \gamma, p_Q) &= \half\left(\#\{\ell \in S_\gamma \,|\, \tv(\ell) = \vw \times \vu\} - \#\{\ell \in S_\gamma \,|\, \tv(\ell) = -\vw \times \vu\}\right)\\
&= \half \sum_{\ell \in S_\gamma} \det(\tv(\ell), \vw,\vu).
\end{align*}
But $\wind(\Pi \circ\gamma, p_Q) = 0$ ($p_Q \notin \cD$ and $\cD$ is simply connected), so we get the result.
\end{proof}

\begin{rem}
\label{rem:pseudocylinderEvenDepth} 
The fact that $$\sum_{d \in S(t, \vu,Q,N)} \det(\tv(d), \vw,\vu) = \sum_{\gamma \in \Gamma^*(t,\tbasew)}\wind(\Pi \circ \gamma, p_Q)$$ (using the notation in the proof of Lemma \ref{lemma:alternativeFluxFormula}) also holds in pseudocylinders, as the hypothesis of simply connected is only needed in the last step of the proof. In fact, if we define the flux $\phi(t) = \sum_{Q , \gamma} \ccol(Q) \wind(\Pi \circ \gamma, p_Q)$ ($Q$ runs through all the basic squares not contained in $\cD$, $\gamma$ runs through all the curves in $\Gamma^*(t,\tbasew)$), then a modification in the proof of Proposition \ref{prop:equalTwistsIntNEven} yields that $T^{\ex}(t) = T^{\ey}(t) = T^{\ez}(t) + \phi(t)$ (in particular, since $\phi(t) \in \ZZ$, the three pretwists are integers).  
\end{rem}

\begin{prop}
\label{prop:equalTwistsIntNEven}
Let $N \in \NN$ be even, and suppose $\cR$ is a cylinder with depth $N$. If $t$ is a tiling of $\cR$, then $T^{\ex}(t) = T^{\ey}(t) = T^{\ez}(t) \in \ZZ$.
\end{prop}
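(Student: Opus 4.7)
Let $\vw \in \Delta$ denote the axis of $\cR$, and set $\tbase = \tbase(\cR)$. Proposition~\ref{prop:writheFormula} already yields $T^{\vw}(t) \in \ZZ$, so it suffices to prove that $T^{\vu}(t) = T^{\vw}(t)$ for each $\vu \in \Delta \setminus \{\vw\}$. Applying Lemma~\ref{lemma:xEffectsEqualsTwist} yields $T^{\vw}(t) = T^{\vu}(t \sqcup (-\tbase))$, and expanding the right-hand side gives $T^{\vu}(t) + T^{\vu}(-\tbase) + T^{\vu}(t,-\tbase) + T^{\vu}(-\tbase,t)$. Every segment of $-\tbase$ is parallel to $\vw$, so $T^{\vu}(-\tbase) = 0$, and the problem reduces to the vanishing of the cross-term sum $T^{\vu}(t,-\tbase) + T^{\vu}(-\tbase,t)$.

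For this I would mimic the box-case argument used in the proof of Lemma~\ref{lemma:allPretwistsBoxes}. Denote by $\vbeta \in \Delta$ the direction perpendicular to both $\vw$ and $\vu$. Only dimers $d \in t$ with $\tv(d) = \pm\vbeta$ contribute, and for each such $d$ the $\vu$-projection of $d$ meets the $\vu$-projection of an $-\tbase$-segment only when that segment sits in one of the two columns of $\tbase$ flanking $d$ in the $\pm\vbeta$ directions, at the same $\vw$-height $z_d$. Because the cubes at height $z_d$ in the two flanking columns have opposite colors, the corresponding $\tbase$-dimers carry opposite orientations along $\vw$, and the computation used for the box case in Lemma~\ref{lemma:allPretwistsBoxes} shows that the contributions from any such matched pair at the same $\vu$-coordinate cancel exactly---provided both flanking columns lie in the base $\cD$.

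The obstruction in a general cylinder arises when one of the two flanking columns sits over a basic square $Q \subset \pi$ with $Q \not\subset \cD$: the matching partner is missing, and a residual term survives. Reorganizing the uncancelled residuals by the ``missing'' square $Q$, one can rewrite the total residual attributable to each such $Q$ as a harmless scalar multiple of the signed flux $\sum_{d \in S(t,\vu,Q,N)} \det(\tv(d),\vw,\vu)$, which vanishes by Lemma~\ref{lemma:alternativeFluxFormula}; this is precisely where the simple connectivity of $\cD$ enters. Summing the vanishing residuals over all $Q \not\subset \cD$ gives the cross-term cancellation and therefore $T^{\vu}(t) = T^{\vw}(t) \in \ZZ$. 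The main obstacle is the sign-and-position bookkeeping needed to identify the aggregated boundary residual at each missing $Q$ with the exact flux expression annihilated by Lemma~\ref{lemma:alternativeFluxFormula}; the topological content itself (simple connectivity) enters cleanly once the identification is made.
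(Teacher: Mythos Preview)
Your approach is correct and lands on the same computation as the paper's proof. The paper takes a slightly different organizational route: rather than working directly with $-\tbase$ inside $\cR$, it embeds $\cR$ in a box $\cB$ (taking a rectangle $\cA \supset \cD$), tiles $\cB\setminus\cR$ by a $\vw$-parallel tiling $t_*$, invokes Lemma~\ref{lemma:allPretwistsBoxes} on $\cB$ to get $T^{\vu}(t\sqcup t_*) = T^{\vw}(t)$, and then shows $T^{\vu}(t,t_*)+T^{\vu}(t_*,t)=0$ by grouping $t_*$ into its columns $t_Q$ over each square $Q$ with $\interior(Q)\subset\cA\setminus\cD$ and applying Lemma~\ref{lemma:alternativeFluxFormula}. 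Your ``boundary residuals at missing $Q$'' are exactly (up to sign) the paper's $T^{\vu}(t_Q,t)$ and $T^{\vu}(t,t_Q)$, since $-\tbase(\cR)$ and $-t_*$ together make up $-\tbase(\cB)$, whose cross-terms with $t$ vanish by the box-case argument you cite. So the two proofs are the same computation viewed from complementary sides; the paper's version has the minor advantage of isolating the box case as a clean standalone lemma, while yours is slightly more direct.
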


\begin{proof}
Suppose $\cR = \cD + [0,N]\vw$, where $\cD \subset \pi$ is simply connected and $\vw \in \Delta$ is the axis of the cylinder. 
Let $\cA \subset \pi$ be a rectangle with vertices in $\ZZ^3$ such that $\cD \subset \cA$: this implies that the box
$\cB = \cA + [0,N]\vw \supset \cR$. Let $\vu \in \Phi, \vu \perp \vw$. We want to show that $T^{\vu}(t) = T^{\vw}(t)$.

Let $t$ be a tiling of $\cR$, and let $t_*$ be the tiling of $\cB \setminus \cR$ such that every dimer is parallel to $\vw$. Applying Lemma \ref{lemma:allPretwistsBoxes} to the box $\cB$, we see that $T^{\vu}(t \sqcup t_*) = T^{\vw}(t)$: it remains to show that $T^{\vu}(t \sqcup t_*) - T^{\vu}(t) = 0$.  

Let $\tbasew$ be the tiling of $\cR$ such that every domino is parallel to $\vw$, and let $Q \subset \pi$ be a basic square such that $\interior(Q) \subset \cA \setminus \cD$. Let $t_Q$ be the set of $N/2$ dominoes of $t_*$ contained in $Q + [0,N]\vw$: we have 
$$T^{\vu}(t \sqcup t_*) - T^{\vu}(t) = T^{\vu}(t,t_*) + T^{\vu}(t_*,t) = \sum_{\interior(Q) \subset \cA \setminus \cD} T^{\vu}(t_Q, t) + T^{\vu}(t,t_Q).$$ 
Notice that, for every domino $d \in t_Q$, $\tv(d) = \ccol(Q) \vw$. Moreover, the dominoes in $S_{t,\vu} = S(t,\vu,Q,N)$ are precisely the ones that intersect the $\vu$-shade of at least one domino of $t_Q$, so that
$$T^{\vu}(t_Q, t) = \frac{1}{4}\sum_{d \in S_{t,\vu}} \det(\tv(d),\ccol(Q)\vw,\vu) = \frac{\ccol(Q)}{4}\sum_{d \in S_{t,\vu}} \det(\tv(d),\vw,\vu),$$
which equals $0$ by Lemma \ref{lemma:alternativeFluxFormula}.
Analogously (the first equality below uses Lemma \ref{lemma:twistNegatingDirection}),
$$T^{\vu}(t,t_Q) = T^{-\vu}(t_Q,t) = \frac{\ccol(Q)}{4}\sum_{d \in S(t,-\vu,Q,n)} \det(\tv(d),\vw,-\vu) = 0.$$
Since $T^{\vw}(t) \in \ZZ$ (by Proposition \ref{prop:writheFormula}), we have completed the proof. 
\end{proof}
\begin{lemma}
\label{lemma:equalTwistsNOdd}
Let $N \in \ZZ$ be odd, and let $\cR$ be a cylinder with depth $N$ that admits a tiling $t$. Then $T^{\ex}(t) = T^{\ey}(t) = T^{\ez}(t) \in \half\ZZ$.
\end{lemma}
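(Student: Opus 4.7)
The plan is to reduce to the even-depth case handled by Proposition \ref{prop:equalTwistsIntNEven} by stacking two copies of $\cR$ along its axis. Write $\cR = \cD + [0,N]\vw$ with $\vw \in \Delta$ the axis and $N$ odd. Given the tiling $t$, I form $t' := \{d + N\vw \mid d \in t\}$, which is a tiling of $\cD + [N,2N]\vw$, and set $t_2 := t \sqcup t'$, a tiling of $\cR_2 := \cD + [0,2N]\vw$. Since $\cR_2$ is a cylinder of even depth $2N$, Proposition \ref{prop:equalTwistsIntNEven} gives $T^{\ex}(t_2) = T^{\ey}(t_2) = T^{\ez}(t_2) \in \ZZ$, so the whole lemma will follow once I establish the identity
\[ T^{\vu}(t_2) \;=\; 2\,T^{\vu}(t) \qquad \text{for each } \vu \in \Delta. \]

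Expanding $T^{\vu}(t_2) = T^{\vu}(t) + T^{\vu}(t') + T^{\vu}(t,t') + T^{\vu}(t',t)$, the identity splits into two claims. First, $T^{\vu}(t') = T^{\vu}(t)$: because $N$ is odd, translation by $N\vw$ swaps the color of every cube, so $\tv(d + N\vw) = -\tv(d)$ for every $d \in t$; the two minus signs cancel inside $\det(\tv(d_2), \tv(d_1), \vu)$, and the shade condition is translation-invariant. Second, the cross-pretwists $T^{\vu}(t,t')$ and $T^{\vu}(t',t)$ must vanish, and I treat this in two subcases.

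When $\vu \perp \vw$, every $d \in t$ has $S^{\vu}(d)$ contained in the same open range of $\vw$-coordinates as the interior of $d$, hence in $(0,N)$ at worst, while each $d'' \in t'$ has $\vw$-coordinates in $[N,2N]$; since the two $\vw$-ranges meet only on the plane $\vw$-coordinate $=N$, which is disjoint from the open shade, $d'' \cap S^{\vu}(d) = \emptyset$, and symmetrically $d \cap S^{\vu}(d'') = \emptyset$. When $\vu = \vw$, we have $T^{\vw}(t',t) = 0$ because $S^{\vw}(d'')$ lies strictly above the interface while $d \in t$ lies below it. For $T^{\vw}(t,t')$, each term equals $\tau^{\vw}(d, d'+N\vw) = -\tfrac14 \det(\tv(d'),\tv(d),\vw)$ when $\Pi^{\vw}(d) \cap \Pi^{\vw}(d')$ is nonempty and $0$ otherwise; since the determinant factor is antisymmetric under swapping $(d,d')$ while the intersection condition on the horizontal projections is symmetric, the total sum over ordered pairs $(d,d') \in t \times t$ is zero.

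The main obstacle is purely bookkeeping rather than conceptual: verifying carefully that the $\vw$-slab argument in the $\vu \perp \vw$ case is not spoiled by dominoes touching the interface $\vw = N$ (the openness of the shade saves us, as the interface has measure zero in the $\vw$-direction) and that the antisymmetry cancellation in the $\vu = \vw$ case correctly absorbs the sign flip produced by the color swap. With the identity $T^{\vu}(t_2) = 2\,T^{\vu}(t)$ in hand, Proposition \ref{prop:equalTwistsIntNEven} applied to $\cR_2$ immediately yields $T^{\ex}(t) = T^{\ey}(t) = T^{\ez}(t) = \half\,T^{\ex}(t_2) \in \half \ZZ$, proving the lemma.
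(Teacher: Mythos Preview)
Your proof is correct and follows essentially the same route as the paper: stack two copies of $t$ along the axis to obtain a tiling $t_2$ of the even-depth cylinder $\cR_2$, invoke Proposition~\ref{prop:equalTwistsIntNEven}, and show $T^{\vu}(t_2)=2T^{\vu}(t)$ for every $\vu\in\Delta$. The only minor difference is in the verification that the cross term $T^{\vw}(t,t')$ vanishes: you argue directly from the antisymmetry of the determinant together with the symmetry of the projection condition, whereas the paper rewrites this cross term as $T^{-\vw}(t)-T^{\vw}(t)$ and then invokes Lemma~\ref{lemma:twistNegatingDirection}; both arguments are equally short and valid.
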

In fact, we prove in Proposition \ref{prop:equalTwistsIntNOdd} that $T^{\ex}(t) = T^{\ey}(t) = T^{\ez}(t) \in \ZZ$, but for our proof this first step is needed. Also, it is not clear when a cylinder with odd depth $N$ is tileable: see Lemma \ref{lemma:treeConstruction} for a related result.
\begin{proof}
Suppose $\cR$ has base $\cD$ and axis $\vw \in \Delta$, so that $\cR = \cD + [0,N] \vw$, and let $\vu \in \Phi$, $\vu \perp \vw$. 
Let $t$ be a tiling of $\cR$. We want to show that $T^{\vu}(t) = T^{\vw}(t)$.

Consider $\cR' = \cD + [0,2N]\vw$, and the tiling $\hat{t} = t_0 \sqcup t_1$ of $\cR'$ which consists of two copies $t_0$ and $t_1$ of $t$, where $t_0$ tiles the subregion $\cD + [0,N]\vw$ and $t_1$ tiles the subregion $\cD + [N,2N]\vw$.


By Proposition \ref{prop:equalTwistsIntNEven}, $T^{\vu}(\hat{t}) = T^{\vw}(\hat{t}) \in \ZZ$. Now clearly $T^{\vu}(\hat{t}) = 2 T^{\vu}(t)$, because the $\vu$-shades of dimers of $t_0$ do not intersect dimers of $t_1$ (and vice-versa). We need to prove that $T^{\vw}(\hat{t}) = 2 T^{\vw}(t)$. 

Notice that $T^{\vw}(\hat{t}) = T^{\vw}(t_0) + T^{\vw}(t_1) + T^{\vw}(t_0,t_1) = 2T^{\vw}(t) + T^{\vw}(t_0,t_1)$. Let $d_0 \in t_0$, $d_1 \in t_1$ be dominoes, and let $\tilde{d}_0$ and $\tilde{d}_1$ be the dominoes of $t$ that they ``refer to''. If $\tilde{d}_0 \neq \tilde{d}_1$, then clearly
$$d_1 \cap \cS^{\vw}(d_0) \neq \emptyset \Leftrightarrow \tilde{d}_1 \cap (\cS^{\vw}(\tilde{d}_0) \cup \cS^{-\vw}(\tilde{d}_0)) \neq \emptyset $$
and $\tau^{\vw}(d_0,d_1) = \frac{1}{4}\det(\tv(d_1), \tv(d_0), \vw) = - \frac{1}{4}\det(\tv(\tilde{d}_1), \tv(\tilde{d}_0), \vw) = \tau^{-\vw}(\tilde{d}_0,\tilde{d}_1) - \tau^{\vw}(\tilde{d}_0,\tilde{d}_1).$
Therefore,
$T^{\vw}(t_0,t_1) = \sum_{d,d' \in t}\tau^{-\vw}(d,d') - \tau^{\vw}(d,d') = 0$. Consequently, $T^{\vw}(\hat{t}) = 2 T^{\vw}(t)$ and thus $T^{\vu}(t) = T^{\vw}(t)$.

 Moreover, since $T^{\vw}(t) = T^{\vw}(\hat{t})/2$ and $T^{\vw}(\hat{t}) \in \ZZ$, it follows that $T^{\vw}(t) \in \half\ZZ$, which completes the proof.
\end{proof}

\begin{rem}
\label{rem:pseudocylinderOddDepth}
The statement of Lemma \ref{lemma:equalTwistsNOdd} (and also of Proposition \ref{prop:equalTwistsMultiplex}) also holds for pseudocylinders with odd depth. The key observation is that if $\hat{t}$ is the tiling obtained by stacking two copies of $t$, one above the other, then $\phi(\hat{t}) = 0$ (see Remark \ref{rem:pseudocylinderEvenDepth} for the definition of the flux $\phi$). 
\end{rem}


\begin{lemma}
\label{lemma:integerTwistDifference}
Let $\cD \subset \pi$ be a planar region, and let $\vw \in \Delta$ be the normal vector for $\pi$. For each $k \in \NN$, write $\cR_k = \cD + [0,2k+1]\vw$. If $k_1, k_2 \in \NN$, then for each $\vu \in \Phi$ and every pair of tilings $t_1$ of $\cR_{k_1}$, $t_2$ of $\cR_{k_2}$, $T^{\vu}(t_1) - T^{\vu}(t_2) \in \ZZ$.
\end{lemma}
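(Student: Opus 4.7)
My plan is to stack $t_1$ and $t_2$ vertically to produce a tiling of a pseudocylinder of even depth, then exploit the integrality of pretwists in the even-depth case. Concretely, let $\tilde{t}_2$ be the translate of $t_2$ by $(2k_1+1)\vw$, and set $\hat{t} = t_1 \sqcup \tilde{t}_2$; this is a tiling of $\hat{\cR} = \cD + [0, 2(k_1+k_2+1)]\vw$. Since $\hat{\cR}$ has even depth, Proposition \ref{prop:equalTwistsIntNEven} (or Remark \ref{rem:pseudocylinderEvenDepth} if $\cD$ is not simply connected) yields $T^{\vu}(\hat{t}) \in \ZZ$.

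Next I would expand
\[
T^{\vu}(\hat{t}) = T^{\vu}(t_1) + T^{\vu}(\tilde{t}_2) + T^{\vu}(t_1, \tilde{t}_2) + T^{\vu}(\tilde{t}_2, t_1)
\]
and show that the cross terms vanish, after first reducing to the case $\vu \perp \vw$. For this reduction, Lemma \ref{lemma:equalTwistsNOdd} (together with its extension to pseudocylinders of odd depth in Remark \ref{rem:pseudocylinderOddDepth}) combined with Lemma \ref{lemma:twistNegatingDirection} implies that $T^{\vu}(t_i)$ does not depend on the choice of $\vu \in \Phi$; hence proving the statement for any single $\vu \perp \vw$ suffices. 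With $\vu \perp \vw$ in hand, every $\vu$-shade preserves the $\vw$-coordinate: a dimer $d \in t_1$ occupies $\vw$-range $[j, j+1]$ for some $j \leq 2k_1$, so $\interior(\cS^{\vu}(d))$ also has $\vw$-coordinates in the open interval $(j, j+1)$, whereas every dimer of $\tilde{t}_2$ has $\vw$-coordinate $\geq 2k_1 + 1 \geq j+1$. Thus $d' \cap \cS^{\vu}(d) = \emptyset$ for any $d \in t_1$ and $d' \in \tilde{t}_2$, forcing $\tau^{\vu}(d, d') = 0$; a symmetric argument, with the roles of $d$ and $d'$ reversed, gives $\tau^{\vu}(d', d) = 0$ as well.

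Combining these observations with the translation invariance $T^{\vu}(\tilde{t}_2) = T^{\vu}(t_2)$ produces
\[
T^{\vu}(t_1) + T^{\vu}(t_2) \;=\; T^{\vu}(\hat{t}) \;\in\; \ZZ.
\]
Applying the identical construction with $t_1$ replaced by a second copy of $t_2$ (self-stacking) gives $2T^{\vu}(t_2) \in \ZZ$, and subtracting yields
\[
T^{\vu}(t_1) - T^{\vu}(t_2) \;=\; \bigl(T^{\vu}(t_1) + T^{\vu}(t_2)\bigr) - 2T^{\vu}(t_2) \;\in\; \ZZ,
\]
which is the desired conclusion. The one point that deserves careful verification is the vanishing of the cross terms along the interface $\vw$-level $2k_1+1$: dimers of $t_1$ and of $\tilde{t}_2$ can share a two-dimensional face there, and the vanishing relies crucially on the fact that $\cS^{\vu}$ is defined as an \emph{open} set, so such a shared face contributes no interior intersection between a shade on one side and a dimer on the other.
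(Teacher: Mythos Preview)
Your proposal is correct and follows essentially the same route as the paper: stack $t_1$ atop a translated copy of $t_2$ to obtain a tiling of an even-depth cylinder, reduce via Lemma~\ref{lemma:equalTwistsNOdd} to the case $\vu\perp\vw$ so that the cross terms vanish, conclude $T^{\vu}(t_1)+T^{\vu}(t_2)\in\ZZ$, and then subtract $2T^{\vu}(t_2)\in\ZZ$. The paper invokes Lemma~\ref{lemma:equalTwistsNOdd} directly for the last step rather than redoing the self-stacking, and it is slightly less explicit about the interface argument, but the structure is identical.
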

Should $\cR_{k_1}$ or $\cR_{k_2}$ not be tileable, the statement is vacuously true.
\begin{proof}
By Lemma \ref{lemma:equalTwistsNOdd}, it suffices to show the result for $\vu \perp \vw$.
Let $t_1$ and $t_2$ be tilings of $\cR_{k_1}$ and $\cR_{k_2}$, respectively. Consider the cylinder with even depth $\cR = \cD + [0,2k_1 + 2k_2 + 2]\vw$, and let $\tilde{t}_2$ denote the tiling of $\cD + [2k_1 + 1,2k_1 + 1 + 2k_2 + 1]\vw$ which is a copy of $t_2$. If $t = t_1 \sqcup \tilde{t}_2$, then $T^{\vu}(t) \in \ZZ$, by Proposition \ref{prop:equalTwistsIntNEven}. Also, since $\vu \perp \vw$, $T^{\vu}(t) = T^{\vu}(t_1) + T^{\vu}(\tilde{t}_2) = T^{\vu}(t_1) + T^{\vu}(t_2)$, so that  
$$ T^{\vu}(t_1) - T^{\vu}(t_2) = T^{\vu}(t_1) + T^{\vu}(t_2) - 2T^{\vu}(t_2) = T^{\vu}(t) - 2T^{\vu}(t_2).$$ 
Since, by Lemma \ref{lemma:equalTwistsNOdd}, $2T^{\vu}(t_2) \in \ZZ$, we're done. 
%
%
\end{proof}

\begin{lemma}
\label{lemma:treeConstruction}
Let $\pi$ be a basic plane with normal $\vw \in \Delta$, and let $\cD \subset \pi$ be a planar region with connected interior such that 
$$\# (\text{black squares in } \cD) = \# (\text{white squares in } \cD) = n.$$ 
Then there exists a tiling $t_0$ of $\cD +[0,2n-1]\vw$ such that $T^{\vw}(t_0) \in \ZZ$.
\end{lemma}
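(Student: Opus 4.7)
I would prove this lemma by induction on $n$, constructing $t_0$ explicitly.

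\emph{Base case} ($n=1$): $\cD$ consists of two adjacent opposite-colored squares (the only balanced connected-interior planar region with two squares), and $\cD + [0,1]\vw$ is a $1 \times 2 \times 1$ box with a unique tiling by one non-$\vw$ dimer; since there is only one dimer, $T^{\vw}(t_0) = 0 \in \ZZ$.

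\emph{Inductive step} ($n \geq 2$): The plan is to find adjacent opposite-color squares $Q_w, Q_b \in \cD$ such that $\cD' := \cD \setminus \{Q_w, Q_b\}$ still has connected interior (it remains balanced automatically, with $n-1$ squares of each color), apply the inductive hypothesis to $\cD'$ to obtain a tiling $t'$ of $\cD' + [0, 2n-3]\vw$ with $T^{\vw}(t') \in \ZZ$, and then extend $t'$ to a tiling $t_0$ of $\cD + [0, 2n-1]\vw$. Concretely, I would set $t_0 = t' \sqcup t_{\mathrm{slab}} \sqcup t_{\mathrm{col}}$, where $t_{\mathrm{slab}}$ fills the depth-$2$ slab $\cD' + [2n-3, 2n-1]\vw$ with one $\vw$-dimer per column, and $t_{\mathrm{col}}$ tiles the $1 \times 2 \times (2n-1)$ box $(Q_w \cup Q_b) + [0, 2n-1]\vw$ by placing a single non-$\vw$ dimer $d^*$ at the bottom ($z$-index $0$) connecting the two columns, and then pairing the remaining $2n-2$ cubes of each column with $\vw$-dimers at $z$-index pairs $(1,2), (3,4), \ldots, (2n-3, 2n-2)$.

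The twist computation then reduces to cross terms, since every $\vw$-dimer contributes zero in every $\tau^{\vw}$-pairing. The only potentially relevant interactions are between $d^*$ and the non-$\vw$ dimers of $t'$. But $\cS^{\vw}(d^*) \subseteq \interior((Q_w \cup Q_b) + [1,\infty)\vw)$ is disjoint from the support of $t'$ (contained in $\cD' + [0, 2n-3]\vw$, which lies over $\cD \setminus (Q_w \cup Q_b)$), so $\tau^{\vw}(d^*, d) = 0$ for every $d \in t'$. Symmetrically, $\cS^{\vw}(d)$ for any $d \in t'$ stays over $\cD'$ and never meets $d^*$. Hence every cross $\tau^{\vw}$-term vanishes, giving $T^{\vw}(t_0) = T^{\vw}(t') \in \ZZ$.

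The main obstacle is the combinatorial claim that such a ``prunable pair'' $(Q_w, Q_b)$ always exists in any balanced, connected-interior planar region with at least four squares. I would prove this by examining the dual graph of $\cD$ via a spanning tree: if no such pair existed, every adjacent opposite-color pair would be a $2$-vertex cut, forcing a very restrictive block-cut structure. By planarity together with the balance condition --- for instance, by considering a longest path in the spanning tree, analyzing the neighborhoods of its endpoints, and invoking balance within each resulting block --- one can argue that this restrictive structure cannot actually occur when $\cD$ has at least four squares.
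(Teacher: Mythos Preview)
Your inductive scheme hinges on the ``prunable pair'' claim, and that claim is false. Take
\[
\cD = \{(1,0),(0,1),(1,1),(2,1),(1,2),(1,3),(0,4),(1,4),(2,4),(1,5)\},
\]
a simply connected planar region whose dual graph is a tree: two degree-$4$ vertices $(1,1)$ and $(1,4)$ joined by the path $(1,2)$--$(1,3)$, each carrying three leaves. It is balanced ($n=5$). In this tree, every edge is either incident to a leaf whose unique neighbor has degree $4$, or is an internal edge whose removal separates the two stars; in all nine cases, deleting both endpoints disconnects the graph. Thus no adjacent opposite-color pair $(Q_w,Q_b)$ can be removed while keeping the interior connected, and your induction cannot proceed. (Nor can you fall back on applying the hypothesis to each resulting component: e.g.\ deleting $(1,2),(1,3)$ yields a component $\{(1,0),(0,1),(1,1),(2,1)\}$ with three black squares and one white.)

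The paper's proof avoids this obstruction by working with \emph{paths} rather than single dominoes. It takes a spanning tree of the dual graph, and at each stage uses Lemma~\ref{lemma:leafColors} to find a white leaf and a black leaf (which always exist in a balanced tree, though they need not be adjacent), lays a path of dominoes between them across two consecutive floors, and deletes just those two leaves from the tree. The price is that the resulting tiling has many non-$\vw$ dominoes whose mutual effects do \emph{not} vanish; the integrality of $T^{\vw}(t_0)$ then comes from a careful bookkeeping of how later paths meet and leave earlier ones, showing these contributions sum to an integer. Your idea of isolating all cross-effects by geometric separation is cleaner when it works, but the combinatorial hypothesis it needs is simply unavailable for general $\cD$.
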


Notice that, with Lemma \ref{lemma:treeConstruction}, the proof of Proposition \ref{prop:equalTwistsMultiplex} is complete. However, we need some preparation before we can prove Lemma \ref{lemma:treeConstruction}.


It is a well-known fact that domino tilings of a region can be seen as perfect matchings of a related graph: in fact, if we consider the graph whose vertices are centers of the cubes (squares in the planar case) of the region, and where two vertices are joined if their Euclidean distance is $1$, then a domino tiling can be directly translated as a perfect matching in this graph. This graph is called the \emph{associated graph} of a region $\cR$ (planar or spatial), and denoted $G(\cR)$.
Since the proof of Lemma \ref{lemma:treeConstruction} will come more naturally in the setting of matchings in associated graphs, we shall revert to this viewpoint for what follows. 

A \emph{bicoloring} of a graph $G$ is a coloring of each vertex of $G$ as black or white, in such a way that no two adjacent vertices have the same color.
Associated graphs for a region $\cR$ are always bicolored: each vertex inherits the color of the cube (or square) it refers to.
For what follows, we shall assume that all graphs are already bicolored. 
Moreover, any subgraph of a bicolored graph $G$ (for instance, the one obtained after deleting a vertex) shall inherit the bicoloring of $G$.  

\begin{lemma}
\label{lemma:leafColors}
Let $T$ be a bicolored tree. If all leaves are white, then the number of white vertices in $T$ is strictly larger than the number of black vertices in $T$.
\end{lemma}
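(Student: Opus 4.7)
The plan is to induct on the number of vertices of $T$, peeling off a leaf and its unique neighbor at each step. In the base case where $T$ is a single vertex (treated as a leaf in this context), the hypothesis forces that vertex to be white, so $|W(T)|=1>0=|B(T)|$.

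For the inductive step I would pick any leaf $\ell$ of $T$; by hypothesis $\ell$ is white, and since the coloring is a proper $2$-coloring, its unique neighbor $v$ is black. The key observation is that $v$ cannot itself be a leaf of $T$: otherwise $T$ would be the single edge $\ell v$, forcing $v$ to be a white leaf and contradicting the fact that $v$ is black. So $\deg_T(v)\ge 2$, and $v$ has other neighbors $v_1,\dots,v_k$ with $k\ge 1$, each of which is white.

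Now delete $\ell$ and $v$ from $T$. This yields a forest whose components are subtrees $T_1,\dots,T_k$ with $v_i\in T_i$. The next step is to verify that the hypothesis ``all leaves are white'' is inherited by each $T_i$, so that the inductive hypothesis applies. For any $u\in T_i$ with $u\neq v_i$, the degree of $u$ in $T_i$ equals its degree in $T$ (since $v$ was not adjacent to $u$ in $T$), so $u$ is a leaf of $T_i$ iff it was a leaf of $T$, hence white. For $v_i$ itself, it is white regardless of whether it becomes a leaf of $T_i$ (which happens exactly when $\deg_T(v_i)\le 2$), so the condition still holds. Applying the inductive hypothesis gives $|W(T_i)|-|B(T_i)|\ge 1$ for each $i$, and then
$$|W(T)|-|B(T)| \;=\; (1-1) \,+\, \sum_{i=1}^{k}\bigl(|W(T_i)|-|B(T_i)|\bigr)\;\ge\; k\;\ge\;1,$$
as desired.

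There is no real obstacle beyond careful bookkeeping: the only thing to watch is that removing $v$ may promote some $v_i$ to a leaf of $T_i$, but this is harmless precisely because $v_i$ sits on the opposite color class from $v$ and is therefore white. An alternative, slightly more economical formulation would replace the induction with a direct counting argument: orient every edge of $T$ from black to white and observe that a vertex is a ``sink'' (white) contributing more edges than it absorbs unless it is a leaf; but the leaf-pruning induction is cleaner and makes the role of the hypothesis ``all leaves white'' completely transparent.
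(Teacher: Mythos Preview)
Your proof is correct and follows essentially the same approach as the paper: both argue by induction on the number of vertices, removing a white leaf together with its (necessarily black) neighbor and applying the inductive hypothesis to each component of the resulting forest. Your version is a bit more explicit in verifying that each component inherits the ``all leaves white'' hypothesis and in tracking the component count $k\ge 1$, but the underlying idea is identical.
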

By definition, a tree is connected and, therefore, nonempty.
\begin{proof}
We proceed by induction on the number of vertices. The result is clearly true if $T$ has three or fewer vertices.
Suppose, by induction, that the result holds for balanced trees with $m$ vertices for any $m < n$. Let $T$ be a tree with $n$ vertices such that all leaves are white.

Let $w \in T$ be a (white) leaf, and let $v \in T$ be the only neighbor of $w$. Let $F$ be the forest obtained by deleting $w$ and $v$: $F$ is nonempty, otherwise $v$ would have to be a black leaf, which contradicts the hypothesis. Now for each connected component $T'$ of $F$, $T'$ is a tree with less than $n$ vertices such that all leaves are white: therefore, by induction, $F$ has more white vertices than black vertices. However, the vertices of $T$ are those of $F$ plus one black vertex ($v$) and one white vertex ($w$), so that the number of white vertices in $T$ is greater than that of black ones.
By induction, we get the result.  
\end{proof}

A connected bicolored graph $G$ is \emph{balanced} if the number of white vertices equals the number of black ones. By Lemma \ref{lemma:leafColors}, a balanced tree must have at least one white leaf and one black leaf.

A \emph{perfect matching} of a bipartite graph $G$ is a set of pairwise disjoint edges of $G$, such that every vertex is adjacent to (exactly) one of the edges in the matching. Clearly, a necessary condition for the existence of a perfect matching is that $G$ is balanced.

Let $G = (V,E)$ be a bicolored graph (in this notation, $V$ is the vertex set of $G$, and $E$ is its edge set), and let $I_n = \{0,1, \ldots, n-1\}$. Let $G \times I_n = (V \times I_n,E_n)$, where $E_n$ consists of all edges connecting $(v,j)$ and $(v,j+1)$, for each $v \in V$ and $j \in I_{n-1}$, plus the edges connecting $(v_1,j)$ and $(v_2,j)$ for each $j \in I_n$ whenever the edge $v_1v_2 \in E$. The color of a vertex $(v,j) \in G \times I_n$ equals the color of $v$ if and only if $j$ is even. Naturally, if $\cD \subset \pi$ is a planar region with normal $\vw$, then $G(\cD) \times I_n \approx G(\cD + [0,n]\vw)$.

Let $G$ be a (nonempty) balanced connected bicolored graph with $2n$ vertices. Algorithm \ref{algorithm:perfectMatching} finds a perfect matching $M$ of $G \times I_{2n-1}$.
\begin{algorithm}
\caption{Algorithm for finding a perfect matching $M$ of $G \times I_{2n-1}$.}
\label{algorithm:perfectMatching}
\begin{algorithmic}
\State Pick a spanning tree $T$ for $G$.
\Comment \textit{\footnotesize $T$ is a balanced tree}
\State $M_0 \gets \emptyset$
\State $T_0 \gets T$
\State $k \gets 0$
\While {$T_k \neq \emptyset$}
\State Pick a white leaf $v_w$ and a black leaf $v_b$ of $T_k$ \newline
\Comment \textit{ \footnotesize Lemma \ref{lemma:leafColors} ensures that a balanced tree has at least one white leaf and one black leaf}
\State Pick a path $P_k = v_{k,1}v_{k,2} \ldots v_{k,m_k}$ in $T_k$ from $v_w$ to $v_b$ \newline
\Comment{\textit{ \footnotesize i.e., $v_{k,1} = v_w, v_{k,m_k} = v_b$; notice that $m_k$ is necessarily even}}
\State $D_k \gets \{(v,2k-1)(v,2k) \,|\, v \in T \setminus T_k\} \sqcup \{(v,2k)(v,2k+1)\,|\, v \in T_k \setminus P_k)\}$
\State $E_k \gets \{(v_{k,2i-1}v_{k,2i},2k) \,|\, 1 \leq i \leq \frac{m_k}{2}\} \sqcup \{(v_{k,2i}v_{k,2i+1},2k+1) \,|\, 1 \leq i < \frac{m_k}{2}\}$\newline        
\Comment{\textit{\footnotesize Here $(vw,l)$ means the edge $(v,l)(w,l)$, i.e., the edge between the vertices $(v,l)$ and $(w,l)$}}
\State $M_{k+1} \gets M_k \sqcup D_k \sqcup E_k$
\State $T_{k+1} \gets T_k \setminus \{v_w, v_b\}$ \newline
\Comment{\textit{\footnotesize Notice that $T_{k+1}$ is still a balanced tree (except in the last iteration, when it is empty)}}
\State $k \gets k + 1$
\EndWhile 
\State $M \gets M_k$
\end{algorithmic}
\end{algorithm}

\begin{lemma}
If $G$ is a connected bicolored balanced graph with $2n$ vertices, then the set of edges $M$ generated by running Algorithm \ref{algorithm:perfectMatching} on $G$ is a perfect matching of $G \times I_{2n-1}$.
\end{lemma}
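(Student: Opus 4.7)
The plan is to show, by tracking which vertex of $T \times I_{2n-1}$ gets matched at each iteration, that every such vertex appears in exactly one edge of $M$ and that each edge of $M$ is a valid edge of $T \times I_{2n-1} \subseteq G \times I_{2n-1}$.

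First I would verify the loop invariant that $T_k$ is a balanced tree (possibly empty) throughout the algorithm. Initially $T_0 = T$ is balanced with $2n$ vertices. Inductively, if $|T_k| > 2$, then a white leaf $v_w$ and a black leaf $v_b$ of $T_k$ cannot be adjacent: otherwise the edge $v_w v_b$ would be the unique edge incident to each, making $\{v_w, v_b\}$ a connected component of $T_k$, contradicting connectedness. So removing both from the tree one at a time yields a tree, and balance is preserved since one vertex of each color is removed. When $|T_k| = 2$ the algorithm empties the tree. In either case, Lemma \ref{lemma:leafColors} applied to $T_k$ guarantees the required white and black leaves, and since colors alternate along paths in a bicolored graph, the path $P_k$ from the white $v_w$ to the black $v_b$ has $m_k$ even.

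For each vertex $v \in T$, let $k_v \in \{0,1,\dots,n-1\}$ be the unique iteration in which $v$ is removed from $T_{k_v}$. I would then classify, for each layer $l \in \{0,1,\dots,2n-2\}$, which iteration matches $(v,l)$. For $l \in \{2k, 2k+1\}$ with $k < k_v$, the vertex $v$ lies in $T_k$ but is not an endpoint of $P_k$; if $v \notin P_k$ the pair is matched by the vertical edge in $D_k$, while if $v$ is an interior vertex of $P_k$ the pair is matched by two horizontal edges of $E_k$, one at layer $2k$ and one at layer $2k+1$. For $l = 2k_v$, the vertex $v$ is an endpoint of $P_{k_v}$, so the horizontal edge of $E_{k_v}$ at layer $2k_v$ matches $(v, 2k_v)$ with its unique path neighbor, while $(v, 2k_v + 1)$ is deliberately left uncovered in this iteration (since the second family of edges in $E_{k_v}$ only runs over interior path vertices). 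For $l \geq 2k_v + 1$, writing $l \in \{2m-1, 2m\}$ with $m \geq k_v + 1$, one has $v \in T \setminus T_m$, so the edge $(v, 2m-1)(v, 2m) \in D_m$ matches the pair, filling precisely the gap left at layer $2k_v + 1$ and continuing through layer $2n - 2$.

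Finally, every edge added to $M$ is valid: vertical edges $(v, l)(v, l+1)$ always lie in $T \times I_{2n-1}$, and horizontal edges $(v_{k,i}v_{k,i+1}, l)$ lie in $T \times I_{2n-1}$ because consecutive vertices on the path $P_k$ are adjacent in $T$. Combined with the covering analysis above, each vertex of $G \times I_{2n-1}$ appears in exactly one edge of $M$, so $M$ is a perfect matching of $G \times I_{2n-1}$. The main bookkeeping obstacle is verifying that the three families of edges produced at iteration $k$ (the $D_k$-edges at layers $2k{-}1 \leftrightarrow 2k$ for already-removed vertices, the $D_k$-edges at layers $2k \leftrightarrow 2k{+}1$ for non-path current vertices, and the $E_k$-edges on path vertices) dovetail into a consistent once-per-vertex covering, and in particular checking that the missing layer $2k_v + 1$ for vertex $v$ is recovered exactly once, in iteration $k_v + 1$.
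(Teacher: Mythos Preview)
Your approach is essentially the same as the paper's, and in fact you supply the bookkeeping that the paper explicitly leaves to the reader (the paper just says ``the reader will easily convince himself that it is a perfect matching''). One small point worth tightening: your sentence ``vertical edges $(v,l)(v,l+1)$ always lie in $T \times I_{2n-1}$'' is not automatic, since at $k=n-1$ the families $\{(v,2k)(v,2k+1): v \in T_k \setminus P_k\}$ and $\{(v_{k,2i}v_{k,2i+1},2k+1)\}$ would land in layer $2n-1$; you should note, as the paper does, that your loop invariant gives $|T_{n-1}|=2$, hence $P_{n-1}=T_{n-1}$ and $m_{n-1}=2$, so both of these families are empty at the final step.
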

\begin{proof}
To see that $M \subset E(G \times I_{2n-1})$, notice that any spanning tree has $2n$ vertices, and exactly two vertices are deleted in each iteration, so that the last iteration where $T_k \neq \emptyset$ occurs when $k = n-1$. In all other iterations (i.e., $0 \leq k < n-1$), clearly any edge created is contained in $E(G \times I_{2n-1})$. When $k = n-1$, $T_k$ is a balanced tree with two vertices, so $P_k = v_w v_b$ and only the edges $\{(v,2n-3)(v,2n-2) \,|\, v \in T \setminus T_k\}$ plus the edge $(v_w,2n-2)(v_b,2n-2)$ are created. Now these edges are contained in $E(G \times I_{2n-1})$, so we're done.  

This proves that $M$ is a subset of the edgeset of $G \times I_{2n-1}$. The reader will easily convince himself that it is a perfect matching (i.e., that every vertex $(v,j)$ of $G \times I_{2n-1}$ is adjacent to exactly one edge of $M$).
\end{proof}

Next we shall prove Lemma \ref{lemma:treeConstruction}. In order to make the explanation clearer, we shall first introduce a few concepts. Let $G$ be a bicolored connected balanced graph, and consider the perfect matching $M$ of $G \times I_{2n-1}$ obtained by running Algorithm \ref{algorithm:perfectMatching} on $G$, as well as the intermediate objects that were created, such as $E_k$ and $P_k$.

Given an edge $e = (vw,j)$ of $E_k$, we say $e$ is adjacent to $v$ and to $w$ (even though it is not an edge of $G$). For $v \in G$ and $0 \leq k \leq 2n-1$, we write $E(v,k) = \{e \in E_k \,|\, e \text{ adjacent to } v\}$. 

Consider the paths $P_k = v_{k,1}v_{k,2} \ldots v_{k,m_k}$ chosen in each step of the algorithm (we shall also use this notation in the proof).
If $j > k$, we say that a path $P_j$ \emph{meets} $P_k$ \emph{at} $v \in G$ if $v = v_{j,i} \in P_k$ for some $i > 1$, but $v_{j,i-1} \notin P_k$. Analogously, $P_j$ \emph{leaves} $P_k$ \emph{at} $v$ if $v = v_{j,i} \in P_k$ for some $i < m_j$, but $v_{j,i+1} \notin P_k$. Notice that a path $P_j$ can meet and leave $P_k$ at the same vertex $v$. Also, notice that $P_j$ can only meet (resp. leave) $P_k$ at most once (i.e., at no more than one vertex).



\begin{proof}[Proof of Lemma \ref{lemma:treeConstruction}]
Consider the graph $G = G(\cD)$ associated with the planar region $\cD$. Clearly $G$ is balanced; since $\cD$ has connected interior, it follows that $G$ is also connected. Let $M$ be the perfect matching obtained after running Algorithm \ref{algorithm:perfectMatching} on $G$, and let $t$ be the tiling of $\cD + [0,2n-1]\vw$ associated with $M$.

If $e_0, e_1 \in M$, we will abuse notation and write $\tau^{\vw}(e_0,e_1) = \tau^{\vw}(d_0,d_1)$, where $d_i \in t$ is the domino associated with $e_i \in M$: we also say that two edges are parallel if their associated dominoes are parallel. 

Notice that the only dominoes that are not parallel to $\vw$ are those associated with the edges of $E_k$ for each $k$: therefore, 
$$
T^{\vw}(t) = \sum_{i \leq j} T^{\vw}(E_i, E_j) = \sum_{\substack{i \leq j \\e \in E_i, e' \in E_j}}\tau^{\vw}(e,e').
$$ 
Fix $0 \leq k \leq n-1$. We want to show that $\sum_{j \geq k} T^{\vw}(E_k, E_j) \in \ZZ$. First, write 
$$\sum_{j \geq k} T^{\vw}(E_{k}, E_j) = \sum_{\substack{1 < i < m_{k}\\j \geq k}}T^{\vw}(E(v_{k,i},k), E(v_{k,i},j));$$
we may ignore $v_{k,1}$ and $v_{k,m_k}$ because they are deleted from the tree in step $k$, so that $E(v_{k,1},j) = E(v_{k,m_k},j) = \emptyset$ for each $j > k$ (for $j = k$, it contains only one edge, so there is also no effect).

\begin{figure}[ht]%
\centering
\subfloat[Some cases where $P_k$ goes straight at $v$: the effects are, respectively, $1$, $1/2$, $1/2$ and $0$.]{\includegraphics[width=0.45\columnwidth]{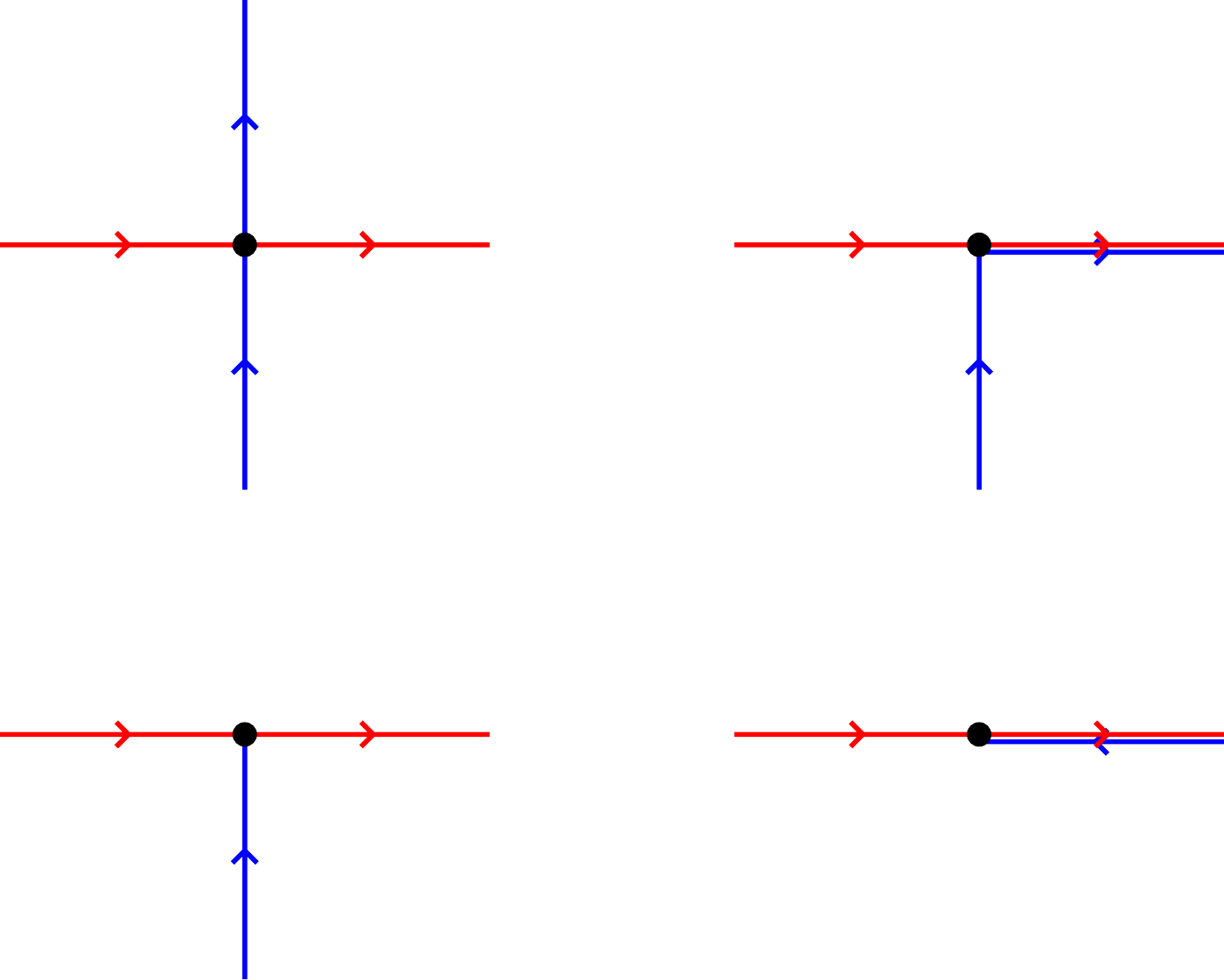}\label{subfig:treeConstructionStraight}} 
\hspace{0.075\columnwidth}
\subfloat[Some cases where $P_k$ makes a left turn at $v$: in this case the red segments have nonzero effect on one another (in this case it is $1/4$): the effects on the blue segments are, respectively, $1/2$, $0$, $1/4$ and $-1/4$.]{\includegraphics[width=0.45\columnwidth]{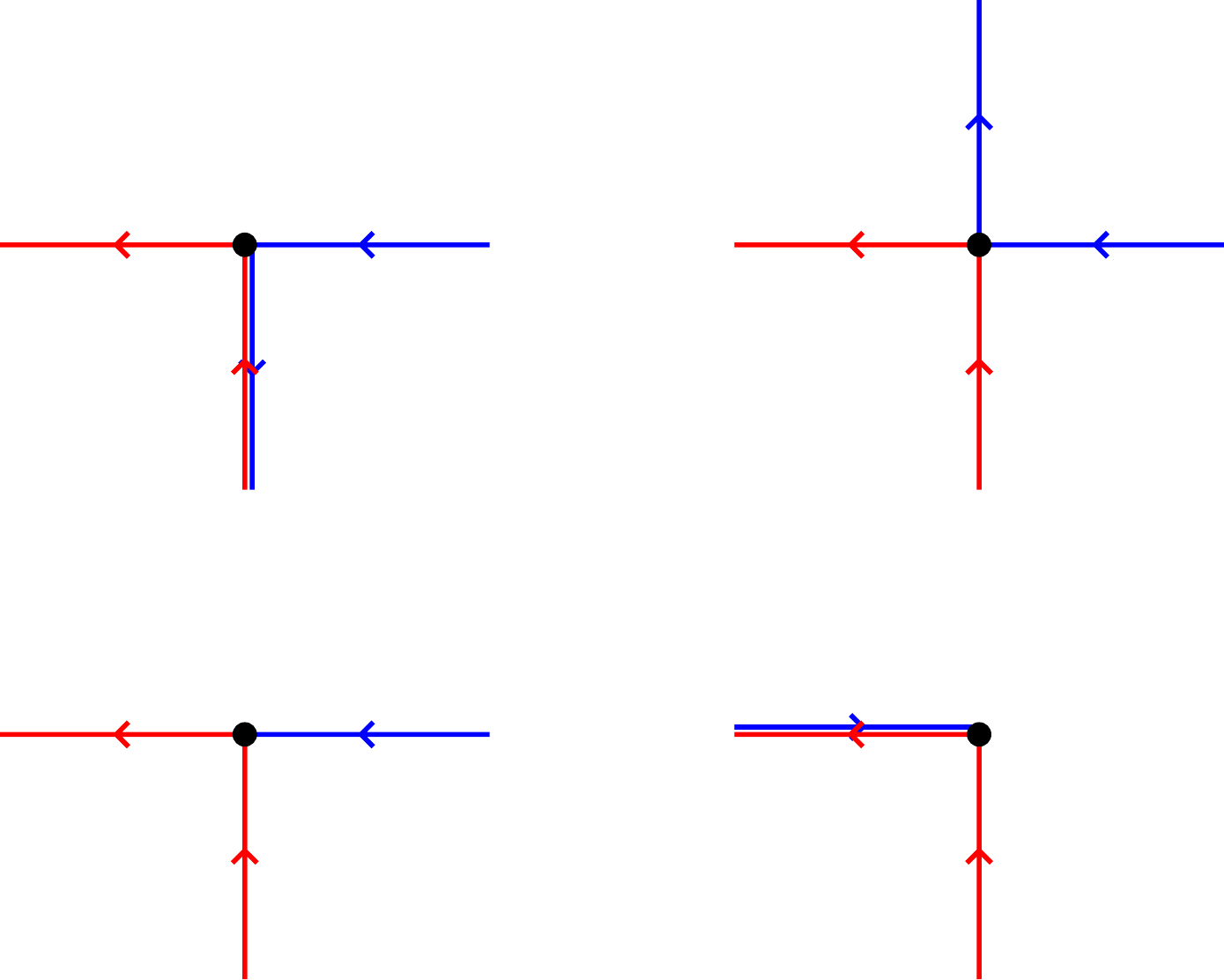}\label{subfig:treeConstructionTurn}}
\caption{Edges of $E_k$ (red) and $E_j$ (blue) for some $j > k$, portrayed as edges of $G$. The edges are oriented as $\tv(d)$, where $d$ is the associated domino. The portrayed vertex is $v$, which we assume here to be black: notice that $v$ is one of the endpoints of $P_j$ in the bottom two cases of each figure.}%
\label{fig:treeConstructionEffects}%
\end{figure}

For $v = v_{k,i}$, $1 < i < m_k$, we claim that, modulo $1$, 
$\sum_{j \geq k} T^{\vw}(E(v,k), E(v,j))$ equals  
$$\half \left(\#\{j > k \,|\, P_j \text{ meets } P_k \text{ at } v\} + \#\{j > k \,|\, P_j \text{ leaves } P_k \text{ at } v\}\right)$$ 
(in other words, their difference is an integer).

If the two edges in $E(v,k)$ are parallel (i.e., $P_k$ goes straight at $v$), then $T^{\vw}(E(v,k), E(v,k)) = 0$. By checking a number of cases (see Figure \ref{subfig:treeConstructionStraight}) we see that the following holds for each $j > k$: 
\begin{equation}
T^{\vw}(E(v,k), E(v,j)) = 
\begin{cases}
\pm 1, &P_j \text{ meets and leaves } P_k \text{ at } v; \\
\pm 1/2, &P_j \text{ either meets or leaves } P_k \text{ at } v; \\ 
0, &\text{otherwise.}
\end{cases}
\label{eq:effectsMeetingAndLeaving}
\end{equation}

If the two edges in $E(v,k)$ are not parallel (i.e., $P_k$ makes a turn at $v$), we proceed as follows: assume that the path $P_k$ makes a left turn and that $v$ is a black vertex (the other cases are analogous). Let $k'$ be the step where $v$ is chosen as the black leaf to be deleted (so that $v = v_{k', m_{k'}}$): again, inspection of a few possible cases (some of which are shown in Figure \ref{subfig:treeConstructionTurn}) shows that \eqref{eq:effectsMeetingAndLeaving} holds for $k < j < k'$ (and for $j > k'$, obviously $T^{\vw}(E(v,k), E(v,j)) = 0$).
Also, $T^{\vw}(E(v,k), E(v,k)) = 1/4$ (because it is a left turn and $v$ is black), and (see the last two examples in Figure \ref{subfig:treeConstructionTurn})
$$
T^{\vw}((E(v,k), E(v,k')) = 
\begin{cases}
1/4, &P_{k'} \text{ meets } P_k \text{ at } v;\\
-1/4, &\text{otherwise;}
\end{cases}
$$
so that $T^{\vw}(E(v,k), E(v,k)) + T^{\vw}((E(v,k), E(v,k')) = 1/2$ if and only if $P_{k'}$ meets $P_j$ at $v$ (and $0$ otherwise), so that we get the result.

Now let $N(v) = \#\{j > k \,|\, P_j \text{ meets } P_k \text{ at } v\} + \#\{j > k \,|\, P_j \text{ leaves } P_k \text{ at } v\}.$ To finish the proof, we need to show that 
$$
N = \sum_{1 < i < m_k}N(v_{k,i}) = \#\{j > k \,|\, P_j \text{ meets } P_k\} + \#\{j > k \,|\, P_j \text{ leaves } P_k\}
$$ is even.
Because all $P_j$'s are paths in a tree $T_k$, it follows that each path meets (or leaves) $P_k$ at most once. Therefore, each $j > k$ may contribute $0$ (if it never meets nor leaves $P_k$), $1$ (if it either meets or leaves $P_k$, but not both) or $2$ (if it meets and leaves $P_k$) to the above sum. This contribution is $0$ if $v_{j,1}, v_{j,m_j} \in P_k$; it is $0$ or $2$ if $v_{j,1}, v_{j,m_j} \notin P_k$. If exactly one of the two is in $P_k$, the contribution is $1$; however, since
$\#\{j > k \,|\, v_{j,1} \in P_k, v_{j,m_j} \notin P_k\} = \#\{j > k \,|\, v_{j,1} \notin P_k, v_{j,m_j} \in P_k\},$  
it follows that $N$ is even, so that
$T^{\vw}(t) = \sum_{j \geq k} T^{\vw}(E_{k}, E_j) \equiv N/2 \pmod{1}$
is an integer. 
\end{proof}
We sum up our main results in the following proposition:
\begin{prop}
\label{prop:equalTwistsIntNOdd}
Let $\cD \subset \pi$ be a planar region with normal vector $\vw$ and connected interior such that
$$\# (\text{black squares in } \cD) = \# (\text{white squares in } \cD) = n.$$ 
Then $\cD + [0,2n-1]\vw$ is tileable.
Moreover, for each $k \in \NN$ such that $\cD + [0,2k-1]\vw$ is tileable (in particular, for each $k \geq n$), every tiling $t$ of $\cD + [0,2k-1]\vw$ satisfies $T^{\ex}(t) = T^{\ey}(t) = T^{\ez}(t) \in \ZZ$.
\end{prop}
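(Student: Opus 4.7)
The plan is to assemble the statement from the three preceding results: Lemma \ref{lemma:treeConstruction}, Lemma \ref{lemma:equalTwistsNOdd}, and Lemma \ref{lemma:integerTwistDifference}. The tileability of $\cD + [0,2n-1]\vw$ follows immediately from Lemma \ref{lemma:treeConstruction}, which simultaneously produces a distinguished tiling $t_0$ with $T^{\vw}(t_0) \in \ZZ$. This tiling $t_0$ will serve as the ``reference'' tiling used to upgrade later conclusions from $\tfrac{1}{2}\ZZ$ to $\ZZ$.

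Next, I would apply Lemma \ref{lemma:equalTwistsNOdd} to $t_0$ itself, viewed as a tiling of the cylinder $\cD + [0,2n-1]\vw$ (which has odd depth). This lemma gives $T^{\ex}(t_0) = T^{\ey}(t_0) = T^{\vw}(t_0)$, and since $T^{\vw}(t_0) \in \ZZ$ by construction, all three pretwists of $t_0$ lie in $\ZZ$. So for the special tiling $t_0$ the conclusion already holds.

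For an arbitrary $k \in \NN$ such that $\cD + [0,2k-1]\vw$ is tileable, pick any tiling $t$ of $\cD + [0,2k-1]\vw$. By Lemma \ref{lemma:equalTwistsNOdd} applied to $t$, the three pretwists of $t$ agree, so it suffices to prove that $T^{\vu}(t) \in \ZZ$ for one (equivalently all) $\vu \in \Phi$. Here I invoke Lemma \ref{lemma:integerTwistDifference} with the two tilings $t$ (of $\cD + [0,2k-1]\vw$) and $t_0$ (of $\cD + [0,2n-1]\vw$), noting that the family $\cR_k = \cD + [0,2k+1]\vw$ appearing in that lemma is, after a trivial reindexing, the same family of odd-depth cylinders over $\cD$ that appears in our statement. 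The lemma then yields $T^{\vu}(t) - T^{\vu}(t_0) \in \ZZ$, and combining with $T^{\vu}(t_0) \in \ZZ$ we conclude $T^{\vu}(t) \in \ZZ$, completing the proof.

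There is essentially no hard step here, since all real work was done in the preceding lemmas; the only thing to be careful about is the bookkeeping of indices (making sure the cylinder $\cD + [0,2n-1]\vw$ provided by Lemma \ref{lemma:treeConstruction} can legitimately be paired with the arbitrary $\cD + [0,2k-1]\vw$ in Lemma \ref{lemma:integerTwistDifference}, which is immediate since both are odd-depth cylinders with the same base $\cD$). The conceptual point is simply that the half-integer ambiguity coming from Lemma \ref{lemma:equalTwistsNOdd} is resolved globally, once and for all, by the existence of a single integer-pretwist tiling for the base $\cD$.
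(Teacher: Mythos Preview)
Your proposal is correct and follows essentially the same approach as the paper, which simply states that the result ``follows directly from Lemmas \ref{lemma:equalTwistsNOdd}, \ref{lemma:integerTwistDifference} and \ref{lemma:treeConstruction}.'' You have spelled out in detail exactly how these three lemmas combine: Lemma \ref{lemma:treeConstruction} supplies tileability and the reference tiling $t_0$ with integer pretwist, Lemma \ref{lemma:equalTwistsNOdd} gives equality of the three pretwists for any tiling of an odd-depth cylinder over $\cD$, and Lemma \ref{lemma:integerTwistDifference} transfers integrality from $t_0$ to an arbitrary tiling $t$.
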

\begin{proof}
Follows directly from Lemmas \ref{lemma:equalTwistsNOdd}, \ref{lemma:integerTwistDifference} and \ref{lemma:treeConstruction}.
\end{proof}

Now that we have seen that the twist, as in Definition \ref{def:twist}, is well-defined for cylinders, we may adopt the notation $\Tw(t)$ when $t$ is a tiling of a cylinder. 
\section{Additive properties and proof of Theorem \ref{theo:main}}
\label{sec:additiveProperties}

The goal for this section is to discuss some additive properties of the twist and to complete the proof of Theorem \ref{theo:main}. 


\begin{lemma}
\label{lemma:secondOrderTwistDiff}
Let $\cR_0$ and $\cR_1$ be two regions whose interiors are disjoint. Let $t_{\cR_0,0}$ and $t_{\cR_0,1}$ be two tilings of $\cR_0$ and $t_{\cR_1,0}$ and $t_{\cR_1,1}$ be two tilings of $\cR_1$. For each $(i,j) \in \{0,1\}^2$, set $t_{ij} = t_{\cR_0,i} \sqcup t_{\cR_1,j}$, which is a tiling of $\cR = \cR_0 \cup \cR_1$. Let $\Gamma_i^* = \Gamma^*(t_{\cR_i,0}, t_{\cR_i,1})$, $i=0,1$. Then, for each $\vu \in \Phi$,
$$
T^{\vu}(t_{00}) - T^{\vu}(t_{01}) - T^{\vu}(t_{10}) + T^{\vu}(t_{11}) = 2\sum_{\gamma_0 \in \Gamma_0^*,\gamma_1 \in \Gamma_1^*} \Link(\gamma_0,\gamma_1). 
$$
In particular, if $\cR_0$ or $\cR_1$ is simply connected, then $T^{\vu}(t_{00}) - T^{\vu}(t_{01}) - T^{\vu}(t_{10}) + T^{\vu}(t_{11}) = 0$.
\end{lemma}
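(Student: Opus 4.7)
The plan is to exploit the bilinearity of $T^{\vu}(X,Y) := \sum_{d \in X, d' \in Y} \tau^{\vu}(d,d')$. Writing $A_i = t_{\cR_0,i}$ and $B_j = t_{\cR_1,j}$ so that $t_{ij} = A_i \sqcup B_j$, each summand splits as
\[
T^{\vu}(t_{ij}) = T^{\vu}(A_i) + T^{\vu}(B_j) + T^{\vu}(A_i,B_j) + T^{\vu}(B_j,A_i).
\]
In the alternating sum, the ``diagonal'' terms $T^{\vu}(A_i)$ and $T^{\vu}(B_j)$ each appear with matched signs and cancel, leaving only cross contributions:
\[
T^{\vu}(t_{00}) - T^{\vu}(t_{01}) - T^{\vu}(t_{10}) + T^{\vu}(t_{11}) = \sum_{i,j \in \{0,1\}} (-1)^{i+j}\bigl[T^{\vu}(A_i,B_j) + T^{\vu}(B_j,A_i)\bigr].
\]

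I would then pass to the segment viewpoint of Section \ref{sec:topologicalGroundwork}, treating each domino as a dimer oriented white-to-black. A direct check of the definition shows that $\tau^{\vu}(-\ell,\ell') = -\tau^{\vu}(\ell,\ell') = \tau^{\vu}(\ell,-\ell')$: the only nonzero case requires $\ell,\ell' \perp \vu$, so reversing $\ell$ flips $\tv(\ell)$ without altering the projection-overlap or $\vu$-height conditions. Bilinearly extending $T^{\vu}$ to the signed sets $M_0 := A_0 \sqcup (-A_1)$ and $M_1 := B_0 \sqcup (-B_1)$, the right-hand side above is precisely $T^{\vu}(M_0,M_1) + T^{\vu}(M_1,M_0)$. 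By definition of $\Gamma$, each $M_i$ decomposes as the disjoint union of the closed curves in $\Gamma(t_{\cR_i,0},t_{\cR_i,1})$; moreover, each trivial 2-cycle $\{\ell,-\ell\} \subset M_i$ (coming from a dimer in $t_{\cR_i,0} \cap t_{\cR_i,1}$) contributes $\tau^{\vu}(\ell,x) + \tau^{\vu}(-\ell,x) = 0$ in either slot. Thus only the nontrivial curves of $\Gamma_0^*$ and $\Gamma_1^*$ survive, giving
\[
\sum_{\gamma_0 \in \Gamma_0^*,\gamma_1 \in \Gamma_1^*} \bigl[T^{\vu}(\gamma_0,\gamma_1) + T^{\vu}(\gamma_1,\gamma_0)\bigr].
\]

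The curves $\gamma_0 \in \Gamma_0^*$ and $\gamma_1 \in \Gamma_1^*$ are disjoint in $\RR^3$: every segment of $\cR_i$ lies entirely in $\interior(\cR_i)$ (its endpoints are cube centers, and its interior crosses only interiors of cubes and interior shared faces of $\cR_i$), so the hypothesis forces $\interior(\cR_0) \cap \interior(\cR_1) = \emptyset$. Lemma \ref{lemma:linkingNumber}\ref{item:linking_00} therefore yields $T^{\vu}(\gamma_0,\gamma_1) + T^{\vu}(\gamma_1,\gamma_0) = 2\Link(\gamma_0,\gamma_1)$ when $\vu \in \Delta$, and Lemma \ref{lemma:twistNegatingDirection} shows this symmetric pairing is invariant under $\vu \mapsto -\vu$, so the formula extends to all $\vu \in \Phi$. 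This establishes the main identity. For the ``in particular'' clause, if $\cR_0$ is simply connected then every simple closed curve $\gamma_0 \subset \interior(\cR_0)$ bounds a continuous singular disk in $\interior(\cR_0)$, which is automatically disjoint from $\gamma_1 \subset \interior(\cR_1)$; hence $\Link(\gamma_0,\gamma_1) = 0$ for all pairs, and the alternating sum vanishes. The main technical obstacle is the sign bookkeeping in passing cleanly from the domino picture to the signed-segment picture and checking that trivial $2$-cycles drop out, so that only the genuine curves of $\Gamma_i^*$ survive to feed into Lemma \ref{lemma:linkingNumber}.
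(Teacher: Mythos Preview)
Your proof is correct and follows essentially the same route as the paper: expand $T^{\vu}(t_{ij})$ bilinearly, observe that only the cross terms survive the alternating sum, rewrite these as $T^{\vu}(t_{\cR_0,0}\sqcup(-t_{\cR_0,1}),\,t_{\cR_1,0}\sqcup(-t_{\cR_1,1}))$ plus its transpose, decompose into the curves of $\Gamma_i^*$, and apply Lemma~\ref{lemma:linkingNumber} using disjointness of interiors. You are simply more explicit about the sign check under segment reversal and the vanishing of trivial $2$-cycles (which the paper leaves implicit), and your detour through Lemma~\ref{lemma:twistNegatingDirection} to cover $\vu\in\Phi$ is unnecessary since Lemma~\ref{lemma:linkingNumber} already allows $\vbeta_3\in\Phi$.
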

\begin{proof}
For shortness, given two sets of segments $A_0$ and $A_1$, we shall in this proof write $\ST^{\vu}(A_0,A_1) = T^{\vu}(A_0,A_1) + T^{\vu}(A_1,A_0)$.

For each $(i,j) \in \{0,1\}^2,$ we have 
$$
T^{\vu}(t_{ij}) = T^{\vu}(t_{\cR_0,i} \sqcup t_{\cR_1,j})
= T^{\vu}(t_{\cR_0,i}) + T^{\vu}(t_{\cR_1,j}) + \ST^{\vu}(t_{\cR_0,i},t_{\cR_1,j}).
$$
Notice that the last term is the only one that depends on both $i$ and $j$, so that it is the only one that does not cancel out in the sum $\sum_{i,j \in \{0,1\}} (-1)^{i+j} \Tw(t_{ij})$.
Therefore, we have
\begin{align*}
&\sum_{i,j \in \{0,1\}} (-1)^{i+j} T^{\vu}(t_{ij}) = \sum_{i,j \in \{0,1\}} (-1)^{i+j} \ST^{\vu}(t_{\cR_0,i},t_{\cR_1,j})=\\
&= \ST^{\vu}(t_{\cR_0,0} \sqcup (-t_{\cR_0,1}), t_{\cR_1,0} \sqcup (-t_{\cR_1,1})) 
= \sum_{\gamma_0 \in \Gamma_0^*, \gamma_1 \in \Gamma_1^*} \ST^{\vu}(\gamma_0,\gamma_1).
\end{align*}
%
Since for each pair $\gamma_0, \gamma_1$ in the sum we have $\gamma_i \subset \interior(\cR_i)$, it follows that $\gamma_0 \cap \gamma_1 = \emptyset$.
Hence, by Lemma \ref{lemma:linkingNumber}, $\ST^{\vu}(\gamma_0,\gamma_1) = 2 \Link(\gamma_0, \gamma_1)$, which yields the result. 
\end{proof}

\begin{coro}
\label{coro:embeddableRegions}
Let $\cR$ be a simply connected region, and suppose that there exists a box $\cB \supset \cR$ such that $\cB \setminus \cR$ is tileable. If $t_0, t_1$ are two tilings of $\cR$ and $t_a, t_b$ are two tilings of $\cB \setminus \cR$, then
$$ 
\Tw(t_0 \sqcup t_a) - \Tw(t_1 \sqcup t_a) = \Tw(t_0 \sqcup t_b) - \Tw(t_1 \sqcup t_b).
$$ 
\end{coro}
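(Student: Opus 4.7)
The plan is to reduce the statement to a direct application of Lemma \ref{lemma:secondOrderTwistDiff}, whose ``in particular'' clause already encodes the key vanishing of an alternating sum of pretwists whenever one of the two pieces of the decomposition is simply connected.

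Concretely, I would decompose $\cB = \cR_0 \cup \cR_1$ with $\cR_0 = \cR$ and $\cR_1 = \cB \setminus \cR$; the interiors $\interior(\cR_0)$ and $\interior(\cR_1)$ are disjoint by construction, and by hypothesis $\cR_1$ is tileable. Setting $t_{\cR_0,0} = t_0$, $t_{\cR_0,1} = t_1$, $t_{\cR_1,0} = t_a$, $t_{\cR_1,1} = t_b$, the four tilings $t_{ij} = t_{\cR_0,i} \sqcup t_{\cR_1,j}$ of $\cB$ are precisely the four combined tilings appearing in the corollary. Applying Lemma \ref{lemma:secondOrderTwistDiff} for some $\vu \in \Phi$, the simple connectivity of $\cR_0 = \cR$ makes the linking-number sum vanish, yielding
$$T^{\vu}(t_0 \sqcup t_a) - T^{\vu}(t_0 \sqcup t_b) - T^{\vu}(t_1 \sqcup t_a) + T^{\vu}(t_1 \sqcup t_b) = 0,$$
and a trivial rearrangement gives the claimed identity at the level of $T^{\vu}$. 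Since $\cB$ is a box and therefore a cylinder, Proposition \ref{prop:equalTwistsMultiplex} together with Definition \ref{def:twist} allows me to replace each $T^{\vu}$ by $\Tw$, completing the argument.

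No genuine obstacle arises: all the substantive work is packaged into Lemma \ref{lemma:secondOrderTwistDiff}. The only points worth double-checking are (i) that it is the simple connectivity of $\cR$ that is used, not of $\cB \setminus \cR$ (which need not hold in general), and this is exactly the hypothesis available to us; and (ii) that the four combined tilings live inside the cylinder $\cB$, so $\Tw$ is unambiguously defined on all of them.
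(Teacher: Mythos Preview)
Your proof is correct and is exactly the paper's approach: the paper's proof is the single line ``Use Lemma~\ref{lemma:secondOrderTwistDiff} with $\cR_0 = \cR$, $\cR_1 = \cB \setminus \cR$,'' and you have simply unpacked this, including the passage from $T^{\vu}$ to $\Tw$ via the fact that $\cB$ is a cylinder.
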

\begin{proof}
Use Lemma \ref{lemma:secondOrderTwistDiff} with $\cR_0 = \cR$, $\cR_1 = \cB \setminus \cR$.
\end{proof}

\begin{lemma}
\label{lemma:trivialTilingBoxes}
Suppose $L,M,N$ are even positive integers, and let $\cB = [0,L] \times [0,M] \times [0,N]$. If $\cR \subset \cB$ is a cylinder with even depth, then there exists a tiling $t_*$ of $\cB \setminus \cR$ such that 
$\Tw(t \sqcup t_*) = \Tw(t)$
for each tiling $t$ of $\cR$.
\end{lemma}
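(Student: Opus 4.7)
The plan is to construct $t_*$ explicitly and verify the identity by unfolding $T^{\ez}$. Using that all three dimensions of $\cB$ are even, I would first apply a coordinate rotation so that $\vw = \ez$ is the axis of $\cR$, writing $\cR = \cD \times [k_0, k_1]$ with $k_1 - k_0$ even; since $N$ is also even, the parities of $k_0$ and $N - k_1$ coincide. Set $\cA := [0, L] \times [0, M]$. If $k_0$ is even, every column of $\cB \setminus \cR$ has even $\ez$-height, and I take $t_*$ to be the tiling in which every domino is parallel to $\ez$. If $k_0$ is odd, I fill the side shell $(\cA \setminus \cD) \times [k_0, k_1]$ and the interior slabs $\cA \times [1, k_0]$, $\cA \times [k_1, N-1]$ with $\ez$-dominoes, and tile the two extremal layers $\cA \times [0, 1]$ and $\cA \times [N-1, N]$ with $\ex$-dominoes in the brick pattern pairing $(x, y, z)$ with $(x+1, y, z)$ for each even $x \in \{0, 2, \ldots, L-2\}$, which is valid because $L$ is even.

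To verify the twist identity, I expand $T^{\ez}$ bilinearly:
\[
T^{\ez}(t \sqcup t_*) \;=\; T^{\ez}(t) + T^{\ez}(t_*) + T^{\ez}(t, t_*) + T^{\ez}(t_*, t).
\]
Since $\cB$ is a $\ez$-cylinder and $t \sqcup t_*$ is one of its tilings, the left side equals $\Tw(t \sqcup t_*)$, and similarly $T^{\ez}(t) = \Tw(t)$. So it suffices to show the last three terms vanish. In both cases $T^{\ez}(t_*) = 0$, because every pair of $t_*$-dominoes either contains an $\ez$-parallel one or consists of two $\ex$-parallel ones, and the determinant $\det(\tv(d_1), \tv(d_0), \ez)$ is zero in each situation. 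In the even case the cross terms are trivially zero as well, since $t_*$ contains only $\ez$-dominoes.

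The substantive step is the odd case. A nonzero $\tau^{\ez}(d_0, d_1)$ with exactly one argument in $t_*$ requires the $t_*$-domino to be $\ex$-parallel (hence in one of the extremal layers) and the $t$-domino to be $\ey$-parallel, together with the shading condition $d_1 \in \cS^{\ez}(d_0)$. For $T^{\ez}(t, t_*)$ this forces the $t_*$-domino into the top extremal layer, and for each $\ey$-domino $d_0 = [x_0, x_0+1] \times [y_0, y_0+2] \times [z_0, z_0+1] \in t$ exactly two top-layer $\ex$-dominoes meet $\cS^{\ez}(d_0)$: those containing the cells $(x_0, y_0, N-1)$ and $(x_0, y_0+1, N-1)$. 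Using $\tv((a, b, c)-(a+1, b, c)) = (-1)^{a+b+c}\ex$, these two $\tv$'s are opposite vectors, so by linearity of the determinant in its first argument their contributions cancel. The symmetric computation in the bottom extremal layer (reindexed by each $\ey$-domino $d_1 \in t$, which has exactly two bottom-layer $\ex$-dominoes below it in $\cS^{\ez}(d_0)$) yields $T^{\ez}(t_*, t) = 0$.

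The main obstacle is precisely the odd case: $\cB \setminus \cR$ cannot be tiled solely by $\ez$-dominoes, forcing non-$\ez$ dominoes into the extremal layers, and the delicate point is to ensure that their cross contributions cancel for \emph{every} tiling $t$ of $\cR$. The cancellation relies essentially on the regularity of the brick pattern together with the alternating signs in $\tv$; a less symmetric tiling of the extremal layers would in general fail to produce the required pointwise cancellation.
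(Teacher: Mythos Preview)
Your proof is correct, but it takes a different route from the paper's. The paper avoids the parity case split entirely by a two-stage construction that exploits the axis-independence of $\Tw$: it first fills the side shell $([0,L]\times[0,M]\times[E,F])\setminus\cR$ with $\ez$-dominoes and verifies $\Tw(t\sqcup t_{1,*})=T^{\ez}(t\sqcup t_{1,*})=T^{\ez}(t)=\Tw(t)$; then it fills the remaining top and bottom slabs $[0,L]\times[0,M]\times([0,E]\cup[F,N])$ with $\ex$-dominoes and verifies the analogous identity using $T^{\ex}$ instead. Because it switches axes between the two steps, no cancellation argument and no parity discussion are needed.

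Your approach, by contrast, stays entirely within $T^{\ez}$, which forces you to treat the case where $k_0$ is odd separately and to check the pairwise cancellation of the extremal $\ex$-dominoes against every $\ey$-domino of $t$. That computation is correct (each $\ey$-domino of $t$ sees exactly one brick per row $b\in\{y_0,y_0+1\}$, and the two have opposite $\tv$), and it has the mild advantage of using the equality $T^{\ex}=T^{\ez}$ only in the weak form ``$\Tw=T^{\ez}$ on cylinders'' rather than switching axes mid-argument. The paper's proof, however, is shorter and more robust: it would work verbatim with any tiling of the top and bottom slabs by $\ex$-dominoes, whereas your cancellation genuinely depends on the specific brick pattern.
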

Corollary \ref{coro:embeddableRegions} and Lemma \ref{lemma:trivialTilingBoxes} imply that for any tiling $\tilde{t}_*$ of $\cB \setminus \cR$, there exists a constant $K$ such that, for any tiling $t$ of $\cR$, $\Tw(t \sqcup \tilde{t}_*) = \Tw(t) + K$.  
\begin{proof}
We may without loss of generality assume that the axis of $\cR$ is $\ez$, so that $\cR = \cD + [E,F]\ez$, where $\cD \subset [0,L] \times [0,M] \times \{0\}$ and $F - E$ is even.   

Let $\cB_1 = [0,L] \times [0,M] \times [E,F]$. Clearly there exists a tiling $t_{1,*}$ of $\cB_1 \setminus \cR$ such that every domino is parallel to $\ez$: hence, $\Tw(t \sqcup t_{1,*}) = T^{\ez}(t \sqcup t_{1,*}) = T^{\ez}(t) = \Tw(t)$ for each tiling $t$ of $\cR$. On the other hand, since $L$ is even, there exists a tiling $t_{2,*}$ of $\cB \setminus \cB_1$ such that every dimer is parallel to $\ex$, so that $\Tw(t \sqcup t_{2,*}) = T^{\ex}(t \sqcup t_{2,*}) = T^{\ex}(t) = \Tw(t)$ for each tiling $t$ of $\cB_1$. Setting $t_* = t_{1,*} \sqcup t_{2,*}$ we get the result. 
\end{proof}
\begin{lemma}
\label{lemma:trivialTilingAllMultiplexes}
Let $\cR$ be a tileable cylinder with base $\cD$, axis $\vw \in \Delta$ and depth $n$. Let $\cR' = \cD + [0,2n]\vw$ be a cylinder with even depth formed by two copies of $\cR$; let $\cB \supset \cR'$ be a box with all dimensions even. Then there exist a tiling $t_*$ of $\cB \setminus \cR$ and a constant $K$ such that, for each tiling $t$ of $\cR$, $\Tw(t \sqcup t_*) = \Tw(t) + K.$
\end{lemma}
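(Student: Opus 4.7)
The plan is to reduce to Lemma \ref{lemma:trivialTilingBoxes} via the doubled cylinder $\cR'$ (which always has even depth). I apply that lemma to $\cR' \subset \cB$ to obtain a tiling $t_*'$ of $\cB \setminus \cR'$ with $\Tw(\tilde t \sqcup t_*') = \Tw(\tilde t)$ for every tiling $\tilde t$ of $\cR'$. Fix any tiling $t_0$ of $\cR$, let $\hat t_0$ denote the translate of $t_0$ by $n\vw$ (a tiling of $\cR' \setminus \cR$), and set $t_* := \hat t_0 \sqcup t_*'$. For any tiling $t$ of $\cR$ this gives $\Tw(t \sqcup t_*) = \Tw(t \sqcup \hat t_0)$, and expanding the $\vw$-pretwist yields
\[
T^{\vw}(t \sqcup \hat t_0) \;=\; \Tw(t) + \Tw(t_0) + T^{\vw}(t, \hat t_0) + T^{\vw}(\hat t_0, t),
\]
where $T^{\vw}(\hat t_0) = \Tw(t_0)$ by translation invariance of $T^{\vw}$ and $T^{\vw}(\hat t_0, t) = 0$ because no dimer of $\hat t_0$ (upper) has a dimer of $t$ (lower) in its $\vw$-shade. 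The lemma therefore reduces to the claim $T^{\vw}(t, \hat t_0) = 0$, which will yield the constant $K = \Tw(t_0)$.

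The main obstacle is this claim. The first move is to reduce it to closed curves: by bilinearity,
\[
T^{\vw}(t, \hat t_0) - T^{\vw}(t_0, \hat t_0) = \sum_{\gamma \in \Gamma^*(t, t_0)} T^{\vw}(\gamma, \hat t_0),
\]
and the argument used in the proof of Lemma \ref{lemma:equalTwistsNOdd} shows that $T^{\vw}(t_0, \hat t_0) = 0$ (two translated copies of the same tiling produce no $\vw$-pretwist between them). It therefore suffices to prove $T^{\vw}(\gamma, \hat t_0) = 0$ for every closed curve $\gamma \subset \cR$. The key trick is to embed $\hat t_0$ into a tiling of an even-depth cylinder one more level up: let $\cR^{+} := \cD + [n,3n]\vw$ (simply connected, depth $2n$), let $\tilde t_0$ denote the translate of $t_0$ by $2n\vw$ (a tiling of $\cD + [2n,3n]\vw$), and form $\hat s := \hat t_0 \sqcup \tilde t_0$, a tiling of $\cR^{+}$.

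Since $\cR$ is simply connected and disjoint from $\cR^{+}$, every $\gamma \subset \cR$ bounds a surface in $\cR$, so $\Link(\gamma, \gamma') = 0$ for every closed curve $\gamma' \subset \cR^{+}$; combining with Lemma \ref{lemma:linkingNumber} and the vanishing $T^{\vw}(\gamma', \gamma) = 0$ (upper segments cannot shade lower ones), this forces $T^{\vw}(\gamma, \gamma') = 0$. Consequently $T^{\vw}(\gamma, \hat s)$ is unchanged if $\hat s$ is replaced by any other tiling of $\cR^{+}$; using the all-$\vw$ tiling (which exists since $\cR^{+}$ has even depth) we conclude $T^{\vw}(\gamma, \hat s) = 0$. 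On the other hand, since $\hat t_0$ and $\tilde t_0$ are translated copies of the same tiling, corresponding dimers share their $\Pi^{\vw}$-projections and orientations while each lies strictly above every segment of $\gamma$, so $\tau^{\vw}$ yields equal contributions and $T^{\vw}(\gamma, \hat t_0) = T^{\vw}(\gamma, \tilde t_0)$. Combined with $T^{\vw}(\gamma, \hat s) = T^{\vw}(\gamma, \hat t_0) + T^{\vw}(\gamma, \tilde t_0) = 0$, this gives $T^{\vw}(\gamma, \hat t_0) = 0$, as required.
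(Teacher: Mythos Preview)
Your construction up to $\Tw(t \sqcup t_*) = \Tw(t \sqcup \hat t_0)$ matches the paper exactly. The divergence is in how you split $\Tw(t \sqcup \hat t_0)$. You expand in the axis direction $\vw$ and then mount an elaborate argument to kill the cross term $T^{\vw}(t,\hat t_0)$. The paper instead picks $\vu \in \Phi$ with $\vu \perp \vw$ and writes
\[
\Tw(t \sqcup \hat t_0) = T^{\vu}(t \sqcup \hat t_0) = T^{\vu}(t) + T^{\vu}(\hat t_0) = \Tw(t) + \Tw(t_0),
\]
the middle equality holding because the $\vu$-shades of dimers in $\cR$ never meet $\cR' \setminus \cR$ (and vice versa). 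That is the entire proof.

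Your route has a genuine gap. In the last step you claim $T^{\vw}(\gamma,\hat t_0) = T^{\vw}(\gamma,\tilde t_0)$ because ``corresponding dimers share their $\Pi^{\vw}$-projections and orientations.'' But $\tilde t_0 = \hat t_0 + n\vw$, and translation by $n\vw$ flips $\tv$ on every domino when $n$ is odd (colors swap). Thus for odd $n$ one has $\tau^{\vw}(\ell,\tilde d) = -\,\tau^{\vw}(\ell,\hat d)$ for corresponding dimers, hence $T^{\vw}(\gamma,\tilde t_0) = -\,T^{\vw}(\gamma,\hat t_0)$, and your equation $T^{\vw}(\gamma,\hat s)=0$ becomes the vacuous $0=0$. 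The same parity issue undercuts your appeal to the proof of Lemma~\ref{lemma:equalTwistsNOdd} for $T^{\vw}(t_0,\hat t_0)=0$: that argument explicitly uses that the depth is odd so that $\tv$ flips under the shift; if $n$ is even it does not go through as stated. Since the only case where the lemma is not already immediate from Lemma~\ref{lemma:trivialTilingBoxes} is precisely $n$ odd, your argument fails exactly where it is needed. Switch to a direction perpendicular to $\vw$ and the cross terms vanish for free.
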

\begin{proof}
By Lemma \ref{lemma:trivialTilingBoxes}, there exists a tiling $\tilde{t}$ of $\cB \setminus \cR'$ such that $\Tw(t \sqcup \tilde{t}) = \Tw(t)$ for each tiling $t$ of $\cR'$.
Fix a tiling $t_0$ of $\cD + [n,2n]\vw$ (which is tileable because $\cR$ is tileable). If we set $t_* = t_0 \sqcup \tilde{t}$ and $K = \Tw(t_0)$, then for every tiling $t$ of $\cR$,
$$ \Tw(t \sqcup t_*) = \Tw(t \sqcup t_0 \sqcup \tilde{t}) = \Tw(t \sqcup t_0) = \Tw(t) + \Tw(t_0);$$
the last equality holding by fixing $\vu \in \Phi$, $\vu \perp \vw$ and writing $\Tw(t \sqcup t_0) = T^{\vu}(t \sqcup t_0) = T^{\vu}(t) + T^{\vu}(t_0)$. 
\end{proof}

\begin{proof}[Proof of Theorem \ref{theo:main}]
The twist is constructed in Definition \ref{def:twist} and its integrality follows from Proposition \ref{prop:equalTwistsMultiplex}. Lemma \ref{lemma:fullyBalancedMultiplex} and Proposition \ref{prop:flipsAndTrits} yield items \ref{item:flips} and \ref{item:trits} . 
To see item \ref{item:duplexes}, let $\cR$ be a duplex region with axis $\vw$, and consider the tiling $t_{\vw}$ such that all dominoes are parallel to $\vw$: clearly $\Tw(t_{\vw}) = P_{t_{\vw}}'(1) = 0$ (we assume that the reader is familiar with the notation from Chapter \ref{chap:twofloors}). Since the space of domino tilings of $\cR$ is connected by flips and trits (by Theorem \ref{theo:invariantProperties}), Proposition \ref{prop:flipsAndTrits}, together with Theorems \ref{theo:invariantProperties} and \ref{theo:tritProperties}, implies that for each tiling $t$ of $\cR$, $\Tw(t) = P_t'(1)$ (for a more direct proof of item \ref{item:duplexes}, see Section \ref{sec:duplexnotes}).

We're left with proving item \ref{item:multiplexUnion}.
Let $\cR$ be a cylinder, and suppose $\cR = \bigcup_{1 \leq j \leq m} \cR_j$, where each $\cR_j$ is a cylinder (they need not have the same axis) and $\interior(\cR_i) \cap \interior(\cR_j) = \emptyset$ if $i \neq j$. Suppose the bases, axes and depths are respectively, $\cD,\vw,n$ and $\cD_j, \vw_j, n_j$.

Let $t_{j,0}$ and $t_{j,1}$ be two tilings of $\cR_j$. It suffices to show that
$$\Tw\left(\bigsqcup_{1 \leq j \leq m} t_{j,1}\right) - \Tw\left(\bigsqcup_{1 \leq j \leq m} t_{j,0}\right) = \sum_{1 \leq j \leq m} (\Tw(t_{j,1}) - \Tw(t_{j,0})).$$ 

For $0 \leq j \leq m$, let $t_j = \bigsqcup_{1 \leq i \leq j} t_{i,1} \sqcup \bigsqcup_{j < i \leq m} t_{i,0}$. We want to show that $\Tw(t_m) - \Tw(t_0) = \sum_{1 \leq j \leq m} (\Tw(t_{j,1}) - \Tw(t_{j,0}))$.

Let $\cB$ be a box with all dimensions even such that $\cD + [0,2n]\vw \subset \cB$ and $\cD_j + [0,2n_j]\vw_j \subset \cB$ for $j=1,\ldots,m$. By Lemma \ref{lemma:trivialTilingAllMultiplexes}, there exist: a tiling $t_*$ of $\cB \setminus \cR$ and a constant $K$; and for each $j$, a tiling $t_{j,*}$ of $\cB \setminus \cR_j$ and a constant $K_j$ such that $\Tw(t \sqcup t_{*}) = \Tw(t) + K$ for each tiling $t$ of $\cR$, and $\Tw(t \sqcup t_{j,*}) = \Tw(t) + K_j$ for each tiling $t$ of $\cR_j$.

Write $\hat{t}_j = t_* \sqcup \bigsqcup_{1 \leq i < j} t_{i,1} \sqcup \bigsqcup_{j < i \leq m} t_{i,0}$ for each $j$, so that $\hat{t}_j$ is a tiling of $\cB \setminus \cR_j$. Notice that, for $1 \leq j \leq m$, 
$t_j \sqcup t_* = t_{j,1} \sqcup \hat{t}_j$ and $t_{j-1} \sqcup t_* = t_{j,0} \sqcup \hat{t}_j.$  
Therefore, we have
\begin{align*}
&\Tw(t_m) - \Tw(t_0)
= \sum_{1 \leq j \leq m} (\Tw(t_j) - \Tw(t_{j-1}))\\ 
&= \sum_{1 \leq j \leq m} ((\Tw(t_j \sqcup t_*) - K) - (\Tw(t_{j-1} \sqcup t_*) - K))\\
&= \sum_{1 \leq j \leq m} (\Tw(t_{j,1} \sqcup \hat{t}_j) - \Tw(t_{j,0} \sqcup \hat{t}_j)) 
\stackrel{\dagger}{=} \sum_{1 \leq j \leq m} (\Tw(t_{j,1} \sqcup t_{j,*}) - \Tw(t_{j,0} \sqcup t_{j,*}))\\
&= \sum_{1 \leq j \leq m} ((\Tw(t_{j,1}) + K_j) - (\Tw(t_{j,0}) + K_j)) = \sum_{1 \leq j \leq m} (\Tw(t_{j,1}) - \Tw(t_{j,0})).
\end{align*}
Equality $\dagger$ holds by 
Corollary \ref{coro:embeddableRegions}, because $\hat{t}_j$ and $t_{j,*}$ are two tilings of $\cB \setminus \cR_j$.
%
\end{proof}

\section{Embeddable regions}
\label{sec:embeddable}
We shall now describe a simple generalization of the twist to a wider class of simply connected regions. An \emph{embedding} of a region $\cR$ is a triple $(\cR, \cB, t_*)$, where $\cB \supset \cR$ is a box and $t_*$ is a tiling of $\cB \setminus \cR$. A region $\cR$ is said to be \emph{embeddable} if it admits an embedding.
\begin{lemma}
\label{lemma:independenceOfBox}
Let $\cR$ be a simply connected region and suppose $(\cR,\cB_0,t_0)$ and $(\cR,\cB_1,t_1)$ are two embeddings of $\cR$. If $t,t'$ are two tilings of $\cR$, then
$$\Tw(t \sqcup t_0) - \Tw(t' \sqcup t_0) = \Tw(t \sqcup t_1) - \Tw(t' \sqcup t_1).$$
\end{lemma}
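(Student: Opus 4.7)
The strategy is to reduce to the case $\cB_0 = \cB_1$, which is handled directly by Corollary \ref{coro:embeddableRegions}, by enlarging both embeddings into a common box $\cB$ and verifying that each enlargement step preserves the quantity $\Tw(t \sqcup t_*) - \Tw(t' \sqcup t_*)$. The key observation is that whenever we attach a slab to a box, Theorem \ref{theo:main}\ref{item:multiplexUnion} lets the effect of the added piece factor out of a difference of twists.

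First I would establish the following single enlargement step: suppose $(\cR, \cP, t_*)$ is an embedding, $\cP \subsetneq \cP'$ are boxes with $\cP' \setminus \cP$ itself a box, and $s$ is a tiling of $\cP' \setminus \cP$. Then
\begin{equation*}
\Tw(t \sqcup t_* \sqcup s) - \Tw(t' \sqcup t_* \sqcup s) = \Tw(t \sqcup t_*) - \Tw(t' \sqcup t_*).
\end{equation*}
Indeed, $\cP = (\cP) \cup (\cP' \setminus \cP)$ decomposes the cylinder $\cP'$ into two cylinders with disjoint interiors, so Theorem \ref{theo:main}\ref{item:multiplexUnion} gives a constant $K$ with $\Tw(u \sqcup s) = K + \Tw(u) + \Tw(s)$ for every tiling $u$ of $\cP$; applying this to $u = t \sqcup t_*$ and $u = t' \sqcup t_*$ and subtracting makes $K + \Tw(s)$ cancel.

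Next I would pick any box $\cB \supseteq \cB_0 \cup \cB_1$ with all dimensions even, and, for each $i = 0, 1$, choose a finite chain of boxes $\cB_i = \cP_0^i \subsetneq \cP_1^i \subsetneq \cdots \subsetneq \cP_{k_i}^i = \cB$ obtained by expanding one face at a time by a positive even amount, so that every slab $\cP_{j+1}^i \setminus \cP_j^i$ is a box with an even dimension and hence tileable. Fixing a tiling of each slab and taking the disjoint union with $t_i$ produces a tiling $\tilde t_i$ of $\cB \setminus \cR$, and iterating the single enlargement step gives
\begin{equation*}
\Tw(t \sqcup \tilde t_i) - \Tw(t' \sqcup \tilde t_i) = \Tw(t \sqcup t_i) - \Tw(t' \sqcup t_i), \qquad i = 0, 1.
\end{equation*}

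Finally, since $\tilde t_0$ and $\tilde t_1$ are both tilings of $\cB \setminus \cR$ with $\cR$ simply connected, Corollary \ref{coro:embeddableRegions} applied in $\cB$ yields $\Tw(t \sqcup \tilde t_0) - \Tw(t' \sqcup \tilde t_0) = \Tw(t \sqcup \tilde t_1) - \Tw(t' \sqcup \tilde t_1)$, and chaining this with the two identities above completes the proof. The only real obstacle is the purely combinatorial bookkeeping of arranging the enlargement chain so that every intermediate slab is tileable, which is cleanly handled by choosing $\cB$ with even dimensions and expanding by even amounts at each step.
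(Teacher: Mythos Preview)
Your overall strategy is the paper's: enlarge both embeddings into a common box $\cB$ and then apply Corollary~\ref{coro:embeddableRegions} there. The paper carries out the enlargement more directly, via Lemma~\ref{lemma:trivialTilingBoxes}: since each $\cB_i$ is a tileable box it is a cylinder of even depth, so there is a single tiling $t_{i,*}$ of $\cB\setminus\cB_i$ with $\Tw(u\sqcup t_{i,*})=\Tw(u)$ for every tiling $u$ of $\cB_i$, and no chain is needed.

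Your chain argument via Theorem~\ref{theo:main}\ref{item:multiplexUnion} is sound in principle, but the specific bookkeeping you propose does not quite work. Expanding ``one face at a time by a positive even amount'' from $\cB_i$ can only reach boxes whose faces lie at the same parities as those of $\cB_i$. If, for instance, the right $x$-faces of $\cB_0$ and $\cB_1$ sit at integers of opposite parity, then no box $\cB$ is reachable from both by even-step expansions, no matter how large or how even you make $\cB$. The fix is minor: each $\cB_i$, being tileable, has an even dimension, say in direction $k_i$; first expand in the two directions $j\neq k_i$ by whatever (possibly odd) amounts are needed to match $\cB$ --- those slabs inherit the even $k_i$-side of $\cB_i$ and are tileable --- and only then expand in direction $k_i$, where the slab now has the even $j$-sides of $\cB$. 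With this corrected chain your argument goes through.
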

\begin{proof}
Let $\cB$ be a box containing $\cB_0$ and $\cB_1$ such that all its dimensions are even. Since $\cB_0$ and $\cB_1$ are tileable boxes, at least one of their dimensions is even: by Lemma \ref{lemma:trivialTilingBoxes}, there exist tilings $t_{0,*}$ of $\cB \setminus \cB_0$ and $t_{1,*}$ of $\cB \setminus \cB_1$ such that $\Tw(t_{\cB_0} \sqcup t_{0,*}) = \Tw(t_{\cB_0})$ and $\Tw(t_{\cB_1} \sqcup t_{1,*}) = \Tw(t_{\cB_1})$ for every tiling $t_{\cB_0}$ of $\cB_0$ and $t_{\cB_1}$ of $\cB_1$. Therefore,
$$
\Tw(t \sqcup t_0) - \Tw(t' \sqcup t_0) = \Tw(t \sqcup t_0 \sqcup t_{0,*}) - \Tw(t' \sqcup t_0 \sqcup t_{0,*}). 
$$
Notice that $t_0 \sqcup t_{0,*}$ and $t_1 \sqcup t_{1,*}$ are two tilings of $\cB \setminus \cR$. By Corollary \ref{coro:embeddableRegions}, 
$$
\Tw(t \sqcup t_0 \sqcup t_{0,*}) - \Tw(t' \sqcup t_0 \sqcup t_{0,*}) = \Tw(t \sqcup t_1 \sqcup t_{1,*}) - \Tw(t' \sqcup t_1 \sqcup t_{1,*}), 
$$
and hence $\Tw(t \sqcup t_0) - \Tw(t' \sqcup t_0) = \Tw(t \sqcup t_1) - \Tw(t' \sqcup t_1).$
\end{proof}

Given a simply connected embeddable region $\cR$ and two tilings $t_0, t_1$, define the \emph{relative twist} $\TW(t_0, t_1) = \Tw(t_0 \sqcup t_*) - \Tw(t_1 \sqcup t_*)$, where $(\cR, \cB, t_*)$ is any embedding of $\cR$. Moreover, if we fix a base tiling $t_0$, then the function $t \mapsto \TW(t,t_0)$ is an invariant with the same properties as the twist in cylinders (for instance, items \ref{item:flipInvariance}, \ref{item:tritDifference} and \ref{item:multiplexUnion} in Theorem \ref{theo:main}); and different choices of base tiling alter this function by an additive constant.

\begin{example}[Cylinders]
\label{ex:multiplexes}
If $\cR$ is a cylinder, then $\TW(t,t') = \Tw(t) - \Tw(t')$ for any two tilings of $\cR$ (this follows directly from Lemma \ref{lemma:trivialTilingAllMultiplexes}).
\end{example}

A natural question at this point is: ``how large'' is the class of tileable embeddable simply connected regions?
Since the box $\cB \supset \cR$ can be chosen to be as large as needed, it is clear that embeddable regions are, in fact, very common: for instance, if the complement of $\cR$ does not contain any narrow areas (where space is limited), $\cR$ will almost certainly be embeddable. However, there exist simply connected (and even contractible) regions that are not embeddable: Figure \ref{fig:contractible_notgood} shows an example. Notice that the complement of $\cR$ contains a narrow area where cubes of one color happen in much larger number.

\begin{figure}[ht]%
\centering
\includegraphics[width=0.55\columnwidth]{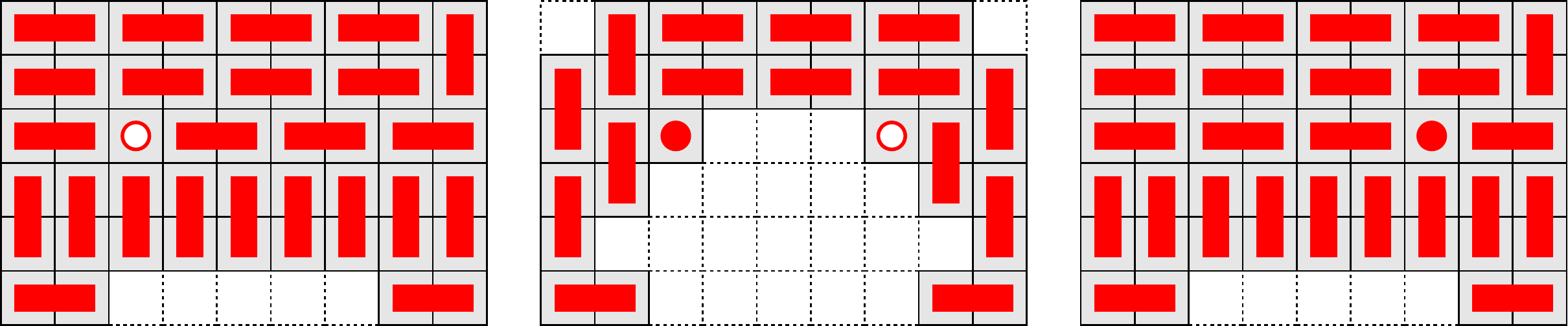}%
\caption{A tileable contractible region that is not embeddable. Notice the narrow area in the middle floor, which contains a large surplus of cubes of one color, while only a few of them are connected to the ``outer space''. Adding more space in the complement of $\cR$ clearly does not help, so $\cR$ is not embeddable.}%
\label{fig:contractible_notgood}%
\end{figure}


\section{The twist for duplex regions}
\label{sec:duplexnotes}

In this section, we provide an alternative proof for the following fact:

\begin{prop}
\label{prop:duplexes}
If $\cR$ is a duplex region, then, for any tiling $t$ of $\cR$,
$$P_t'(1) = T^{\ex}(t) = T^{\ey}(t) = T^{\ez}(t).$$ 
\end{prop}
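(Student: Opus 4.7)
The plan is to verify directly that both $P_t'(1)$ and $T^{\ez}(t)$ have the same value on a single base tiling and the same response to every flip and every trit. Combined with the already-proved Proposition~\ref{prop:equalTwistsMultiplex} (which asserts $T^{\ex}(t) = T^{\ey}(t) = T^{\ez}(t)$ for every cylinder, and in particular for every duplex region), this will yield the desired three-way equality. Without loss of generality I take $\cR$ to be a duplex region with axis $\ez$.

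First I would assemble the local-move identities one invariant at a time. On the polynomial side, Proposition~\ref{prop:twoFloorFlipInvariant} makes $P_t$ flip-invariant (so $P_t'(1)$ is too), and Proposition~\ref{prop:posTrit} shows $P_{t_1}(q) - P_{t_0}(q) = q^k(q-1)$ under a positive trit, so differentiating and evaluating at $q = 1$ yields $P_{t_1}'(1) - P_{t_0}'(1) = 1$. On the pretwist side, duplex regions are fully balanced by Lemma~\ref{lemma:fullyBalancedMultiplex}, so Proposition~\ref{prop:flipsAndTrits} gives flip-invariance of $T^{\ez}(t)$ together with $T^{\ez}(t_1) - T^{\ez}(t_0) = 1$ under a positive trit. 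It follows that the difference $D(t) := P_t'(1) - T^{\ez}(t)$ is unchanged by every flip and by every trit of either sign.

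Next I would invoke Proposition~\ref{prop:connectivityFlipsTrits}: the space of tilings of $\cR$ is connected by flips and trits, which forces $D$ to be constant over the entire set of tilings of $\cR$. The value of the constant is then pinned down by computing $D(\tbase)$ for the trivial tiling $\tbase = \tbase(\cR)$ consisting solely of $\ez$-dimers. Every dimer of $\tbase$ has $\tv$ parallel to $\ez$, so $\det(\tv(d_1), \tv(d_0), \ez) = 0$ for every pair $(d_0, d_1)$ and hence $T^{\ez}(\tbase) = 0$; simultaneously every dimer is a jewel and the associated drawing contains no cycles, so $k_{\tbase}(j) = 0$ for each jewel, whence $P_{\tbase}(q) = \sum_j \ccol(j)$ is constant in $q$ and $P_{\tbase}'(1) = 0$. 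Thus $D \equiv 0$ and the proposition follows.

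The one subtlety worth flagging is ensuring that the two trit formulas use a common convention for ``positive'' versus ``negative'' trit; but since the signed trit is fixed once and for all in Chapter~\ref{chap:notation}, Propositions~\ref{prop:posTrit} and \ref{prop:flipsAndTrits}\ref{item:trits} are automatically compatible, and no further adjustment is needed.
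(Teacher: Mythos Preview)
Your argument is correct and is in fact exactly the route the paper takes when it first establishes this fact inside the proof of Theorem~\ref{theo:main}\ref{item:duplexes}: check the base tiling $\tbasez$, invoke flip/trit connectivity (Proposition~\ref{prop:connectivityFlipsTrits}), and use that both $P_t'(1)$ and $T^{\ez}$ respond identically to flips and trits.

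However, the proof attached to Proposition~\ref{prop:duplexes} in Section~\ref{sec:duplexnotes} is deliberately presented as an \emph{alternative, direct} argument that avoids the flip/trit-connectivity machinery and avoids appealing to Proposition~\ref{prop:equalTwistsMultiplex}. For $\vu\in\{\pm\ex,\pm\ey\}$ the paper computes $\wind(\gamma,v)$ as a signed count of intersections of $\gamma$ with the half-line $v+[0,\infty)\vu$ and matches this term-by-term with the $\vu$-effects, yielding $T^{\vu}(t)=P_t'(1)$ directly (Lemma~\ref{lemma:pretwistsNotAxis}). For $\vu=\ez$ the paper introduces, for each cycle $\gamma$ of the sock, an \emph{interior charge} $\charge_{\interior}(\gamma)=\sum_v \ccol(v)\wind(\gamma,v)$ and a \emph{boundary charge} $\charge_{\partial}(\gamma)=\sum_{v\in\gamma}\angle(\gamma,v)\ccol(v)$, observes that $P_t'(1)=\sum_\gamma \charge_{\interior}(\gamma)$ while $T^{\ez}(t)=\sum_\gamma \charge_{\partial}(\gamma)$, and proves $\charge_{\interior}(\gamma)=\charge_{\partial}(\gamma)$ cycle-by-cycle via a comparison of ``topological'' and ``metric'' weights of vertices. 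Your approach is cleaner and relies only on previously established lemmas; the paper's approach is more self-contained and gives, as a by-product, an independent verification of $T^{\ex}=T^{\ey}=T^{\ez}$ in the duplex case without passing through the general cylinder machinery of Section~\ref{sec:differentDirections}.
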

The equality $T^{\ex}(t) = T^{\ey}(t) = T^{\ez}(t)$ above is a special case of Proposition \ref{prop:equalTwistsMultiplex}. We shall now give an independent proof of this equality in the particular case of duplex regions. 
Let $\cR$ be a duplex region. 
Let $t$ be a tiling of $\cR$, and let $s$ be its corresponding sock in $G$. For $p \in \RR^2$ and a cycle $\gamma$ of $s$, let $\wind(\gamma,p)$ be the winding number of $\gamma$, thought of as a curve in $\RR^2$, around $p$. Clearly we can write our invariant $P_t(q)$ as
\begin{equation}
P_t(q) = \sum_{v \in G} \ccol(v) q^{\sum_{\gamma, v \notin \gamma} \wind(\gamma,v)},
\label{eq:definitionOfPt}
\end{equation}
where the sum in the exponent of $q$ is taken over all the cycles in $s$ that do not contain $v$. Notice that $$P_t'(1) = \sum_{v \in G, \gamma \subsumtext{ cycle}} \ccol(v) \wind(\gamma,v) = \sum_{v \in \ZZ^2, \gamma \subsumtext{ cycle}} \ccol(v) \wind(\gamma,v).$$  

\begin{lemma}
\label{lemma:pretwistsNotAxis}
If $\cR$ is a $\ez$-duplex region with associated graph $G$, $\vu \in \{\pm \ex, \pm \ey\}$ and $t$ is a tiling of $\cR$ with corresponding sock $s$, then
$T^{\vu}(t) = P_t'(1)$.
\end{lemma}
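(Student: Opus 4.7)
The natural strategy is to exploit the fact that both $P_t'(1)$ and $T^{\vu}(t)$ are already known to transform in exactly the same way under flips and trits, and then reduce to a single base tiling where both invariants vanish.

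More precisely, I would argue as follows. Since $\cR = \cD \times [0,2]$ is a duplex region, it admits the tiling $t_0$ in which every domino is parallel to $\ez$ (tile each column $\{p\} \times [0,2]$ with one vertical domino). For this tiling, the associated drawing consists only of jewels and no cycles, so $P_{t_0}(q)$ is a constant polynomial and $P_{t_0}'(1) = 0$. On the other hand, every domino $d$ of $t_0$ satisfies $\tv(d) = \pm \ez$, so for any pair $d_0, d_1 \in t_0$ the determinant $\det(\tv(d_1), \tv(d_0), \vu)$ vanishes (two of its rows are parallel to $\ez$). Hence $T^{\vu}(t_0) = 0$ as well, and both invariants agree on $t_0$.

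Now let $t$ be an arbitrary tiling of $\cR$. By Proposition \ref{prop:connectivityFlipsTrits}, there is a finite sequence of flips and trits taking $t_0$ to $t$. I will track both invariants along this sequence. Along a flip, Proposition \ref{prop:twoFloorFlipInvariant} gives $P_t(q)$, and hence $P_t'(1)$, invariant; Lemma \ref{lemma:fullyBalancedMultiplex} (combined with the fact that a duplex region is a cylinder, hence a pseudocylinder) tells us $\cR$ is fully balanced with respect to $\vu$, so Proposition \ref{prop:flipsAndTrits}\ref{item:flips} gives $T^{\vu}$ invariant as well. Along a positive trit, Proposition \ref{prop:posTrit} combined with the identity $\frac{d}{dq}(q^k(q-1))\big|_{q=1} = 1$ shows that $P_t'(1)$ increases by $1$; and Proposition \ref{prop:flipsAndTrits}\ref{item:trits} shows that $T^{\vu}(t)$ also increases by $1$. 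Negative trits are handled by reversal (they decrease each by $1$).

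Consequently, if the sequence from $t_0$ to $t$ contains $n_+$ positive trits and $n_-$ negative trits, then $P_t'(1) - P_{t_0}'(1) = n_+ - n_- = T^{\vu}(t) - T^{\vu}(t_0)$, and the common base values being zero yield $P_t'(1) = T^{\vu}(t)$, completing the proof. There is no real obstacle here since all the needed invariance and increment statements have already been established in the chapter; the only point requiring a brief verification is that a duplex region, as a cylinder, qualifies as fully balanced so that Proposition \ref{prop:flipsAndTrits} applies with $\vu \in \{\pm\ex, \pm\ey\}$.
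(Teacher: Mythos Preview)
Your argument is correct: connectivity by flips and trits (Proposition~\ref{prop:connectivityFlipsTrits}) together with the matching behavior of $P_t'(1)$ and $T^{\vu}$ under these moves (Propositions~\ref{prop:twoFloorFlipInvariant}, \ref{prop:posTrit}, \ref{prop:flipsAndTrits}, the latter applicable since duplex regions are fully balanced by Lemma~\ref{lemma:fullyBalancedMultiplex}) reduces everything to the all-vertical tiling, where both quantities vanish. This is essentially the argument the paper gives for item~\ref{item:duplexes} in the proof of Theorem~\ref{theo:main}.

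However, the paper's proof of this particular lemma takes a different, purely direct route---indeed, Section~\ref{sec:duplexnotes} is explicitly set up as a ``more direct'' alternative to the argument you give. The paper observes that, for $\vu\perp\ez$ in a duplex region, two dominoes not parallel to $\ez$ have zero mutual $\vu$-effect (they lie in the same pair of floors), so only pairs involving a jewel contribute. For a jewel $v$ and a cycle $\gamma$, the sum $2\sum_{d\in\gamma}\tau^{\vu}(d,d_v)$ counts, with signs, the crossings of $\gamma$ with the half-line $v+[0,\infty)\vu$, which is precisely $\ccol(v)\wind(\gamma,v)$. Summing over all jewels and cycles gives $T^{\vu}(t)=P_t'(1)$ directly.

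Your approach is shorter given the available machinery, but it invokes connectivity and the full-balance/flip-trit apparatus. The paper's approach is self-contained and explains \emph{why} the equality holds at the level of individual terms: the $\vu$-pretwist formula is literally a ray-crossing count of winding numbers. This direct identity is also what makes the subsequent computation of $T^{\ez}(t)$ via boundary charges (Lemma~\ref{lemma:ptprimeEqualsTwist}) fit into a coherent independent proof of Proposition~\ref{prop:duplexes}.
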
 
\begin{proof}
Two dominoes that are not parallel to $\ez$ have no effect along $\vu$ on one another. Therefore, we only consider pairs of dominoes where one is parallel to $\ez$, that is, refers to a jewel of $s$.

If $\gamma$ is a cycle of $s$ and $v$ is a jewel, one way of computing $\wind(\gamma,v)$ is to count (with signs) the intersections of $\gamma$ with the half-line $v +  [0,\infty) \vu$. Thus, if $d_v$ denotes the domino containing $v$ and $d \in \gamma$ means that $d$ refers to an edge of $\gamma$, then
$\ccol(v)\wind(\gamma,v) = 2 \sum_{d \in \gamma} \tau^{\vu}(d,d_v) = 2 \sum_{d \in \gamma} \tau^{\vu}(d_v,d).$
Thus, $$P_t'(1) = \sum_{\gamma, v} \ccol(v)\wind(\gamma,v) = \sum_{\substack{\gamma,v\\d \in \gamma}} (\tau^{\vu}(d,d_v)+ \tau^{\vu}(d_v,d)) = T^{\vu}(t),$$
completing the proof.
\end{proof}
We now consider $T^{\ez}$. Again, let $t$ be a tiling of a duplex region with corresponding sock $s$.
Let the \emph{charge enclosed} by a cycle $\gamma$ of $s$ be $$\charge_{\interior}(\gamma) = \sum_{v \notin \gamma} \ccol(v)\wind(\gamma,v),$$
so that  
$P_t'(1) = \sum_{\gamma \scalebox{0.7}{\mbox{ cycle of }} s} \charge_{\interior}(\gamma).$
Charges can be looked at from a point of view that is more interesting for our purposes.  Given $v \in \RR^2$, consider the set of four points $\neighbor_v = \{v + (\frac{k}{2}, \frac{l}{2}) | k,l \in \{-1, 1\}\},$ i.e., the set of points of the form $v + (\pm \half, \pm \half)$. The \emph{metric weight} of a vertex $v \in \ZZ^2$ with respect to a cycle $\gamma$ of $s$ is given by 
$$ \metricw{\gamma}{v} = \frac{1}{4} \sum_{u \in \neighbor_v} \wind(\gamma,u),$$
while the \emph{topological weight} $\topw{\gamma}{v}$ of $v$ is the (arithmetic) average of the set $\wind(\gamma,\neighbor_v) = \{\wind(\gamma,u) | u \in \neighbor_v\}$ (see Figure \ref{fig:topAndMetricWeights}). 

\begin{lemma}
\label{lemma:interiorCharge}
$$\charge_{\interior}(\gamma) = \sum_{v \in \ZZ^2} \ccol(v)\topw{\gamma}{v}.$$
\end{lemma}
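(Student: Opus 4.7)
The plan is to split the sum on the right-hand side according to whether $v$ lies on $\gamma$ or not, and show that the ``on $\gamma$'' part vanishes. Concretely, write
\[
\sum_{v \in \ZZ^2} \ccol(v)\topw{\gamma}{v} = \sum_{v \notin \gamma} \ccol(v)\topw{\gamma}{v} + \sum_{v \in \gamma} \ccol(v)\topw{\gamma}{v},
\]
and I will argue that the first summand equals $\charge_{\interior}(\gamma)$ while the second vanishes.

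For the first summand, I would observe that if $v \notin \gamma$ then all four points in $\neighbor_v$ (the centers of the four unit squares meeting at $v$) lie in the same connected component of $\RR^2 \setminus \gamma$, since any path between two of them through $v$ avoids $\gamma$. Consequently $\wind(\gamma,u) = \wind(\gamma,v)$ for every $u \in \neighbor_v$, so the set $\wind(\gamma,\neighbor_v)$ is the singleton $\{\wind(\gamma,v)\}$ and $\topw{\gamma}{v} = \wind(\gamma,v)$. This directly gives $\sum_{v \notin \gamma} \ccol(v)\topw{\gamma}{v} = \charge_{\interior}(\gamma)$.

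For the second summand, the key point is that $\gamma$ is a simple closed curve in $\RR^2$, so by the Jordan curve theorem $\wind(\gamma,\cdot)$ takes only two values on $\RR^2 \setminus \gamma$, namely $0$ on the unbounded component and $\epsilon \in \{+1,-1\}$ on the bounded one, with $\epsilon$ depending only on the orientation of $\gamma$. For any $v \in \gamma$, the two edges of $\gamma$ incident to $v$ split a small disk around $v$ into two open sectors, one in each component; since each $u \in \neighbor_v$ lies in one of those sectors and both sectors are represented, the set $\wind(\gamma,\neighbor_v)$ is exactly $\{0,\epsilon\}$, giving $\topw{\gamma}{v} = \epsilon/2$, a constant independent of $v \in \gamma$.

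Finally, I would finish by noting $\sum_{v \in \gamma} \ccol(v) = 0$: the vertices of $\gamma$, traversed along the cycle, alternate in color because $\ZZ^2$ is a bipartite graph (adjacent vertices differ by one coordinate, hence have opposite colors), and a cycle must therefore have even length with equal numbers of black and white vertices. Hence $\sum_{v \in \gamma} \ccol(v)\topw{\gamma}{v} = (\epsilon/2) \sum_{v \in \gamma} \ccol(v) = 0$, completing the proof. The main subtlety to get right is the geometric claim at a vertex $v \in \gamma$—that both values $0$ and $\epsilon$ actually appear among $\wind(\gamma,\neighbor_v)$—but once one draws the picture of the two incident edges, it reduces to the observation that these edges locally separate the neighborhood of $v$ into pieces belonging to different components of $\RR^2 \setminus \gamma$.
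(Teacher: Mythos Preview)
Your proof is correct and follows essentially the same approach as the paper: split the sum into $v\notin\gamma$ and $v\in\gamma$, observe that $\topw{\gamma}{v}=\wind(\gamma,v)$ in the first case and that $\topw{\gamma}{v}$ is the constant $\pm\tfrac12$ in the second, and conclude using $\sum_{v\in\gamma}\ccol(v)=0$. Your write-up is more detailed (invoking the Jordan curve theorem and the bipartiteness of $\ZZ^2$ explicitly), but the logical structure is identical to the paper's.
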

\begin{proof}
Notice that
$$ \topw{\gamma}{v} = \begin{cases} \wind(\gamma,v), &\text{if } v \notin \gamma,\\  
 \half, &\text{if } v \in \gamma \text{ and $\gamma$ is counterclockwise oriented,}\\
-\half, &\text{if } v \in \gamma \text{ and $\gamma$ is clockwise oriented.}\end{cases}$$
In particular, $\sum_{v \in \gamma}\topw{\gamma}{v} \ccol(v) = \pm \half \sum_{v \in \gamma}\ccol(v) = 0.$ Hence, 
$$\charge_{\interior}(\gamma) = \sum_{v \in \ZZ^2} \ccol(v)\topw{\gamma}{v}.$$
\end{proof}

\begin{figure}%
\centering
\def\svgwidth{0.7\columnwidth}
\def\myScaleVar{0.8}
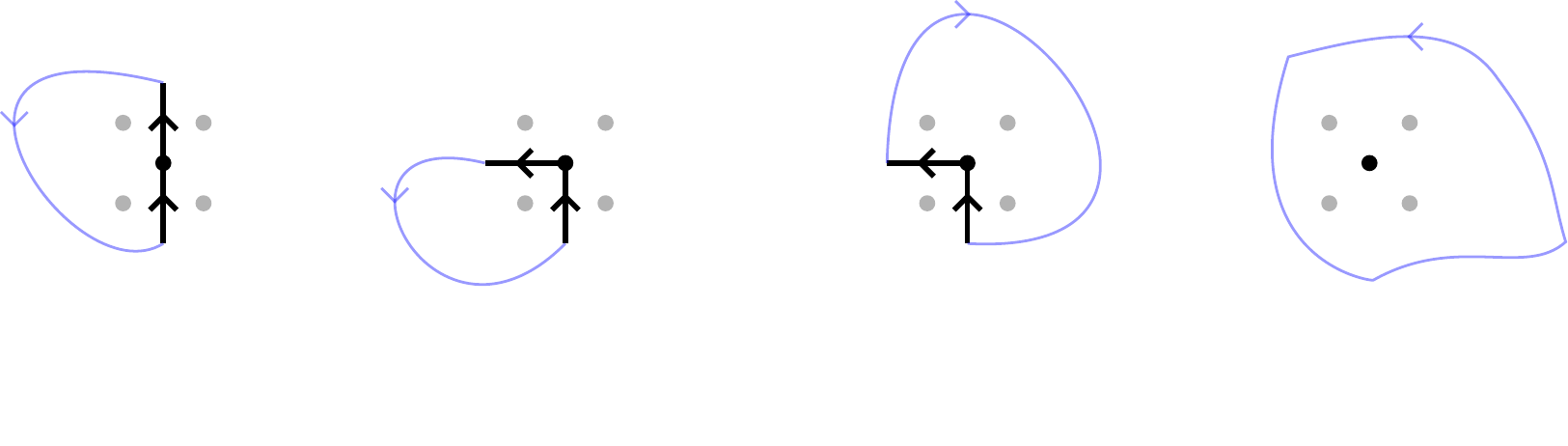%
\caption{Illustration of topological and metric weights. The points in $\neighbor_v$ are in grey.}%
\label{fig:topAndMetricWeights}%
\def\myScaleVar{1}
\end{figure}

If $s$ is the corresponding sock of a tiling $t$ and $\gamma$ is a cycle of $s$, then the angle $\angle(\gamma,v)$ of a vertex $v \in \gamma$ is the difference between the angle of the edge of $\gamma$ leaving $v$ and the angle of the edge of $\gamma$ entering $v$, counted in counterclockwise laps. In other words, a vertex $v$ where the curve goes straight has angle $0$, whereas a vertex where a left (resp. right) turn occurs has angle $1/4$ (resp. $-1/4$), as shown in Figure \ref{fig:angles}.
\begin{figure}[ht]%
\centering
\def\svgwidth{0.5\columnwidth}
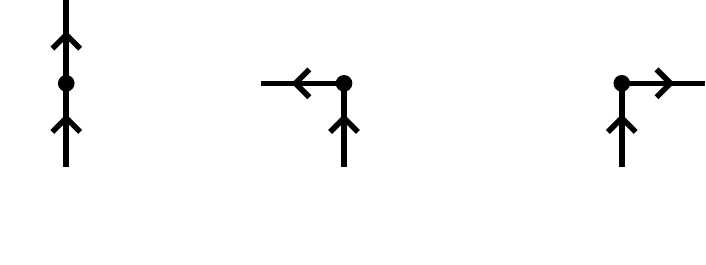
\caption{Illustration of the angle of a vertex.}%
\label{fig:angles}%
\end{figure}

The \emph{boundary charge} of a curve $\gamma$ is $\charge_{\partial}(\gamma) = \sum_{v \in \gamma} \angle(\gamma,v) \ccol(v)$. Notice that 
\begin{equation}
T^{\ez}(t) = \sum_{\gamma \scalebox{0.7}{\mbox{ cycle of }} s} \charge_{\partial}(\gamma).
\label{eq:ezpretwist}
\end{equation} 
We now set out to prove that, for each $\gamma$, $\charge_{\partial}(\gamma) = \charge_{\interior}(\gamma)$, which will complete the proof of Proposition \ref{prop:duplexes}.
\begin{lemma}
\label{lemma:metricColorZero}
For each cycle $\gamma$ of $s$, 
$$\sum_{v \in \ZZ^2} \metricw{\gamma}{v} \ccol(v) = 0.$$
\end{lemma}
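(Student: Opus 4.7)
The plan is to prove this by swapping the order of summation. By definition,
$$\sum_{v \in \ZZ^2} \metricw{\gamma}{v} \ccol(v) = \frac{1}{4} \sum_{v \in \ZZ^2} \sum_{u \in \neighbor_v} \wind(\gamma,u)\,\ccol(v).$$
Note that the set of points $u$ appearing on the right is exactly $(\plshalf{\ZZ})^2$ (the centers of basic squares), and $\wind(\gamma,u)$ vanishes for all but finitely many such $u$, so the double sum is absolutely convergent and the order may be exchanged freely.

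The key observation is the following combinatorial identity: for each fixed $u = (k+\half,l+\half) \in (\plshalf{\ZZ})^2$, the set $\{v \in \ZZ^2 : u \in \neighbor_v\}$ consists of exactly the four lattice points $(k,l), (k+1,l), (k,l+1), (k+1,l+1)$, and the colors of these four vertices sum to zero, since two of them have $x+y$ even (white) and two have $x+y$ odd (black). Concretely, $\sum_{v: u \in \neighbor_v}\ccol(v) = (-1)^{k+l+1}(1 - 1 - 1 + 1) = 0$.

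Therefore, swapping the order of summation yields
$$\sum_{v \in \ZZ^2} \metricw{\gamma}{v} \ccol(v) = \frac{1}{4}\sum_{u \in (\plshalf{\ZZ})^2} \wind(\gamma,u)\left(\sum_{v: u \in \neighbor_v}\ccol(v)\right) = 0,$$
which completes the proof. No serious obstacle is anticipated — the whole argument is just a bookkeeping swap, and the cancellation of colors around each face of $\ZZ^2$ is immediate from the bicoloring.
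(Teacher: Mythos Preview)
Your proof is correct and is essentially identical to the paper's own argument: both swap the order of summation so that the inner sum becomes $\sum_{v \in \neighbor_u} \ccol(v) = 0$ for each $u \in (\plshalf{\ZZ})^2$. The paper compresses this into a single displayed equation, but the idea is exactly the same.
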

\begin{proof}
$$\sum_{v \in \ZZ^2} \metricw{\gamma}{v} \ccol(v) = \sum_{u \in \ZZ^2 + (\half,\half)} \left(\frac{1}{4} \wind(\gamma,u) \sum_{v \in \neighbor_u} \ccol(v)\right) = 0.$$  
\end{proof}

\begin{lemma}
\label{lemma:topMinusMetric}
If $v \in \ZZ^2$, 
$$\topw{\gamma}{v} - \metricw{\gamma}{v} = \begin{cases} \angle(\gamma,v), &\text{if } v \in \gamma,\\
0, &\text{otherwise.} \end{cases} $$
\end{lemma}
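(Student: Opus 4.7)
The proof will be a case analysis depending on whether $v$ lies on $\gamma$ and, if so, on the local shape of $\gamma$ at $v$. As a preliminary observation, every edge of $\gamma$ lies on a line of the form $x = k$ or $y = k$ with $k \in \ZZ$, whereas each point of $\neighbor_v$ has half-integer coordinates; so all four winding numbers appearing in the definitions of $\topw$ and $\metricw$ are well-defined.

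The easy case is $v \notin \gamma$. Here I would observe that the interior of the unit square $Q_v = v + [-\tfrac12,\tfrac12]^2$ is disjoint from $\gamma$: any edge of $\gamma$ meeting $\interior(Q_v)$ must lie on the line $x = v_x$ or $y = v_y$ (the only integer lines crossing $\interior(Q_v)$), and such an edge, being a unit segment with integer endpoints, must have $v$ itself as an endpoint, contradicting $v \notin \gamma$. Therefore all four points of $\neighbor_v$ lie in a single connected component of $\RR^2 \setminus \gamma$ and share a common winding number $w$; both $\topw{\gamma}{v}$ and $\metricw{\gamma}{v}$ reduce to $w$, and the difference vanishes, as required.

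The remaining case $v \in \gamma$ splits according to whether $\gamma$ goes straight at $v$ or makes a quarter turn. If $\gamma$ goes straight, the two collinear edges at $v$ form a unit segment that locally splits $\neighbor_v$ into two pairs; using the fact that crossing $\gamma$ from right to left (in the direction of travel) increases winding by $1$, the two pairs carry windings $\{w, w\}$ and $\{w+1, w+1\}$ for some $w \in \ZZ$. Then $\topw{\gamma}{v} = (w + (w+1))/2 = w + \tfrac12$ and $\metricw{\gamma}{v} = (2w + 2(w+1))/4 = w + \tfrac12$, so the difference is $0 = \angle(\gamma,v)$. If $\gamma$ turns at $v$, the two perpendicular edges at $v$ form an L-shape and the open square $\interior(Q_v)$ minus these edges breaks into one small "inner" region containing a unique corner of $\neighbor_v$ and one larger region containing the other three corners; an inspection shows that the three outer corners share a common winding number $w$, while the isolated corner has winding $w \pm 1$ according to whether the turn is a left or right turn. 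In both subcases, $\topw{\gamma}{v} = w \pm \tfrac12$ and $\metricw{\gamma}{v} = (3w + (w \pm 1))/4 = w \pm \tfrac14$, so $\topw{\gamma}{v} - \metricw{\gamma}{v} = \pm \tfrac14 = \angle(\gamma,v)$.

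The main obstacle is the sign bookkeeping in the turning case: one must check that the isolated corner lies on the left of the direction of travel both just before and just after the turn in the left-turn case (and symmetrically on the right in the right-turn case), so that the $\pm 1$ winding jump has a consistent sign and agrees with $\angle(\gamma, v)$. This follows because at a convex corner of the L-shape the "inner" corner is on the concave side of the curve throughout the turn, which is precisely the left side for a left turn and the right side for a right turn; once this geometric fact is settled the arithmetic above closes all cases.
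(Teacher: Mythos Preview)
Your proof is correct and follows essentially the same case analysis as the paper's proof: both split on $v \notin \gamma$, $v \in \gamma$ with $\gamma$ straight at $v$, and $v \in \gamma$ with a left/right turn, and in each case compute the multiset of winding numbers on $\neighbor_v$ directly. You supply more justification than the paper does (the connectivity argument showing only the two incident edges of $\gamma$ meet $\interior(Q_v)$, and the explicit sign check for the isolated corner), but the underlying argument is the same.
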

\begin{proof}
Figure \ref{fig:topAndMetricWeights} illustrates most of the elements needed in this proof. If $v \notin \gamma$, then $\wind(\gamma,u) = \wind(\gamma,v)$ for each $u \in \neighbor_v$, so $\metricw{\gamma}{v} = \topw{\gamma}{v} = \wind(\gamma,v).$ 

If $v \in \gamma$ and the curve goes straight at $v$, then, for some $k$, two points in $\neighbor_v$ have winding number $k$ and the other two have winding number $k+1$, so $\metricw{\gamma}{v} = \frac{1}{4}(k + k + (k+1) + (k+1)) = \half(k + (k+1)) = \topw{\gamma}{v}.$

If $v \in \gamma$ and the curves turns left at $v$, then the winding numbers are $k,k,k,k+1$, so $\topw{\gamma}{v} - \metricw{\gamma}{v} = 1/4 = \angle(\gamma,v)$. Analogously, $\topw{\gamma}{v} - \metricw{\gamma}{v} = - 1/4 = \angle(\gamma,v)$ if $\gamma$ turns right at $v$.
\end{proof}

\begin{lemma}
\label{lemma:ptprimeEqualsTwist}
For each cycle $\gamma$ of $s$, $\charge_{\partial}(\gamma) = \charge_{\interior}(\gamma)$.
\end{lemma}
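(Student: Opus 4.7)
The plan is to combine the two immediately preceding lemmas. The idea is that Lemma \ref{lemma:interiorCharge} expresses $\charge_{\interior}(\gamma)$ as a weighted sum of topological weights, while $\charge_{\partial}(\gamma)$ is a weighted sum of angles, and Lemmas \ref{lemma:metricColorZero} and \ref{lemma:topMinusMetric} together show that, after passing to metric weights, the difference between topological weight and metric weight coincides with the angle on $\gamma$ and vanishes off $\gamma$.

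First I would write, using Lemma \ref{lemma:interiorCharge},
\begin{equation*}
\charge_{\interior}(\gamma) = \sum_{v \in \ZZ^2} \ccol(v)\topw{\gamma}{v} = \sum_{v \in \ZZ^2} \ccol(v)\bigl(\topw{\gamma}{v} - \metricw{\gamma}{v}\bigr) + \sum_{v \in \ZZ^2} \ccol(v)\metricw{\gamma}{v}.
\end{equation*}
By Lemma \ref{lemma:metricColorZero}, the second sum is zero. By Lemma \ref{lemma:topMinusMetric}, every term of the first sum with $v \notin \gamma$ vanishes, while for $v \in \gamma$ the summand equals $\ccol(v)\angle(\gamma,v)$. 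Hence
\begin{equation*}
\charge_{\interior}(\gamma) = \sum_{v \in \gamma} \ccol(v)\angle(\gamma,v) = \charge_{\partial}(\gamma),
\end{equation*}
which is the desired equality.

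There is no real obstacle here: once Lemmas \ref{lemma:interiorCharge}, \ref{lemma:metricColorZero} and \ref{lemma:topMinusMetric} are in place, the statement is essentially a two-line bookkeeping argument, and the proof of Proposition \ref{prop:duplexes} then follows by summing over all cycles $\gamma$ of $s$ and comparing with \eqref{eq:ezpretwist} and Lemma \ref{lemma:pretwistsNotAxis}.
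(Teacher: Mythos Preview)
Your proof is correct and essentially identical to the paper's own argument: both combine Lemmas \ref{lemma:interiorCharge}, \ref{lemma:metricColorZero} and \ref{lemma:topMinusMetric} in the same way, the only cosmetic difference being that the paper starts from $\charge_{\partial}(\gamma)$ and rewrites it as $\charge_{\interior}(\gamma)$, while you go in the opposite direction.
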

\begin{proof}
By Lemma \ref{lemma:topMinusMetric}, the boundary charge of $\gamma$ can also be written as 
\begin{align*}
\charge_{\partial}(\gamma) &= \sum_{v \in \gamma} \angle(\gamma,v) \ccol(v) \\
													 &= \sum_{v \in \ZZ^2} (\topw{\gamma}{v} - \metricw{\gamma}{v})\ccol(v)\\
                           &=\sum_{v \in \ZZ^2} \topw{\gamma}{v}\ccol(v) - \sum_{v \in \ZZ^2}\metricw{\gamma}{v}\ccol(v)\\
													&= \sum_{v \in \ZZ^2} \topw{\gamma}{v}\ccol(v) = \charge_{\interior}(\gamma),
\end{align*}
the fourth equality holding because of Lemma \ref{lemma:metricColorZero}; the last equality is Lemma \ref{lemma:interiorCharge}.
\end{proof}

\begin{proof}[Proof of Proposition \ref{prop:duplexes}]

Lemma \ref{lemma:ptprimeEqualsTwist} and Equation \eqref{eq:ezpretwist} imply that $P_t'(1) = T^{\ez}(t)$. Together with Lemma \ref{lemma:pretwistsNotAxis}, this proves the result.
\end{proof}

\section{Examples and counterexamples}
\label{sec:examples}
In this short section, we give a few examples and counterexamples that help motivate the theory and some of the results obtained.

For instance, when looking at Proposition \ref{prop:equalTwistsMultiplex}, one might wonder whether the pretwists are always integers or if they always coincide, at least for, say, simply connected or contractible regions. This turns out not to be the case, as Figure \ref{subfig:twoDominoes} shows: for the tiling $t$ portrayed there, $T^{\ex}(t) = T^{\ey}(t) = 0$ but $T^{\ez}(t) = 1/4$.

\begin{figure}%
\centering
\subfloat[A tiling $t$ satisfying $T^{\ez}(t) = 1/4$.]{\includegraphics[width=0.2\columnwidth]{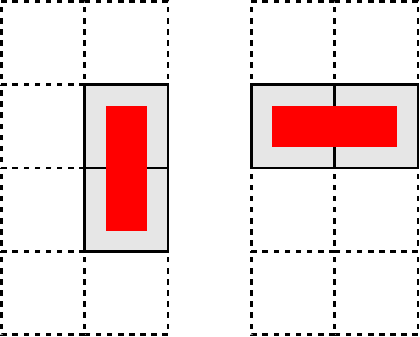}\label{subfig:twoDominoes}}%
\qquad \qquad
\subfloat[A tiling of a pseudocylinder where $T^{\ex}(t) \neq T^{\ez}(t)$.]{\includegraphics[width=0.3\columnwidth]{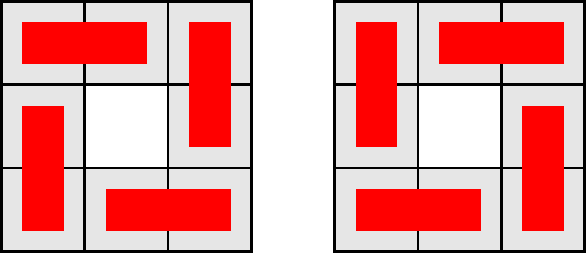}\label{subfig:pseudomultiplex}}
\caption{Two examples of tilings: $\ez$ is chosen to point towards the paper.}%
\label{fig:examples}%
\end{figure}

One might ask whether the pretwists coincide in a pseudocylinder (i.e., if the base is not necessarily simply connected): the tiling $t$ portrayed in Figure \ref{subfig:pseudomultiplex} satisfies $T^{\ex}(t) = T^{\ey}(t) = 0$ and $T^{\ez}(t) = 1$ (see Remarks \ref{rem:pseudocylinderEvenDepth} and \ref{rem:pseudocylinderOddDepth} for an idea of what can be shown for pseudocylinders in this respect).  

\begin{example}[The $4 \times 4 \times 4$ box]
\label{example:444box}
A particularly interesting example is the $4 \times 4 \times 4$ box, which has 5051532105 tilings, divided into 93 flip connected components. The number associated to each component is completely arbitrary, it has to do with the order in which our computer program found each one. Table \ref{table:CC444} shows the components in decreasing order of size, whereas Figure \ref{fig:graphConnectedComps_box444} contains a graph whose vertices are the connected components: it shows which components are connected via trits.

The reader will find more information in \cite{webpageNicolau}. There, he will find details about each connected component (e.g., pictures of a sample tiling in each of the components), as well as other examples and the source code for the \emph{C$^\sharp$} program used to generate them. 
\begin{table}[ht]
\centering
\begin{tabular}{|c|c|c|}
\hline
 \parbox[c]{2cm}{\centering Connected \\Component} & \parbox[c]{2cm}{\centering Number of \\tilings} & $\Tw$\\ \hline
 0 & \num{4412646453}  & $0$  \\ \hline
 1 & \num{310185960} & $1$  \\ \hline
 2 & \num{310185960}  & $-1$  \\ \hline
 8 & \num{8237514}  & $2$  \\ \hline 
 7 & \num{8237514}  & $-2$   \\ \hline
 3 & \num{718308}  & $2$  \\ \hline
 5 & \num{718308}   & $-2$  \\ \hline
 4,6 &  \num{283044} & $0$  \\ \hline
 27,36,58 & \num{2576} & $3$   \\ \hline
 28,35,57 & \num{2576} & $-3$   \\ \hline  
 9,13,22,23,29,32,37,43,49,55,76,80 & \num{618}  & $3$  \\ \hline
 12,16,21,25,30,31,39,45,51,56,75,77 & \num{618}  & $-3$  \\ \hline
 10,15,24,34,38,42,47,52,54,59,78,82 & \num{236}  & $1$  \\ \hline
 11,14,26,33,40,41,44,50,53,60,79,81 & \num{236}  & $-1$  \\ \hline
 48,61,83 & \num{4} & $4$   \\ \hline
 46,62,84 & \num{4} & $-4$   \\ \hline
 17,63,67,72,85,92 & \num{1}  & $4$  \\ \hline
 18,19,64,65,68,70,71,73,86,87,90,91 & \num{1} & $0$  \\ \hline
 20,66,69,74,88,89 & \num{1} & $-4$  \\ \hline

\end{tabular}
\caption{Flip connected components of a $4 \times 4 \times 4$ box}
\label{table:CC444}
\end{table}

\begin{figure}[ht]%
\centering
\includegraphics[width=0.7\columnwidth]{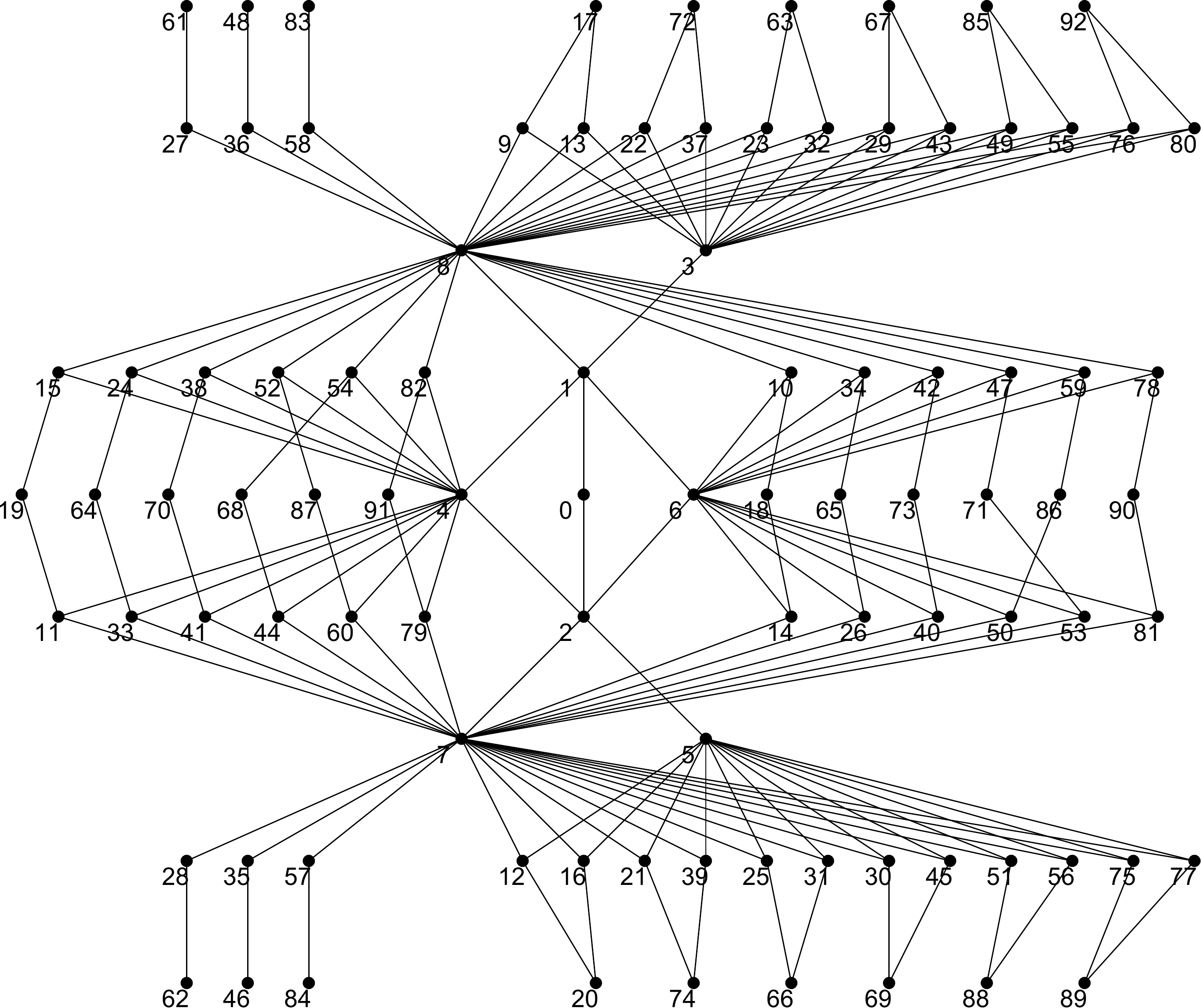}%
\caption{Graph with $93$ vertices, each one representing a flip connected component of the $4 \times 4 \times 4$ box. Two components are joined by an edge if there exists a trit taking a tiling in one component to a tiling of the other. Components that are below have a lower twist: hence, edges going up always refer to positive trits (and edges going down always refer to negative trits).}%
\label{fig:graphConnectedComps_box444}%
\end{figure}
\end{example}
%
%
%
%
%
%
%

\chapter{Possible values of the twist}
\label{chap:possiblevals}

Now that we have established (in Proposition \ref{prop:writheFormula}) that the twist of a cylinder is always an integer, we wish to take a closer look at the set of values that can be the twist of some tiling of a given box. 
If $\cB$ is a box, define $ \Tw(\cB) = \{\Tw(t) | t \text{ is a tiling of } \cB\} \subset \ZZ$. An easy observation, that follows directly from Lemma \ref{lemma:twistReflection}, is that $\Tw(\cB)$ is symmetric with respect to zero, i.e., $k \in \Tw(\cB) \Leftrightarrow -k \in \Tw(\cB)$. In this chapter, our main concern is to provide bounds for the value $\maxtw(L,M,N) = \max \Tw\left([0,L] \times [0,M] \times [0,N]\right)$.

\begin{lemma}
\label{lemma:maxTwist}
Let $L,M,N$ be integers, at least one of them even, with $N \leq M \leq L$. Then 
$$
\maxtw(L,M,N) \leq \frac{LM\ceil{N/2}\floor{N/2}}{4}\leq \frac{LMN^2}{16}.
$$
\end{lemma}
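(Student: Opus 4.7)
The strategy is to project along the shortest axis and count contributing pairs of dominoes column by column. Since $\cB$ is a box, it is a cylinder with respect to each of the three coordinate axes, so by Proposition~\ref{prop:equalTwistsMultiplex} we may freely use any pretwist to compute $\Tw(t)$; with $N \leq M \leq L$, the best choice is $\vu = \ez$, so that
$$
\Tw(t) = T^{\ez}(t) = \sum_{d_0, d_1 \in t} \tau^{\ez}(d_0, d_1).
$$
Every nonzero summand equals $\pm\frac{1}{4}$, and $\tau^{\ez}(d_0,d_1) \neq 0$ requires: neither $d_0$ nor $d_1$ is $\ez$-parallel, they are perpendicular to one another, and $d_1$ lies strictly above $d_0$ in $\ez$ with overlapping $\ez$-projections. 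In particular, each \emph{unordered} pair $\{d_0,d_1\}$ of dominoes contributes to the sum in at most one order, so $|\Tw(t)|$ is at most $\frac{1}{4}$ times the number of unordered contributing pairs.

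Next, observe that if $d_0$ is $\ex$-parallel and $d_1$ is $\ey$-parallel (or vice versa), their $\ez$-projections are $2\times 1$ and $1\times 2$ rectangles, which share \emph{at most one} basic square. Hence every contributing pair is attached to a unique column $(i,j) \in \{0,\dots,L-1\}\times\{0,\dots,M-1\}$ lying in both projections. For each such column, define
$$
h_{i,j} = \#\{k : (i,j,k)+[0,1]^3 \subset \text{some $\ex$-parallel domino of } t\},
$$
and let $v_{i,j}$ be the analogous count for $\ey$-parallel dominoes. Then the number of contributing unordered pairs sharing the column $(i,j)$ is exactly $h_{i,j}\,v_{i,j}$, since any $\ex$-parallel slab at $(i,j)$ paired with any $\ey$-parallel slab at $(i,j)$ yields a perpendicular pair of dominoes meeting in precisely that column. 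Therefore
$$
|\Tw(t)| \leq \frac{1}{4}\sum_{i=0}^{L-1}\sum_{j=0}^{M-1} h_{i,j}\, v_{i,j}.
$$

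Finally, since each of the $N$ slabs contributes at most one kind of domino at column $(i,j)$, we have $h_{i,j}+v_{i,j} \leq N$. For nonnegative integers with fixed sum $s \leq N$, the product is maximized when the values are as balanced as possible, giving $h_{i,j}\,v_{i,j} \leq \floor{s/2}\ceil{s/2} \leq \floor{N/2}\ceil{N/2}$. Summing over the $LM$ columns yields
$$
|\Tw(t)| \leq \frac{LM\,\floor{N/2}\ceil{N/2}}{4} \leq \frac{LMN^2}{16},
$$
which is the claimed bound. There is no real obstacle; the only genuinely useful input beyond direct counting is Proposition~\ref{prop:equalTwistsMultiplex}, which lets us project along the shortest axis regardless of the orientation of a tiling.
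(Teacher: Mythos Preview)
Your proof is correct and follows essentially the same approach as the paper: project along the shortest axis, bound $|\Tw(t)|$ by $\tfrac{1}{4}$ times the number of perpendicular pairs sharing a column, observe that each column $(i,j)$ contributes $h_{i,j}v_{i,j}$ such pairs, and use $h_{i,j}+v_{i,j}\le N$ to bound each product by $\floor{N/2}\ceil{N/2}$. The only cosmetic difference is that you spell out why each contributing pair is attached to a unique column and explicitly invoke Proposition~\ref{prop:equalTwistsMultiplex}, whereas the paper leaves these implicit.
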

\begin{proof}
Suppose, without loss of generality, that $N \leq L,M$, and let $t$ be a tiling of $\cB$. 
For each $(x,y) \in \ZZ^2$, $0 \leq x \leq L-1$, $0 \leq y \leq M-1$, let, for $\vu \in \{\ex, \ey\}$,
$d_{\vu}(x,y) = \{d \in t | \tv(d) = \pm \vu, C(\plshalf{x},\plshalf{y},\plshalf{z}) \subset d \text{ for some } z \in \ZZ\}$, and let $n_{\vu}(x,y)$ be the number of elements of $d_{\vu}(x,y)$ 
Notice that there is a one-to-one correspondence between pairs $(d_0, d_1)$ such that $d_0 \in d_{\ex}(x,y)$, $d_1 \in d_{\ey}(x,y)$, and pairs $(d_0,d_1)$ such that $\tau(d_0,d_1) + \tau(d_1,d_0) \neq 0$; also, the latter condition implies that $\tau(d_0,d_1) + \tau(d_1,d_0) = \pm 1/4$. For fixed $x,y$, the number of such pairs is $n_{\ex}(x,y) n_{\ey}(x,y)$. Therefore,   
$$
|\Tw(t)| \leq \frac{1}{4}\sum_{x = 0}^{L-1} \sum_{y = 0}^{M-1} n_{\ex}(x,y) n_{\ey}(x,y).
$$
Since $n_{\ex}(x,y) + n_{\ey}(x,y) \leq N$, it follows that $n_{\ex}(x,y) n_{\ey}(x,y) \leq \ceil{N/2}\floor{N/2}$, so that
$$
|\Tw(t)| \leq \frac{1}{4}\sum_{x = 0}^{L-1} \sum_{y = 0}^{M-1} {\ceil{N/2}\floor{N/2}} = \frac{LM\ceil{N/2}\floor{N/2}}{4}.
$$ 
\end{proof}

In Lemma \ref{lemma:techOrderOptima} below, we use a similar (but more refined) analysis to find a slightly better upper bound.

\section{Asymptotic lower bounds}

We will now describe some constructions that yield asymptotic lower bounds for $\maxtw(L,M,N)$. We begin with an exact description for the asymptotic behavior in boxes with two floors (Lemma \ref{lemma:possibleTwoFloors}).
First, however, we need an auxiliary construction.

Recall from Section \ref{sec:twoFloorsMoreSpace} the definition of a sock. The \emph{eel} of \emph{thickness} $m$ and \emph{length} $n$ is a sock in $\ZZ^2$ that consists of $m$ cycles, with the same orientation, such that each cycle surrounds a set of $n$ consecutive jewels in a diagonal (and those are the only jewels each cycle surrounds): Figures \ref{fig:eel} and \ref{fig:triangleEel} show examples of eels of different thicknesses and lengths. Notice that an eel of length $1$ is a boxed jewel.
If $e$ is an eel of thickness $m$ and length $n$ such that the $n$ jewels are black and the cycles spin counterclockwise, then $P_e(q) = nq^m$, so that $P_e'(1) = mn$.  
 
%

\begin{lemma}
\label{lemma:fish}
For $n \geq 0$, let $f(n) = \maxtw(n,n,2)$. Then
$$\lim_{n \to \infty} \frac{f(n)}{4n^2} = \frac{1}{16}.$$
\end{lemma}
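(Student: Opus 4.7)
The upper bound $\limsup_{n\to\infty} f(n)/(4n^2) \leq 1/16$ is immediate from Lemma~\ref{lemma:maxTwist}: taking $L = M = n$ and $N = 2$ gives $f(n) = \maxtw(n,n,2) \leq n^2/4$. The content of the lemma is therefore the matching lower bound, for which I will construct, for every large $n$, a tiling of $[0,n] \times [0,n] \times [0,2]$ whose twist is at least $n^2/4 - o(n^2)$.

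The idea is to pack many disjoint eels into the $n \times n$ base, using eels of slowly growing thickness. Pick a parameter $m = m(n)$ with $m \to \infty$ and $m = o(n)$; for concreteness, take $m = \lfloor \sqrt{n} \rfloor$. Consider eels of thickness $m$: the eel at diagonal offset $k$ has its jewels on the line $y = x + k$ (at points $(i, i + k)$ for $i$ in a suitable range) and consists of $m$ concentric staircase cycles, the $j$-th of which lies at $L^{\infty}$-distance exactly $j$ from the jewel diagonal. Consequently, all vertices used by such an eel lie within $L^{\infty}$-distance $m$ of its diagonal, and since the $L^{\infty}$-distance between the diagonals $y = x + k_1$ and $y = x + k_2$ is $|k_1 - k_2|/2$, two eels at offsets $k_1 \neq k_2$ are automatically vertex-disjoint provided $|k_1 - k_2| \geq 4m + 1$. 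Accordingly, I place eels at all offsets $k \in \{0, \pm(4m+1), \pm 2(4m+1), \ldots\}$ whose cycles fit inside $[0,n]^2$, and for each eel choose the orientation that makes its contribution to $P_t(q)$ positive; this is always possible because all jewels of a given eel share the same color (determined by the parity of $k$), so reversing the orientation simply flips the sign of the contribution to $P_t'(1)$.

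Because the eels are pairwise vertex-disjoint, the resulting sock is valid and its twist is the sum of the individual eel contributions. An eel of thickness $m$ and length $n - |k| - O(m)$ contributes $m \bigl(n - |k| - O(m)\bigr)$ to $P_t'(1)$ (as computed just before the statement of the lemma), so
\[
f(n) \;\geq\; \sum_{\substack{|k| \leq n - O(m) \\ (4m+1)\mid k}} m\bigl(n - |k| - O(m)\bigr) \;=\; \frac{m\, n^2}{4m+1} - O(n\, m)
\]
by a routine arithmetic-series computation. Since $m/(4m+1) \to 1/4$ and $nm = o(n^2)$, with $m = \lfloor\sqrt{n}\rfloor$ this gives $f(n) \geq n^2/4 - O(n^{3/2})$, so $\liminf_{n\to\infty} f(n)/(4n^2) \geq 1/16$. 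Combined with the upper bound, the limit exists and equals $1/16$.

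The main technical obstacle is the bookkeeping to show that the whole packing really is a legitimate sock in the $n \times n$ graph: one must check that the $m$ concentric cycles inside a single eel are pairwise vertex-disjoint (this is the content of the eel construction itself, and comes down to the observation above that the $j$-th cycle uses exactly the vertices at $L^{\infty}$-distance $j$ from the diagonal jewels), and one must truncate the eels close to the boundary of $[0,n]^2$ so that their cycles fit inside the base region. Both issues only cost $O(m)$ per eel, and hence contribute only $O(nm) = O(n^{3/2})$ to the total, which is absorbed in the error estimate above.
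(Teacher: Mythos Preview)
Your argument is correct and reaches the same conclusion as the paper's with the same error term $O(n^{3/2})$, but the route is genuinely different. The paper sets up coupled recursions
\[
f(n)\geq k(n-2k)+2g(n-2k-1),\qquad g(n)\geq k(n-4k)+g(n-4k-1)+2g(2k),
\]
introduces $g^*(n)=n(n+1)/2-4g(n)$, and proves $g^*(n)\le 2n^{3/2}$ by induction on $n$ with the step choice $k=\lfloor\sqrt{n}/2\rfloor$; the base cases $n\le 100$ are verified by computer. Your approach instead unrolls this recursion in one shot: fixing the thickness at $m=\lfloor\sqrt n\rfloor$ throughout and laying the eels on parallel diagonals at spacing $4m+1$ makes the total an explicit arithmetic sum, so no induction and no base-case check are needed. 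What you give up is the recursive filling of the little $\cP_{2m}$ corner triangles (the $2g(2k)$ term), but those contribute only $O(nm)=O(n^{3/2})$ anyway, so the leading term is unaffected.

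Two small points worth tightening. First, your $L^\infty$ claim is exactly right and matches the paper's figures: the $j$-th cycle of an eel occupies precisely the lattice points at $L^\infty$-distance $j$ from the set of jewels, hence the whole eel of thickness $m$ sits in the band $|y-x-k|\le 2m$, and the $4m+1$ spacing guarantees disjointness. Second, the ``truncation'' you mention amounts to choosing the eel at offset $k$ to have length exactly $n-|k|-2m$; then its $(\ell+2m)\times(\ell+2m)$ hexagonal footprint fits inside $[0,n-1]^2$ with room to spare, and the leftover vertices in each band are simply jewels outside all cycles and contribute nothing to $P_t'(1)$. With these two observations spelled out, your proof is complete.
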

\begin{proof}
Let $\cB_n = [0,n] \times [0,n] \times [0,2]$. By Lemma \ref{lemma:maxTwist}, we only need to show that $\lim_{n \to \infty} f(n)/(4n^2) \geq 1/16$. Also, let $\cP_n = \cD_n \times [0,2]$, where $\cD_n \subset \RR^2$ consists of all the basic squares completely contained in the set $\{(x,y)\,|\,x,y \geq 0, x + y \leq n+1\}$ (see Figure \ref{fig:triangleEel}). Let, for $n \geq 0$, $g(n) = \max \Tw(\cP_n).$  We claim that $f$ and $g$ satisfy the following inequalities:
\begin{align}
 &f(n) \geq \max_{0 \leq k \leq (n-1)/2} k(n-2k) + 2 g(n - 2k - 1), \label{eq:recurrenceF}\\
 &g(n) \geq \max_{0 \leq k \leq (n-1)/4} k(n-4k) + g(n-4k-1) + 2g(2k),
\label{eq:recurrenceG}  
\end{align}
with $f(0) = g(0) = 0$. 

\begin{figure}[ht]%
\centering
\includegraphics[width=0.6\columnwidth]{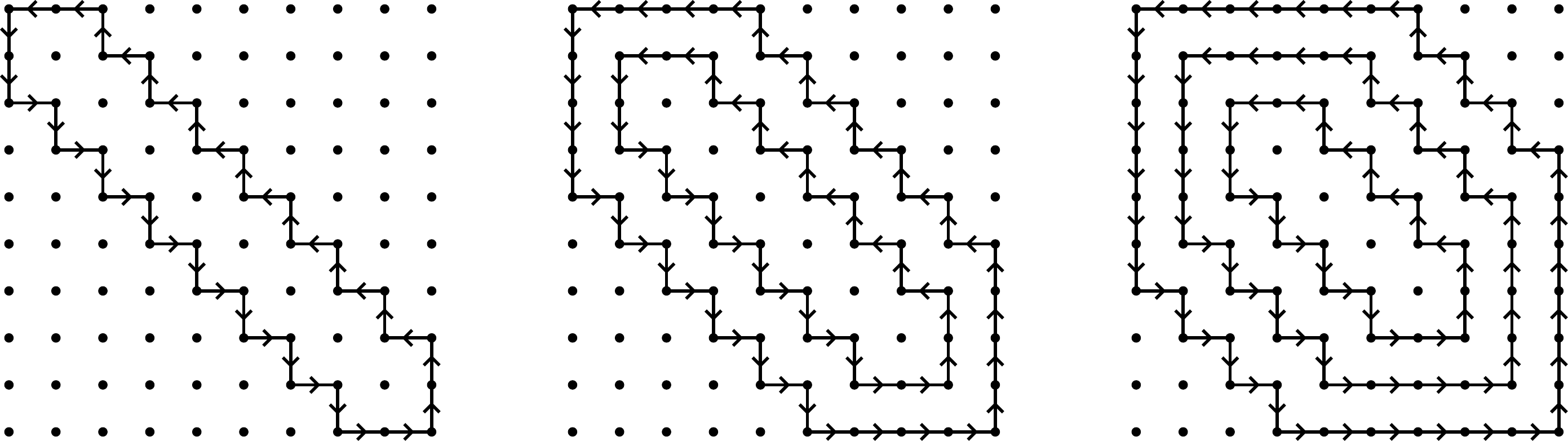}%
\caption{Socks for the eels of thickness $1$, $2$ and $3$ in $\cB_{10}$.}%
\label{fig:eel}%
\end{figure} 

Figure \ref{fig:eel} illustrates that we can fit an eel $e_k$ of thickness $k$ and length $n-2k$ in $\cB_n$ whenever $0 \leq k \leq (n-1)/2$. Notice that the jewels not in the eel form two copies of $\cP_{n-2k-1}$. Since $\Tw(t) = P_t'(1)$ for tilings of duplex regions, it follows that $\max \Tw(\cB_n)  \geq P_{e_k}'(1) + 2 \max \Tw(\cP_{n-2k-1})$, which implies \eqref{eq:recurrenceF}. 


For Equation \eqref{eq:recurrenceG}, notice that, whenever $0 \leq k \leq (n-1)/4$, we can fit an eel of thickness $k$ and length $n-4k$ in $\cP_n$, as illustrated in Figure \ref{fig:triangleEel}. The remaining jewels form three regions: two copies of $\cP_{2k}$ and one copy of $\cP_{n - 4k - 1}$, thus $g(n) \geq k(n-4k) + g(n-4k-1) + 2g(2k)$.
\begin{figure}[ht]%
\centering
\includegraphics[width=0.65\columnwidth]{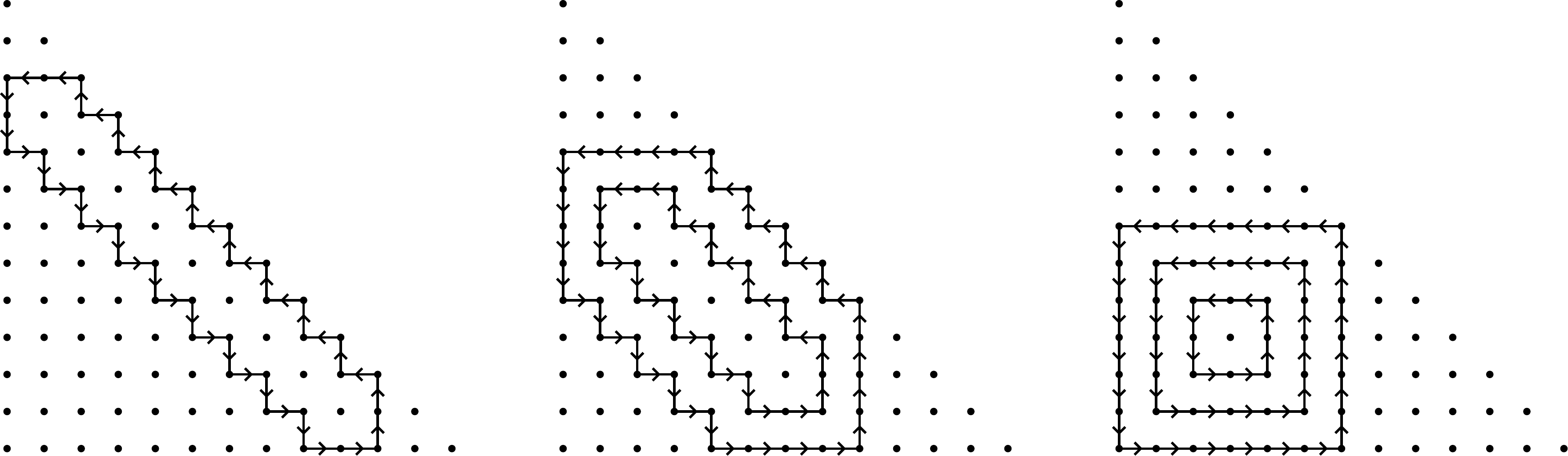}%
\caption{Socks for the constructions of eels of thickness $1$, $2$ and $3$ in $\cP_{13}$.}%
\label{fig:triangleEel}%
\end{figure}

Now let $g^*(n) = n(n+1)/2 - 4g(n)$. We claim that, for all $n \in \NN$, $g^*(n) < 2 n^{3/2}$.
By \eqref{eq:recurrenceG}, 
\begin{equation}
g^*(n) \leq \min_{0 \leq k \leq (n-1)/4} 4k^2 + n - 4k + g^*(n-4k-1) + 2 g^*(2k).
\label{eq:recurrenceGst}
\end{equation} 
Now for $0 \leq n \leq 100$, we check (with a computer) that $g^*(n) \leq 2n^{3/2}$ by recursively calculating upper bounds for $g^*(n)$ via \eqref{eq:recurrenceGst}. 

Let $n > 100$, and assume, by induction, that $g^*(n_0) \leq 2 n_0^{2/3}$ whenever $n_0 < n$. Let $k = \floor{\sqrt{n}/2} \geq 5$. Using the induction hypothesis on $n - 4k - 1$ and on $2k$, we get
$$2n^{3/2} - g^*(n) \geq 2(n^{3/2} - (n-4k-1)^{3/2}) - (4k^2 + 4(2k)^{3/2} - 4k + n).$$
Since $h(x) = x^{3/2}$ is convex and $h'(x) = (3/2)x^{1/2}$, we have 
$$2n^{3/2} - g^*(n) \geq 3(n-4k-1)^{1/2}(4k+1) - 4k^2 - 4(2k)^{3/2} + 4k - n.$$
Since $\sqrt{n}/2 - 1 \leq k \leq \sqrt{n}/2$, we have $4k^2 \leq n \leq 4k^2 + 8k + 4$. Thus,
$$
2n^{3/2} - g^*(n) 
\geq 3(4k^2 - 4k - 1)^{1/2}(4k+1) - 8k^2 - 4(2k)^{3/2} - 4k - 4.
$$
Consider the function $u(x) = 3(4x^2 - 4x - 1)^{1/2}(4x+1) - 8x^2 - 4(2x)^{3/2} - 4x - 4$, defined on $[2,\infty)$. It is a simple exercise to prove that $u'(x) > 0$ for $x \geq 2$. Also, $u(5) \approx 209.465  > 0$ and hence $2n^{3/2} - g^*(n) \geq u(k) \geq u(5) > 0$, establishing the claim.

By \eqref{eq:recurrenceF}, $0 \leq n^2 - 4f(n) \leq 4k^2 + n - 2k + 2 g^*(n - 2k - 1)$ for each $0 \leq k \leq (n-1)/2$, and thus $0 \leq n^2 - 4f(n) \leq 2n^{3/2} + n$. Multiplying by $1/(16n^2)$ and taking the limit, we complete the proof.
\end{proof}

\begin{lemma}
\label{lemma:possibleTwoFloors}
The following holds:
$$\lim_{L,M \to \infty} \frac{\maxtw(L,M,2)}{4LM} = \frac{1}{16}.$$
\end{lemma}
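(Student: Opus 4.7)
The upper bound $\limsup_{L,M \to \infty} \maxtw(L,M,2)/(4LM) \leq 1/16$ is immediate from Lemma \ref{lemma:maxTwist} applied with $N=2$, which gives $\maxtw(L,M,2) \leq LM/4$. So the task is to prove the matching lower bound by exhibiting, for every $\epsilon > 0$, tilings of $[0,L]\times[0,M]\times[0,2]$ whose twist exceeds $(1/16 - \epsilon) \cdot 4LM$ for all sufficiently large $L,M$.

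The strategy is to recycle the planar square construction from Lemma \ref{lemma:fish}. Fix $\epsilon > 0$ and, using that lemma, choose $n$ (depending on $\epsilon$) together with a tiling $t_n$ of $\cB_n = [0,n]\times[0,n]\times[0,2]$ such that $\Tw(t_n) = f(n) \geq (1/4 - 4\epsilon) n^2$. Given $L,M \geq n$, write $L = an + r$, $M = bn + s$ with $0 \leq r,s < n$, and partition the base rectangle $[0,L] \times [0,M]$ into an $a \times b$ array of $n \times n$ squares together with an L-shaped leftover strip of width $< n$. In the $a b$ sub-boxes $\cB^{(i,j)}$ of size $n \times n \times 2$ sitting above these squares, place translated copies $t_n^{(i,j)}$ of the tiling $t_n$; in the rest of the box, which is a union of columns of the form (basic square)$\times [0,2]$, place the trivial tiling $t_*$ consisting solely of $\ez$-dominoes. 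Call the resulting tiling $t$.

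Next, compute $\Tw(t) = T^{\ez}(t)$ directly from the combinatorial formula in Section \ref{sec:combTwistBoxes}. The claim is that $T^{\ez}(t) = \sum_{i,j} T^{\ez}(t_n^{(i,j)}) = ab \cdot f(n)$. Indeed, for any pair of dominoes $d_0, d_1$ in $t$, the number $\tau^{\ez}(d_0,d_1)$ vanishes unless both dominoes are perpendicular to $\ez$ and one lies in the $\ez$-shade of the other; this forces them to share the same $(x,y)$-column in the base rectangle. In particular, any pair involving a domino from $t_*$ (which is $\ez$-parallel) contributes zero, and any pair of non-$\ez$-dominoes from distinct sub-boxes $t_n^{(i,j)}, t_n^{(i',j')}$ with $(i,j) \neq (i',j')$ also contributes zero because their $(x,y)$-projections are disjoint. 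Only intra-sub-box interactions survive, yielding the advertised additivity.

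Consequently $\maxtw(L,M,2) \geq ab \cdot f(n) \geq \lfloor L/n \rfloor \lfloor M/n \rfloor f(n)$. Dividing by $4LM$ and letting $L,M \to \infty$ with $n$ fixed gives
\[
\liminf_{L,M \to \infty} \frac{\maxtw(L,M,2)}{4LM} \geq \frac{f(n)}{4n^2} \geq \frac{1}{16} - \epsilon.
\]
Since $\epsilon > 0$ was arbitrary, this matches the upper bound and proves the limit. The one routine point to verify carefully is the vanishing of $\tau^{\ez}$ on cross-terms, which is the only technical obstacle; everything else is a bookkeeping exercise. (One could alternatively appeal to Theorem \ref{theo:main}\ref{item:multiplexUnion}, but in this duplex setting the direct computation with $T^{\ez}$ is more transparent and makes the additive constant $K$ explicitly zero.)
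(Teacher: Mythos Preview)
Your proposal is correct and follows essentially the same route as the paper: both use Lemma~\ref{lemma:maxTwist} for the upper bound and, for the lower bound, tile a large rectangle by $\lfloor L/n\rfloor\lfloor M/n\rfloor$ translates of an optimal $n\times n\times 2$ block from Lemma~\ref{lemma:fish}, fill the leftover with vertical dominoes, and then let $n\to\infty$. The paper's version is terser---it asserts the additivity $\maxtw(L,M,2)\ge \lfloor L/k\rfloor\lfloor M/k\rfloor f(k)$ without spelling out the $T^{\ez}$ cross-term vanishing that you (correctly) make explicit.
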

\begin{proof}
Let $B_{L,M} = [0,L] \times [0,M] \times [0,2]$, and let $f(n) = \maxtw(n,n,2)$. 
Fix $k \in \NN$. Notice that we can place $\floor{L/k}\floor{M/k}$ copies of $\cB_{k,k}$ inside $\cB_{L,M}$, so that $\maxtw(L,M,2) \geq \floor{L/k}\floor{M/k} f(k) \geq (L/k - 1)(M/k-1)f(k)$. Therefore,
$$\liminf_{L,M \to \infty} \frac{\maxtw(L,M,2)}{4LM} \geq \frac{f(k)}{4k^2}.$$
Since this inequality holds for any fixed $k$, we may take the limit as $k \to \infty$ to get, by Lemma \ref{lemma:fish}, that
$$ \frac{1}{16} \leq  \liminf_{L,M \to \infty} \frac{\maxtw(L,M,2)}{4LM} \leq \limsup_{m,n \to \infty} \frac{\maxtw(L,M,2)}{4LM} \leq \frac{1}{16},$$
yielding the result.
\end{proof}
\begin{figure}[ht]%
\centering
\includegraphics[width=0.75\columnwidth]{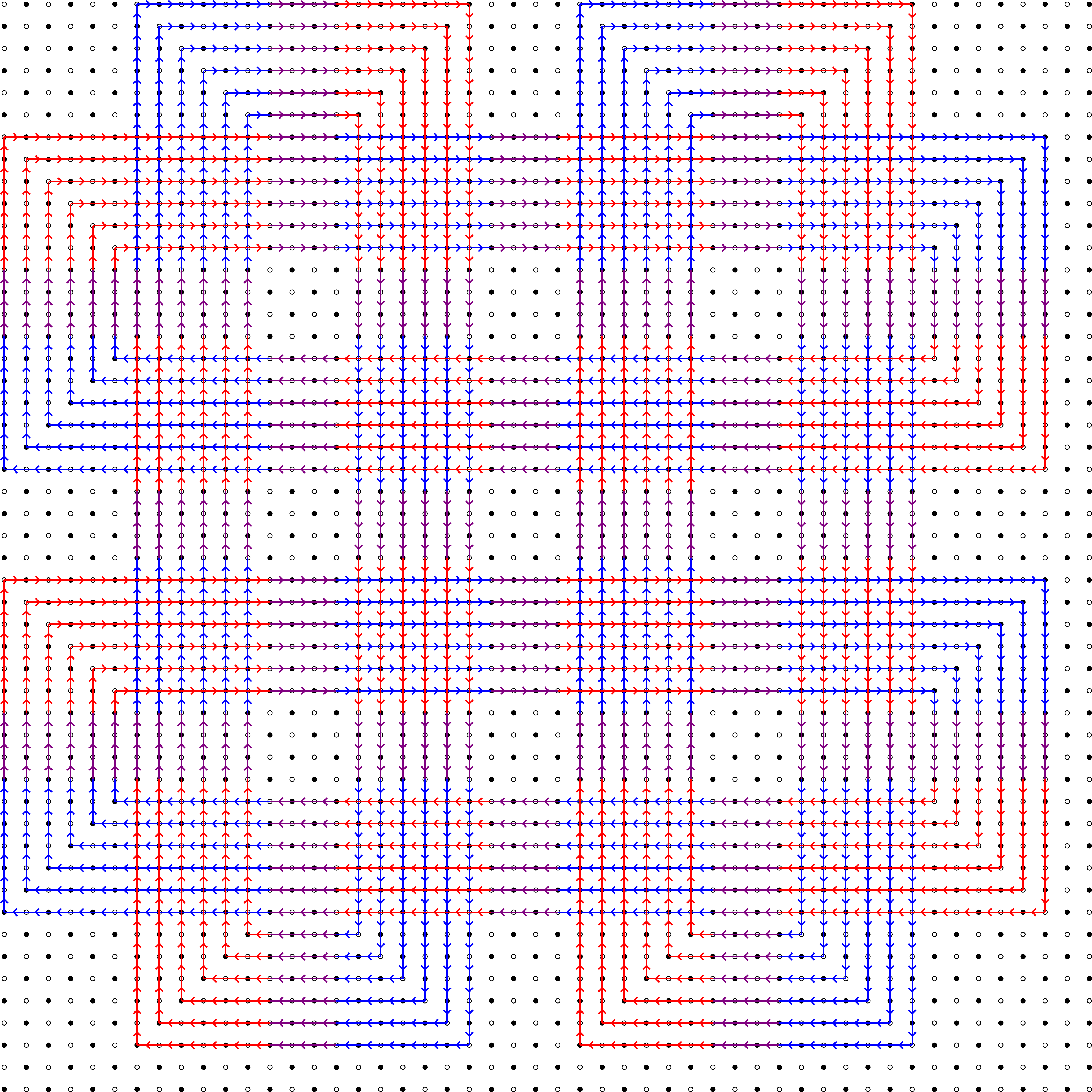}%
\caption{Schematic representation of a tiling of $\cB = [0,50] \times [0,50] \times [0,8]$, illustrating the construction with $K = 6$, so that $q_1 = q_2 = q = 4$ and $n_1 = n_2 = 4$. This representation works more or less like a sock, but the red arrows indicate that in the top four (in general $q_1$ or $q_2$, depending on the cycle) floors, all the dominoes are in the given direction and with the given orientation (taking cube colors into account);
blue arrows are similar, but with respect to the bottom four (again, $q_1$ or $q_2$) floors; finally, the purple arrows indicates that dominoes that are parallel to the arrow point in the given direction (but some dominoes are not parallel to it: see Figures \ref{fig:detailConstructionEvenOdd} and \ref{fig:detailConstructionOddEven}).}%
\label{fig:sockConstruction}%
\end{figure}
\begin{lemma}
\label{lemma:asymptoticLowerBound}
The following holds:
\begin{enumerate}[label=\upshape(\roman*)]
	\item \label{item:constFixedN} If $N \geq 2$ is fixed, then $$\lim_{L,M \to \infty} \frac{\maxtw(L,M,N)}{LMN^2} = \frac{\ceil{N/2}\floor{N/2}}{4N^2}.$$
	\item \label{item:constVariableN} $$\lim_{\substack{N,\mu,\lambda \to \infty \\ L = \floor{\lambda N}, M = \floor{\mu N}}} \frac{\maxtw(L,M,N)}{LMN^2} = \frac{1}{16},$$ 
	where the limit is taken over values $N \in \ZZ$ and $\mu,\lambda \in \RR$ such that at least one of $L,M,N$ is even.  
\end{enumerate}
\end{lemma}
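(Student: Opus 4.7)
The upper bound for both parts is immediate from Lemma \ref{lemma:maxTwist}: $\maxtw(L,M,N)/(LMN^2) \leq \ceil{N/2}\floor{N/2}/(4N^2)$, which tends to $1/16$ as $N \to \infty$. The substantive content is the lower bound, which requires an explicit construction.

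For part (i), my plan is to build, for each fixed $N \geq 2$, a basic ``block'' tiling of a box $\cB_0 = [0,L_0] \times [0,M_0] \times [0,N]$ whose twist is close to the combinatorial optimum $L_0 M_0 \ceil{N/2}\floor{N/2}/4$. Following the scheme depicted in Figure \ref{fig:sockConstruction}, the construction is a 3D generalization of the 2D eel of Lemma \ref{lemma:fish}: the top $\ceil{N/2}$ floors and the bottom $\floor{N/2}$ floors are filled with parallel dominoes along two different horizontal axes, coordinated by large cycles in the sock $\Gamma^*(t,\tbase)$ (where $\tbase$ is the all-vertical tiling) so that the writhes and pairwise linking numbers of these cycles, counted via Proposition \ref{prop:writheFormula}, add up constructively rather than cancelling. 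Once such a block is in hand, a larger box $[0,L] \times [0,M] \times [0,N]$ is tiled by $\floor{L/L_0} \floor{M/M_0}$ disjoint translates of $\cB_0$, with leftover strip regions filled using Lemma \ref{lemma:trivialTilingAllMultiplexes}. Additivity of the twist (Theorem \ref{theo:main}\ref{item:multiplexUnion}) then gives $\maxtw(L,M,N) \geq \floor{L/L_0}\floor{M/M_0}\,\Tw(t_0) - O(L+M)$, so that dividing by $LMN^2$ and letting $L,M \to \infty$ yields the claimed limit.

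For part (ii), given $\epsilon > 0$, I first pick $N$ large enough so that the fraction $\ceil{N/2}\floor{N/2}/(4N^2)$ exceeds $1/16 - \epsilon$, then apply part (i) with $L = \floor{\lambda N}$ and $M = \floor{\mu N}$ for $\lambda, \mu$ sufficiently large (depending on the block sizes $L_0(N), M_0(N)$ from part (i)). The main obstacle here is uniformity in $N$: the block sizes must grow slowly enough with $N$ that the constraint $L \geq L_0(N)$ is satisfied by $L = \floor{\lambda N}$ with $\lambda \to \infty$; in practice it may be cleaner to present a single family of tilings parametrized by $(L,M,N)$ directly from Figure \ref{fig:sockConstruction}'s scheme and compute its twist to be $(1/16 - o(1))LMN^2$ in one pass. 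The deeper technical challenge in both parts is the sign bookkeeping: naive stackings of axis-aligned slabs produce systematic cancellations (the two $\ex$-dominoes in a single floor of a $2\times 2\times k$ region have opposite $\tv$-signs), so the eel/cycle-linking structure is essential to prevent destructive interference of $\tau^{\ez}$-contributions across the columns, and the bulk of the work is verifying that the chosen configuration indeed yields positive contributions in each $(x,y)$ column.
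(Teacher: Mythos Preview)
Your high-level plan is sound and aligns with the paper's strategy, but what you have written is an outline rather than a proof: the substantive content of this lemma is precisely the explicit construction you defer to ``the scheme depicted in Figure \ref{fig:sockConstruction}'' and the sign bookkeeping you flag as ``the bulk of the work''. The paper's proof is almost entirely devoted to describing that construction in detail (crossing zones, transition zones, exterior padding, filler) and verifying that the $\ez$-effects outside the $n_1 n_2$ crossing zones either vanish or contribute at most $E = (L+M)KN$, yielding $\Tw(t_{K,L,M,N}) \geq n_1 n_2 K^2 q_1 q_2/4 - E$ with $q_1 = \ceil{N/2}$, $q_2 = \floor{N/2}$. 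Without that construction and verification you do not yet have a proof.

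There is also an organizational difference. You propose to build a \emph{fixed} block $\cB_0$ of near-optimal twist density and tile large boxes by translates, taking a double limit (first $L,M \to \infty$ for fixed block, then block size $\to \infty$) as in Lemma \ref{lemma:possibleTwoFloors}. The paper instead constructs a \emph{single} tiling $t_{K,L,M,N}$ of the full box with a free parameter $K$ and then chooses $K$ as a function of the data (for part \ref{item:constFixedN}, $K = 2\lfloor\sqrt{\min(L,M)}\rfloor$; for part \ref{item:constVariableN}, $K = 2N\lfloor\sqrt{\min(\lambda,\mu)}\rfloor$), so that both parts follow from the single estimate \eqref{eq:constBound}. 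Your block approach would also work but requires the extra $\epsilon$-argument you sketch, and for part \ref{item:constVariableN} runs into exactly the uniformity-in-$N$ issue you identify; the paper's single-family approach, which you mention as the cleaner alternative, is indeed what is done. Finally, the paper computes the twist by direct counting of $\tau^{\ez}$-effects in the crossing zones rather than via Proposition \ref{prop:writheFormula}; invoking the writhe formula here would be unnecessary machinery, since the construction is designed so that the effect structure is already transparent column by column.
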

\begin{proof}
Given $L,M,N,$ take any even positive integer $K$ such that $L,M \geq 2K$. We shall describe a tiling $t_{K,L,M,N}$ of the box $\cB = [0,L] \times [0,M] \times [0,N]$, which is hinted at in Figure \ref{fig:sockConstruction}.
Let $q_1 = \ceil{N/2}$, $q_2 = \floor{N/2}$, $q = 2 \ceil{N/4} = 2 \ceil{q_1/2}$, 
$$n_1 = 2\floor{\frac{L - 2K + q}{2(K+q)}}, n_2 = 2\floor{\frac{M - 2K + q}{2(K+q)}}.$$
Divide the box $\cB$ into the following parts (see Figure \ref{fig:sockConstruction}):
\begin{itemize}
	\item The \emph{exterior padding area}, which is the area in the complement of $[K, 2K + (n_1-1)(K+q)] \times [K, 2K + (n_2-1)(K+q)] \times [0,N]$ where the curves make turns.
	\item The \emph{crossing zones}, formed by the subregions $[K + i(K+q), 2K + i(K + q)] \times [K + j(K + q), 2K + j(K + q)] \times [0,N]$, $0 \leq i < n_1$, $0 \leq j < n_2$, where red and blue arrows cross. 
	\item The \emph{transition zones}, formed by the areas where purple arrows occur: each of these areas form a $q \times K \times N$ or $K \times q \times N$ subregion. 
	\item The \emph{filler}, which is the remainder of the box.
\end{itemize}
The idea for the tiling is as follows: each subregion $[0,L] \times [j,j+1] \times [0,N]$, where $ 0 \leq j - K - n(K+q) < K$ for some $n$, is tiled in a similar fashion as Figure \ref{fig:detailConstructionEvenOdd}: the part that ``flows horizontally'' is formed by $q_1$ simple paths that move up and down together such that all horizontal dominoes $d$ have $\tv(d)$ parallel to the corresponding arrow in Figure \ref{fig:sockConstruction} (which are also horizontal in that figure). Similarly, subregions of the form $[i,i+1] \times [0,M] \times [0,N]$, where $ 0 \leq i - K - n(K+q) < K$ for some $n$, are tiled similarly to Figure \ref{fig:detailConstructionOddEven}: one clearly sees $q_2$ paths flowing horizontally, which correspond to vertical arrows in Figure \ref{fig:sockConstruction}.
\begin{figure}[ht]%
\centering
\includegraphics[width=0.8\columnwidth]{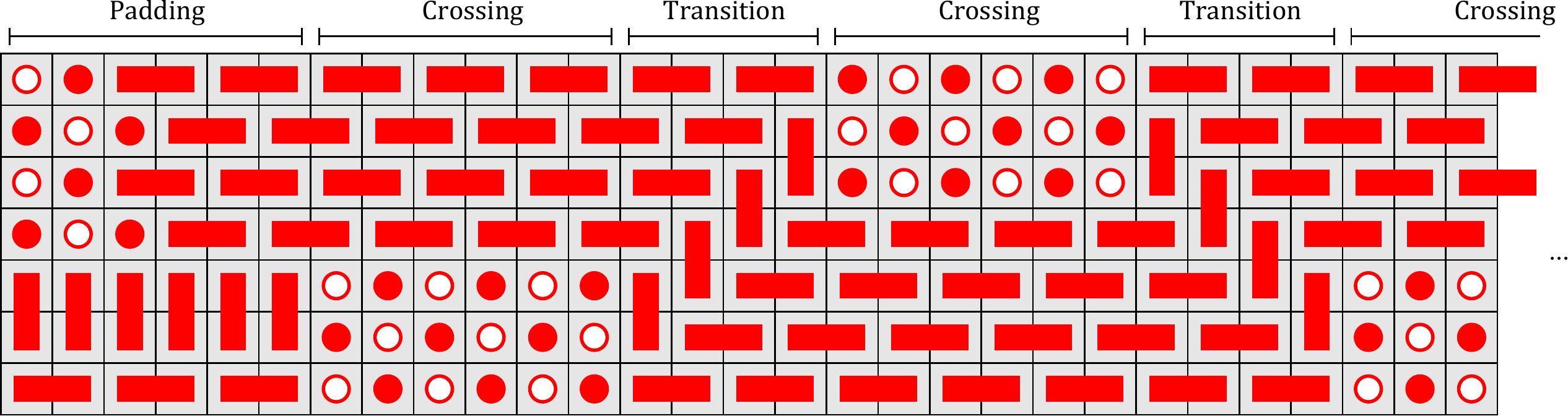}%
\caption{Lateral view on the construction, with $K = 6$ and $N = 7$, so that $q = q_1 = 4$ and $q_2 = 3$: this represents part of the subregion $[0,L] \times [K+2,K+3] \times [0,N]$ (see also Figure \ref{fig:sockConstruction}).}%
\label{fig:detailConstructionEvenOdd}%
\end{figure}
\begin{figure}[ht]%
\centering
\includegraphics[width=0.8\columnwidth]{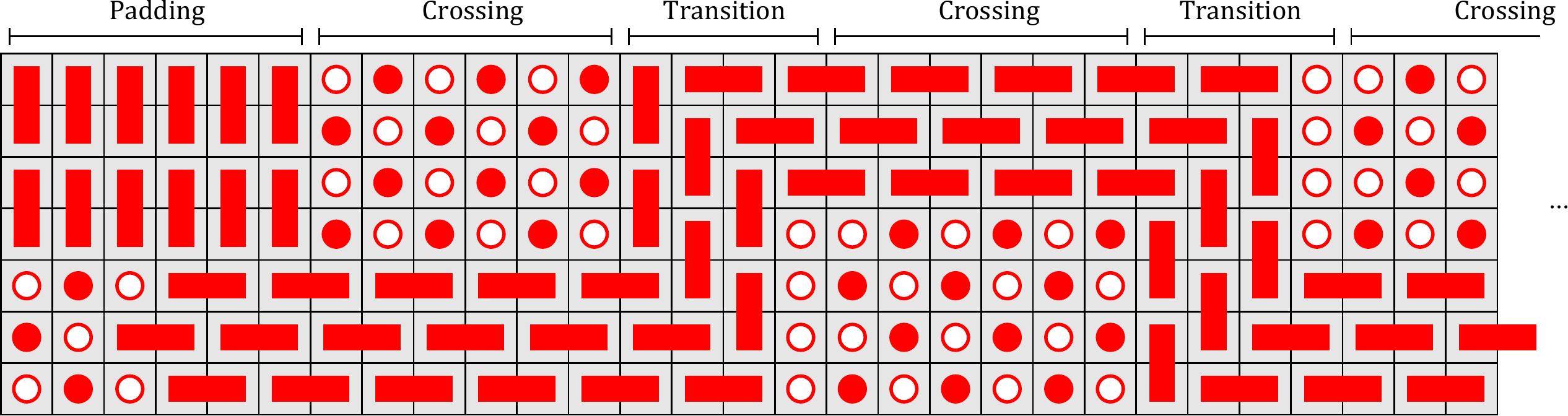}%
\caption{Lateral view on the construction, with $K = 6$ and $N = 7$, so that $q = q_1 = 4$ and $q_2 = 3$: this represents part of the subregion $[K+2,K+3] \times [0,M] \times [0,N]$ (see also Figure \ref{fig:sockConstruction}).}%
\label{fig:detailConstructionOddEven}%
\end{figure}

Figures \ref{fig:detailConstructionEvenOdd} and \ref{fig:detailConstructionOddEven} also make it clear that the paths need a length of $q_1$ (for the first kind) and $q_2$ (second kind) to transition from top to bottom and vice-versa. Since $q \geq \max(q_1,q_2)$, the paths have enough space to transition: any remaining space is filled with dominoes that are orthogonal to the paths, as shown in Figure \ref{fig:detailConstructionOddEven} (this is possible because $K$ is even). 

Finally, notice that the external padding area has enough space for the curves to make turns, as indicated in Figure \ref{fig:sockConstruction}. It is also easy to see that the remainder of the box can be filled in a way that doesn't create any $\ez$-effects (for instance, since $q$ is even, the $q \times q \times N$ filler subregions can be tiled in a trivial way).

Let $t_{K,L,M,N}$ denote the tiling obtained from this construction: we claim that, in essence, only $\ez$-effects in the contact zones count towards the twist. To see this, notice that the only areas outside contact zones where there is nonzero $\ez$-effects between dominoes are the external padding area and (possibly) the transition zones (for instance, if $q > q_1$ and $q_1$ is odd, there might be nonzero effects between the dominoes in the path and the dominoes that are used to fill the remaining space). For the transitions zones, each cycle has an even number of transitions: since $K$ and $q$ are even, it is easy to see that the effects from two consecutive transitions cancel out. As for the external padding zones, the effects in the four corners of the cycles cancel out if $N$ is even; if $N$ is odd, however, and, say, $q_i$ is even (exactly one of $q_1, q_2$ is even), then some effects that do not cancel, adding up to $K n_i q_i/2 \leq K \max(n_1, n_2) q/2 \leq (L+M)KN$.  

By construction, the effects in a single $K \times K$ contact zone equal $(K^2q_1q_2)/4$. Thus, letting $E = (L+M)KN$, we may write
\begin{align}
&\Tw(t_{K,L,M,N}) \geq \frac{n_1n_2K^2q_1q_2}{4} - E \nonumber\\
&\geq \floor{\frac{L - 2K + 2\ceil{N/4}}{2(K+2\ceil{N/4})}}\floor{\frac{M - 2K + 2\ceil{N/4}}{2(K+2\ceil{N/4})}}K^2\ceil{N/2}\floor{N/2} - E, 
\label{eq:constBound}
\end{align}
To see \ref{item:constFixedN}, let $N \geq 2$ be fixed.
If, given $L,M$, we let let $K = K(L,M) = 2 \floor{\sqrt{\min(L,M)}}$, \eqref{eq:constBound} yields that
$$\lim_{L,M \to \infty} \frac{\Tw(t_{K,L,M,N})}{LMN^2} = \lim_{K \to \infty} \frac{K^2 \ceil{N/2}\floor{N/2}}{4(K+2\ceil{N/4})^2N^2} = \frac{\ceil{N/2}\floor{N/2}}{4N^2},$$
which, together with Lemma \ref{lemma:maxTwist}, implies \ref{item:constFixedN}.

For \ref{item:constVariableN}, let $L = \floor{\lambda N}$, $M = \floor{\mu N}$ and $K = K(\lambda, \mu, N) = 2N \floor{\sqrt{\min(\lambda,\mu)}}$.
Then, as $\lambda, \mu, N \to \infty$ we have $K/N \to \infty$ and $\frac{\Tw(t_{K,L,M,N})}{LMN^2} \approx \frac{\ceil{N/2}\floor{N/2}}{4N^2} \to 1/16$. 
\end{proof}

\section{General bounds for boxes}

\begin{lemma}
\label{lemma:techUpperBound}
Let $L,M,N$ be integers, at least one of them even. Then
$$\maxtw(L,M,N) \leq \frac{LMN}{4}\max_{\substack{\alpha,\beta,\gamma \geq 0\\ \alpha + \beta + \gamma = 1}} \min\left(\frac{N\alpha\beta}{\alpha+\beta}, \frac{M\alpha\gamma}{\alpha+\gamma}, \frac{L\beta\gamma}{\beta+\gamma}\right).$$
\end{lemma}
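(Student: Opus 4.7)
The plan is to refine the single-axis bound in the proof of Lemma \ref{lemma:maxTwist} into three bounds (one per axis) expressed in terms of the numbers $N_{\vu}$ of dominoes in each direction, and then to recognize those three bounds as the three arguments of the $\min$ on the right-hand side. Concretely, for a tiling $t$ of $\cB = [0,L] \times [0,M] \times [0,N]$, let
$$N_{\ex} = \#\{d \in t : \tv(d) = \pm \ex\}, \quad N_{\ey} = \#\{d \in t : \tv(d) = \pm \ey\}, \quad N_{\ez} = \#\{d \in t : \tv(d) = \pm \ez\},$$
so $N_{\ex} + N_{\ey} + N_{\ez} = LMN/2$. Define $\alpha = 2N_{\ex}/(LMN)$, $\beta = 2N_{\ey}/(LMN)$, $\gamma = 2N_{\ez}/(LMN)$; then $\alpha + \beta + \gamma = 1$. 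Since the right-hand side is the \emph{maximum} of $\min(\cdots)$ over all such triples, it suffices to prove the three inequalities
$$|\Tw(t)| \leq \frac{N \cdot N_{\ex} N_{\ey}}{2(N_{\ex}+N_{\ey})}, \qquad |\Tw(t)| \leq \frac{M \cdot N_{\ex} N_{\ez}}{2(N_{\ex}+N_{\ez})}, \qquad |\Tw(t)| \leq \frac{L \cdot N_{\ey} N_{\ez}}{2(N_{\ey}+N_{\ez})},$$
since, after substituting $\alpha,\beta,\gamma$, these are exactly $|\Tw(t)| \leq \frac{LMN}{4} \cdot \frac{N\alpha\beta}{\alpha+\beta}$ and its cyclic variants.

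\textbf{Proof of the first inequality.} By symmetry, it is enough to prove the first. Following the proof of Lemma \ref{lemma:maxTwist}, for each $(x,y)$ let $a_{xy} = n_{\ex}(x,y)$, $b_{xy} = n_{\ey}(x,y)$; then
$$|\Tw(t)| = |T^{\ez}(t)| \leq \frac{1}{4} \sum_{x,y} a_{xy} b_{xy},$$
with the constraints $a_{xy} + b_{xy} \leq N$, $\sum a_{xy} = 2N_{\ex}$, and $\sum b_{xy} = 2N_{\ey}$. The key step is to bound the right-hand side sharply. Applying the Cauchy--Schwarz inequality in the form $\left(\sum a_{xy}\right)^2 \leq \left(\sum \frac{a_{xy}^2}{a_{xy}+b_{xy}}\right)\left(\sum (a_{xy}+b_{xy})\right)$ (summing over cells where $a_{xy}+b_{xy} > 0$, and using the analogous bound for $b_{xy}$), one obtains
$$\sum \frac{a_{xy}^2}{a_{xy}+b_{xy}} \geq \frac{2N_{\ex}^2}{N_{\ex}+N_{\ey}}, \qquad \sum \frac{b_{xy}^2}{a_{xy}+b_{xy}} \geq \frac{2N_{\ey}^2}{N_{\ex}+N_{\ey}}.$$
Since $\sum(a_{xy}+b_{xy}) = 2(N_{\ex}+N_{\ey})$ and $\frac{2a_{xy}b_{xy}}{a_{xy}+b_{xy}} = (a_{xy}+b_{xy}) - \frac{a_{xy}^2+b_{xy}^2}{a_{xy}+b_{xy}}$, summing yields
$$\sum \frac{a_{xy} b_{xy}}{a_{xy}+b_{xy}} \leq \frac{2 N_{\ex} N_{\ey}}{N_{\ex}+N_{\ey}}.$$

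\textbf{Conclusion.} Finally, using $a_{xy}+b_{xy} \leq N$ in every cell,
$$\sum a_{xy} b_{xy} = \sum (a_{xy}+b_{xy}) \cdot \frac{a_{xy}b_{xy}}{a_{xy}+b_{xy}} \leq N \sum \frac{a_{xy}b_{xy}}{a_{xy}+b_{xy}} \leq \frac{2N \cdot N_{\ex} N_{\ey}}{N_{\ex}+N_{\ey}},$$
so $|\Tw(t)| \leq \frac{N \cdot N_{\ex} N_{\ey}}{2(N_{\ex}+N_{\ey})}$, as desired. Degenerate cases (e.g. $N_{\ex}+N_{\ey}=0$, in which case every horizontal pair is parallel and $T^{\ez}(t)=0$) are handled by the convention $\alpha\beta/(\alpha+\beta) := 0$ when $\alpha=\beta=0$.

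\textbf{Main obstacle.} The bookkeeping of Lemma \ref{lemma:maxTwist} gives a quick bound $|\Tw(t)| \leq \frac{N(N_{\ex}+N_{\ey})}{8}$, which is weaker than what the lemma requires. The real work is the two-step Cauchy--Schwarz argument above, which must reconcile the global constraints $\sum a_{xy} = 2N_{\ex}$, $\sum b_{xy} = 2N_{\ey}$ with the local bound $a_{xy}+b_{xy} \leq N$ to extract the $\frac{N_{\ex}N_{\ey}}{N_{\ex}+N_{\ey}}$ factor; the naive use of AM--GM loses a factor of $2$ in the cube-symmetric regime. Once this sharp per-axis bound is in hand, the passage to the $\max$--$\min$ formulation is purely cosmetic.
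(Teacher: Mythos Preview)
Your proof is correct and reaches the same per-axis inequality
\[
\tfrac{1}{4}\sum_{x,y} a_{xy}b_{xy} \;\leq\; \frac{N\,N_{\ex}N_{\ey}}{2(N_{\ex}+N_{\ey})}
\]
that the paper establishes, but the method for that key step is genuinely different. The paper treats $\sum a_{xy}b_{xy}$ as a constrained optimization problem over real $\bar a_{xy},\bar b_{xy}$ and argues structurally that at a maximum essentially all occupied columns satisfy $\bar a_{xy}+\bar b_{xy}=N$ with the common split $\bar a_{xy}=Na/(a+b)$, $\bar b_{xy}=Nb/(a+b)$; the bound then follows by direct evaluation. Your route is cleaner: applying Titu's form of Cauchy--Schwarz to get $\sum a_{xy}^2/(a_{xy}+b_{xy}) \geq 2N_{\ex}^2/(N_{\ex}+N_{\ey})$ (and symmetrically for $b$), combining via the identity $2ab/(a+b)=(a+b)-(a^2+b^2)/(a+b)$, and then factoring out the columnwise bound $a_{xy}+b_{xy}\leq N$. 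This avoids the case analysis of the optimizer and is shorter; the paper's optimization argument, on the other hand, makes the equality case visible (all mass concentrated on columns with $a_{xy}:b_{xy}=N_{\ex}:N_{\ey}$ and $a_{xy}+b_{xy}=N$), which is informative for the later analysis of when the upper bound is tight. The remaining passage to the $\max$--$\min$ formulation is identical in both approaches.
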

\begin{proof}
Let $t$ be a tiling of an $L \times M \times N$ box, and let $a,b,c$ denote the number of dominoes parallel to $\ex, \ey, \ez$, respectively: notice that $2a + 2b + 2c = LMN$.

For each pair of integers $0 \leq x < L, 0 \leq y < M$, let $a(x,y)$ (resp. $b(x,y)$) denote the number of dominoes parallel to $\ex$ (resp. $\ey$) which contain a cube of the form $C(\plshalf{x}, \plshalf{y} \plshalf{z}$) for some $z$. Then $0 \leq a(x,y) + b(x,y) \leq N$ and $\sum_{x,y} a(x,y) = 2a$, $\sum_{x,y} b(x,y) = 2b$.

Notice that $\Tw(t) \leq \frac{1}{4} \sum_{x,y} a(x,y) b(x,y)$. We claim that $\frac{1}{4} \sum_{x,y} a(x,y) b(x,y) \leq \frac{Nab}{2(a+b)}$. To see this, we will consider the following relaxed optimization problem: we want to find the maximum value of $\frac{1}{4}\sum_{x,y} \bar{a}(x,y)\bar{b}(x,y)$, subject to the conditions that $\bar{a}(x,y)$ and $\bar{b}(x,y)$ are nonnegative real numbers, $\bar{a}(x,y) + \bar{b}(x,y) \leq N$, $\sum_{x,y} \bar{a}(x,y) = 2a$, $\sum_{x,y} \bar{b}(x,y) = 2b$.

Let $(\bar{a}(x,y), \bar{b}(x,y))$ be such that $\sum_{x,y} \bar{a}(x,y)\bar{b}(x,y)$ is maximum. We claim that there is at most one pair $(x,y)$ such that $0 < \bar{a}(x,y) + \bar{b}(x,y) < N$. To see this, suppose that $(x_1,y_1)$ and $(x_2,y_2)$ are two such pairs, and suppose $\bar{a}(x_1,y_1) + \bar{b}(x_1,y_1) \geq \bar{a}(x_2,y_2) + \bar{b}(x_2,y_2)$. Then for any $\epsilon > 0$ we have $(\bar{a}(x_1,y_1) + \epsilon)(\bar{b}(x_1, y_1) + \epsilon) + (\bar{a}(x_2,y_2) - \epsilon)(\bar{b}(x_2, y_2) - \epsilon) > \bar{a}(x_1,y_1)\bar{b}(x_1, y_1) + \bar{a}(x_2,y_2)\bar{b}(x_2, y_2)$, which contradicts the maximality of $\bar{a}, \bar{b}$.  

The latter claim implies that there are exactly $\ceil{(2a+2b)/N}$ pairs $(x,y)$ such that $\bar{a}(x,y) + \bar{b}(x,y) > 0$, and exactly $\floor{(2a+2b)/N} = K$ pairs $(x,y)$ where $\bar{a}(x,y) + \bar{b}(x,y) = N$; if we consider two of these $K$ positions $(x_1,y_1)$ and $(x_2,y_2)$, we see that we must have $\bar{a}(x_1,y_1) = \bar{a}(x_2,y_2)$ and $\bar{b}(x_1,y_1) = \bar{b}(x_2,y_2)$ (if that's not the case, it's easy to see that it cannot be a maximum), so that, in each of these $K$ pairs, $\bar{a}(x,y) = Na / (a+b)$ and $\bar{b}(x,y) = Nb / (a+b)$. If the fractional part $s$ of $(2a+2b)/N$ is not zero, then there is exactly one position where $\bar{a}(x,y) = Nas/(a+b)$ and $\bar{b}(x,y) = Nbs/(a+b)$. Consequently, 
$$\frac{1}{4} \sum_{x,y} \bar{a}(x,y) \bar{b}(x,y) = \frac{1}{4}(K + s^2)\frac{N^2ab}{(a+b)^2} \leq \frac{2a + 2b}{4N} \cdot \frac{N^2ab}{(a+b)^2} = \frac{Nab}{2(a+b)},   $$
establishing the first claim.

Finally, by symmetry (by looking at effects along other directions), we see that $\Tw(t) \leq \half\min\left(\frac{Nab}{a+b}, \frac{Mac}{a+c}, \frac{Lbc}{b+c}\right)$, and
\begin{align*}
 \maxtw(L,M,N) &\leq \half\max_{\substack{a,b,c \geq 0\\ a + b + c = LMN/2}} \min\left(\frac{Nab}{a+b}, \frac{Mac}{a+c}, \frac{Lbc}{b+c}\right)\\
&\leq \frac{LMN}{4}\max_{\substack{\alpha,\beta,\gamma \geq 0\\ \alpha + \beta + \gamma = 1}} \min\left(\frac{N\alpha\beta}{\alpha+\beta}, \frac{M\alpha\gamma}{\alpha+\gamma}, \frac{L\beta\gamma}{\beta+\gamma}\right),
\end{align*}
completing the proof.
\end{proof}

\begin{lemma}
\label{lemma:techOrderOptima}
Suppose $N \leq M \leq L$, and suppose $(\bar{\alpha}, \bar{\beta}, \bar{\gamma})$ realize the maximum for the problem
\begin{equation} 
\max_{\substack{\alpha,\beta,\gamma \geq 0\\ \alpha + \beta + \gamma = 1}} \min\left(\frac{N\alpha\beta}{\alpha+\beta}, \frac{M\alpha\gamma}{\alpha+\gamma}, \frac{L\beta\gamma}{\beta+\gamma}\right).
\label{eq:optimalLMN}
\end{equation}
Then 
$$\frac{N\baralpha\barbeta}{\baralpha+\barbeta} = \frac{M\baralpha\bargamma}{\baralpha+\bargamma} \leq \frac{L\barbeta\bargamma}{\barbeta+\bargamma}.$$
\end{lemma}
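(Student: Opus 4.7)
Write $F_1 = \frac{N\alpha\beta}{\alpha+\beta}$, $F_2 = \frac{M\alpha\gamma}{\alpha+\gamma}$, $F_3 = \frac{L\beta\gamma}{\beta+\gamma}$, and $g = \min(F_1, F_2, F_3)$. The plan is to apply the Karush--Kuhn--Tucker (KKT) necessary conditions to the maximization of $g$ on the compact simplex $\{\alpha, \beta, \gamma \geq 0, \ \alpha + \beta + \gamma = 1\}$. Since each $F_i$ vanishes as soon as one of its two variables is zero, $g \equiv 0$ on the boundary of the simplex, so the maximizer $(\bar\alpha, \bar\beta, \bar\gamma)$ lies in the open simplex and $t^* := g(\bar\alpha, \bar\beta, \bar\gamma) > 0$. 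Setting $B := \{i : F_i(\bar\alpha,\bar\beta,\bar\gamma) = t^*\}$ and reformulating as ``maximize $t$ subject to $F_i \geq t$ and $\alpha+\beta+\gamma=1$'', KKT provides nonnegative multipliers $\mu_i$ ($i \in B$), not all zero, and $\lambda \in \RR$ with $\sum_{i \in B} \mu_i \nabla F_i = \lambda(1,1,1)$.

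A direct computation gives $\nabla F_1 = \frac{N}{(\alpha+\beta)^2}(\beta^2, \alpha^2, 0)$, $\nabla F_2 = \frac{M}{(\alpha+\gamma)^2}(\gamma^2, 0, \alpha^2)$, $\nabla F_3 = \frac{L}{(\beta+\gamma)^2}(0, \gamma^2, \beta^2)$; no single gradient is proportional to $(1,1,1)$, so $|B| \geq 2$. If $|B| = 3$ the conclusion $F_1 = F_2 = F_3$ is immediate; otherwise $|B| = 2$, and solving the two-term Lagrange system in each of the three subcases yields $\alpha^2 = \beta^2 + \gamma^2$ when $B = \{1,2\}$, $\beta^2 = \alpha^2 + \gamma^2$ when $B = \{1,3\}$, and $\gamma^2 = \alpha^2 + \beta^2$ when $B = \{2,3\}$ (in each case all multipliers are forced to be strictly positive, since $\lambda = 0$ would imply every $\mu_i = 0$). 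The subcase $B = \{1,2\}$ is precisely the desired conclusion $F_1 = F_2 \leq F_3$.

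It remains to rule out the remaining two subcases under $N \leq M \leq L$. For $B = \{1,3\}$, the relation $\beta^2 = \alpha^2 + \gamma^2$ gives $\beta \geq \alpha$ and $\beta \geq \gamma$, while $F_1 = F_3$ expands (after cancelling $\beta$) to $N\alpha(\beta+\gamma) = L\gamma(\alpha+\beta)$, which rearranges to $\beta(N\alpha - L\gamma) = (L-N)\alpha\gamma \geq 0$ and therefore forces $N\alpha \geq L\gamma$, hence $\alpha \geq (L/N)\gamma \geq \gamma$. Substituting back, $M\gamma(\alpha+\beta) = (M/L)\cdot N\alpha(\beta+\gamma) \leq N\alpha(\beta+\gamma) \leq N\beta(\alpha+\gamma)$, where the last inequality uses $\beta \geq \alpha$; this reads $F_2 \leq F_1$, contradicting the requirement $F_2 > F_1$ forced by $2 \notin B$. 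For $B = \{2,3\}$, the relation $\gamma^2 = \alpha^2 + \beta^2$ gives $\gamma \geq \beta$, so $M\gamma - N\beta \geq (M-N)\beta \geq 0$; combined with $(N-M)\beta\gamma \leq 0$ this yields $\alpha(M\gamma - N\beta) \geq (N-M)\beta\gamma$, i.e.\ $F_1 \leq F_2$, contradicting $F_1 > F_2$. The main anticipated obstacle is the clean KKT setup for the nonsmooth max--min objective; once the binding-set reduction is carried out, the algebra in the elimination step is immediate.
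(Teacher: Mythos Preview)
Your proof is correct, but it takes a genuinely different route from the paper. The paper argues by direct perturbation: assuming (say) $F_1 > F_2$ at the maximizer, it observes that then $\barbeta > \bargamma$ and that shifting mass from $\barbeta$ to $\bargamma$ strictly increases $F_2$ and $F_3$ while keeping $F_1$ above the old minimum, contradicting optimality; two more such perturbation steps establish $F_1 \leq F_2 \leq F_3$ and then $F_1 = F_2$. Your approach instead invokes the KKT/Clarke stationarity conditions for the nonsmooth objective $g = \min_i F_i$, reduces to a binding-set analysis, extracts the algebraic identities $\alpha^2 = \beta^2 + \gamma^2$ (etc.) in each two-element case, and then eliminates the two unwanted cases by short inequality chains using $N \leq M \leq L$.

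Both arguments are sound. The paper's perturbation proof is entirely elementary and self-contained, requiring no optimization machinery. Your KKT route is heavier in setup (you need the Clarke subdifferential description $\partial g = \operatorname{conv}\{\nabla F_i : i \in B\}$, or equivalently the $(\alpha,\beta,\gamma,t)$-reformulation with a constraint-qualification check), but it yields the extra structural relations like $\baralpha^2 = \barbeta^2 + \bargamma^2$ in the case $B = \{1,2\}$, which could be useful if one wanted to push the analysis further (though the paper does not need them). One minor point worth making explicit in your write-up: the stationarity condition $\sum_{i\in B}\mu_i\nabla F_i = \lambda(1,1,1)$ with $\mu_i \geq 0$ not all zero follows without LICQ from the Clarke first-order condition at an interior constrained maximum, since $\nabla(\alpha+\beta+\gamma) = (1,1,1) \neq 0$; this preempts the ``clean KKT setup'' obstacle you flag.
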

\begin{proof}
First, observe that $\baralpha\barbeta\bargamma > 0$.
Suppose, by contradiction, that $\frac{N\baralpha\barbeta}{\baralpha+\barbeta} > \frac{M\baralpha\bargamma}{\baralpha+\bargamma}$. Then $\barbeta > \bargamma$, and we can pick a sufficiently small $\epsilon > 0$ such that the triple $(\baralpha, \barbeta - \epsilon, \bargamma + \epsilon)$ satisfies 
$\frac{N\baralpha\barbeta}{\baralpha+\barbeta} > \frac{N\baralpha(\barbeta-\epsilon)}{\baralpha+\barbeta-\epsilon} > \frac{M\baralpha(\bargamma+\epsilon)}{\baralpha+\bargamma + \epsilon} > \frac{M\baralpha\bargamma}{\baralpha+\bargamma}$ 
and
$
\frac{L(\barbeta-\epsilon)(\bargamma+\epsilon)}{\barbeta+\bargamma} = \frac{L(\barbeta\bargamma + \epsilon(\barbeta - \bargamma) - \epsilon^2)}{\barbeta+\bargamma} > \frac{L\barbeta\bargamma}{\barbeta+\bargamma},$ which is a contradiction. A similar analysis shows that we cannot have $\frac{L\barbeta\bargamma}{\barbeta+\bargamma} > \frac{M\baralpha\bargamma}{\baralpha+\bargamma}$, so that we necessarily have $\frac{N\baralpha\barbeta}{\baralpha+\barbeta} \leq \frac{M\baralpha\bargamma}{\baralpha+\bargamma} \leq \frac{L\barbeta\bargamma}{\barbeta+\bargamma}$.

Finally, if $\frac{N\baralpha\barbeta}{\baralpha+\barbeta} < \frac{M\baralpha\bargamma}{\baralpha+\bargamma} \leq \frac{L\barbeta\bargamma}{\barbeta+\bargamma}$, then clearly the triple $(\baralpha+\epsilon, \barbeta, \bargamma-\epsilon)$ would have a strictly larger objective function for sufficiently small $\epsilon > 0$, and thus we must have $\frac{N\baralpha\barbeta}{\baralpha+\barbeta} = \frac{M\baralpha\bargamma}{\baralpha+\bargamma}$.
\end{proof}
\begin{coro}
Suppose $N \leq M \leq L$, at least one of them even. Then
$$\maxtw(L,M,N) < \frac{1}{16}\left(1 - \frac{N}{4M}\right)LMN^2. $$
\end{coro}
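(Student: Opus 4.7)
The plan is to combine Lemma \ref{lemma:techUpperBound} with Lemma \ref{lemma:techOrderOptima} in order to reduce the corollary to a scale-invariant one-variable inequality, and then to handle that inequality via an explicit algebraic decomposition. Lemma \ref{lemma:techUpperBound} gives $\maxtw(L,M,N) \leq \frac{LMN}{4}K^*$, where $K^*$ denotes the max-min value appearing in its statement; by Lemma \ref{lemma:techOrderOptima}, any optimal triple $(\baralpha,\barbeta,\bargamma)$ satisfies
$$
K^* \;=\; \frac{N\baralpha\barbeta}{\baralpha+\barbeta} \;=\; \frac{M\baralpha\bargamma}{\baralpha+\bargamma}.
$$
Discarding the inequality $K^* \leq L\barbeta\bargamma/(\barbeta+\bargamma)$ can only enlarge $K^*$, so it is enough to bound $K^*$ using only the two equalities above together with the constraint $\baralpha+\barbeta+\bargamma = 1$, and show that the resulting maximum is strictly smaller than $\frac{N}{4}\bigl(1 - \tfrac{N}{4M}\bigr)$.

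Taking reciprocals in both equalities and writing $x = K^*/\baralpha$, I would obtain $\baralpha = K^*/x$, $\barbeta = K^*/(N-x)$, $\bargamma = K^*/(M-x)$, with $x \in (0,N)$ (the constraint $M-x>0$ is automatic since $M\geq N$). The normalization $\baralpha+\barbeta+\bargamma = 1$ then reads $K^* = 1/h(x)$, where
$$
h(x) \;=\; \frac{1}{x} + \frac{1}{N-x} + \frac{1}{M-x}.
$$
After substituting $z = x/N$ and $r = M/N \geq 1$, the target inequality $K^* < \frac{N(4M-N)}{16M}$ rescales to
$$
\frac{1}{z} + \frac{1}{1-z} + \frac{1}{r-z} \;>\; \frac{16r}{4r-1}, \qquad z \in (0,1),\ r \geq 1.
$$

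The main obstacle is proving this last inequality sharply enough to handle every $(z,r)$. I would use the identities $\frac{16r}{4r-1} = 4 + \frac{4}{4r-1}$, $\frac{1}{z(1-z)} - 4 = \frac{(2z-1)^2}{z(1-z)}$, and $\frac{1}{r-z} - \frac{4}{4r-1} = \frac{4z-1}{(r-z)(4r-1)}$ to rewrite the difference of the two sides as
$$
\frac{(2z-1)^2}{z(1-z)} \;+\; \frac{4z-1}{(r-z)(4r-1)}.
$$
The first summand is everywhere nonnegative. For $z \geq 1/4$ the second summand is also nonnegative, and the two cannot vanish at once (that would require both $z=1/2$ and $z=1/4$), so the total is strictly positive. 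For $z \in (0,1/4)$ I would use the crude estimates $(1-2z)^2 > 1/4$ and $z(1-z) < 3/16$ to show that the first summand strictly exceeds $4/3$, and the estimates $r - z > 3/4$ and $4r-1 \geq 3$ to bound the magnitude of the (now negative) second summand above by $4/9$; the sum is therefore at least $4/3 - 4/9 = 8/9 > 0$. Feeding this back through Lemmas \ref{lemma:techUpperBound} and \ref{lemma:techOrderOptima} yields $\maxtw(L,M,N) \leq \frac{LMN}{4}K^* < \frac{LMN}{4}\cdot\frac{N(4M-N)}{16M} = \frac{1}{16}\bigl(1 - \tfrac{N}{4M}\bigr)LMN^2$, which is the claim.
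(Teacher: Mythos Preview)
Your proof is correct. Both you and the paper start from the same two lemmas and reduce to bounding $N\alpha\beta/(\alpha+\beta)$ over triples on the simplex satisfying the equality $\frac{N\alpha\beta}{\alpha+\beta}=\frac{M\alpha\gamma}{\alpha+\gamma}$, but the treatments of this constrained problem diverge.

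The paper handles it in essentially four lines by contradiction: if $\frac{\alpha\beta}{\alpha+\beta}\geq\frac14(1-\mu/4)$ (with $\mu=N/M$), then the elementary bound $\frac{\alpha\beta}{\alpha+\beta}\leq\frac{\alpha+\beta}{4}$ forces $\alpha+\beta\geq1-\mu/4$, hence $\gamma\leq\mu/4$; one then checks directly that $\frac{\alpha\gamma}{\alpha+\gamma}<\gamma(1-\gamma)\leq\frac{\mu}{4}(1-\mu/4)\leq\frac{\mu\alpha\beta}{\alpha+\beta}$, contradicting the equality constraint. Your route instead parametrizes the constraint set via reciprocals, obtaining $K^*=1/h(x)$ with $h(x)=\tfrac1x+\tfrac1{N-x}+\tfrac1{M-x}$, and then proves the resulting one-variable inequality through the algebraic decomposition $\frac{(2z-1)^2}{z(1-z)}+\frac{4z-1}{(r-z)(4r-1)}$ and a case split at $z=1/4$. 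The paper's argument is shorter and more conceptual (it isolates exactly why the bound holds: AM--GM on $\alpha,\beta$ pushes $\gamma$ too small for the constraint to survive), while your parametrization is more explicit and would adapt more readily if one wanted, say, to compute the exact constrained maximum as a function of $r$ rather than merely bound it.
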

\begin{proof}
By Lemma \ref{lemma:techOrderOptima}, the maximum value of \eqref{eq:optimalLMN} can be bounded above by the problem
\begin{equation}
\max_{\substack{\alpha,\beta,\gamma \geq 0\\ \alpha + \beta + \gamma = 1}} \frac{N\alpha\beta}{\alpha + \beta}, \quad \text{ subject to } \frac{N\alpha\beta}{\alpha+\beta} = \frac{M\alpha\gamma}{\alpha + \gamma}.
\label{eq:partialOptimalLMN}
\end{equation}
Let $\mu = N/M$, so that $0 < \mu \leq 1$, and denote by $A(\mu)$ the maximum value of
$\frac{\alpha\beta}{\alpha + \beta}$ subject to the condition that $\frac{\mu\alpha\beta}{\alpha+\beta} = \frac{\alpha\gamma}{\alpha + \gamma}$: we claim that $A(\mu) < \frac{1}{4}(1 - \mu/4)$. To see this, suppose $\frac{\alpha\beta}{\alpha + \beta} \geq \frac{1}{4}(1 - \mu/4)$ for some $\alpha, \beta$: a straightforward calculation shows that $\alpha + \beta \geq 1 - \mu/4$, so that $\gamma = 1 - \alpha - \beta \leq \mu/4$ and so
$$\frac{\alpha \gamma}{\alpha + \gamma} < \gamma(1-\gamma) < \frac{\mu}{4}(1 - \mu/4) \leq  \frac{\mu\alpha\beta}{\alpha + \beta},$$ 
hence the triple $\alpha,\beta,\gamma$ cannot satisfy the condition $\frac{N\alpha\beta}{\alpha+\beta} = \frac{M\alpha\gamma}{\alpha + \gamma}.$
Therefore, we have that the maximum of \eqref{eq:partialOptimalLMN} is less than $\frac{N}{4}(1 - N/(4M))$. By Lemma \ref{lemma:techUpperBound}, 
$$\maxtw(L,M,N) < \frac{LMN}{4} \cdot \frac{N}{4}\left(1 - \frac{N}{4M}\right),$$
completing the proof.
\end{proof}

\begin{coro}[Case $L = M$]
\label{coro:caseLEqualsM}
Suppose $N \leq M$, at least one of them even, and let $\mu = N/M$. Then
$$\maxtw(M,M,N) \leq \frac{2 - \mu}{8(4 - \mu)} M^2N^2.$$
\end{coro}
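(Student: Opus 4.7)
The plan is to specialise Lemma~\ref{lemma:techUpperBound} to the case $L=M$, thereby reducing the statement to a two-dimensional constrained optimisation problem that can be solved in closed form. Writing
$$A := \max_{\substack{\alpha,\beta,\gamma\geq 0\\ \alpha+\beta+\gamma=1}} \min\!\left(\frac{N\alpha\beta}{\alpha+\beta},\; \frac{M\alpha\gamma}{\alpha+\gamma},\; \frac{M\beta\gamma}{\beta+\gamma}\right),$$
Lemma~\ref{lemma:techUpperBound} gives $\maxtw(M,M,N) \le \frac{M^2N}{4}\cdot A$, so it suffices to prove $A \le \frac{N(2-\mu)}{2(4-\mu)}$. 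Let $(\baralpha,\barbeta,\bargamma)$ be a maximiser (it exists by compactness and continuity).

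The key step is to show that $\baralpha = \barbeta$. Applying Lemma~\ref{lemma:techOrderOptima} (with the ordering $N\le M\le L=M$), we obtain
$$\frac{N\baralpha\barbeta}{\baralpha+\barbeta}\;=\;\frac{M\baralpha\bargamma}{\baralpha+\bargamma}\;\le\;\frac{M\barbeta\bargamma}{\barbeta+\bargamma};$$
dividing the last inequality by $M\bargamma>0$ (one checks $\bargamma>0$ at any optimum) gives $\baralpha\le\barbeta$. Suppose for contradiction that $\baralpha<\barbeta$; then the last inequality above is strict. I would consider the perturbation $(\baralpha+\epsilon,\barbeta-\epsilon,\bargamma)$ for small $\epsilon>0$. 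The product $(\baralpha+\epsilon)(\barbeta-\epsilon) = \baralpha\barbeta+\epsilon(\barbeta-\baralpha)-\epsilon^2$ strictly exceeds $\baralpha\barbeta$, so the first quantity strictly increases; since $x\mapsto\frac{x\bargamma}{x+\bargamma}$ has positive derivative, the second also strictly increases; and the third decreases, yet by strict inequality it remains above the other two for $\epsilon$ small. Hence the minimum strictly increases, contradicting optimality.

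With $\baralpha=\barbeta$, the equality constraint $\frac{N\baralpha}{2}=\frac{M\baralpha\bargamma}{\baralpha+\bargamma}$ simplifies to $N(\baralpha+\bargamma)=2M\bargamma$, i.e.\ $\bargamma=\frac{\mu}{2-\mu}\baralpha$. Combined with $2\baralpha+\bargamma=1$, this yields
$$\baralpha=\barbeta=\frac{2-\mu}{4-\mu},\qquad \bargamma=\frac{\mu}{4-\mu},$$
both nonnegative since $0<\mu\le 1$. Therefore $A = \frac{N\baralpha}{2} = \frac{N(2-\mu)}{2(4-\mu)}$, and Lemma~\ref{lemma:techUpperBound} yields
$$\maxtw(M,M,N)\;\le\;\frac{M^2N}{4}\cdot\frac{N(2-\mu)}{2(4-\mu)}\;=\;\frac{(2-\mu)\,M^2N^2}{8(4-\mu)},$$
as claimed. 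The only delicate point is the perturbation argument establishing $\baralpha=\barbeta$; once this symmetrisation is in hand, the remainder is elementary algebra.
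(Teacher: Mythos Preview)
Your proof is correct and follows essentially the same approach as the paper: both reduce via Lemma~\ref{lemma:techUpperBound} to the optimisation problem, use a perturbation argument (you make explicit what the paper covers by ``a very similar analysis to the one done in the proof of Lemma~\ref{lemma:techOrderOptima}'') to force $\baralpha=\barbeta$, and then solve the resulting one-variable problem to obtain $\baralpha=\frac{2-\mu}{4-\mu}$. Your organisation---first citing Lemma~\ref{lemma:techOrderOptima} to get $\baralpha\le\barbeta$, then perturbing to rule out strict inequality---is slightly more explicit than the paper's, but the substance is the same.
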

In particular, $\maxtw(N,N,N) \leq N^4/24$.
\begin{proof}
Suppose $(\baralpha, \barbeta, \bargamma)$ realize the maximum of \eqref{eq:optimalLMN} for $L = M$. A very similar analysis to the one done in the proof of Lemma \ref{lemma:techOrderOptima} shows that 
$\frac{N\baralpha\barbeta}{\baralpha+\barbeta} = \frac{M\baralpha\bargamma}{\baralpha+\bargamma} = \frac{M\barbeta\bargamma}{\barbeta+\bargamma},$ so that $\baralpha = \barbeta$ and $\frac{N}{2} = \frac{M(1-2\baralpha)}{1-\baralpha}$. Writing $\mu = N/M$, we see that $\baralpha = \frac{2-\mu}{4-\mu}$ and so $\frac{\baralpha\barbeta}{\baralpha+\barbeta} = \frac{\baralpha}{2} = \frac{2-\mu}{2(4-\mu)}$. By Lemma \ref{lemma:techUpperBound}, we're done.
\end{proof}

\begin{coro}[Case $M = N$]
Suppose $N \leq L$, at least one of them even, and let $\lambda = N/L$. Then $\maxtw(L,N,N) \leq C(\lambda) LN^3,$ where
$$C(\lambda) = 
\begin{cases} 
\frac{3 - 2\sqrt{2}}{4}, &\lambda \leq \frac{2+\sqrt{2}}{4};\\
\frac{2\lambda - 1}{8\lambda(4\lambda - 1)}, &  \frac{2+\sqrt{2}}{4} < \lambda \leq 1.
\end{cases}
$$ 
\end{coro}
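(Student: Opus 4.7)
The plan is to apply Lemma \ref{lemma:techUpperBound} with the triple $(L,M,N)=(L,N,N)$, which gives
$$\maxtw(L,N,N) \;\leq\; \frac{LN^2}{4}\cdot F, \qquad F = \max_{\alpha+\beta+\gamma=1}\min\!\left(\frac{N\alpha\beta}{\alpha+\beta},\;\frac{N\alpha\gamma}{\alpha+\gamma},\;\frac{L\beta\gamma}{\beta+\gamma}\right).$$
Since $M=N$, the first two expressions in the minimum are symmetric in $\beta$ and $\gamma$. Applying Lemma \ref{lemma:techOrderOptima} (with the sorted ordering $N\leq N\leq L$), the first two terms are equal at the optimum; since $\alpha>0$ there, this forces $\beta=\gamma$, and the inequality from the lemma becomes $\frac{N\alpha\beta}{\alpha+\beta}\leq \frac{L\beta}{2}$. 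Writing $\alpha=1-2\beta$ reduces the problem to the one-variable maximization of $h(\beta):=N(1-2\beta)\beta/(1-\beta)$ on $\beta\in(0,1/2]$, subject to $h(\beta)\leq L\beta/2$.

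Next I would solve the \emph{unconstrained} problem. Setting $h'(\beta)=0$ yields $2\beta^2-4\beta+1=0$, whose relevant root is $\beta^\ast=(2-\sqrt{2})/2$, with $\alpha^\ast=\sqrt{2}-1$; direct substitution shows $h(\beta^\ast)=N(3-2\sqrt{2})$. I would then check feasibility: the constraint $h(\beta^\ast)\leq L\beta^\ast/2$ simplifies to $\alpha^\ast/\beta^\ast=\sqrt{2}\leq L/(2N-L)$, which after rearranging in terms of $\lambda=N/L$ is precisely $\lambda\leq (2+\sqrt{2})/4$. In this regime, Lemma \ref{lemma:techUpperBound} gives $\maxtw(L,N,N)\leq \tfrac{3-2\sqrt{2}}{4}LN^3$.

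For $\lambda>(2+\sqrt{2})/4$, the constraint is active, so I would impose $h(\beta)=L\beta/2$ (equivalently $(2N-L)\alpha=L\beta$) together with $\alpha+2\beta=1$ to get $\beta=(2N-L)/(4N-L)$ and $\alpha=L/(4N-L)$. Plugging into $h$ yields
$$h(\beta)=\frac{L(2N-L)}{2(4N-L)}=N\cdot\frac{2\lambda-1}{2\lambda(4\lambda-1)},$$
so Lemma \ref{lemma:techUpperBound} produces the bound $\maxtw(L,N,N)\leq \frac{2\lambda-1}{8\lambda(4\lambda-1)}LN^3$. The only real thing to verify — and the point most likely to hide an arithmetic slip — is that the threshold $\lambda=(2+\sqrt{2})/4$ arising from feasibility matches the point at which the two formulas for $C(\lambda)$ agree (so that $C$ is continuous); a short computation confirms that the constrained formula evaluates to $(3-2\sqrt{2})/4$ at $\lambda=(2+\sqrt{2})/4$, closing the argument.
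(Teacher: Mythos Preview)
Your proposal is correct and follows essentially the same route as the paper: both invoke Lemma~\ref{lemma:techOrderOptima} to force $\beta=\gamma$, reduce to a one-variable problem, locate the unconstrained optimum of $(1-2\beta)\beta/(1-\beta)$ at $\beta=1-\sqrt{2}/2$, and then split on whether the third inequality is slack or binding, obtaining the threshold $\lambda=(2+\sqrt{2})/4$. The only cosmetic difference is that the paper phrases the reduced problem as maximizing $\min(f,g)$ and uses the concavity of $f$ together with the monotonicity of $g$ to argue that the optimum sits either at the critical point of $f$ or at the crossing $f=g$; you phrase it as ``maximize $h$ subject to $h\leq L\beta/2$'', which is equivalent here but leaves slightly implicit why the optimum cannot lie in the region $h>L\beta/2$ (where $\min=L\beta/2$) --- one sentence noting that $L\beta/2$ is increasing and hence maximized on that region at the boundary would close that loop.
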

\begin{proof}
Let $(\baralpha, \barbeta, \bargamma)$ realize the maximum of \eqref{eq:optimalLMN} for $M = N$. By Lemma \ref{lemma:techOrderOptima}, $\frac{\baralpha\barbeta}{\baralpha+\barbeta} = \frac{\baralpha\bargamma}{\baralpha+\bargamma} \leq \frac{\barbeta\bargamma}{\lambda(\barbeta+\bargamma)},$
so that $\barbeta = \bargamma$. Therefore, the value
$
\max_{0 \leq \gamma \leq 1}\min\left(\frac{(1-2\gamma)\gamma}{1-\gamma}, \frac{\gamma}{2\lambda}\right)
$,
multiplied by $N$, equals the maximum in \eqref{eq:optimalLMN}. Let $f(\gamma) = \frac{(1-2\gamma)\gamma}{1-\gamma}$ and $g(\gamma) = \frac{\gamma}{2\lambda}$.  

Notice that $f$ is strictly concave in $(0,1)$. Since $f'(1 - \sqrt{2}/2) = 0$, it follows that this is the only maximum point of $f$.
Let $\gamma^* \in [0,1]$ be the largest value for which $f(\gamma^*) = g(\gamma^*)$ ($f(0) = g(0)$). Since for $\gamma \geq 1/2$ we have $f(\gamma) \leq 0 < g(\gamma)$, it follows that $\gamma^* < 1/2$ and $f(\gamma) < g(\gamma)$ whenever $\gamma > \gamma^*$. Therefore, if $\gamma^* \leq 1 - \sqrt{2}/2$, then $f(\gamma^*) < f(1 - \sqrt{2}/2) \leq g(1-\sqrt{2}/2)$, thus the maximum value is given by $f(1-\sqrt{2}/2)$. Otherwise, since $g$ is always increasing and $f$ is decreasing for $\gamma > 1 - \sqrt{2}/2$ (because it is concave), it follows that the maximum is given by $g(\gamma^*)$.

Solving the equation $f(\gamma) = g(\gamma)$ yields $\gamma^* = \max(\frac{2\lambda - 1}{4\lambda - 1},0)$, and $\frac{2\lambda - 1}{4\lambda - 1} = 1 - \sqrt{2}/2$ for $\lambda = \frac{2+\sqrt{2}}{4}$. Hence, the maximum is
$$\begin{cases}
f(1-\sqrt{2}/2) = 3 - 2\sqrt{2}, &\lambda \leq \frac{2+\sqrt{2}}{4};\\
g\left(\frac{2\lambda-1}{4\lambda-1}\right) = \frac{2\lambda - 1}{2\lambda(4\lambda - 1)}, & \frac{2+\sqrt{2}}{4} < \lambda \leq 1.
\end{cases}
$$
By Lemma \ref{lemma:techUpperBound}, we have
$$\maxtw(L,N,N) \leq\frac{LN^3}{4} \max_{0 \leq \gamma \leq 1}\min\left(\frac{(1-2\gamma)\gamma}{1-\gamma}, \frac{\gamma}{2\lambda}\right) \leq C(\lambda)LN^3,$$
completing the proof. 
\end{proof}

We shall now finish the discussion with a construction that yields a lower bound for the maximum twist of boxes. Let $a,b,c,d \in \NN$. The \emph{solenoid} with parameters $(a,b,c,d)$ is a tiling of the box $\cB = [0,2b+2c] \times [0,2b+d] \times [0,a+2c]$ whose twist is $\ceil{abcd/2}$. 

A \emph{wire loop} is a region which, after translation by vectors of integer coordinates and permutations of the axes, equals $([0,m] \times [0,n] \times [0,1]) \setminus ((1,m-1) \times(1,n-1) \times [0,1])$ as illustrated by Figure \ref{fig:onionRings}. Notice that a wire loop has exactly two tilings. 

\begin{figure}[ht]%
\centering
\subfloat[]{\includegraphics[width=0.3\columnwidth]{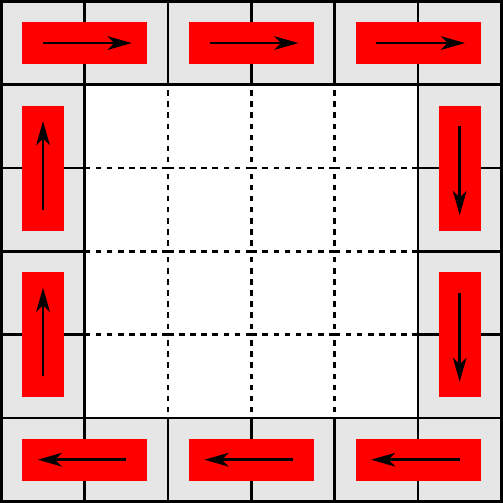}} \qquad \qquad \qquad \qquad
\subfloat[]{\includegraphics[width=0.3\columnwidth]{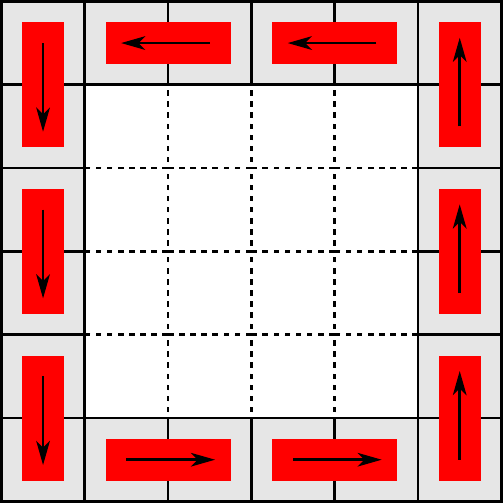}}
\caption{The two tilings of a wire loop. The vector $\tv(d)$ is shown for each domino $d$.}%
\label{fig:onionRings}%
\end{figure}

Figure \ref{fig:solenoid} shows the basic idea of the solenoid construction. We consider the \emph{cable loops}
\begin{align*}
L_0 &= ([0,2b+c] \times [0,2b+d] \times [c,a+c]) \setminus ((b,b+c) \times (b,b+d) \times [c,a+c]), \\
L_1 &= ([b,2b+2c] \times [b,b+d] \times [0,a+2c])\\ &\setminus ((b+c,2b+c) \times [b,b+d] \times (c,a+c)),
\end{align*} 
illustrated in Figure \ref{fig:solenoid}. 
\begin{figure}[ht]%
\centering
\def\svgwidth{0.4\columnwidth}
\subfloat[Cables drawn separately.]{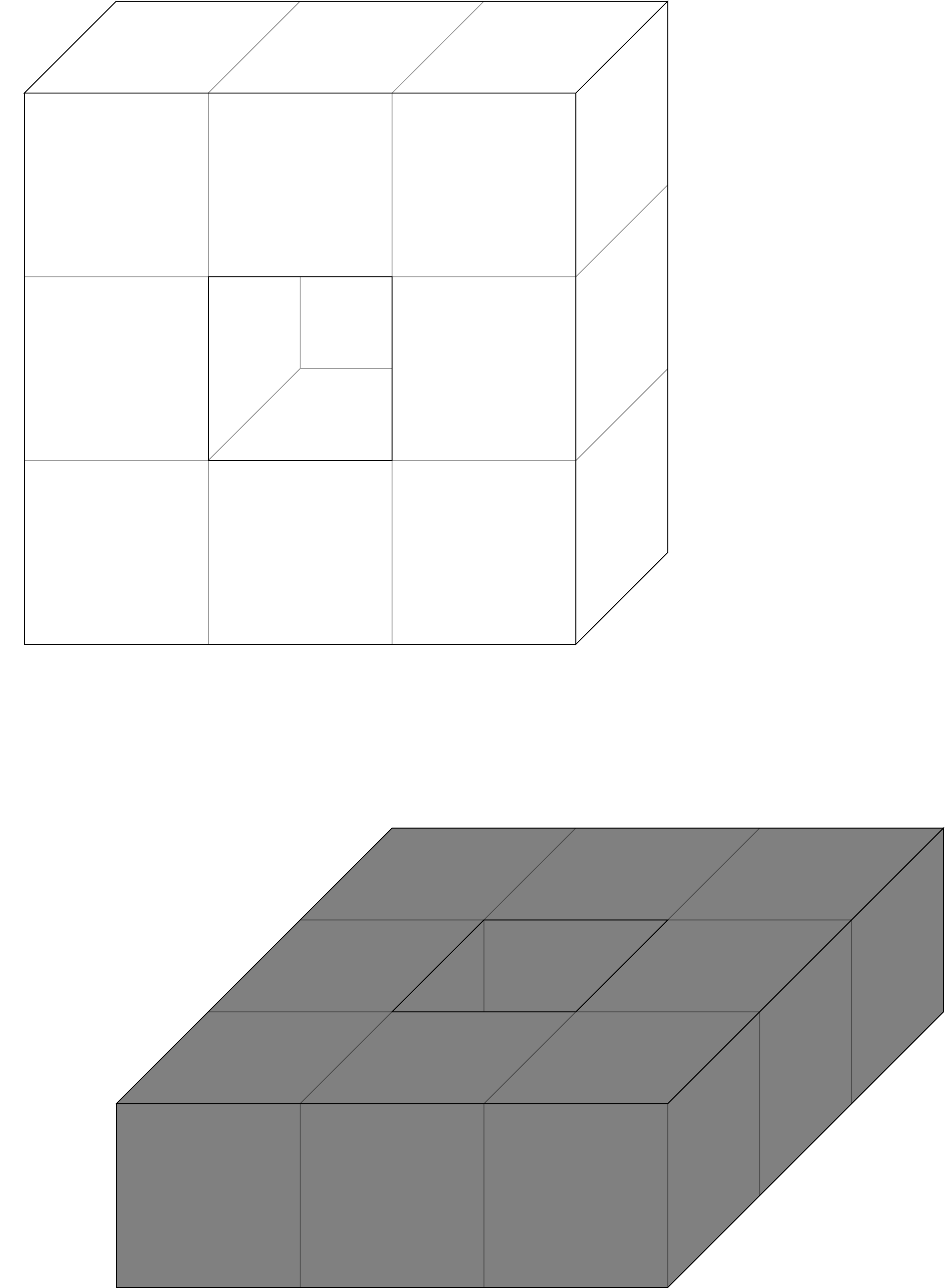}
\qquad \qquad \qquad
\def\svgwidth{0.4\columnwidth}
\subfloat[Cables in the same figure.]{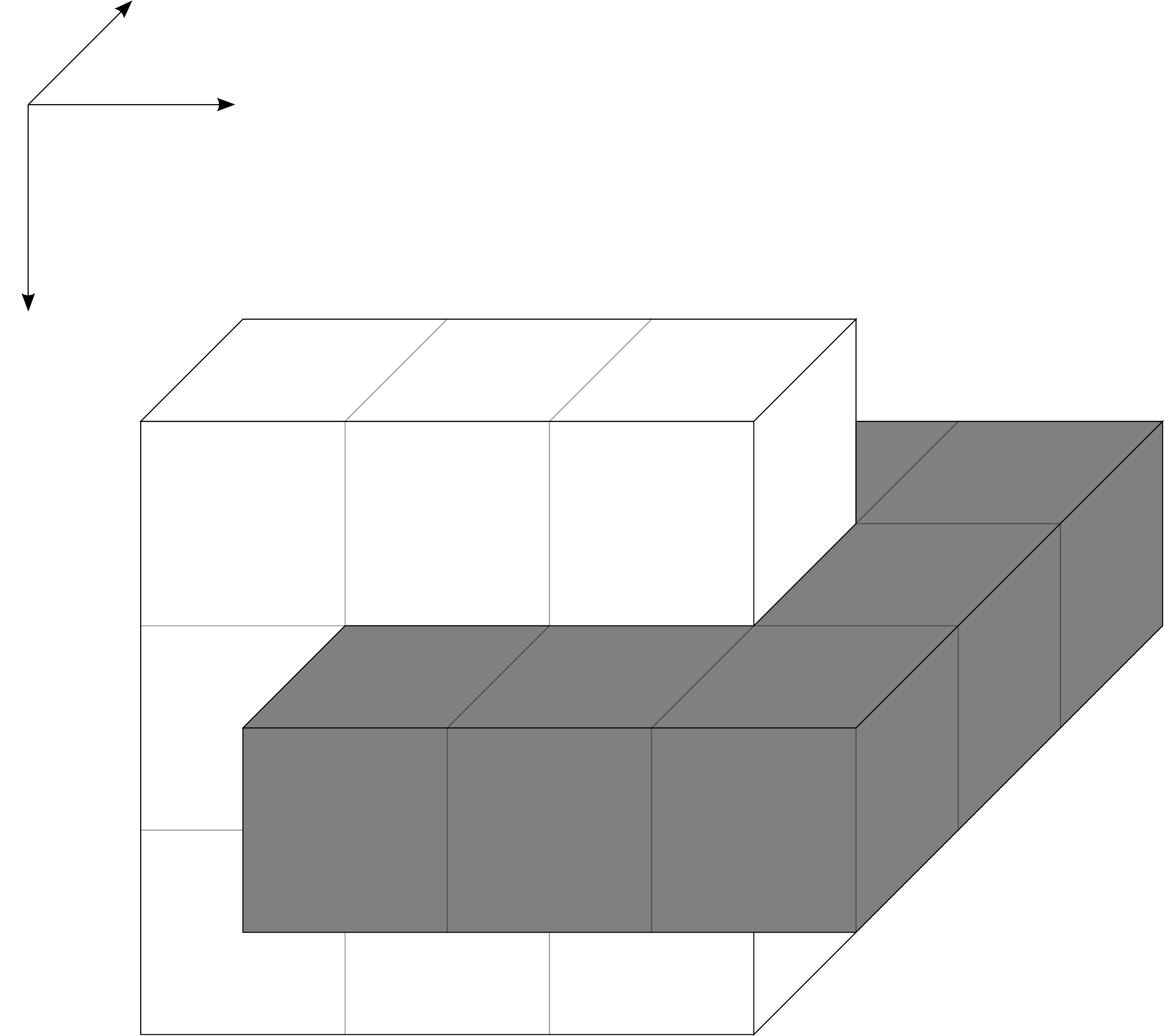}
\caption{The cable loops $L_0$ (white) and $L_1$ (grey). }%
\label{fig:solenoid}%
\end{figure}
Notice that $L_0$ is the disjoint union of $ab$ wire loops, whereas $L_1$ is the disjoint union of $cd$ wire loops. Each of these wire loops can be tiled in such a way that if $W_0$ is a wire loop in $L_0$ with tiling $t_{W_0}$ and $W_1$ is a wire loop in $t_1$ with tiling $t_{W_1}$, then $T^{\ex}(t_{W_0},t_{W_1}) + T^{\ex}(t_{W_1},t_{W_0}) = 1/2$. If we consider the tilings $t_0$ of $L_0$ and $t_1$ of $L_1$ formed by individual tilings of each wire loop, then $T^{\ex}(t_0,t_1) + T^{\ex}(t_1,t_0) = (abcd)/2$. Figure \ref{fig:solenoid_example} shows an example of this construction.
\begin{figure}[ht]%
\centering
\includegraphics[width=0.8\columnwidth]{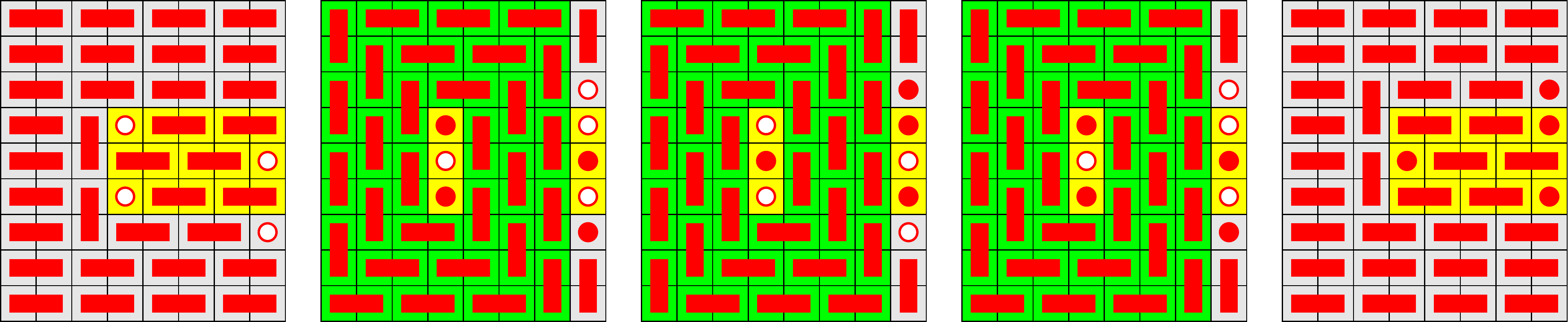}%
\caption{A solenoid with parameters $(3,3,1,3)$. $L_0$ is highlighted in green, while $L_1$ is highlighted in yellow. Notice that the twist of this tiling is $14 = \ceil{3\cdot 3 \cdot 1 \cdot 3/2}$.}%
\label{fig:solenoid_example}%
\end{figure}

A reasonably straightforward case-by-case analysis shows that, if at least one of the parameters $a,b,c,d$ is even, the remainder of the box can be tiled in such a way that the twist of the solenoid is exactly $abcd/2$ (most dominoes will be parallel to $\ex$, and we need to check that the effects between others cancel out). If all dimensions are odd (as in Figure \ref{fig:solenoid_example}), then everything cancels out with the exception of two pairs of dominoes with positive sign in the twist, hence yielding $\ceil{abcd/2}$.

We shall now use this construction to find a lower bound for all boxes. First, an auxiliary Lemma:
\begin{lemma}
\label{lemma:auxLowerBound}
Let $\cB = [0,L] \times [0,M] \times [0,N]$ be a box, with $N$ even. Then $\max \Tw(\cB) \geq \floor{L/3}\floor{M/3} N/2$.
\end{lemma}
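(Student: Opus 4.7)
The plan is to exhibit a tiling of $\cB$ whose twist equals $\floor{L/3}\floor{M/3}\,N/2$, built by assembling many copies of a small high-twist block in a decomposition of $\cB$ into $\ez$-cylinders of even depth, and then invoking the additivity statement in Theorem~\ref{theo:main}\ref{item:multiplexUnion}.

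The building block is the $3\times 3\times 2$ box, which by Example~\ref{example:332box} admits a tiling $t_\star$ with $\Tw(t_\star)=+1$ (the reflection across $z=1$ of the tiling in Figure~\ref{fig:negtrit_example}). Write $L=3q_L+r_L$, $M=3q_M+r_M$ with $r_L,r_M\in\{0,1,2\}$. For $0\le i<q_L$, $0\le j<q_M$ and $0\le k<N/2$, set
\[
 \cC_{ijk}=[3i,3i+3]\times[3j,3j+3]\times[2k,2k+2];
\]
these $q_Lq_M\cdot N/2$ slabs are pairwise interior-disjoint $\ez$-cylinders congruent to the $3\times 3\times 2$ box. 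The complement $\cB\setminus\bigcup\cC_{ijk}$ is the union of at most two boxes, namely $[3q_L,L]\times[0,M]\times[0,N]$ and $[0,3q_L]\times[3q_M,M]\times[0,N]$; each of these is an $\ez$-cylinder of even depth $N$ and thus is tileable. Adding these as additional pieces yields a decomposition $\cB=\bigcup \cR_\alpha$ into interior-disjoint cylinders that satisfies the hypothesis of Theorem~\ref{theo:main}\ref{item:multiplexUnion}.

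By that theorem there is a constant $K\in\ZZ$, depending only on the decomposition, such that $\Tw(\bigsqcup t_\alpha)=K+\sum_\alpha\Tw(t_\alpha)$ for every choice of tilings $t_\alpha$ on the pieces. To pin down $K$, on each piece choose the tiling in which all dominoes are parallel to $\ez$: this is possible because each piece is an $\ez$-cylinder of even depth, and every such tiling has twist $0$ (all pretwists $T^{\ez}$ vanish by definition of $\tau^{\ez}$). The disjoint union of these trivial tilings is again the all-$\ez$ tiling of $\cB$, whose twist is likewise $0$. Plugging into the additivity formula gives $0=K+0$, so $K=0$.

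Now, on each slab $\cC_{ijk}$ place a translated copy of $t_\star$, and on the two leftover boxes place the trivial all-$\ez$ tiling. Applying the additivity formula with $K=0$ gives a tiling $t$ of $\cB$ with
\[
 \Tw(t)=\sum_{i,j,k}\Tw(t_\star)+0=q_Lq_M\cdot\frac{N}{2}=\Big\lfloor\tfrac{L}{3}\Big\rfloor\Big\lfloor\tfrac{M}{3}\Big\rfloor\frac{N}{2},
\]
as required. The only delicate point in the argument is the bookkeeping of the additivity constant $K$; everything else reduces to a clean geometric partition combined with the existence of the explicit twist-$1$ tiling of $3\times 3\times 2$.
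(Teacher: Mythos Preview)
Your proof is correct and follows the same decomposition as the paper: pack $\lfloor L/3\rfloor\lfloor M/3\rfloor\cdot N/2$ copies of the $3\times3\times2$ block into $\cB$, tile the leftover with $\ez$-parallel dominoes, and place a twist-$1$ tiling in each block. The paper's own proof is a one-liner that asserts ``clearly'' the resulting tiling has the desired twist; your version is more explicit, invoking Theorem~\ref{theo:main}\ref{item:multiplexUnion} and determining the constant $K=0$ via the all-$\ez$ reference tiling, which is a clean way to handle the potential cross-effects between vertically stacked blocks.
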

\begin{proof}
Consider the boxes $\cB_{i,j,k} = [3i, 3(i+1)] \times [3j, 3(j+1)] \times [2k, 2(k+1)]$ for $0 \leq i < \floor{L/3}$, $0 \leq j < \floor{M/3}$, $0 \leq k < N/2$. Since the complement $\cB \setminus (\bigcup_{i,j,k} B_{i,j,k})$ is tileable with every domino parallel to $\ez$, we can clearly construct a tiling $t$ whose twist is $\sum_{i,j,k}\maxtw(3,3,2) = \floor{L/3} \floor{M/3} (N/2)$ (see Example \ref{example:332box}).
\end{proof}
\begin{rem}
\label{rem:flipConnectedBoxes}
Lemma \ref{lemma:auxLowerBound} also implies that the only tileable flip connected boxes are those of dimension $L \times M \times 1$ and $L \times 2 \times 2$ (and rotations of those boxes). It is easy to show that these boxes are flip connected: $L \times M \times 1$ boxes are essentially plane regions, and one can show by induction that $L \times 2 \times 2$ boxes are flip connected. For any other tileable $L \times M \times N$ box, we can after a rotation assume that $L, M \geq 3$, $N \geq 2$ and that $N$ is even, so that $k = \floor{L/3}\floor{M/3}N > 0$. Since all the elements in the sequence differ by positive trits, it follows that the box has at least $k+1> 1$ flip connected components. 
\end{rem} 
\begin{lemma}
\label{lemma:solenoid}
If $L,M,N \geq 2$, at least two of them greater than or equal to $3$ (and at least one of them even), then $\maxtw(L,M,N) \geq (1/162)LMN \min(L,M,N)$.
\end{lemma}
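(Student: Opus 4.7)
The plan is to construct an explicit tiling of $\cB = [0,L]\times[0,M]\times[0,N]$ whose twist meets the bound, by packing multiple solenoids with carefully chosen parameters.

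First, after rotating the box (which preserves the hypothesis), I will assume $L \geq M \geq N = n$. I then choose solenoid parameters tuned to the two shorter dimensions: set $c = \lfloor N/4\rfloor$, $a = N - 2c$, $b = \lfloor M/4\rfloor$, $d = M - 2b$. The resulting solenoid occupies a box of size $(2b+2c)\times M \times N$ (its $M$- and $N$-dimensions exactly fill those axes of $\cB$), and has twist $\lceil abcd/2\rceil$. Crucially, its long dimension satisfies $2b+2c \leq (M+N)/2 \leq M \leq L$, so the solenoid fits, and we can stack $\lfloor L/(2b+2c)\rfloor$ disjoint copies along the $x$-axis.

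Next I will tile the complement of these stacked solenoid sub-boxes (an $x$-cylinder, up to the even-parity assumption, which I use to ensure tileability by dominoes parallel to a single axis) with the trivial tiling. Since $\cB$ decomposes into cylinders (the stacked sub-boxes together with the leftover cylinder along $x$), Theorem~\ref{theo:main}\ref{item:multiplexUnion} gives that the total twist equals the sum of the individual twists plus a constant $K$ depending only on the decomposition. The trivial tilings contribute $0$ to their pretwist along their axis, and by Example~\ref{ex:multiplexes} (applied to the embedding of each solenoid), each solenoid contributes $\lceil abcd/2\rceil$. Thus the twist of the constructed tiling is at least $\lfloor L/(2b+2c)\rfloor \cdot abcd/2$, up to an additive constant that is dominated by the leading term.

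Finally, I will verify the bound. Using $a \geq N/2$, $d \geq M/2$, $b \geq M/8$, $c \geq N/8$ (for the regime where $M, N \geq 4$), one finds $abcd/2 \gtrsim M^2 N^2 /128$, and $\lfloor L/(2b+2c)\rfloor \geq \lfloor 2L/(M+N)\rfloor \geq L/(2M)$ since $L \geq M \geq N$. Multiplying, the twist is at least on the order of $LMN \cdot n /128$, which comfortably exceeds $LMN\cdot n/162$ once the floor losses are absorbed into the safety factor between $1/128$ and $1/162$. The main obstacle will be handling the cases where the parameters degenerate (for instance, $N \leq 3$ forces $c = 0$, or very small $M$ forces $b=0$). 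For these boundary cases I would replace the solenoid by the construction of Lemma~\ref{lemma:auxLowerBound}, which packs $3\times 3\times 2$ blocks of twist $1$ and already gives a bound of $\lfloor L/3\rfloor\lfloor M/3\rfloor N/2$ — adequate whenever $n$ is small. A brief case analysis comparing the two bounds over the remaining intermediate values of $n$ (say $4 \leq n \leq 20$) closes the argument.
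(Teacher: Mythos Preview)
Your stacking approach has a genuine numerical gap that the proposed case analysis does not close. The problem is your choice $b=\lfloor M/4\rfloor$, which makes the solenoid's $x$-width $2b+2c\approx (M+N)/2$ comparable to $L$ whenever $L\approx M$. The floor $\lfloor L/(2b+2c)\rfloor$ then equals $1$ and you discard up to half the box. Concretely, for the $10\times 10\times 7$ box your parameters give $(a,b,c,d)=(5,2,1,6)$, a single solenoid of twist $30$, while $LMN\min(L,M,N)/162=4900/162>30$; and Lemma~\ref{lemma:auxLowerBound} (best orientation) also gives only $30$. So neither branch of your ``brief case analysis comparing the two bounds'' succeeds here, and the failure is not governed by $n=\min(L,M,N)$ alone---it depends on the ratio $L/(M+N)$---so a scan over $4\le n\le 20$ does not fix it. Your back-of-the-envelope estimate ``$abcd/2\gtrsim M^2N^2/128$'' is also off: with $b\ge M/8$, $c\ge N/8$, $a\ge N/2$, $d\ge M/2$ you get $M^2N^2/512$, not $/128$.

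The paper avoids both issues by using a \emph{single} solenoid whose long sides are tied to $L$ and $M$ rather than to $M$ and $N$: after rotating to $[0,N]\times[0,M]\times[0,L]$ it takes $b=\lfloor N/4\rfloor$, $c=\lfloor (N+2)/4\rfloor$, $a=L-2c$, $d=M-2b$. Then the solenoid already fills the $L$- and $M$-directions exactly (no stacking, no floor loss), and only a slab of width at most $1$ remains in the $N$-direction. The bound then reduces to checking that $bc(N-2b)(N-2c)/(2N^4)\ge 1/162$, a function of $N$ alone, which is verified by induction on $N\bmod 4$. A secondary point: your appeal to Theorem~\ref{theo:main}\ref{item:multiplexUnion} leaves the constant $K$ unjustified (``dominated by the leading term'' is not an argument). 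In fact $K=0$ here---just compute $T^{\ey}$ directly, since sub-boxes stacked along $\ex$ have no mutual $\ey$-effects---but this should be said.
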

\begin{proof}
Assume that $N \leq M \leq L$. If $N < 4$, then Lemma \ref{lemma:auxLowerBound} suffices: if $N = 2$, $\floor{L/3}\floor{M/3}N/2 \geq LMN/50 \geq LMN^2/100$; if $N = 3$, and, say, $L$ is even, then $\floor{N/3}\floor{M/3}L/2 \geq LMN/30 \geq LMN^2/90$. Therefore, we may assume that $N \geq 4$. 

In order to construct the most efficient solenoid, we rotate the box and consider $\cB = [0,N] \times [0,M] \times [0,L]$.
Let $b = \floor{N/4}$, $c = \floor{(N+2)/4}$, $a = L - 2c$ and $d = M - 2b$. Let $t$ be the tiling of $\cB$ obtained from the solenoid construction with parameters $a,b,c,d$ (and $[N - 2b - 2c,N] \times [0,M] \times [0,L]$ is tiled with all dominoes parallel to each other).

Now
$$
\frac{\Tw(t)}{LMN^2} = \frac{\ceil{\frac{abcd}{2}}}{LMN^2} \geq \frac{bc}{2N^2} \cdot \frac{M-2b}{M} \cdot \frac{L-2c}{L}
\geq \frac{bc}{2N^2} \cdot \left(1 - \frac{2b}{N}\right)\left(1 - \frac{2c}{N}\right).
$$ 
Let $g(N)$ denote the last expression: we see that $g(4) = 1/128, g(5) = 9/1400, g(6) = 1/162, g(7) = 15/2401$, which are all greater than or equal to $1/162$. 
We now want to see that $g(N+4) \geq g(N)$.
Clearly $g(4k) = 1/128$ for any $k > 0$. For the other cases, we see that for $i = 1,2,3$, we have $g(4k+i+4) - g(4k+i) = p_i(k)/(2(4k+i+4)^4(4k+i)^4),$ where $p_1(k) = (4k+3)(64k^4+192k^3+180k^2+54k+3)$, $p_2(k) = 256(k+1)^3(4k^2+8k+1)$, $p_3(k) = 2(k+1)(2k+3)(4k+5)(80k^2+200k+81)$. Since clearly $p_i(k) > 0$ for $k > 0$, we see that $g(N+4) \geq g(N)$ and thus
$\Tw(t)/(LMN^2) \geq 1/162$.
\end{proof}

\begin{proof}[Proof of Theorem \ref{theo:possible}]
The general lower and upper bounds follow directly from Lemmas \ref{lemma:maxTwist} and \ref{lemma:solenoid}. Lemmas \ref{lemma:asymptoticLowerBound} and Corollary \ref{coro:caseLEqualsM} (using $L = M = N$) imply that, if $h(L,M,N) = \maxtw(L,M,N)/(LMN\min(L,M,N))$,
then $$\limsup_{L,M,N \to \infty} h(L,M,N) = \frac{1}{16} \text{ and } \liminf_{L,M,N \to \infty} h(L,M,N) \leq \frac{1}{24},$$
completing the proof.
\end{proof}

\chapter*{Glossary}
\begin{description}

\item[{$\bB$}]
{The set of all positively oriented bases formed by vectors of $\Phi$.}

\item[{$\ccol(v)$}]
{The color of some object (a cube, a square, a vertex, etc): $1$ if it is black, and $-1$ if it is white.}

\item[{cylinder}]
{Region of the form $\cD \times [0,N]$ (possibly rotated), where $\cD \subset \RR^2$ is a connected and simply connected planar region. Same as multiplex.}

\item[{$\Delta$}]
{The set $\{\ex, \ey, \ez\}$.}

\item[{$A_0 \sqcup A_1$}]
{The union of two disjoint sets $A_0$ and $A_1$.}

\item[{duplex region}]
{Region of the form $\cD \times [0,2]$, where $\cD \subset \RR^2$ is a connected and simply connected planar region.}

\item[{embedding}]
{In general, an embedding of a tiling $t$ of $\cR$ in a larger region $\cR' \supset \cR$ is a tiling of $\cR'$ which coincides with $t$ in $\cR$. For specific uses, see Sections \ref{sec:twoFloorsMoreSpace}, \ref{sec:embedFourFloors} and \ref{sec:embeddable}.}

\item[{ghost curve}]
{Curve connecting a source to a sink in a two-story region. See page \pageref{def:ghostCurve}.}

\item[{jewel}]
{Projection of a domino that is parallel to the axis of a duplex region. See page \pageref{fig:tiling742_invariant}.}

\item[{$\Link(\gamma_0, \gamma_1)$}]
{The linking number between two curves $\gamma_0$ and $\gamma_1$.}

\item[{multiplex}]
{Region of the form $\cD \times [0,N]$ (possibly rotated), where $\cD \subset \RR^2$ is a connected and simply connected planar region. Same as cylinder.}

\item[{$\Phi$}]
{The set $\{\pm \ex, \pm \ey, \pm \ez\}$.}

\item[{($\vu$-)pretwist}]
{See $T^{\vu}(t)$.}

\item[{$P_t(q)$}]
{Polynomial invariant for regions with two floors. See pages \pageref{def:ptduplex} and \pageref{def:ptTwoStory}.}

\item[{$\plshalf{n}$}]
{Equals $n + 1/2$.}

\item[{sock}]
{System of cycles. See page \pageref{fig:tilingTwoFloors_withSOC}.}

\item[{$\tau^{\beta}_{a,b}(\ell_0, \ell_1)$}]
{The slanted effect of the segment $\ell_0$ on the segment $\ell_1$ with parameters $\beta \in \bB$ and $a,b \in \RR$. See page \pageref{def:slantedEffect}.}

\item[{$\tau^{\vu}(d_0,d_1)$}]
{The effect of the domino (or segment) $d_0$ on the domino (or segment) $d_1$ along $\vu$: see pages \pageref{def:effect} and \pageref{def:effectSegments}.}

\item[{$T^{\beta}_{a,b}(A_0,A_1)$}]
{Sums of slanted effects $\tau^{\beta}_{a,b}(\ell_0,\ell_1)$ for all segments $\ell_0 \in A_0, \ell_1 \in A_1$. See page \pageref{def:effectsTwoParams}.}

\item[{$T^{\beta}_{a,b}(A)$}]
{Equals $T^{\beta}_{a,b}(A,A)$: see page \pageref{def:effectsTwoParams}.}

\item[{$T^{\vu}(t), T^{\vu}(A)$}]
{The $\vu$-pretwist of a tiling $t$ (see page \pageref{def:pretwist}); also, if $A$ is an arbitrary set of segments, this equals $T^{\vu}(A,A)$: see page \pageref{def:effectsTwoParams}.}

\item[{$T^{\vu}(A_0,A_1)$}]
{The sum of effects of segments of $A_0$ on segments of $A_1$ along $\vu$. See page \pageref{def:effectsTwoParams}.}

\item[{$\maxtw(L,M,N)$}]
{The maximum value of the twist on the set of tilings of an $L \times M \times N$ box.}

\item[{$\Tw(t)$}]
{Integer that is invariant by flips in cylinders. See Definition \ref{def:twist} on page \pageref{def:twist}.}

\item[{$\wind(\gamma,p)$}]
{The winding number of a planar curve $\gamma$ around a point $p \in \RR^2$.}

\item[{$\Wr^{\pm}(\gamma)$}]
{A particular directional writhing number. See page \pageref{eq:writhes}.}

\item[{$\Wr(\gamma,\vu)$}]
{The directional writhing number of a curve $\gamma$ in the direction of a vector $\vu$. See page \pageref{def:directionalWrithe}.}

\end{description}


\bibliography{biblio}{}

\begin{thebibliography}{10}

\bibitem{knotbook}
Colin~C. Adams.
\newblock {\em The knot book: an elementary introduction to the mathematical
  theory of knots}.
\newblock American Mathematical Soc., 2004.

\bibitem{polygonalWindingNumber}
David~G. Alciatore and Rick Miranda.
\newblock A winding number and point-in-polygon algorithm.
\newblock Technical report, Colorado State University, 1995.

\bibitem{bodini2007tiling}
Olivier Bodini and Damien Jamet.
\newblock {Tiling a Pyramidal Polycube with Dominoes}.
\newblock {\em Discrete Mathematics \& Theoretical Computer Science}, 9(2),
  2007.

\bibitem{ciucu1998improved}
Mihai Ciucu.
\newblock An improved upper bound for the three dimensional dimer problem.
\newblock {\em Duke Math J}, 94:1--11, 1998.

\bibitem{cohn1996local}
Henry Cohn, Noam Elkies, James Propp, et~al.
\newblock Local statistics for random domino tilings of the {Aztec} diamond.
\newblock {\em Duke Mathematical Journal}, 85(1):117--166, 1996.

\bibitem{conway1990tiling}
John~H. Conway and Jeffrey~C. Lagarias.
\newblock Tiling with polyominoes and combinatorial group theory.
\newblock {\em Journal of combinatorial theory, Series A}, 53(2):183--208,
  1990.

\bibitem{elkies1992alternating}
Noam Elkies, Greg Kuperberg, Michael Larsen, and James Propp.
\newblock Alternating-sign matrices and domino tilings {(Part II)}.
\newblock {\em Journal of Algebraic Combinatorics}, 1(3):219--234, 1992.

\bibitem{friedland2005theory}
Shmuel Friedland and Uri~N. Peled.
\newblock Theory of computation of multidimensional entropy with an application
  to the monomer--dimer problem.
\newblock {\em Advances in applied mathematics}, 34(3):486--522, 2005.

\bibitem{writhingNumber}
F.~Brock Fuller.
\newblock {The Writhing Number of a Space Curve}.
\newblock {\em Proc. Nat. Acad. Sci. USA}, 68(4):815--819, 1971.

\bibitem{hammersley1966limit}
J.~M. Hammersley.
\newblock Existence theorems and {Monte Carlo} methods for the monomer-dimer
  problem.
\newblock In F.N. David, editor, {\em Research Papers in Statistics:
  Festschrift for J. Neyman}, pages 125--146. Wiley, 1966.

\bibitem{jockusch1998random}
William Jockusch, James Propp, and Peter Shor.
\newblock Random domino tilings and the arctic circle theorem.
\newblock {\em arXiv:math/9801068}, 1995.

\bibitem{Kasteleyn19611209}
P.~W. Kasteleyn.
\newblock The statistics of dimers on a lattice: I. {The} number of dimer
  arrangements on a quadratic lattice.
\newblock {\em Physica}, 27(12):1209 -- 1225, 1961.

\bibitem{kenyonokounkov2006planar}
Richard Kenyon, Andrei Okounkov, et~al.
\newblock Planar dimers and {Harnack} curves.
\newblock {\em Duke Mathematical Journal}, 131(3):499--524, 2006.

\bibitem{kenyonokounkov2006dimers}
Richard Kenyon, Andrei Okounkov, and Scott Sheffield.
\newblock Dimers and amoebae.
\newblock {\em Annals of mathematics}, pages 1019--1056, 2006.

\bibitem{linde2001rhombus}
Joakim Linde, Cristopher Moore, and Mats~G. Nordahl.
\newblock An $n$-dimensional generalization of the rhombus tiling.
\newblock {\em Discrete Mathematics and Theoretical Computer Science
  Proceedings AA}, pages 23--42, 2001.

\bibitem{webpageNicolau}
Pedro~H. Milet and Nicolau~C. Saldanha.
\newblock Additional material for \textit{Domino tilings of three-dimensional
  regions}.
\newblock \url{http://www.nicolausaldanha.com/multiplex/index.html}.

\bibitem{primeiroartigo}
Pedro~H. Milet and Nicolau~C. Saldanha.
\newblock Flip invariance for domino tilings of three-dimensional regions with
  two floors.
\newblock {To appear in \em Discrete \& Computational Geometry.}
\newblock \url{doi:10.1007/s00454-015-9685-y}.

\bibitem{segundoartigo}
Pedro~H. Milet and Nicolau~C. Saldanha.
\newblock Domino tilings of three-dimensional regions: flips, trits and twists.
\newblock {\em arXiv:1410.7693}, 2014.


\bibitem{pak2013complexity}
Igor Pak and Jed Yang.
\newblock The complexity of generalized domino tilings.
\newblock {\em The Electronic Journal of Combinatorics}, 20(4):P12, 2013.

\bibitem{randall2000random}
Dana Randall and Gary Yngve.
\newblock Random three-dimensional tilings of {Aztec} octahedra and tetrahedra:
  an extension of domino tilings.
\newblock In {\em Proceedings of the eleventh annual ACM-SIAM symposium on
  Discrete algorithms}, pages 636--645. Society for Industrial and Applied
  Mathematics, 2000.
	
\bibitem{saldanhatomei1995spaces}
Nicolau~C. Saldanha, Carlos Tomei, Mario~A. Casarin~Jr., and Domingos Romualdo.
\newblock Spaces of domino tilings.
\newblock {\em Discrete \& Computational Geometry}, 14:207--233, 1995.

\bibitem{saldanhatomei1998overview}
Nicolau~C. Saldanha and Carlos Tomei.
\newblock An overview of domino and lozenge tilings.
\newblock {\em Resenhas IME-USP}, 2(2):239--252, 1995.


\bibitem{thurston1990}
William~P. Thurston.
\newblock {Conway's Tiling Groups}.
\newblock {\em The American Mathematical Monthly}, 97(8):pp. 757--773, 1990.

\end{thebibliography}
\bibliographystyle{plain}
\end{document}